\newcommand{\inv}[0]{{-1}}
\newcommand{\oo}[0]{\otimes}
\newcommand{\id}[0]{\mathrm{id}}
\newcommand{\low}[2]{{#1}_{({#2})}}
\newcommand{\st}[0]{{\bf s}}
\newcommand{\ta}[0]{{\bf t}}
\newcommand{\gammad}[0]{{\Gamma_D}}
\newcommand{\prot}[0]{\mathcal H_{\mathrm {pr}}}
\newcommand{\ham}[0]{H_{\mathrm{K}}}
\newcommand{\rhop}[0]{{\boldsymbol \rho}}
\newtheorem{theorem}{Theorem}[section]
\newtheorem*{theorem*}{Theorem}
\newtheorem{example}[theorem]{Example}
\newtheorem{lemma}[theorem]{Lemma}
\newtheorem{proposition}[theorem]{Proposition}
\newtheorem{definition}[theorem]{Definition}
\newtheorem{remark}[theorem]{Remark}
\newcommand*\stdthebibliography{}
\let\stdthebibliography\thebibliography
\renewcommand{\thebibliography}[1]{%
\stdthebibliography{#1}\setlength{\itemsep}{-1pt}}
\def\mytitle{Kitaev lattice models as a Hopf algebra gauge theory}
\def\myauthors{C.~Meusburger}
\begin{document}

\begin{center}
  {\huge\mytitle}

  \vspace{2em}

  {\large
   Catherine Meusburger\footnote{{\tt catherine.meusburger@math.uni-erlangen.de}} 
 }

 Department Mathematik \\
  Friedrich-Alexander-Universit\"at Erlangen-N\"urnberg \\
  Cauerstra\ss e 11, 91058 Erlangen, Germany\\[+2ex]

{July 5, 2016}

  \begin{abstract}
\noindent   We prove that  Kitaev's lattice  model for a finite-dimensional semisimple Hopf algebra $H$ is equivalent to the combinatorial quantisation of Chern-Simons theory for the Drinfeld double $D(H)$. This shows that Kitaev models are a special case of the older and more general combinatorial models.  
This  equivalence  is  an analogue of the relation between Turaev-Viro  and Reshetikhin-Turaev TQFTs and relates
them to the quantisation of moduli spaces of flat connections. 
 
We show that the topological invariants of the two models, the algebra of operators acting on the protected space  of the Kitaev model and the quantum moduli algebra from the combinatorial quantisation formalism, are isomorphic. This is established in a gauge theoretical picture, in which both models appear as  Hopf algebra valued  lattice gauge theories.  
  
  We first prove that the triangle operators of a Kitaev model  form a module algebra over a Hopf algebra of gauge transformations and  that this module algebra  is  isomorphic  to  the lattice algebra in the combinatorial formalism. Both algebras can be viewed as the algebra of  functions on gauge fields in  a Hopf algebra gauge theory.  
 The isomorphism between them induces an algebra isomorphism between their
 subalgebras of invariants,  which are interpreted as  gauge invariant functions or observables. 
It  also relates the curvatures in the two models, which are given as holonomies around  the faces of the lattice.
This yields an isomorphism between the subalgebras obtained by projecting out curvatures, which can be viewed as the algebras of functions on flat gauge fields and are the topological invariants of the two models. 
  \end{abstract}
\end{center}

\section{Introduction}

{\bf Motivation} \newline
Kitaev models \cite{Ki} and Lewin-Wen models \cite{LW1,LW2}, which were shown to be equivalent to Kitaev models in \cite{BA, KMR2},   have  attracted strong interest  in condensed matter physics and  topological quantum computing.   
They 
 assign to 
 an oriented surface $\Sigma$ a finite-dimensional 
 vector
 space, the {\em protected space}, which is a topological invariant of $\Sigma$. They also exhibit topological excitations with braid group statistics, electro-magnetic duality and can be equipped with additional structures such as defects and domain walls \cite{BMD, KK,BBK}, which are a focus of current research.
 
Kitaev models are  also of strong interest from  the perspective of topological quantum field theory  as they are related to Turaev-Viro TQFTs \cite{TVi,BW}.  It was shown in \cite{BK,KKR,KMR,KMR2}  that the protected space  of a Kitaev model for a finite-dimensional semisimple Hopf algebra  $H$  on an oriented surface $\Sigma$ coincides with the vector space $Z_{TV}(\Sigma)$ that the Turaev-Viro TQFT  for the representation category $H$-Mod assigns to $\Sigma$.   
If one interprets Turaev-Viro invariants  as a discretised path integrals \cite{john1,john2}, 
Kitaev models  can  be viewed as their Hamiltonian counterparts. From this point of view,  the linear map $Z_{TV}(M): Z_{TV}(\Sigma)\to Z_{TV}(\Sigma')$ that the Turaev-Viro TQFT assigns to a 3-manifold with boundary $\partial M=\Sigma\coprod \bar\Sigma'$ describes a transition between  Kitaev models on $\Sigma$
and  $\Sigma'$. 

This relation between Kitaev models and TQFTs extends to the case with excitations. 
Excitations in a Kitaev model for  $H$ are labelled by representations of  the Drinfeld double $D(H)$ and correspond to Turaev-Viro TQFTs with line defects \cite{KMR,BK}. These were essential in establishing the equivalence of Turaev-Viro TQFTs  for  $H$-Mod and the Reshetikhin-Turaev TQFTs   for  $D(H)$-Mod in \cite{Ba1,Ba2,KB,TVr}. 

The topological nature of Kitaev models and their relation to Turaev-Viro and Reshetikhin-Turaev TQFTs raises three obvious  questions, which remained open despite their conceptual importance:
\begin{compactenum}
\item  Is there a Hamiltonian analogue of Reshetikhin-Turaev TQFTs  defined along the same lines as Kitaev models?  If yes, can one precisely relate  the 
Kitaev model for a Hopf algebra $H$ and the
Hamiltonian analogue of a Reshetikhin-Turaev TQFT  for its Drinfeld double $D(H)$? 
\\[-1ex]

\item Can a Kitaev model for a Hopf algebra $H$  be viewed as a Hopf algebra analogue of a group-valued lattice gauge theory?  This is strongly suggested by  the structure of Kitaev models, e.~g.~that they are defined in terms of a graph  embedded in a surface $\Sigma$,  that they exhibit  symmetries at each vertex and face of the graph and that these symmetries act trivially on the protected space.  It is also likely due to their relation to 
 Reshetikhin-Turaev  TQFTs, which were  obtained  by quantising  Chern-Simons gauge theories \cite{W,RT}.\\[-1ex]
 
\item  Are there classical analogues of a Kitaev models defined in terms of Poisson or symplectic structures associated with $\Sigma$? Although Kitaev models were defined ad hoc and not by quantising  classical structures, this seems natural due to their interpretation as quantum models. It is also suggested by their relation to Reshetikhin-Turaev TQFTs, which quantise  Chern-Simons gauge theories and hence moduli spaces of flat connections on $\Sigma$.
\end{compactenum}

\medskip
{\bf Main results}\newline
The article addresses these three questions.    More specifically, we show that the algebra of operators that act on the protected space of a Kitaev model  is isomorphic to the quantum moduli algebra obtained in  the combinatorial quantisation  of Chern-Simons gauge theory by Alekseev, Grosse and Schomerus \cite{AGSI,AS} and by Buffenoir and Roche \cite{BR,BR2}.  The protected space of a Kitaev model for a finite-dimensional semisimple Hopf algebra $H$ is therefore given as a representation space of the quantum moduli algebra for its Drinfeld double $D(H)$.
This establishes a complete equivalence between the Kitaev models and the combinatorial quantisation of Chern-Simons theory and shows that Kitaev models are not a new class of models but a special case of the older and more general models in \cite{AGSI,AGSII,AS, BR, BR2}. 

Moreover, it  was   shown in \cite{BR2,AS} that the quantum moduli algebra can be viewed as the Hamiltonian analogue of a Reshetikhin-Turaev TQFT.  Its representation spaces coincide with the vector spaces $Z_{RT}(\Sigma)$ that the Reshetikhin-Turaev TQFT assigns to $\Sigma$, and both give rise to the same action of the mapping class group  $\mathrm{Map}(\Sigma)$. This addresses the first question and establishes a relation $(*)$ between Kitaev model for $H$  and the combinatorial model for $D(H)$ that is analogous to the relation  between a Turaev-Viro TQFT for the representation category  $H$-Mod and the Reshetikhin-Turaev TQFT for the representation category $D(H)$-Mod from \cite{Ba1,Ba2,KB,TVr}:
$$
\xymatrix{
\text{\framebox{Turaev-Viro TQFT for $H$-Mod} }\qquad \ar[d]^{\cite{BK,KKR,KMR}}_{\text{Hamiltonian analogue}} \ar[r]^{\cite{Ba1,Ba2,KB,TVr}\qquad} & \ar[l] \qquad \;\;\;\;\;\text{\framebox{Reshetikhin-Turaev TQFT for $D(H)$-Mod}} \ar[d]_{\cite{AS,BR2}}^{\text{Hamiltonian analogue}}\\
\text{\framebox{Kitaev model for $H$} }  \ar@{-->}[r]^{(*)\qquad\qquad} &  \ar@{-->}[l] \text{ \framebox{combinatorial model for $D(H)$}} 
}
$$
As the quantum moduli algebra was obtained in \cite{BR,AGSI} by canonically quantising  the symplectic structure on the moduli space of flat connections on $\Sigma$ from  \cite{FR,AM}, this also addresses the third question about the corresponding Poisson and symplectic structures. Moreover, all structures that describe  the relation between the two models have Poisson analogues in the theory of Poisson-Lie groups.  
Although this aspect is not developed further here, this defines  Poisson analogues of  Kitaev models  and relates them to symplectic structures on  moduli spaces of flat connections on surfaces. 

Finally,  it was shown in \cite{MW}  that the combinatorial model \cite{AGSI,AGSII, AS,BR,BR2} can  be derived  from a set of simple axioms that generalise  group valued lattice gauge theories to ribbon Hopf algebras.  These axioms encode minimal physics requirements for a local  lattice gauge theory  and imply that the relevant mathematical structures  are module algebras over Hopf algebras. In this sense, the combinatorial model is a generalisation of the notion of a lattice gauge theory from a group to a ribbon Hopf algebra. The equivalence of Kitaev models to combinatorial models  shows that the former  can indeed be interpreted as a Hopf algebra valued lattice gauge theory.  

This viewpoint  is also essential in establishing the correspondence between the two models.  Many of the results in 
\cite{Ki,BMCA} and in \cite{AGSI,AGSII, AS, BR,BR2} are formulated in terms of matrix elements in irreducible representations. While  
 this presents certain computational advantages within the models, it becomes an obstacle when relating them. 
The more algebraic and basis independent formulation  in terms of a Hopf algebra gauge theory in  \cite{MW} allows one to relate the models in a simpler and more conceptual way.

\medskip
{\bf Detailed description of results}\newline
In  this article  we consider Kitaev models for a finite-dimensional semisimple Hopf algebra $H$, as introduced in \cite{BMCA},  and the combinatorial model for the Drinfeld double $D(H)$ from \cite{AGSI,AGSII,BR}, in its formulation  as a Hopf algebra gauge theory  \cite{MW}. Besides the Hopf algebras $H$ and $D(H)$  the input data in both models is a  ribbon graph $\Gamma$, a directed graph with a cyclic ordering of the edge ends at each vertex. The relation between the two models is obtained by thickening $\Gamma$ to a ribbon graph $\gammad$ in which each edge of $\Gamma$ is replaced by a rectangle and each vertex of $\Gamma$ by a polygon. While the Hopf algebra gauge theory  is associated with the ribbon graph $\Gamma$, the natural setting for the Kitaev model is the thickened graph $\gammad$.

 The  concept that is fundamental in relating the two models is {\em holonomy}, which we define as a functor from the path groupoid of  $\Gamma$ or  $\gammad$ into a category constructed from the Hopf algebra data.  For {\em ribbon paths} these holonomies yield  Kitaev's {\em ribbon operators} \cite{Ki}. However, our notion of holonomy is   more general and  defined for  any path in $\gammad$, not only ribbon paths.
 This is essential in relating the two models. 
 By selecting an adjacent face at each vertex - a {\em site} in the language of Kitaev models or a {\em cilium} in the language of Hopf algebra gauge theory -  we associate to each oriented 
  edge $e\in E$ the following two paths $p_{e,\pm}$ in $\gammad$.

 \vspace{-1cm}
\begin{center}
\includegraphics[scale=0.27]{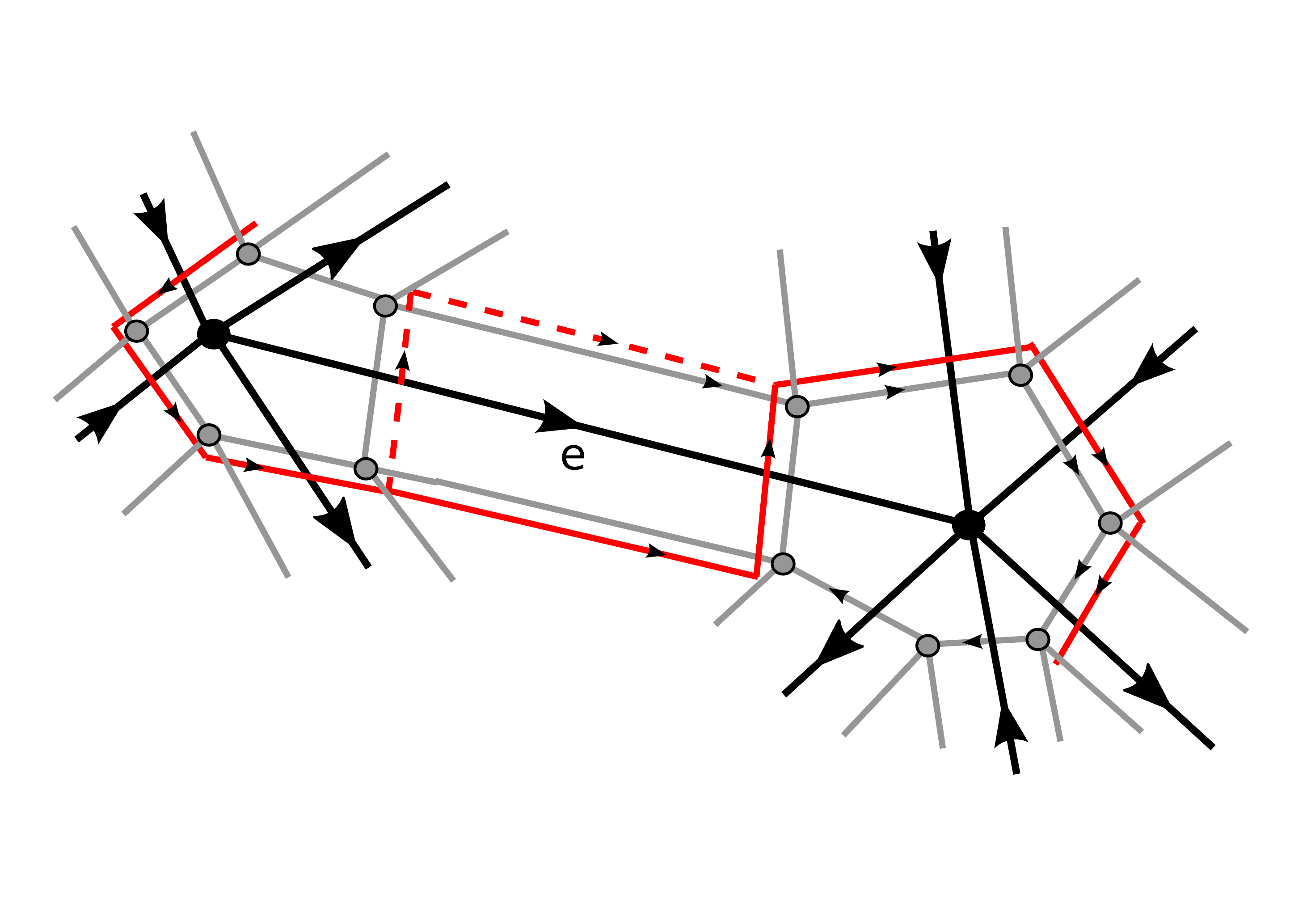}
\end{center}

 \vspace{-1cm}
 The holonomies of the  paths $p_{e,\pm}$, which are {\em not} ribbon paths in the sense of \cite{Ki,BMD,BMCA}, coincide. 
By sending the holonomy of an edge $e$ of $\Gamma$ 
  to the holonomies of the paths $p_{e,\pm}$ in $\gammad$, we then  obtain an explicit relation  between the algebraic structures in a Hopf algebra gauge theory and in the Kitaev models. 
This relation involves three layers:

$\bullet$ {\bf The algebra of functions and the  algebra of triangle operators:} The algebra $\mathcal A^*_\Gamma$  of functions in a Hopf algebra gauge theory for $D(H)$ is the vector space $D(H)^{*\oo E}$ obtained by associating a copy of the dual Hopf algebra $D(H)^*$ to each edge of $\Gamma$. However, its algebra structure  is not the canonical one from the tensor product, but deformed at each vertex by the universal $R$-matrix of $D(H)$. 
The corresponding structure in the Kitaev model for $H$ is the algebra generated by the triangle operators  $L^{h}_{e,\pm}$ and $T^\alpha_{e,\pm}$  for each each edge $e$ of $\Gamma$ and indexed by elements $h\in H$ and $\alpha\in H^*$. These triangle operators act on the vector space
 $H^{\oo E}$  via the left and right regular action of $H$ and  $H^*$ on $H$. They form an  algebra  isomorphic to the $E$-fold tensor product $\mathcal H(H)^{\oo E}$ of the Heisenberg double  of $H$.  The first central result is that under certain assumptions on $\Gamma$ these two algebras are isomorphic (Theorems \ref{th:hdcomb} and \ref{th:chiiso}):

\begin{theorem*} Let $\Gamma$ be a regular ribbon graph. Then the holonomies of the paths $p_{e,\pm}$ induce an algebra isomorphism $\chi: \mathcal A^*_\Gamma\to \mathcal H(H)^{op\oo E}$ from the algebra of functions $\mathcal A^*_\Gamma$ of a Hopf algebra gauge theory for $D(H)$ to the algebra formed by the triangle operators in the Kitaev model for $H$.
\end{theorem*}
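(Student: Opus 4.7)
I would first make the map $\chi$ explicit on generators. Since $D(H)^*\cong H\otimes H^*$ as a vector space (up to a chosen convention identifying the Drinfeld double with a cross product of $H$ and $H^{*}$), an element attached to an edge $e$ can be taken to be a simple tensor $h\otimes\alpha$ with $h\in H$, $\alpha\in H^*$. The holonomy of the path $p_{e,-}$ reads off an $H$-valued variable and the holonomy of $p_{e,+}$ reads off an $H^*$-valued variable, so the natural definition is to send the generator on edge $e$ corresponding to $h\in H\subset D(H)^*$ to $L^{h}_{e,-}$ and the generator corresponding to $\alpha\in H^*\subset D(H)^*$ to $T^\alpha_{e,+}$, extending linearly using the basis $\{L^h_{e,-}T^\alpha_{e,+}\}$ of the edgewise factor. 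Once this is spelled out, $\chi$ is automatically a linear isomorphism, since both $\mathcal A^*_\Gamma$ and $\mathcal H(H)^{op\otimes E}$ are identified with the same underlying vector space $(H\otimes H^*)^{\otimes E}$ and $\chi$ sends a basis to a basis.

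\textbf{Multiplicativity on a single edge.} The next step is to verify that $\chi$ is an algebra homomorphism when restricted to one edge. In this case $\mathcal A^*_\Gamma$ reduces to $D(H)^*$ with its intrinsic algebra structure, and the target reduces to $\mathcal H(H)^{op}$ acting on one copy of $H$. The identity $D(H)^*\cong \mathcal H(H)^{op}$ is a standard Hopf algebraic isomorphism: the product of $h_1\otimes\alpha_1$ and $h_2\otimes\alpha_2$ in $D(H)^*$ is the cross-product determined by the coproduct of $H$ and $H^*$, and this matches exactly the product in $\mathcal H(H)^{op}$ coming from the left regular action of $H$ on itself and the right regular coaction of $H^*$. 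I would verify this by unpacking the two multiplications element-wise; it is a computation entirely inside the Hopf algebra $D(H)$ and does not involve the graph.

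\textbf{Multiplicativity between edges.} The main content, and where I expect the real work to be, is to check that $\chi$ respects the product of generators on distinct edges. If two edges of $\Gamma$ share no vertex, both algebras behave as a plain tensor product and there is nothing to verify. If two edges $e_1,e_2$ meet at a common vertex $v$, the product in $\mathcal A^*_\Gamma$ is deformed by a specific insertion of the universal $R$-matrix of $D(H)$, with the insertion pattern dictated by the cyclic order of the edge ends at $v$; on the Kitaev side, the triangle operators attached to $(e_1,\pm)$ and $(e_2,\pm)$ cease to commute because they act nontrivially on the site of $v$ shared in the definitions of the paths $p_{e_i,\pm}$. The plan is to reduce to a finite case analysis over the four edge-end types $(e_i,\pm)$, compare the $R$-matrix deformation in $D(H)$ (written through a pair of dual bases of $H$ and $H^*$) with the commutation relations of $L$- and $T$-operators across sites, and conclude by linearity.

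\textbf{Main obstacle.} The hard part is precisely this case analysis at a shared vertex: one must produce the universal $R$-matrix of $D(H)$ from the commutators of triangle operators in the correct cyclic order. The regularity hypothesis on $\Gamma$ will be essential here, as it rules out degenerate configurations (such as an edge closing onto a single vertex from both sides within one site) in which the path combinatorics of $p_{e,\pm}$ and the cyclic ordering at the vertex could conflict. Once the match is established at a single vertex, globality follows from the observation that distinct vertices contribute independently to both deformations, so $\chi$ is an algebra homomorphism, hence an isomorphism.
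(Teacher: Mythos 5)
There is a genuine gap, and it sits at the very first step: the map you define is not the one induced by the holonomies of the paths $p_{e,\pm}$, and the difference is exactly what makes the theorem true. In the paper the path $p_{e,+}$ is not a short path hugging the edge $e$: it starts at the cilium of $\st(e)$, winds around $\st(e)$ past all edge ends of lower order, crosses the rectangle of $e$, and winds around $\ta(e)$ back to its cilium (and $p_{e,-}$ is the other diagonal of the rectangle, with the same holonomy). Consequently $\mathrm{Hol}_{p_{e,\pm}}(y\otimes\gamma)$ has nontrivial components in the copies of $\mathcal H(H)$ attached to \emph{all} edges of lower order at $\st(e)$ and $\ta(e)$, not just the copy attached to $e$ (this is the content of Lemma \ref{lem:holl}, where the coproduct of $y$ is spread over those edges via the maps $\phi_1,\xi_1$). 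Your edge-local prescription $(h\otimes\alpha)_e\mapsto L^h_{e,-}T^\alpha_{e,+}$ lands in the single tensor factor $\mathcal H(H)_e$, and triangle operators attached to distinct edges of $\Gamma$ always commute (Lemma \ref{lem:kitoprel}); so your map cannot reproduce the $R$-matrix--deformed exchange relations of $\mathcal A^*_\Gamma$ for two edges sharing a vertex, and your assertion that the operators ``cease to commute because they act nontrivially on the site of $v$'' is false for the map as you define it --- it is true only for the genuine holonomies, precisely because of the vertex-winding subpaths. Relatedly, the single-edge algebra in $\mathcal A^*_\Gamma$ is not ``$D(H)^*$ with its intrinsic algebra structure'' but its $R$-matrix twist $(\beta)_e\cdot(\alpha)_e=\langle\alpha_{(1)}\otimes\beta_{(1)},R\rangle(\alpha_{(2)}\beta_{(2)})_e$; the intrinsic $D(H)^*$ is a Hopf algebra (hence has a one-dimensional representation) and is never isomorphic to the matrix algebra $\mathcal H(H)^{op}$ when $\dim H>1$.

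The second gap is injectivity. Once $\chi$ is defined correctly via the long paths, it no longer ``sends a basis to a basis'' in any obvious way, and bijectivity is a separate theorem (Theorem \ref{th:chiiso}), proved by factoring $\chi$ through the vertex-neighbourhood maps and exhibiting a left inverse; the argument genuinely uses the regularity condition that every face of $\Gamma$ contains exactly one cilium, since the obstruction to injectivity is precisely a closed cycle of maximally-right turns avoiding all cilia, i.e.\ a cilium-free face. Your appeal to regularity only to ``rule out degenerate configurations'' in the vertex case analysis misses that it is also the hypothesis that saves injectivity. To repair the proposal you would need to (i) replace the generator assignment by the actual holonomy formula of Lemma \ref{lem:holl}, (ii) redo the shared-vertex case analysis with these non-local operators (this is where the $R$-matrix of $D(H)$ actually emerges, as in the proof of Theorem \ref{th:hdcomb}), and (iii) supply a separate injectivity argument.
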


$\bullet$ {\bf Gauge symmetries:} The second layer of correspondence concerns the gauge symmetries of the two models.  Gauge transformations in the Hopf algebra gauge theory  are obtained by associating a copy of the Drinfeld double $D(H)$ to each vertex of $\Gamma$ and are given by the Hopf algebra $D(H)^{\oo V}$.  The algebra $\mathcal A^*_\Gamma$ of functions is a module algebra over this Hopf algebra. Consequently, the submodule of invariants  is a subalgebra  $\mathcal A^*_{\Gamma\, inv}\subset \mathcal A^*_\Gamma$, the subalgebra of gauge invariant {\em observables}.

These  gauge symmetries correspond to the symmetries associated with the {\em vertex}  and {\em face operators}  in the Kitaev models. These operators are given as the holonomies of paths in $\gammad$ that go clockwise around the vertices  and faces  of $\Gamma$. For each site of $\Gamma$,  they  define a faithful representation of the Drinfeld double $D(H)$.
Our second main result in Theorem \ref{lem:actedge} states that these representations  give the  algebra of triangle operators  the structure of a module algebra over the Hopf algebra $D(H)^{\oo V}$. Moreover, we find  that the algebra isomorphism that relates it to the algebra of functions in a Hopf algebra gauge theory is compatible with these gauge symmetries:

\begin{theorem*} For each regular ribbon graph $\Gamma$, the vertex and face operators equip the algebra $\mathcal H(H)^{op\oo E}$ of triangle operators with the structure of a $D(H)^{\oo V}$-right module algebra.  The algebra isomorphism $\chi: \mathcal A^*_\Gamma\to \mathcal H(H)^{op\oo E}$ is a module morphism 
and induces an algebra isomorphism  between the subalgebras of invariants $\mathcal A^*_{\Gamma\,inv}\subset \mathcal A^*_\Gamma$ and  $\mathcal H(H)^{op\oo E}_{inv}\subset \mathcal H(H)^{op\oo E}$.
\end{theorem*}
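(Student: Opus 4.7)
My strategy is to first construct the $D(H)^{\otimes V}$-right module algebra structure on $\mathcal H(H)^{op\otimes E}$ directly from the vertex and face operators, then verify that the algebra isomorphism $\chi$ from the preceding theorem intertwines this structure with the gauge action on $\mathcal A^*_\Gamma$, and finally deduce the restriction to the subalgebras of invariants. For the module structure, at each vertex $v$ equipped with its chosen site I would package the vertex operators $A^h_v$ for $h\in H$ and the face operators $B^\alpha_v$ for $\alpha\in H^*$ into a single representation $\rho_v : D(H)\to \mathrm{End}(H^{\otimes E})$. Both families admit a uniform description as holonomies of small closed loops in $\gammad$ (around $v$ clockwise, respectively around an adjacent face), followed by application of an element of $H$ or $H^*$ via the regular action on the edge copies of $H$. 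The cross relations between $A^h_v$ and $B^\alpha_v$, which are the standard Kitaev commutation rules, then reproduce the defining relations of $D(H)$. Since operators associated to distinct sites act on triangle data in disjoint cyclic sectors, they commute, so the $\rho_v$ assemble into an algebra map $\rho : D(H)^{\otimes V}\to \mathrm{End}(H^{\otimes E})$.

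To promote $\rho$ to a module algebra structure, I would verify the defining identity $g\cdot(TT') = \sum (g_{(1)}\cdot T)(g_{(2)}\cdot T')$ on the generators $L^h_{e,\pm}$ and $T^\alpha_{e,\pm}$. Since every generator is supported on a single edge, the verification splits according to the number of edges among $e,e'$ incident to $v$: zero is trivial; one follows from the module algebra property of the regular $H$- and $H^*$-actions on the Heisenberg double at a single edge; two is the main obstacle. In this last case the crossing of two triangle operators meeting at $v$ forces a reordering in $\mathcal H(H)^{op\otimes E}$, and compatibility with the coproduct $\Delta_{D(H)}$ hinges on the universal $R$-matrix twist in $D(H)$ absorbing this reordering. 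The key lemma to isolate is that the $R$-matrix factor appearing in $\Delta_{D(H)}$ matches precisely the $R$-matrix used to deform the algebra structure on $\mathcal A^*_\Gamma$ at the shared vertex, which is the same data that entered into the construction of $\chi$.

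For the intertwining property of $\chi$, since $\chi$ is determined by its values on the holonomies of $p_{e,\pm}$, compatibility reduces to matching the two $D(H)^{\otimes V}$-actions on these generators. The gauge action in the combinatorial formalism \cite{MW} conjugates edge holonomies by gauge elements at the source and target, with $R$-matrix corrections dictated by the cyclic ordering of incident edges at each vertex. On the Kitaev side the vertex and face operators act on the source and target edge copies of $H$ precisely by the corresponding regular actions, and the $R$-matrix corrections appear through the holonomies of $p_{e,\pm}$ themselves, which traverse adjacent edges at the relevant sites. A direct computation on these generators shows the two actions coincide, and the module algebra axioms on both sides then extend the compatibility to all of $\mathcal A^*_\Gamma$. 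The statement on invariants is then formal: a module algebra isomorphism restricts to an algebra isomorphism between the subalgebras of invariants, which are themselves subalgebras of the ambient algebras by the module algebra axiom applied via the counit.
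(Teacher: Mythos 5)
Your overall architecture --- assemble the vertex and face operators at each site into a representation of $D(H)$, take the tensor product over vertices, establish the module algebra axiom, verify the intertwining of $\chi$ on the generators $\mathrm{Hol}_{p_{e,\pm}}$, and deduce the statement on invariants formally --- is essentially the paper's route (Lemma \ref{lem:helpvertface}, Theorem \ref{lem:hdmodule}, Theorem \ref{lem:actedge}). The final step on invariants is indeed formal. But there are two concrete soft spots. First, you never write down the action of $D(H)^{\oo V}$ on the operator algebra, and that is where the content of the first claim sits. The paper defines it as conjugation, $X\lhd(\delta\oo z)_v=\tau_v\bigl(S((\delta\oo z)_{(2)})\bigr)\cdot X\cdot\tau_v\bigl((\delta\oo z)_{(1)}\bigr)$, where $\tau_v:D(H)\to\mathcal H(H)^{\oo E}$ is the algebra homomorphism built from the face and vertex loop holonomies. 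Once this is made explicit, the module algebra axiom for $\mathcal H(H)^{op\oo E}$ is a one-line general fact, valid for conjugation along \emph{any} algebra homomorphism $\tau:K\to A$ (a short Sweedler computation using $m\circ(\id\oo S)\circ\Delta=\epsilon\,1$), with no case analysis on how many of the edges $e,e'$ meet $v$. Your ``main obstacle'' of two triangle operators crossing at $v$ therefore does not arise --- and the justification you offer for resolving it is not available: the coproduct of $D(H)$ in \eqref{eq:dmult} contains no $R$-matrix factor, so there is nothing in $\Delta_{D(H)}$ to ``absorb the reordering.'' The $R$-matrix enters through the deformed multiplication of $\mathcal A^*_\Gamma$ and the $A$--$B$ commutation relations, and that matching is the content of the already-given isomorphism $\chi$, not of the module algebra axiom. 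Relatedly, operators at distinct sites do not act on ``disjoint cyclic sectors'': adjacent vertices share edges, and their commutativity rests on the fact that left and right regular actions commute, i.e.\ on \eqref{eq:hdcommutelr}, not on disjoint support.

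Second, the intertwining of $\chi$ is correctly reduced to the generators, but it is heavier than ``a direct computation.'' Even the case $v\notin\{\st(e),\ta(e)\}$, where the action must be trivial, is not automatic: the face loop $p_{f(v)}$ can traverse edges of $\gammad$ lying over edges of $\Gamma$ that $p_{e,\pm}$ also uses, so one needs the Reidemeister-II-type commutation statement of Lemma \ref{lem:commhelp} and a decomposition of $p_{f(v)}$ into suitable subpaths to see that the relevant holonomies commute. For $v=\ta(e)$ and $v=\st(e)$ one must further split $p_{e,\pm}$ into the pieces $p_{t(e)\leq}$ and $p_{s(e)<}$ and track how the vertex and face loop holonomies move past each piece before the result can be compared with the module structure in Proposition \ref{prop:multrel}. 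None of this invalidates your plan, but it is where the actual work of Theorem \ref{lem:actedge} lives and should not be presented as routine.
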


$\bullet$ {\bf Curvature:} The third layer of correspondence between Kitaev models and Hopf algebra gauge theories concerns  {\em curvatures}, which are   the holonomies of the faces of $\Gamma$  and $\gammad$.  In the Hopf algebra gauge theory, the holonomies of the faces of $\Gamma$ give rise to an algebra morphism from the character algebra or the centre of $D(H)$ into the centre $Z(\mathcal A^*_{\Gamma\,inv})$  of the algebra  of gauge invariant observables. By taking the product of these holonomies over all faces of $\Gamma$ and inserting the Haar integral of $D(H)$, one obtains a projector on the {\em quantum moduli algebra} $\mathcal M_\Gamma$.

As the faces of the thickened graph $\gammad$ correspond to either faces, vertices or edges of $\Gamma$ and the holonomies of the latter are trivial, curvatures in Kitaev models are given by the vertex and face operators.  By taking the product of these holonomies for all vertices and faces of $\Gamma$ and inserting the Haar integrals of the Hopf algebras $H$ and $H^*$ one obtains Kitaev's {\em Hamiltonian} $\ham$, which is a projector from  $H^{\oo E}$  on the {\em protected space}.   

We show that  the Hamiltonian  defines a projector on a subalgebra $\mathcal H(H)^{\oo E}_{flat}\subset \mathcal H(H)^{\oo E}_{inv}$, which consists of those elements that satisfy the relation  $\ham\cdot X\cdot \ham=X$. This is  the subalgebra of operators that act on the {\em protected space}. 
We then prove that  for each site  of $\Gamma$ the algebra isomorphism $\chi$ sends the holonomy of the associated face in $\Gamma$ to the product of the associated vertex and face operator of the Kitaev model. 
This leads to our third main result in Theorem \ref{th:modth}:

\begin{theorem*} For each regular ribbon graph $\Gamma$, the  algebra isomorphism  $\chi: \mathcal A^*_\Gamma\to \mathcal H(H)^{op\oo E}$  induces an algebra isomorphism $\chi:\mathcal M_\Gamma\to \mathcal H(H)^{op\oo E}_{flat}$.
\end{theorem*}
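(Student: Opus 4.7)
The plan is to reduce the statement to the identification of two projectors. Since the preceding theorem establishes that $\chi$ is a module algebra isomorphism which restricts to an algebra isomorphism $\chi : \mathcal A^*_{\Gamma\,inv} \to \mathcal H(H)^{op\oo E}_{inv}$ on gauge invariants, and since both $\mathcal M_\Gamma$ and $\mathcal H(H)^{op\oo E}_{flat}$ are characterised as corner algebras $\{X : PXP = X\}$ of idempotents $P$ sitting inside these gauge-invariant subalgebras, the required restriction will be automatic once one shows that $\chi$ maps the combinatorial flatness projector $P$ whose image is $\mathcal M_\Gamma$ to the Kitaev Hamiltonian $\ham$.

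To do this I would work one face at a time. For each face $f$ of $\Gamma$ with distinguished site $s=(v,f)$, let $P_f \in \mathcal A^*_{\Gamma\,inv}$ be the local flatness projector obtained by evaluating the holonomy around $f$ on the Haar integral $\eta_{D(H)}$ of $D(H)$. The key input is the identification, announced in the introduction and established in the sections leading up to this theorem, that $\chi$ sends the holonomy of $f$ based at $s$ to the product $A^h_v B^\alpha_f$ of the associated vertex and face operators, with $(h,\alpha)$ ranging over $D(H)\cong H\otimes H^*$. Combined with the factorisation $\eta_{D(H)} = \eta_H \otimes \eta_{H^*}$ of the Haar integral of the Drinfeld double with respect to this vector space decomposition, this yields $\chi(P_f) = A^{\eta_H}_v B^{\eta_{H^*}}_f$, which is precisely the local vertex-and-face projector of the Kitaev Hamiltonian at the site $s$.

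Taking the product over all faces of $\Gamma$ then gives $\chi(P) = \prod_f \chi(P_f) = \prod_v A_v \prod_f B_f = \ham$, using the bijection between the faces of $\Gamma$ and the vertex--face pairs of $\gammad$ that the choice of sites provides. Pairwise commutativity of the factors on both sides, which is needed for the product to be unambiguous and for $\chi(P)$ to be idempotent, transports along $\chi$ from the centrality of the $P_f$ in $\mathcal A^*_{\Gamma\,inv}$ to the familiar commutativity of the $A_v$ and $B_f$ in Kitaev's model. The desired conclusion $\chi(\mathcal M_\Gamma) = \mathcal H(H)^{op\oo E}_{flat}$ then follows from the corner characterisation, because $PXP = X$ holds precisely when $\ham\,\chi(X)\,\ham = \chi(X)$.

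The main technical hurdle I anticipate is the site-based bookkeeping needed to identify $\chi$ of the face holonomy of $\Gamma$ with $A_v B_f$ for the \emph{correct} vertex--face pair, since the thickened graph $\gammad$ realises each face of $\Gamma$ through contributions from both a polygon (the vertex of $\Gamma$) and adjacent rectangles (the edges bounding the face), whose holonomy paths along the $p_{e,\pm}$ must be matched against the chosen cilia and cyclic orderings with consistent orientations. Once this identification is in place, however, the statement of the theorem is essentially a consequence of the factorisation of $\eta_{D(H)}$ combined with the algebraic structure already assembled in the previous theorems.
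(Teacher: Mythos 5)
Your proposal is correct and follows essentially the same route as the paper: restrict the already-established isomorphism $\chi:\mathcal A^*_{\Gamma\,inv}\to\mathcal H(H)^{op\oo E}_{inv}$, use the intertwining relation $\chi\circ P_{f(v)}=Q_v\circ\chi$ coming from the identification of the face holonomy of $\Gamma$ with the product of the vertex and face operators (together with the factorisation $\eta_{D(H)}=\eta\oo\ell$ of the Haar integral), and invoke the bijection between faces of $\Gamma$ and sites guaranteed by regularity to conclude $\chi\circ P_{flat}=Q_{flat}\circ\chi$. The "main technical hurdle" you identify is indeed where the real work lies, and it is carried out in the paper as a separate proposition preceding this theorem, so the theorem's own proof is exactly the short assembly you describe.
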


Both, the quantum moduli  algebra $\mathcal M_\Gamma$ and Kitaev's protected space were shown to be topological invariants \cite{Ki, BMCA, AGSI,AGSII,BR,MW}. They depend only on the homeomorphism class of the oriented surface obtained by gluing discs to the faces of $\Gamma$. As the same holds for the algebra $\mathcal H(H)^{\oo E}_{flat}$
this theorem establishes a precise equivalence between the topological invariants of the Kitaev model for $H$ and the Hopf algebra gauge theory for $D(H)$.

This equivalence  between Kitaev models and Hopf algebra gauge theory also has a  geometrical interpretation. The thickened ribbon graph $\gammad$ for the Kitaev model can be viewed as a `graph double' of the ribbon graph $\gamma$, because it combines the ribbon graph $\Gamma$ with its Poincar\'e dual $\bar\Gamma$. In  Kitaev models  Poincar\'e duality corresponds to Hopf algebra duality: The edges of $\gammad$ associated with  edges of $\Gamma$ carry the triangle operators $T^\alpha_{e,\pm}$  indexed by elements  $\alpha\in H^*$. The edges of $\gammad$ associated with  edges of $\bar \Gamma$ carry the triangle operators $L^h_{e,\pm}$ indexed by elements  $h\in H$. 
This allows one to view the Kitaev model for the Hopf algebra $H$  as a {\em factorisation} of the Hopf algebra gauge theory for the Drinfeld double $D(H)$, in which  gauge fields on $\Gamma$ with values in $D(H)$ are factorised  into  gauge fields on $\Gamma$  and on $\bar\Gamma$ with values in  $H\subset D(H)$ and  $H^*\subset D(H)$.  Similarly,  the $D(H)$-valued gauge transformations and  curvatures in the Hopf algebra gauge theory  factorise into gauge transformations and curvatures on $\Gamma$ and on $\bar\Gamma$ with values in $H\subset D(H)$ and $H^*\subset D(H)$. The former are associated with vertex and the latter with face operators. 

{\bf Structure  of the article:} In Section \ref{sec:background} we introduce the relevant notation, the background on Hopf algebras  and the graph theoretical background needed in the article. 
Section \ref{subsec:kit} contains a brief  summary of Kitaev models, and  Section \ref{sec:gtheory} summarises the  background on combinatorial quantisation and  Hopf algebra gauge theory. In Section \ref{sec:holkitaev} we introduce a generalised notion of holonomy for Kitaev models, investigate its algebraic properties and show that it reduces to the ribbon operators for ribbon paths. In particular, this applies to the holonomies of  paths  around  the vertices and faces of $\Gamma$, which define the vertex and face operators of the Kitaev model.

In Section \ref{subsec:gsymmflat} we show that Kitaev models exhibit the mathematical structures of a Hopf algebra gauge theory. We prove that  the vertex and face operators give the algebra of triangle operators the structure of a right module algebra  over a Hopf algebra of gauge transformations and investigate its subalgebra of invariants. We show that the Hamiltonian of the Kitaev model arises from the curvatures of the faces of $\gammad$ and projects on a subalgebra  of operators  on the protected space.

Sections \ref{sec:kithopf} and \ref{sec:qmod} contain  the core results of the article. In Section \ref{sec:kithopf} we prove that  under certain assumptions on $\Gamma$ the holonomies of the paths $p_{e,\pm}$ in $\gammad$ induce an algebra isomorphism between the algebra of functions of a Hopf algebra gauge theory and Kitaev's triangle operator algebra.   In Section \ref{sec:qmod} we show that this algebra isomorphism is a morphism of module algebras  and hence induces an isomorphism between their subalgebras of invariants. We then establish that this algebra isomorphism  relates the curvatures of the two models and prove that it induces an isomorphism between the quantum moduli algebra and  the algebra of operators  acting on the protected space.

\section{Background}
\label{sec:background}

\subsection{Notations and conventions}
\label{subsec:conventions}

Throughout the article $\mathbb F$ is a field of characteristic zero.  
For an algebra $A$  and $n\in\mathbb N$ we denote by  $A^{\oo n}$ the $n$-fold tensor product of $A$ with itself, always taken over $\mathbb F$ unless specified otherwise. If $X$ is a finite set of cardinality $|X|$, we write $A^{\oo X}$ instead of $A^{\oo|X|}$ and denote by $\tau: \Pi_X A\to A^{\oo X}$ the canonical $|X|$-linear surjection. Identifying $\Pi_X A$ with the vector space of maps $f: X\to A$, we define
$a_x: X\to A$ for $x\in X$ and $a\in A$ as the map with $a_x(x)=a$ and $a_x(y)=1$ for $y\neq x$ and denote by $(a)_x=\tau(a_x)$ the associated element in $A^{\oo X}$.  This corresponds is the pure tensor in $A^{\oo X}$ that has entry $a$ in the copy of $A$ associated with $x\in X$ and $1$ in all other entries.

Similarly, for $a^1,...,a^n\in A$ and pairwise distinct $x_1,...,x_n\in X$, we  define $(a^1\oo...\oo a^n)_{x_1,...,x_n}$ as the 
 pure tensor in $A^{\oo X}$ that has entry $a^1$ in the copy of $A$ in $A^{\oo X}$ associated with $x_1$, $a^2$ in the copy associated with $x_2$ etc. It is given by $(a^1\oo...\oo a^n)_{x_1...x_n}:=\tau((a^1,...,a^n)_{x_1,...,x_n})$, where
$(a^1,...,a^n)_{x_1...x_n}:=(a^1)_{x_1}\cdot (a^2)_{x_2}\cdots (a^n)_{x_n}: X\to A$ and the product is taken with respect to the pointwise multiplication in $A$. 
We denote by $\iota_{x_1...x_k}: A^{\oo n}\to A^{\oo X}$, $a^1\oo...\oo a^n\mapsto (a^1\oo...\oo a^n)_{x_1,...,x_n}$
the associated inclusion maps.  For linear maps $f_1,...,f_n: A\to A$ we define the linear map
$(f_1\oo...\oo f_n)_{x_1...x_n}: A^{\oo X}\to A^{\oo X}$ by  $(f_1\oo ...\oo f_n)_{x_1,...,x_n}((a)_{x_i})=(f_i(a))_{x_i}$ for all $i\in\{1,...,n\}$ and $(f_1\oo ...\oo f_n)_{x_1,...,x_n}((a)_{y})=(a)_y$ for $y\notin\{x_1,...,x_n\}$.

For Hopf algebras  we use Sweedler notation  without summation signs. We write
$\Delta(h)=\low h 1\oo\low h 2$
for the comultiplication $\Delta: H\to H\oo H$ of a Hopf algebra $H$  and
 also use this notation for elements of $H\oo H$, e.~g.~$R=\low R 1\oo\low R 2$ for a universal $R$-matrix. We denote by
$H^{op}$ and $H^{cop}$, respectively, the Hopf algebra  with the opposite multiplication and comultiplication and by $H^*$ the dual Hopf algebra.
Unless specified otherwise, we use Latin letters for elements of $H$ and Greek letters for  elements of $H^*$. 
 The pairing between $H$ and $H^*$ is denoted 
 $\langle\;,\;\rangle: H^*\oo H\to\mathbb F$, $\alpha\oo h\mapsto \alpha(h)$, and 
the same notation is used for the induced pairing $\langle\,,\,\rangle: H^{*\oo n}\oo H^{\oo n}\to\mathbb F$.

\subsection{Hopf algebras}

In this subsection we summarise  basic facts about Hopf algebras. 
 Unless  specific citations are given,  the results can be found in textbooks on Hopf algebras, such as  \cite{K, majidbook, Mon, rbook}. 

\begin{theorem}[\cite{Dr}] \label{lem:ddouble2}  
Let $H$ be a finite-dimensional Hopf algebra. Then there exists a unique quasitriangular Hopf algebra structure on $H^*\oo H$ for which $H\cong1\oo H$ and $H^{*cop}\cong H^{*cop}\oo 1$ are Hopf subalgebras.  In terms of a basis $\{x_i\}$  of $H$ and  the dual basis $\{\alpha^i\}$ of $H^*$, it is given by
\begin{align}\label{eq:dmult}
&(\alpha\oo h)\cdot (\alpha'\oo h')=
\langle \alpha'_{(3)}, h_{(1)}\rangle \langle S^\inv(\alpha'_{(1)}),h_{(3)}\rangle\, \alpha\alpha'_{(2)}\oo h_{(2)}h' 
& &1=1_{H^*}\oo 1_H\\
&\Delta(\alpha\oo h)=\alpha_{(2)}\oo h_{(1)}\oo\alpha_{(1)}\oo h_{(2)} & &\epsilon(\alpha\oo h)=\epsilon(\alpha)\epsilon(h)\nonumber\\
&S(\alpha\oo h)
=\langle\low\alpha 1,\low h 3\rangle\langle S^\inv(\low\alpha 3),  \low h 1\rangle\; S(\low\alpha 2)\oo S(\low h 2) &  &R=\Sigma_i\,1\oo x_i\oo\alpha^i\oo 1
.\nonumber
\end{align}
This Hopf algebra  is called the  {\bf Drinfeld   double}  of $H$ and denoted $D(H)$.
\end{theorem}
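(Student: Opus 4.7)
The plan is to treat existence and uniqueness separately. For existence, one takes the formulas in \eqref{eq:dmult} as definitions and verifies, in the standard order, the coalgebra axioms, then associativity and the unit axiom, then the bialgebra compatibility, then the antipode axiom, and finally the three quasitriangularity axioms for $R$. For uniqueness, the Hopf-subalgebra requirements together with the axioms for a quasitriangular Hopf algebra will be used to force each structure map.

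The coalgebra part is immediate: $\Delta(\alpha\oo h)=\alpha_{(2)}\oo h_{(1)}\oo\alpha_{(1)}\oo h_{(2)}$ is the tensor product coproduct of $H^{*cop}$ and $H$ (composed with the canonical swap), so coassociativity and counitality reduce at once to those of $H^*$ and $H$. Associativity of the product is the first non-trivial step. Expanding the two triple products $((\alpha\oo h)(\alpha'\oo h'))(\alpha''\oo h'')$ and $(\alpha\oo h)((\alpha'\oo h')(\alpha''\oo h''))$ in Sweedler notation, both sides collapse to a single expression after invoking coassociativity in $H$ and $H^*$, multiplicativity of both coproducts, and the Hopf-pairing identities
\[
\langle\alpha\alpha',h\rangle=\langle\alpha,h_{(1)}\rangle\langle\alpha',h_{(2)}\rangle,\qquad \langle S^\inv(\alpha),h\rangle=\langle\alpha,S^\inv(h)\rangle.
\]
Multiplicativity of $\Delta$ is checked by an analogous expansion, and the antipode axiom $m(S\oo\id)\Delta=\eta\circ\epsilon=m(\id\oo S)\Delta$ follows once $S$ is written out through the two pairings and one applies the antipode axioms of $H$ and $H^*$.

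For the $R$-matrix $R=\sum_i 1\oo x_i\oo\alpha^i\oo 1$, the hexagons $(\Delta\oo\id)(R)=R_{13}R_{23}$ and $(\id\oo\Delta)(R)=R_{13}R_{12}$ reduce to the dual-basis identities
\[
\sum_i\Delta_H(x_i)\oo\alpha^i=\sum_{i,j}x_i\oo x_j\oo\alpha^i\alpha^j,\qquad \sum_i x_i\oo\Delta_{H^*}(\alpha^i)=\sum_{i,j}x_ix_j\oo\alpha^j\oo\alpha^i,
\]
which encode the duality between the coproduct of one Hopf algebra and the product of the other. The braiding axiom $R\,\Delta(d)=\Delta^{op}(d)\,R$ for $d\in D(H)$ only needs to be checked on the two generating subalgebras $1\oo H$ and $H^{*cop}\oo 1$; in each case it reduces via \eqref{eq:dmult} to a basis identity in $H\oo H^*$ that amounts to a rewriting of the evaluation pairing.

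For uniqueness, the coalgebra structure is forced by the subalgebra conditions together with the identification $(\alpha\oo 1)(1\oo h)=\alpha\oo h$ in the ambient vector space: applying the algebra morphism $\widetilde\Delta$ to this product recovers the tensor-product coproduct, and the antipode is then determined as the convolution inverse of the identity. The cross relation $(1\oo h)(\alpha\oo 1)$ is pinned down by combining the braiding axiom applied to $d=1\oo h$ with the support condition $R\in(H^{*cop}\oo H)\oo(H^{*cop}\oo H)$, itself forced by the requirement that $1\oo H$ and $H^{*cop}\oo 1$ generate $D(H)$; the first hexagon together with this support condition then forces $R$ to equal $\sum_i 1\oo x_i\oo\alpha^i\oo 1$. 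The main obstacle is the hexagon verification $\Delta^{op}(d)\,R=R\,\Delta(d)$: the product formula \eqref{eq:dmult} contains two pairings and a copy of $S^\inv$, and matching it against the tensor-product coproduct requires careful bookkeeping of the four tensor legs and of the components of $\Delta^{(2)}(\alpha)$ or $\Delta^{(2)}(h)$ appearing after expansion. Once this identity is established, all remaining checks are routine.
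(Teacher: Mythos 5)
The paper does not prove this statement at all: it is quoted verbatim as a background result with a citation to Drinfeld, so there is no proof of record to compare yours against. Judged on its own, your existence half is the standard textbook verification and the outline is sound: the coalgebra structure is the (interleaved) tensor product of $H^{*cop}$ and $H$, associativity and the bialgebra/antipode axioms reduce to Sweedler bookkeeping with the pairing identities you list, the two hexagon axioms for $R=\Sigma_i\,1\oo x_i\oo\alpha^i\oo 1$ reduce to the dual-basis identities (these are exactly the identities \eqref{eq:basexx} the paper records later for other purposes --- check your index order in the second one against the $H^{*cop}$ convention), and checking $R\,\Delta(d)=\Delta^{op}(d)\,R$ on the two generating subalgebras suffices because $d\mapsto R\,\Delta(d)$ and $d\mapsto\Delta^{op}(d)\,R$ intertwine multiplicatively.

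The uniqueness half is where your argument is genuinely incomplete. First, you take the factorisation $(\alpha\oo 1)\cdot(1\oo h)=\alpha\oo h$ as given "in the ambient vector space", but the theorem's hypotheses only say that $1\oo H$ and $H^{*cop}\oo 1$ are Hopf subalgebras; that the multiplication map restricted to $(H^{*cop}\oo 1)\oo(1\oo H)$ is the identity on $H^*\oo H$ is an additional normalisation that must either be derived or added to the hypotheses (most careful formulations, e.g.\ Radford's, state it explicitly). Second, your "support condition $R\in(H^{*cop}\oo H)\oo(H^{*cop}\oo H)$" is vacuous as written (that is all of $D(H)\oo D(H)$); what you need is $R\in(1\oo H)\oo(H^{*cop}\oo 1)$, and the assertion that this is "forced by the requirement that the two subalgebras generate $D(H)$" is not an argument --- a Hopf algebra can carry several inequivalent quasitriangular structures, so pinning down $R$ uniquely is precisely the delicate point and needs the combination of both hexagons with the subalgebra conditions spelled out. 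Until those two points are supplied, the uniqueness claim is not established; the existence claim, by contrast, only needs the routine computations you describe to be carried out.
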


\begin{remark} If $\{x_i\}$  a basis of $H$ and $\{\alpha^i\}$ the dual  basis of $H^*$, 
the dual Hopf algebra $D(H)^*$ of the Drinfeld double $D(H)$ is the vector space $H\oo H^*$ with the following Hopf algebra structure
\begin{align}\label{eq:ddualmult}
&(y\oo\gamma)\cdot (z\oo\delta)=zy\oo\gamma\delta & &1=1_H\oo 1_{H^*}\\
&\Delta(y\oo\gamma)=\Sigma_{i,j}\;\low y 1\oo \alpha^i\low\gamma 1\alpha^j\oo S(x_j)\low y 2x_i\oo\low \gamma 2 & &\epsilon(y\oo\gamma)=\epsilon(y)\epsilon(\gamma)\nonumber\\
&S(y\oo\gamma)= \Sigma_{i,j} \;x_iS^\inv(y)x_j \oo S(\alpha^j)S(\gamma)\alpha^i.\nonumber
\end{align}
\end{remark}

In this article we mostly restrict attention to finite-dimensional semisimple Hopf algebras $H$, since these are the Hopf algebras used in Kitaev models. 
Recall that a finite-dimensional Hopf algebra $H$ over a field $\mathbb F$ of characteristic zero is semisimple if and only if $H^*$ is semisimple if and only if $S^2=\id$ \cite{LR} if and only if $D(H)$ is semisimple \cite{R}.  In this case, $D(H)$ is  a ribbon Hopf algebra with ribbon element given by the inverse of the  Drinfeld element \cite{sgel}.  Note also that any finite-dimensional semisimple Hopf algebra $H$ is equipped with a (normalised)  Haar  integral and that   $\eta\oo \ell$ is a Haar integral for $D(H)$ if and only if  $\eta\in H^*$ and $\ell\in H$ are  Haar integrals of $H^*$ and $H$. 

\begin{definition} Let $H$ be a finite-dimensional Hopf algebra. A  {\bf Haar integral} is an element $\ell\in H$ with $h\cdot\ell=\ell\cdot h=\epsilon(h)\,\ell$ for all $h\in H$ and $\epsilon(\ell)=1$.
\end{definition}

\begin{remark} \label{rem:haar}
The Haar integral of a finite-dimensional semisimple Hopf algebra is unique. If $\ell\in H$ is a Haar integral, then  $S(\ell)=\ell$, the element  $\Delta^{(n)}(\ell)$ is invariant under cyclic permutations for all $n\in\mathbb N$ and  $e=\low\ell 1\oo S(\low \ell 2)$ is a separability idempotent for $H$. Moreover, for all $\alpha\in H^*$ one has $\langle \low\alpha 1,\ell\rangle\, \low\alpha 2=\langle\low\alpha 2,\ell\rangle\, \low\alpha 1=\langle \alpha,\ell\rangle \, 1$.
\end{remark}

A {\bf module} over a Hopf algebra $H$ is a module over the algebra $H$. Important examples that arise in the Kitaev models are the following.

\begin{definition}\label{def:regacts}

Let $H$ be a  Hopf algebra and $H^*$ its dual. \\[-3ex]
\begin{compactenum}

\item The {\bf left regular action}  $\rhd: H\oo H\to H$, $h\rhd k= h\cdot k$ 
 and
 the {\bf right regular action}  $\lhd: H\oo H\to H$, $k\lhd h=k\cdot h$ 
give $H$ the structure of an $H$-left and $H$-right module.\\[-2ex]

\item The {\bf left regular action}  $\rhd^*: H\oo H^*\to H^*$, $h\rhd\alpha=\langle\alpha_{(2)}, h\rangle \,\alpha_{(1)}$ 
 and
 the {\bf right regular action}  $\lhd^*: H^*\oo H\to H^*$, $\alpha\lhd h= \langle \alpha_{(1)}, h\rangle\, \alpha_2$ 
give $H^*$ the structure of an $H$-left and an $H$-right module.\\[-2ex]

\item The {\bf left adjoint action} $\rhd_{ad}: H\oo H\to H$, $h\rhd k= h_{(1)}\cdot k\cdot S(h_{(2)})$ and
the {\bf right adjoint action}  $\lhd_{ad}: H\oo H\to H$, $ k\lhd h= S^\inv(h_{(1)})\cdot k\cdot h_{(2)}$ 
give $H$ the structure of an $H$-left and an $H$-right module.\\[-2ex]

\item The {\bf left coadjoint action} $\rhd^*_{ad}: H\oo H^*\to H$, $\alpha\mapsto \langle S^\inv(\low\alpha 1)\low \alpha 3, h\rangle\, \low\alpha 2$ and the
 {\bf right coadjoint action} $\lhd_{ad}^*:H^*\oo H\to H^*$, $\alpha\mapsto \langle \low\alpha 1 S(\low\alpha 3), h\rangle\, \low\alpha 2$ give $H^*$ the structure of a $H$-left and an $H$-right module.
\end{compactenum}
\end{definition}

An {\bf invariant} of an $H$-module $M$  is  an element
 $m\in M$ with $h\rhd m=\epsilon(h)\, m$ for all $h\in H$. The invariants  of $M$ form a linear subspace $M_{inv}\subset M$. 

\begin{lemma}\label{lem:haarproj} Let $M$ with $\rhd: H\oo M\to M$ be a left  module over a Hopf  algebra $H$.  If $H$ is finite-dimensional semisimple with Haar integral $\ell \in H$, then $\Pi_M: M\to M$, $m\mapsto \ell\rhd m$
is a projector on  
$
M_{inv}=\{m\in M: h\rhd m=\epsilon(h)\,m\;\forall h\in H\}.
$
\end{lemma}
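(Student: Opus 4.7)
The plan is to verify the three standard properties that characterize a projector onto a prescribed subspace: $\Pi_M$ maps into $M_{inv}$, $\Pi_M$ acts as the identity on $M_{inv}$, and these two together force $\Pi_M^2=\Pi_M$. All three will follow directly from the defining properties of the Haar integral recalled just before the lemma, namely $h\cdot\ell=\ell\cdot h=\epsilon(h)\,\ell$ for all $h\in H$ and $\epsilon(\ell)=1$, together with the axioms of a left module.

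First, I would show that $\Pi_M(M)\subseteq M_{inv}$. For any $m\in M$ and $h\in H$, the module axiom together with $h\cdot\ell=\epsilon(h)\,\ell$ gives
\[
h\rhd \Pi_M(m)=h\rhd(\ell\rhd m)=(h\cdot\ell)\rhd m=\epsilon(h)\,(\ell\rhd m)=\epsilon(h)\,\Pi_M(m),
\]
so $\Pi_M(m)$ is invariant. Next, I would show $\Pi_M\big|_{M_{inv}}=\id_{M_{inv}}$: if $m\in M_{inv}$ then by definition $\ell\rhd m=\epsilon(\ell)\,m$, which using $\epsilon(\ell)=1$ gives $\Pi_M(m)=m$. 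Combining these two facts immediately yields $\Pi_M^2=\Pi_M$, since for any $m\in M$ the element $\Pi_M(m)$ lies in $M_{inv}$ and is therefore fixed by $\Pi_M$.

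There is no real obstacle here; the lemma is a direct consequence of the defining identities of the Haar integral, and the only thing to be careful about is invoking the module axiom $(hh')\rhd m=h\rhd(h'\rhd m)$ in the correct direction when computing $h\rhd(\ell\rhd m)$. If one wished to be pedantic, one could alternatively verify $\Pi_M^2=\Pi_M$ by computing $\ell\cdot\ell=\epsilon(\ell)\,\ell=\ell$ directly from the Haar integral property with $h=\ell$, but the route above via the two characterizing properties of a projector seems cleaner and simultaneously identifies the image with $M_{inv}$.
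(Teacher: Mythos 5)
Your proof is correct and complete: the two facts $h\rhd\Pi_M(m)=\epsilon(h)\,\Pi_M(m)$ (from $h\cdot\ell=\epsilon(h)\,\ell$) and $\Pi_M\vert_{M_{inv}}=\id$ (from $\epsilon(\ell)=1$) together pin down both the idempotency and the image. The paper states this lemma without proof, deferring to standard Hopf algebra references, and your argument is exactly the standard one those sources give.
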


Specific examples  required in the  following
are the invariants for the left and right coadjoint action of a Hopf algebra $H$ on its dual  $H^*$ from Definition \ref{def:regacts}. If $H$ is finite-dimensional semisimple, then the invariants of the two coincide and can be viewed as the Hopf algebra analogue of the character algebra of a finite group.

\begin{example}\label{ex:kad} Let $H$ be a finite-dimensional semisimple Hopf algebra with dual $H^*$. Then the  invariants of the left and right coadjoint action $\rhd^*_{ad}$ and  $\lhd^*_{ad}$  are given by the {\bf character algebra}
$C(H)=\{\alpha\in H^*:\Delta(\alpha)=\Delta^{op}(\alpha)\}$. The  map $\pi_{ad}: H^*\to H^*$, $\alpha\mapsto \alpha\lhd^*_{ad}\ell$ is a projector on $C(H)$.
\end{example}

Note that for {\em factorisable} Hopf algebras $H$ the Drinfeld map defines an algebra isomorphism between the character algebra $C(H)$ and centre $Z(H)$.
In particular, this applies to the case where $H$ is the  Drinfeld double of a finite-dimensional semisimple Hopf algebra.

If $M$ is not only a module  over a Hopf algebra $H$ but  also an associative algebra  such that the module structure is  compatible with the multiplication,
 then $M$ is called a {\em module algebra} over $H$. In other words,  a left (right) module algebra  over a  Hopf algebra $H$  is  an algebra object in the category $H$-Mod (Mod-$H$) of left (right) modules over $H$.

\begin{definition}\label{def:modalg} Let $H$ be a Hopf algebra over $\mathbb F$.

\begin{compactenum}
\item  An {\bf $H$-left module algebra}  is an associative, unital algebra $A$ over $\mathbb F$ with an $H$-left module structure  $\rhd: H\oo A\to A$, $h\oo a\mapsto h\rhd a$  such that  
$h\rhd (a\cdot a')=(h_{(1)}\rhd a)\cdot (h_{(2)}\rhd a')$ and $h\rhd 1_A=\epsilon(h)\, 1$ for all $h\in H$, $a,a'\in A$.\\[-2ex]
\item If $A,A'$ are $H$-left module  algebras, then  $f: A\to A'$ is called a {\bf morphism of $H$-left module algebras} if it is both, an algebra morphism  and a morphism of $H$-left modules.
\end{compactenum}
\end{definition}

An {\bf $H$-right module algebra} is  an  $H^{op}$-left module algebra, and  
an {\bf $(H,K)$-bimodule algebra} is a $(H\oo K^{op})$-left module algebra. Morphisms of $H$-right and $(H,K)$-bimodule algebras are defined correspondingly. 
Important examples arise from the left and right adjoint action of $H$ on itself and the  left and right action of $H$ on  $H^*$ in Definition \ref{def:regacts}.

\begin{example}\label{ex:regacts2}
Let $H$ be a  Hopf algebra with dual $H^*$. Then the left and right  regular action of $H$ on $H^*$ give $H^*$ the structure of an $(H,H)$-bimodule algebra. The  left and right adjoint action of $H$ on itself 
give $H$ the structure of an $H$-left and of an $H$-right module algebra.\end{example}

An important motivation for the role of module algebras in Hopf algebra gauge theory is  the fact that their invariants form  a  subalgebra. 

\begin{lemma}\label{lem:project} Let $H$ be a  Hopf algebra and  $A$ an $H$- left module algebra. Then the linear subspace $A_{inv}=\{a\in A: h\rhd a=\epsilon(h)\,\forall h\in H\}$ is a subalgebra of $A$. 
\end{lemma}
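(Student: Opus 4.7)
The plan is to verify directly that $A_{inv}$ contains the unit and is closed under multiplication, using the two defining properties of an $H$-left module algebra together with the counit axiom for $H$.

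First I would check that $1_A\in A_{inv}$. This is immediate from the module algebra axiom $h\rhd 1_A=\epsilon(h)\, 1_A$ in Definition \ref{def:modalg}, which is exactly the invariance condition. Second, I would take arbitrary $a,a'\in A_{inv}$ and compute, for any $h\in H$,
\begin{align*}
h\rhd (a\cdot a') = (\low h 1\rhd a)\cdot (\low h 2\rhd a') = \epsilon(\low h 1)\epsilon(\low h 2)\, a\cdot a',
\end{align*}
where the first equality uses the compatibility of the module structure with the algebra product and the second uses that $a,a'$ are invariants.

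To finish, I would apply the counit axiom for $H$ in the form $\epsilon(\low h 1)\low h 2=h$, from which $\epsilon(\low h 1)\epsilon(\low h 2)=\epsilon(\epsilon(\low h 1)\low h 2)=\epsilon(h)$. Hence $h\rhd (a\cdot a')=\epsilon(h)\, a\cdot a'$, so $a\cdot a'\in A_{inv}$. Together with $1_A\in A_{inv}$ and the obvious linearity of the invariance condition, this shows that $A_{inv}$ is a unital subalgebra of $A$.

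There is no real obstacle here: the argument is a direct Sweedler notation computation and uses nothing beyond the axioms of a module algebra and the counit axiom. The only minor point to be careful about is to invoke the multiplicativity of the module action on both $a$ and $a'$ before using the counit axiom, rather than trying to reduce $\epsilon(\low h 1)\epsilon(\low h 2)$ to $\epsilon(h)$ via the coalgebra structure alone, which would already suffice but deserves to be spelled out once.
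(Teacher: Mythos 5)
Your proof is correct and is precisely the standard argument: the paper states Lemma \ref{lem:project} without proof (it is one of the textbook facts covered by the blanket citation at the start of the Hopf algebra section), and your computation $h\rhd(a\cdot a')=(\low h 1\rhd a)\cdot(\low h 2\rhd a')=\epsilon(\low h 1)\epsilon(\low h 2)\,a\cdot a'=\epsilon(h)\,a\cdot a'$ together with $h\rhd 1_A=\epsilon(h)1_A$ is exactly the intended justification.
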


Another important feature of a module algebra $A$ over a  Hopf algebra $H$ is that it induces an algebra structure on the vector space $A\oo H$,  the so-called smash or cross product, which  can be viewed as the Hopf algebra analogue of a semidirect product of groups. 

\begin{definition} \label{def:hdouble} Let $H$ be a 
Hopf algebra, $A$ an $H$-left module algebra and $B$ an $H$-right module algebra.
The  {\bf left cross product}  or {\bf left smash product} $A\#_L H$   is the algebra   $ (A\oo H,\cdot)$ with  
\begin{align}
\label{eq:cross_lefthd}
&(a\oo h)\cdot (a'\oo h')=a(h_{(1)}\rhd a')\oo h_{(2)}h'.
\end{align}
The {\bf right cross product} or {\bf right smash product}  $H\#_R B$  
is the algebra  $(H\oo B,\cdot)$ 
with
\begin{align}\label{eq:cross_rhd}
&(h\oo b)\cdot (h'\oo b')=h\low {h'} 1\oo (b\lhd \low {h'}2)b'.
\end{align}
\end{definition}

\begin{example} The left and right {\bf Heisenberg double}  of $H$  are the cross products $\mathcal H_L(H)=H^*\#_LH$ and  $\mathcal H_R(H)=H \#_R H^*$  for  the left and right regular action of $H$ on $H^*$: 
\begin{align}
&\mathcal H_L(H): \qquad(\alpha\oo h)\cdot (\alpha'\oo h')=\langle\alpha'_{(2)}, h_{(1)}\rangle\; \alpha \alpha'_{(1)}\oo h_{(2)} h'\\
&\mathcal H_R(H): \qquad( h\oo \alpha)\cdot (h'\oo \alpha')=\langle \alpha_{(1)}, h'_{(2)}\rangle\; hh'_{(1)}\oo \alpha_{(2)}\alpha'.\label{eq:hd2}
\end{align}
\end{example}

In the following we focus on the Heisenberg doubles for the right regular action of $H$ on its dual and denote it $\mathcal H(H)=\mathcal H_R(H)$. Some structural results on  the Heisenberg double $\mathcal H(H)$ that are required throughout the article are the following.

\begin{lemma} \label{lem:antipodehd}Let $H$ be a finite-dimensional semisimple Hopf algebra with dual $H^*$ and Heisenberg double $\mathcal H(H)$.  Denote by $S_D: H\oo H^*\to H\oo H^*$ the antipode  and 
 by $\Delta_D: H\oo H^*\to H\oo H^*\oo H\oo H^*$ the comultiplication of $D(H)^*$ from \eqref{eq:ddualmult} and by $S$ the antipodes of $H$ and $H^*$. Then: 
\begin{compactenum}
\item   $S_D: \mathcal H(H)\to\mathcal H(H)$  is an algebra automorphism, and for all $y,z\in H$ and $\gamma,\delta\in H^*$ one has
\begin{align}\label{eq:hdcommutelr}
S_D(y\oo 1)\cdot (z\oo 1)=(z\oo 1)\cdot S_D(y\oo 1)\qquad S_D(1\oo\gamma)\cdot (1\oo\delta)=(1\oo\delta)\cdot S_D(1\oo\gamma).
\end{align}
\item The following linear maps  are injective algebra morphisms from $\mathcal H(H)$ to $\mathcal H(H)\oo \mathcal H(H)$
\begin{align*}
&\phi_1=(\id\oo\epsilon)\oo (\id\oo\id))\circ\Delta_D:  y\oo\gamma\mapsto (\low y 1\oo 1)\oo (\low y 2\oo\gamma)\\
&\phi_2=((\id\oo\id)\oo(\epsilon\oo\id))\circ\Delta_D: y\oo\gamma\mapsto (y\oo\low\gamma 1)\oo (1\oo\low\gamma 2)\\
&\xi_1=((\id\oo\id)\oo (S\oo\epsilon))\circ\Delta_D:   y\oo\gamma\mapsto \Sigma_{i,j} \,(\low y 1\oo\alpha^i\gamma \alpha^j)\oo (S(x_i)S(\low y 2)x_j\oo 1)\\
&\xi_2=((\epsilon\oo S)\oo(\id\oo\id))\circ\Delta_D:   y\oo\gamma\mapsto \Sigma_{i,j}\,(1\oo \alpha^iS(\low\gamma 1)S(\alpha^j))\oo (x_iy x_j\oo\low\gamma 2).
\end{align*}
They satisfy the relations
\begin{align}\label{eq:hdmapids}
&(\phi_1\oo \id)\circ \phi_1=(\id\oo\phi_1)\circ \phi_1 & &(\phi_2\oo \id)\circ \phi_2=(\id\oo\phi_2)\circ \phi_2\\
&(\id\oo\phi_2)\circ \phi_1=(\phi_1\oo\id)\circ\phi_2 & &(\id\oo \xi_1)\circ\xi_2=(\xi_2\oo \id)\circ \xi_1\nonumber.
\end{align}
\end{compactenum}
\end{lemma}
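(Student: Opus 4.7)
My plan is to prove both parts by direct Sweedler calculation, using the explicit formulas \eqref{eq:ddualmult} for $\Delta_D$ and $S_D$, the Heisenberg product \eqref{eq:hd2}, the pairing identities $\sum_i \alpha^i(h)\,x_i = h$ and $\sum_i \alpha(x_i)\,\alpha^i = \alpha$, and the semisimplicity consequence $S^2 = \id$, which in dualised form gives $\sum_j S(\alpha^j)\oo S(x_j) = \sum_j \alpha^j \oo x_j$.

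\textbf{Part (1).} I will first verify the commutation relations \eqref{eq:hdcommutelr}. Expanding $S_D(y \oo 1) = \sum_{i,j} x_i S^\inv(y) x_j \oo S(\alpha^j)\alpha^i$, the product $(z \oo 1)\cdot S_D(y \oo 1)$ collapses immediately to $\sum_{i,j} z x_i S^\inv(y) x_j \oo S(\alpha^j)\alpha^i$ since the $H^*$-leg of the first factor is trivial. For $S_D(y \oo 1)\cdot (z \oo 1)$ one must expand $\Delta(S(\alpha^j)\alpha^i) = S(\low{\alpha^j}{2})\low{\alpha^i}{1} \oo S(\low{\alpha^j}{1})\low{\alpha^i}{2}$, sum the basis indices against $z_{(1)}, z_{(2)}$, and apply $S^2 = \id$ to bring the expression to the same form. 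The second commutation relation is proved by an entirely symmetric calculation with the roles of $H$ and $H^*$ interchanged. To prove that $S_D\colon \mathcal H(H)\to\mathcal H(H)$ is an algebra automorphism, I will compute both $S_D((y,\gamma)(z,\delta))$, starting from the Heisenberg product $(y,\gamma)(z,\delta) = \langle\low\gamma 1, z_{(2)}\rangle yz_{(1)} \oo \low\gamma 2 \delta$, and $S_D(y,\gamma) \cdot_{\mathcal H(H)} S_D(z,\delta)$. Both expand as four-fold sums over basis elements $(i,j,k,l)$; matching them requires the pairing identities, coassociativity of $\Delta_H$ and $\Delta_{H^*}$, and the dualised semisimplicity identity. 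Bijectivity is then immediate from $S_D^2 = \id$ in the semisimple case.

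\textbf{Part (2).} I will first confirm that the stated formulas for $\phi_1, \phi_2, \xi_1, \xi_2$ indeed arise from $\Delta_D$ with the indicated projections, using $\sum_i \epsilon(x_i)\alpha^i = 1_{H^*}$, $\sum_i \epsilon(\alpha^i) x_i = 1_H$, and for $\xi_1,\xi_2$ the dualised semisimplicity identity to collapse the inner sums. To check that $\phi_1$ is an algebra morphism $\mathcal H(H)\to\mathcal H(H)\oo\mathcal H(H)$, I will apply both $\phi_1((y,\gamma)(z,\delta))$ and $\phi_1(y,\gamma)\cdot\phi_1(z,\delta)$ to the Heisenberg product: using coassociativity of $\Delta_H$ and noting that the tensor slot $(y_{(1)}\oo 1)(z_{(1)}\oo 1)$ contributes no pairing, both sides reduce to $\langle\low\gamma 1, z_{(3)}\rangle (y_{(1)} z_{(1)} \oo 1)\oo(y_{(2)} z_{(2)}\oo \low\gamma 2\delta)$. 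The case of $\phi_2$ is symmetric, and $\xi_1, \xi_2$ follow the same blueprint with additional bookkeeping of the antipode factors and the sum over $\{x_i\},\{\alpha^i\}$. Injectivity in each case is immediate by applying a left-inverse projection, e.g.\ $\epsilon\oo\id$ on the appropriate slot of $\phi_1(y\oo\gamma) = (y_{(1)}\oo 1)\oo(y_{(2)}\oo\gamma)$ recovers $y\oo\gamma$. The relations \eqref{eq:hdmapids} involving $\phi_1, \phi_2$ reduce to coassociativity of $\Delta_H$ and $\Delta_{H^*}$ with the two legs decoupled, while the identity for $\xi_1, \xi_2$ reduces to coassociativity of $\Delta_D$ on $D(H)^*$.

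\textbf{Main obstacle.} The technically hardest step is the algebra automorphism property of $S_D$ on $\mathcal H(H)$: $S_D$ is the antipode of the \emph{differently multiplied} algebra $D(H)^*$, not of $\mathcal H(H)$, so this is a genuinely new statement that must be verified by direct computation. Coordinating the four-fold basis expansion in $S_D(y,\gamma)\cdot S_D(z,\delta)$ with the non-trivial pairing in the Heisenberg product is the main source of difficulty, and the semisimplicity identity $\sum_j S(\alpha^j)\oo S(x_j) = \sum_j \alpha^j\oo x_j$ is the essential tool that makes the terms reorganise correctly.
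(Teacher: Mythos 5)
Your proposal is correct and follows essentially the same route as the paper's proof, which likewise reduces everything to direct Sweedler computations using the multiplication \eqref{eq:hd2} of $\mathcal H(H)$, the formulas \eqref{eq:ddualmult} for $\Delta_D$ and $S_D$, the dual-basis identities, and the semisimplicity consequence $\Sigma_i\, x_i\oo S(\alpha^i)=\Sigma_i\, S(x_i)\oo\alpha^i$ (the paper's \eqref{eq:helpanti}), which you correctly single out as the key reorganising tool. Your identification of the automorphism property of $S_D$ on the differently-multiplied algebra $\mathcal H(H)$ as the genuinely non-formal step, and your reduction of the relations \eqref{eq:hdmapids} to coassociativity of $\Delta_D$, match the paper's (much terser) argument.
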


\begin{proof} 
The claims  follow by direct but lengthy computations from formula \eqref{eq:hd2} for the multiplication of the Heisenberg double, the formulas  for the antipode and comultiplication of $D(H)^*$ in \eqref{eq:ddualmult} and the identity $S^2=\id$.
Their proof also makes use of the following auxiliary identities. If
  $\{x_i\}$ is a basis of $H$ and $\{\alpha^i\}$  the dual basis of $H^*$ then
$h=\Sigma_i\,\langle \alpha^i,h\rangle x_i$ for  all $h\in H$  and $\beta=\Sigma_i \langle\beta, x_i\rangle\alpha^i$ for
all $\beta\in H^*$. This implies 
\begin{align}\label{eq:basexx}
&\Sigma_i \; \Delta(\alpha^i)\oo x_i=\Sigma_i \alpha^i_{(1)}\oo\alpha^i_{(2)}\oo x_i=\Sigma_{i,j} \;\alpha^i\oo\alpha^j\oo x_ix_j \\
&\Sigma_i\;\alpha^i\oo\Delta(x_i)=\Sigma_i \alpha^i\oo x_{i(1)}\oo x_{i(2)}=\Sigma_{ij} \;\alpha^i\alpha^j\oo x_i\oo x_j.\nonumber
\end{align}
If $H$  is semisimple, then $H^*$ is semisimple as well and the identity $S^2=\id$ for these two Hopf algebras together with  the expression for the universal $R$-matrix in \eqref{eq:dmult} imply
\begin{align}\label{eq:helpanti}
\Sigma_i \,x_i\oo S(\alpha^i)=\Sigma_i \,S(x_i)\oo\alpha^i\qquad \Sigma_{i} \,S(x_i)\oo S(\alpha^i)=\Sigma_i\,x_i\oo\alpha^i.
\end{align}
By combining these identities with \eqref{eq:basexx} one obtains 
\begin{align}\label{eq:helpanti2}
&\Sigma_{i,j}\, x_ix_j\oo S(\alpha^i)\alpha^j=\Sigma_{i,j}\, S(x_i)x_j\oo \alpha^i\alpha^j=\Sigma_{i,j}\, x_ix_j\oo \alpha^iS(\alpha^j)=\Sigma_{i,j}\, x_iS(x_j)\oo \alpha^i\alpha^j=1.
\end{align}
Inserting these identities into the formulas for the maps $S_D, \phi_1, \phi_2,\xi_1,\xi_2$ together with \eqref{eq:basexx} and \eqref{eq:helpanti},  \eqref{eq:ddualmult} and the identity $S^2=\id$ for the antipodes of $H$ and $H^*$ proves the claim.
\end{proof}

\subsection{Ribbon graphs}
\label{subsec:graph}

In the following, we consider finite directed graphs $\Gamma$. We denote by $V(\Gamma)$  and $E(\Gamma)$, respectively,  their sets of vertices and edges and omit the argument $\Gamma$ when this is unambiguous.  We denote by $\st(e)$ the {\bf starting vertex} and  by $\ta(e)$ the {\bf target vertex} of an oriented edge $e$ and call $e$ 
 a {\bf loop} if  $\st(e)=\ta(e)$.  The reversed edge  is denoted $e^\inv$, and one has $\st(e^\inv)=\ta(e)$, $\ta(e^\inv)=\st(e)$.

\begin{definition}\label{def:vertex_nb} Let $\Gamma$ be a directed graph.
\begin{compactenum}

\item The {\bf vertex neighbourhood} $\Gamma_v$ of a vertex $v$ of $\Gamma$ is the directed graph obtained by subdividing each edge of $\Gamma$ and   deleting all edges from the resulting graph  that are not incident at $v$, as shown in Figure \ref{fig:cutting}. \\[-2ex]

\item For an edge $e$ of $\Gamma$ the associated edges  $s(e)\in\Gamma_{\st(e)}$ and $t(e)\in\Gamma_{\ta(e)}$ are called the {\bf starting end} and  {\bf target end} of $e$. 
They satisfy $t(e^\inv)=s(e)^\inv$ and $s(e^\inv)=t(e)^\inv$.
 \end{compactenum}
\end{definition}

\begin{figure}
\centering
\includegraphics[scale=0.35]{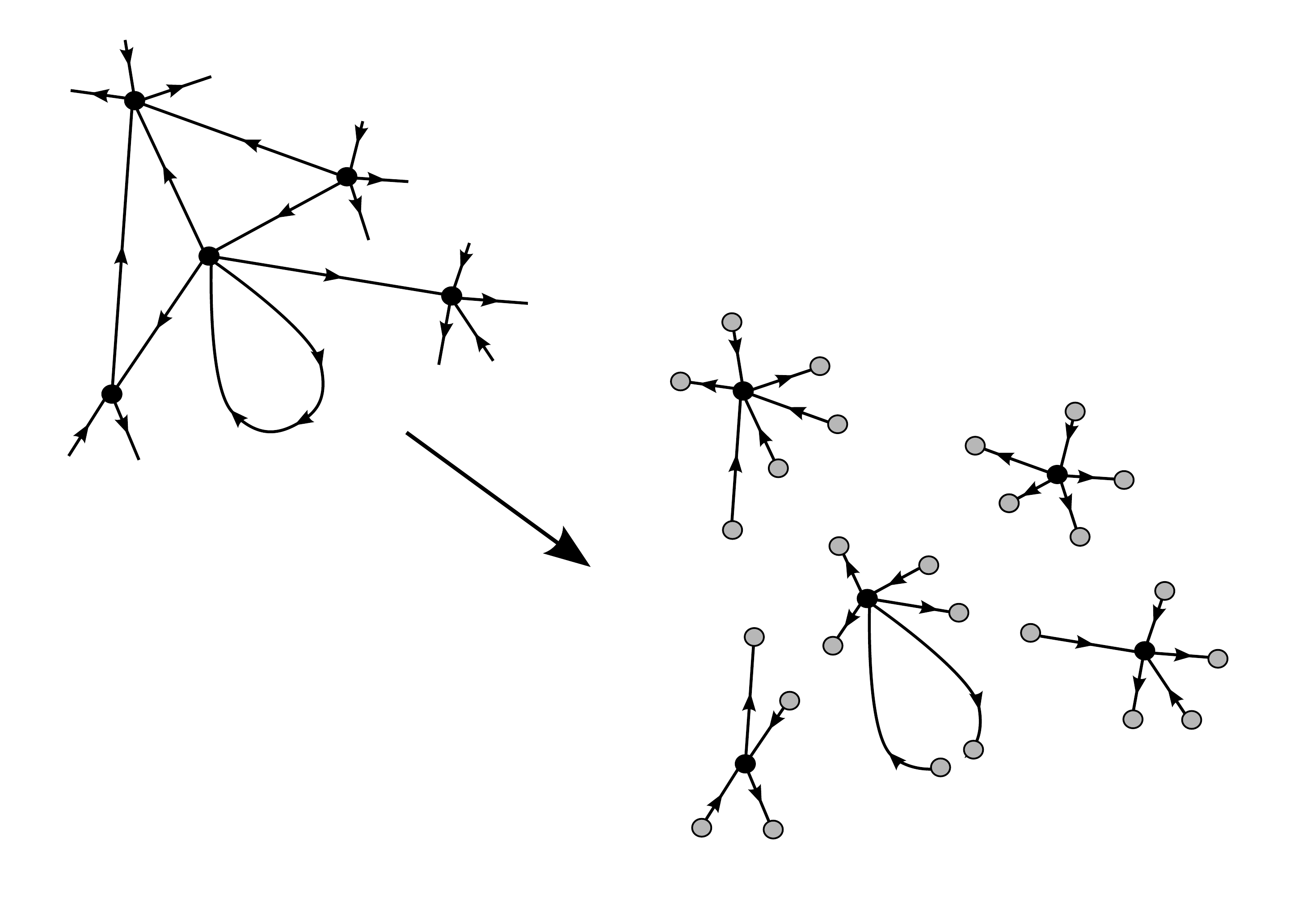}

\vspace{-.5cm}
\caption{Splitting a directed graph  $\Gamma$ into a disjoint union of vertex neighbourhoods.}
\label{fig:cutting}
\end{figure}

Paths in a directed graph $\Gamma$ are morphisms in the free groupoid generated by $\Gamma$.
They are described by {\bf words} $w=e_n^{\epsilon_n}\circ\ldots\circ e_1^{\epsilon_1}$
with $n\in\mathbb N$, $e_i\in E$,  $\epsilon_i\in\{\pm 1\}$  or empty words $\o_v$ for each vertex $v\in V$. 
A word $w$  is called {\bf composable} if  it is empty or if  $\ta(e_i^{\epsilon_i})=\st(e_{i+1}^{\epsilon_{i+1}})$ for all $i=1,...,n-1$. In this case we set $\st(w)=\st(e_1^{\epsilon_1})$ and $\ta(w)=\ta(e_n^{\epsilon_n})$ and $\st(w)=\ta(w)=v$ if $w=\o_v$.
The number $n\in\mathbb N$ is called the {\bf length} of $w$. 
A word $w$ is called {\bf reduced} if it is empty or of the form $w=e_n^{\epsilon_n}\circ\ldots\circ e_1^{\epsilon_1}$
 with $e_i^{\epsilon_i}\neq e_{i+1}^{-\epsilon_{i+1}}$ for all $i\in\{1,...,n-1\}$.

 \begin{definition} \label{def:free_cat} 
 Let $\Gamma$ be a directed graph. 
The  {\bf path groupoid} $\mathcal G(\Gamma)$   is the free groupoid generated by $\Gamma$. Its objects are the  vertices of $\Gamma$.  A morphism
 from  $u$ to $v$  is an equivalence class of  composable words $w$ with
 $\st(w)=u$ and  $\ta(w)=v$  with respect to 
  $e^{-1}\circ e\sim \o_{\st(e)}$, $e\circ e^\inv\sim\o_{\ta(e)}$ for all edges $e$ of $\Gamma$. Identity morphisms  are  equivalence classes of  trivial words $\o_v$,  and the composition of morphisms is induced by the concatenation.  
A {\bf path}  in $\Gamma$ is a morphism in $\mathcal G(\Gamma)$.
  \end{definition}

In the following, we consider  directed graphs with additional structure, called {\em ribbon graphs}, {\em fat graphs} or  {\em embedded graphs} (for background see \cite{LZ, EM}). These are directed graphs with a {\em cyclic ordering} of the incident edge ends at each vertex, i.~e.~an ordering up to cyclic permutations.    

This cyclic ordering equips the graph with the notion of a {\em face}.  A path  $p\in \mathcal G(\Gamma)$ given by a reduced word $e_n^{\epsilon_n}\circ \ldots\circ  e_1^{\epsilon_1}$ is said to  turn maximally right (left) at the vertex $v_i=\st(e_{i+1}^{\epsilon_{i+1}})=\ta(e_i^{\epsilon_i})$ if the starting end of  $e_{i+1}^{\epsilon_{i+1}}$ comes directly after (before) the target end of $e_i^{\epsilon_i}$ with respect to the cyclic ordering at $v_i$.
 If  $\st(p)=\ta(p)=v_n$,  $p$  is said to turn maximally right (left) at $v_n=\st(e_1^{\epsilon_1})=\ta(e_n^{\epsilon_n})$ if the starting end of $e_1^{\epsilon_1}$ comes directly after (before) the target end of $e_n^{\epsilon_n}$ with respect to the cyclic ordering at $v_n$.  Faces of $\Gamma$  are equivalence classes of closed paths that turn maximally right\footnote{Note that a different convention is used in \cite{MW}, where a face is required to turn maximally left at each vertex. However, the conventions in this article give a better match with Kitaev models.} 
at each vertex and pass any edge at most once in each direction, up to cyclic permutations. The set of faces of $\Gamma$ is denoted $F(\Gamma)$

In the following, we also consider ribbon graphs  in which some or all vertices  are equipped with a
 {\em linear} ordering of the incident edge ends. In this case, we always require that the  cyclic ordering induced by the linear ordering is the one from the ribbon graph structure.  Any such linear ordering of the incident edge ends at a vertex is obtained from their cyclic ordering of the ribbon graph  by selecting one of the incident edge ends  as the edge end of minimal order. 
 This is indicated in  pictures by placing a marking, called {\bf cilium}  at the vertex and ordering the incident edges at the vertex  counterclockwise from the cilium in the plane of the drawing, as shown in Figure \ref{fig:vertex_edgeends}. For a vertex  $v$ with a linear odering of the incident edge ends, we 
write  $e<f$  if $e,f$ are edge ends incident at  $v$ and $e$  is of lower order than $f$.

\begin{definition} Let $\Gamma$ be a ribbon graph.
\begin{compactenum}

\item A {\bf ciliated vertex} of  $\Gamma$ is a vertex $v$  with a linear ordering of the incident edge ends that induces their cyclic ordering from the ribbon graph structure. Two edge ends $e,f$ at a ciliated vertex $v$ are called {\bf adjacent}  if there is no edge end $g$  at $v$ with $e<g<f$ or $f<g<e$.  The {\bf valence} $|v|$ of $v$ is the number of incident edge ends.
\\[-2ex]

\item A {\bf ciliated ribbon graph} is a ribbon graph in which all vertices are ciliated. \\[-2ex]

\item  A {\bf ciliated face}  of $\Gamma$ is a closed path which turns maximally right at each vertex, including the starting vertex,  and traverses each edge at most once in each direction. The  {\bf valence} $|f|$ of $f$  is  its length  as a  reduced word in $E(\Gamma)$. \\[-2ex]

\item Two ciliated faces $f,f'$  of $\Gamma$ are  {\bf equivalent} if they induce the same face, i.~e.~their expressions as reduced words in the edges of $\Gamma$ are related  by  cyclic permutations. 
\end{compactenum}
\end{definition}

This terminology is compatible with Poincar\'e duality.  By passing from a ribbon graph $\Gamma$ to its Poincar\'e dual $\bar\Gamma$, we can  interpret a ciliated face of $\Gamma$ as a ciliated vertex of  $\bar\Gamma$ and a ciliated vertex of $\Gamma$ as a ciliated face of $\bar\Gamma$. 
Vertices and faces of $\Gamma$ with the associated {\em cyclic} orderings are given as equivalence classes of ciliated vertices and faces under cyclic permutations and hence correspond to, respectively,  faces and vertices  of $\bar\Gamma$.
In the following, we sometimes require that the ciliated ribbon graphs $\Gamma$ satisfy certain regularity conditions.

\begin{figure}
\centering
\includegraphics[scale=0.17]{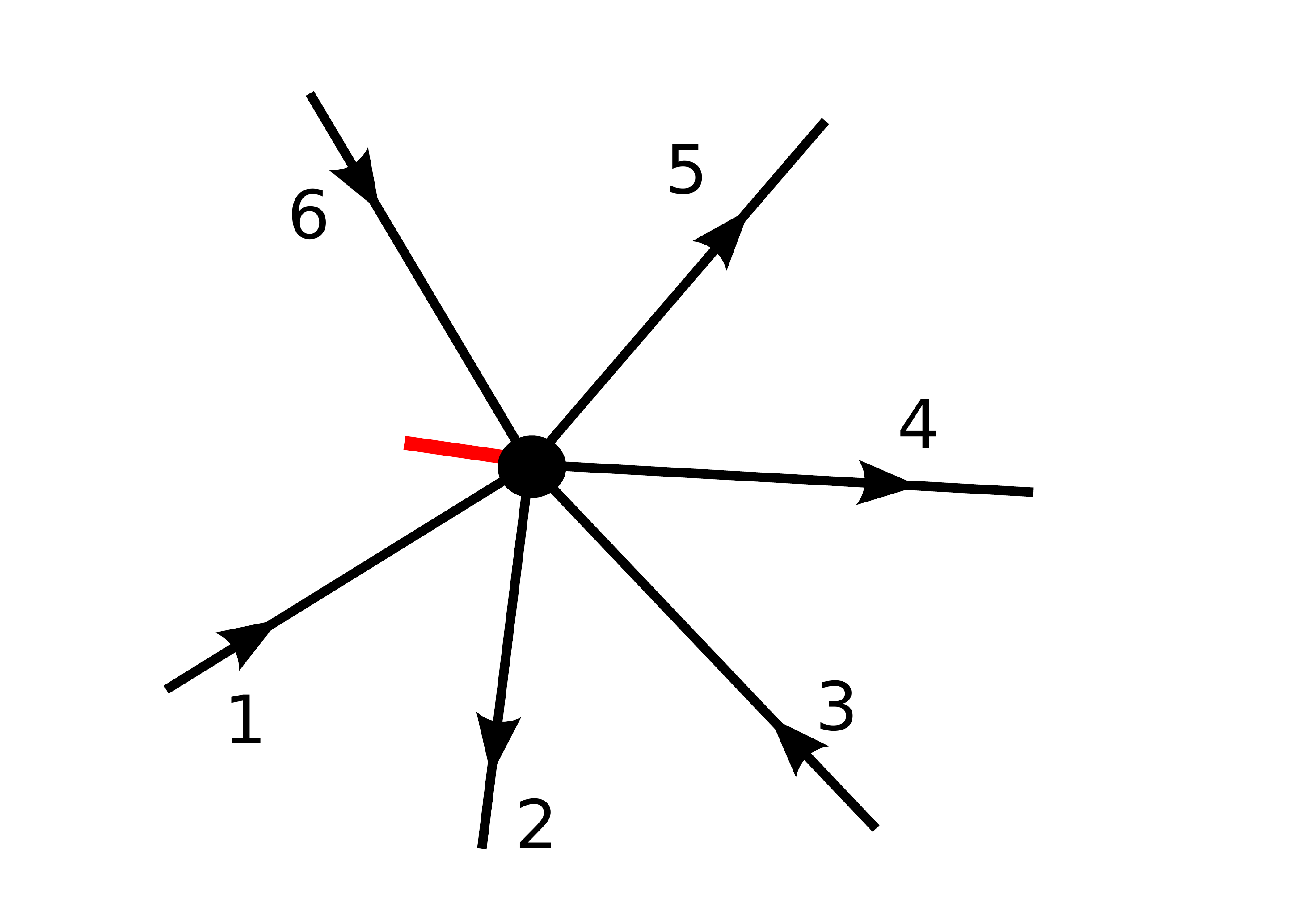}
\caption{Vertex with  incident edge ends and the  ordering induced by the  cilium. }
\label{fig:vertex_edgeends}
\end{figure}

\begin{definition}\label{def:regular} A ciliated ribbon graph $\Gamma$ is called {\bf regular} if:
\begin{compactenum}
\item $\Gamma$ has no loops or multiple edges.
\item Each  face of $\Gamma$ traverses each edge  at most once.
\item Each  face  of $\Gamma$ contains exactly one cilium.
\end{compactenum}
\end{definition}

\begin{figure}
\centering
\includegraphics[scale=0.36]{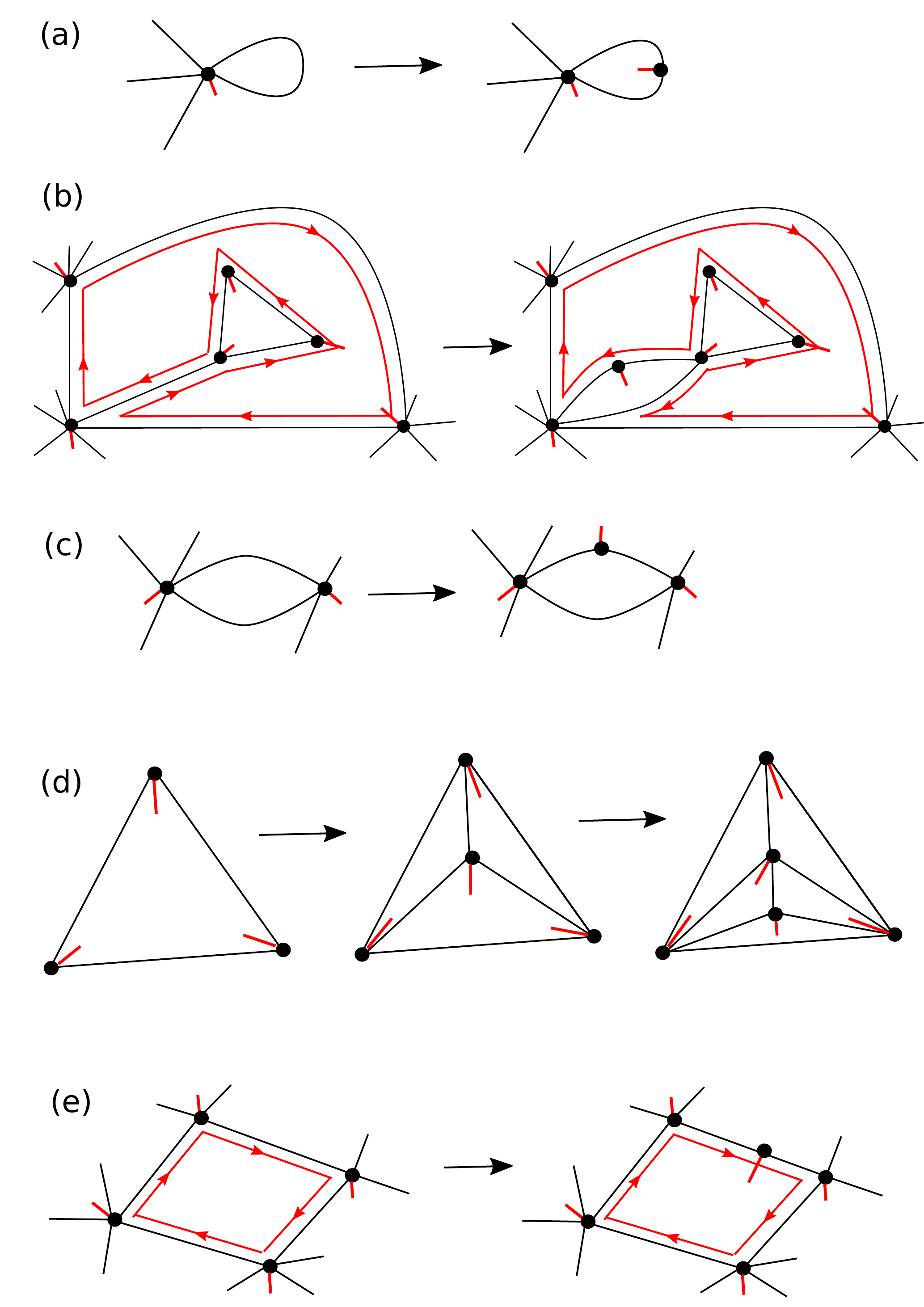}
\caption{Regularising   a ciliated ribbon graph: (a) Subdividing a loop, (b) doubling an edge, (c) subdividing  multiple edges, (d) subdividing a face, (e) adding a ciliated vertex to a face.}
\label{fig:subdiv}
\end{figure}

Note that for a regular  ciliated ribbon graph $\Gamma$, one has $|V(\Gamma)|=|F(\Gamma)|$.
Each cilium corresponds to a pair $(v,f)$ of a ciliated vertex $v\in V(\Gamma)$ and a  ciliated face $f$ based at the cilium. 
Such pairs $(v,f)$ are called {\em sites} in the context of Kitaev models. 

The regularity conditions  in Definition \ref{def:regular} are mild because any ciliated ribbon graph can be transformed into a regular ciliated ribbon graph by subdividing edges, doubling edges and subdividing faces. 
More precisely, 
for any ciliated ribbon graph $\Gamma$ one can construct a regular ciliated ribbon graph $\Gamma'$  by the following procedure,   illustrated in Figure \ref{fig:subdiv}:
\begin{compactenum}[(a)]
\item Subdivide each loop  by adding a  bivalent ciliated vertex whose cilium points inside the loop, i.~e.~such that that the path along the loop that starts and ends at this new vertex and turns maximally right at each vertex  becomes a ciliated face, as shown in Figure \ref{fig:subdiv} (a).\\[-2ex]
\item Double each edge  that is traversed twice by a  face and  add a ciliated bivalent  vertex whose cilium points into the resulting face, as shown in Figure \ref{fig:subdiv} (b).\\[-2ex]
\item For each pair of edges $e,e'$ with $\st(e)=\st(e')$ and  $\ta(e)=\ta(e')$, subdivide either $e$ or $e'$ by adding  a bivalent ciliated vertex, as shown in Figure \ref{fig:subdiv} (c).\\[-2ex]

\item Subdivide each face  that contains more than one cilium by adding a vertex  and connecting it to the vertices of the face in such a way that each of the resulting faces contains at most one cilium. Equip the new vertex with a cilium and repeat if necessary,  as shown in Figure \ref{fig:subdiv} (d).\\[-2ex]

\item  For each face  that contains no cilia, add a bivalent ciliated vertex to one of its edges such that the cilium  points into  the face, as shown in Figure \ref{fig:subdiv} (e).
\end{compactenum}

Ribbon graphs can be viewed as graphs embedded into oriented surfaces. A graph embedded into an oriented surface inherits a cyclic ordering of the incident edge ends at each vertex from the orientation of the surface. Conversely, each ribbon graph defines oriented surface which is unique up to homeomorphisms and obtained as follows. Given a graph  $\Gamma$, understood as a combinatorial graph, one obtains a graph in the topological sense,  a 1-dimensional CW-complex, by gluing intervals to the vertices as specified by the edges. 
If additionally $\Gamma$ has a ribbon graph structure, one obtains an oriented surface $\Sigma_\Gamma$
 by  gluing a disc to each face of $\Gamma$.
  If $\Gamma$ is a  graph embedded in an oriented surface $\Sigma$ and equipped with the induced ribbon graph structure, then the surface $\Sigma_\Gamma$  is homeomorphic to $\Sigma$ if and only if  each connected component of $\Sigma\setminus\Gamma$ is homeomorphic to a disc.

Ribbon graphs $\Gamma$ and $\Gamma'$ that are related by certain graph operations 
 define homeomorphic surfaces $\Sigma_\Gamma$ and $\Sigma_{\Gamma'}$. These graph operations include 
 edge contractions, edge subdivisions,  subdivisions of faces,  doubling edges and adding or removing edges from the graph if this does not change the number of connected components  \cite{LZ, EM, MW}. This implies in particular that for each ciliated ribbon graph $\Gamma$ there is a ciliated ribbon graph $\Gamma'$ that is regular in the sense  of Definition \ref{def:regular} and 
  such that $\Sigma_\Gamma$ and $\Sigma_{\Gamma'}$ are homeomorphic.

\section{Kitaev models}
\label{subsec:kit}

Kitaev models were first introduced in \cite{Ki}. They were then generalised to models based on the group algebra of a finite group and with defects and domain walls in \cite{BMD} and  to  finite-dimensional semisimple Hopf algebras in \cite{BMCA}. More recent generalisations include models based on certain certain tensor categories and with defect data from higher categories \cite{KK}. 
In this article we focus on the models from \cite{BMCA}  for a  finite-dimensional semisimple\footnote{Note that the conditions on the Hopf algebra in \cite{BMCA}  are slightly stronger, as they set $\mathbb F=\mathbb C$ and  require that $H$ is a  $*$-Hopf algebra. This is needed in their definition of the scalar product on $H^{\oo E}$ and to ensure unitarity and self-adjointness of certain operators on $H^{\oo E}$. However, as we do not investigate these structures, it is sufficient for our purposes that $H$ is finite-dimensional and semisimple.} Hopf algebra $H$.

The two ingredients of a Kitaev model are a finite-dimensional semisimple Hopf algebra $H$ and a ribbon graph $\Gamma$.
The starting point in the construction is the {\em extended 
 space} $H^{\oo E}$ obtained by associating a copy of $H$ to each edge of  $\Gamma$. One then assigns to each edge $e\in E$ four  basic triangle operators $L^h_{e\pm}: H\to H$ and $T^\alpha_{e\pm}: H\to H$, indexed by  elements  $h\in H$ and  $\alpha\in H^*$. 
With the notation and conventions from Section \ref{subsec:conventions} they are defined as follows.

\begin{definition} [\cite{Ki, BMCA}]  \label{def:kitdef} Let $H$ be a finite-dimensional semisimple Hopf algebra and $\Gamma$ a ribbon graph.
The {\bf triangle operators} for an edge $e$ of $\Gamma$, $h\in H$ and $\alpha\in H^*$ are the linear maps
\begin{align*}
L^h_{e\pm}=(L^h_\pm)_e: H^{\oo E}\to H^{\oo E} \qquad\qquad T^\alpha_{e\pm}=( T^\alpha_{\pm})_e: H^{\oo E}\to H^{\oo E},
\end{align*}
with $L^h_{\pm}, T^{\alpha}_{\pm}: H\to H$  given by
\begin{align}\label{eq:kitops}
&L_+^{h}k= h\cdot k & &L_-^{h}k=(S\circ L_+^{S(h)}\circ S)k=k \cdot h\\
&T^{\alpha}_+ k=\langle \alpha,\low k 2\rangle\, \low k 1 & &T^{ \alpha}_-k=(S\circ T^{S(\alpha)}_+\circ S)k=\langle \alpha, \low k 1\rangle\, \low k 2.\nonumber
\end{align}

\end{definition}

\medskip
By combining the triangle operators of the edges at each vertex  $v$ and  in each face  $f$ of $\Gamma$, one obtains  the {\em vertex and face operators}  $A_v^h: H^{\oo E}\to H^{\oo E}$  and $B^\alpha_f: H^{\oo E}\to H^{\oo E}$. Their definition requires a linear ordering of the incident edges at each vertex and a  in each face, i.~e.~{\em ciliated} vertices and  {\em ciliated} faces. They are defined for general ribbon graphs $\Gamma$, but in the following we restrict attention to ribbon graphs without loops or multiple edges.

\begin{definition}[\cite{Ki, BMCA} ] \label{def:vertexoperator} Let $\Gamma$ be a  ribbon graph without loops or multiple edges.
\begin{compactenum}
\item Let  $v$  be a ciliated vertex of $\Gamma$  with  incident edges $e_1,...,e_n$, numbered according to the ordering at $v$ and such that $e_1^{\epsilon_1}$,..., $e_n^{\epsilon_n}$ are incoming.  The 
  {\bf vertex operator} $A^h_v:H^{\oo E}\to H^{\oo E}$ for  $h\in H$  is the linear map 
$$A^h_v=L^{S^{\tau_1}(\low h 1)}_{e_1\epsilon_1}\circ \ldots\circ L^{S^{\tau_n}(\low h n)}_{e_n\epsilon_n}: H^{\oo E}\to H^{\oo E}\qquad\text{with}\;\;\; \tau_i=\tfrac 1 2(1-\epsilon_i).$$
\item 
Let $f=e_1^{\epsilon_1}\circ \ldots\circ e_n^{\epsilon_n}$  be a ciliated face of $\Gamma$. The {\bf face operator} $B^\alpha_f:H^{\oo E}\to H^{\oo E}$ for   $\alpha\in H^*$ is the linear map 
$$B^\alpha_f=T^{S^{\tau_1}(\low \alpha 1)}_{e_1\epsilon_1}\circ\ldots\circ T^{ S^{\tau_n}(\low\alpha n)}_{e_n\epsilon_n}: H^{\oo E}\to H^{\oo E}\qquad\text{with}\;\;\; \tau_i=\tfrac 1 2(1-\epsilon_i).$$
\end{compactenum}
\end{definition}

Choosing a cilium at a vertex $v$ does not only equip  $v$ with the structure of a ciliated vertex but at the same time selects  a ciliated face of $\Gamma$, namely the unique ciliated face that starts and ends at the cilium at $v$. It was shown   
 in \cite{BMCA}  that the associated vertex and face operators define a representation of the Drinfeld double $D(H)$. This follows by a direct computation from the definition of the vertex and face operators and equation \eqref{eq:kitops} for the  triangle operators and is proven in Section \ref{subsec:vertface} in a different formalism.

\begin{lemma} [\cite{Ki,BMCA}] \label{lem:algrelskit} Let $v$ be a ciliated vertex  of $\Gamma$ and $f(v)$ the ciliated face  of $\Gamma$ that starts and ends at the cilium at $v$. The associated vertex and face operators satisfy the commutation relations 
\begin{align*}
&A^h_v \circ A^k_v=A_v^{hk}\qquad B^\alpha_{f(v)}\circ B^\beta_{f(v)}=B^{\alpha\beta}_{f(v)}\qquad A^h_v\circ B^\alpha_{f(v)}=\langle \low\alpha 3, \low h 1\rangle\langle \low\alpha 1, S(\low h 3)\rangle\; B^{\low\alpha 2}_{f(v)}\circ A^{\low h 2}_v.
\end{align*}
The map $\tau: D(H)\to \mathrm{End}_{\mathbb F}(H)$, $\alpha\oo h\mapsto B^\alpha_{f(v)} \circ A^h_v$ is an injective algebra homomorphism.
\end{lemma}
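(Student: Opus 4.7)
The plan is to reduce the three commutation relations to local ones on individual edges and then assemble them, using that triangle operators on distinct edges commute since they act on distinct tensor factors of $H^{\oo E}$. Directly from \eqref{eq:kitops} and the Hopf algebra axioms one verifies the per-edge identities $L^h_+\circ L^k_+=L^{hk}_+$, $L^h_-\circ L^k_-=L^{kh}_-$, $T^\alpha_+\circ T^\beta_+=T^{\alpha\beta}_+$ and $T^\alpha_-\circ T^\beta_-=T^{\beta\alpha}_-$, together with the four cross relations between $L^h_\pm$ and $T^\alpha_\pm$ on the same edge. The latter are precisely the defining relations of the Heisenberg double $\mathcal H(H)$ from \eqref{eq:hd2}; the Drinfeld double structure on the full vertex-and-face operator algebra will emerge only once these local relations are combined over several edges via the $|v|$-fold coproduct of $h$ and the $|f(v)|$-fold coproduct of $\alpha$.

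For $A^h_v\circ A^k_v$, regrouping edge by edge, each edge $e_i$ at $v$ contributes $L^{S^{\tau_i}(\low h i)}_{e_i\epsilon_i}\circ L^{S^{\tau_i}(\low k i)}_{e_i\epsilon_i}$. This reduces to $L^{\low h i\low k i}_{e_i+}$ when $\epsilon_i=+1$ and to $L^{S(\low k i)S(\low h i)}_{e_i-}=L^{S(\low h i\low k i)}_{e_i-}$ when $\epsilon_i=-1$, so in both cases it equals $L^{S^{\tau_i}((hk)_{(i)})}_{e_i\epsilon_i}$, and the product over $i$ reassembles to $A^{hk}_v$. The analogous argument with $T^\alpha_\pm$ in place of $L^h_\pm$, together with the regularity hypothesis that $f(v)$ traverses each edge at most once, gives $B^\alpha_{f(v)}\circ B^\beta_{f(v)}=B^{\alpha\beta}_{f(v)}$.

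The mixed relation is the technical crux. Writing $A^h_v\circ B^\alpha_{f(v)}$ edge by edge and commuting each factor $L^{S^{\tau_i}(\low h i)}_{e_i\epsilon_i}$ past the corresponding $T^{S^{\sigma_j}(\low\alpha j)}_{e_j\epsilon_j'}$ on shared edges via the Heisenberg double cross relations produces pairings between successive coproduct components of $h$ and $\alpha$. The crucial bookkeeping is that the cilium simultaneously fixes the minimal edge end at $v$ and the starting edge of $f(v)$, so that the positions of shared edges in the vertex ordering and in the face ordering match in such a way that the local pairings telescope globally into just two pairings involving the outermost coproduct components of $h$ and $\alpha$. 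Using $S^\inv=S$, valid since $H$ is finite-dimensional semisimple, the pre-factor becomes $\langle\low\alpha 3,\low h 1\rangle\,\langle\low\alpha 1,S(\low h 3)\rangle$, which is exactly the pre-factor of $(1\oo h)\cdot(\alpha\oo 1)$ in $D(H)$ read from \eqref{eq:dmult}. Together with the two previous identities this shows that $\tau\colon\alpha\oo h\mapsto B^\alpha_{f(v)}\circ A^h_v$ is an algebra homomorphism.

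Injectivity follows because both $h\mapsto A^h_v$ and $\alpha\mapsto B^\alpha_{f(v)}$ are individually injective --- the identity $(\ee\oo\cdots\oo\ee)\circ\Delta^{(|v|-1)}=\id_H$ and its dual show that the images of $1^{\oo E}$ under $A^h_v$ and $B^\alpha_{f(v)}$ recover $h$ and $\alpha$ --- and evaluating $\tau(\alpha\oo h)$ on $1^{\oo E}$ and inspecting the resulting tensor on the edges at $v$ and along $f(v)$ separates distinct elements of $D(H)$. The main obstacle is precisely the bookkeeping in the mixed relation: the local Heisenberg double pre-factors, one per edge shared by $v$ and $f(v)$ and each involving coproduct components of $h$ and $\alpha$, must telescope globally into the compact two-pairing expression of the lemma, which is what realises exactly the Drinfeld double rather than a larger algebra.
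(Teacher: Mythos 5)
Your overall route --- reduce everything to per-edge relations among the triangle operators \eqref{eq:kitops} and reassemble via the iterated coproducts --- is the ``direct computation'' that the paper attributes to \cite{Ki,BMCA}; the paper itself reproves the statement in the holonomy formalism of Section \ref{subsec:vertface} (Lemmas \ref{lem:holab}, \ref{lem:commhelp} and \ref{lem:helpvertface}), which is the same computation in different packaging. Your treatment of $A^h_v\circ A^k_v=A^{hk}_v$ and $B^\alpha_{f(v)}\circ B^\beta_{f(v)}=B^{\alpha\beta}_{f(v)}$ is correct. For the mixed relation, however, you assert rather than prove the one step that carries all the difficulty. In the generic situation where $f(v)$ meets $v$ only at the cilium there are just two shared edges and nothing to ``telescope''; the real issue is that $f(v)$ may re-enter $v$ away from the cilium, in which case it traverses a pair of \emph{adjacent} edges at $v$ and two extra Heisenberg-double prefactors appear there. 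That these cancel against each other --- because maximal right turns force the two edges to carry adjacent coproduct components $\low h j,\low h {j+1}$ of $h$, which recombine through the antipode axiom --- is precisely the content of Lemma \ref{lem:commhelp} (the Reidemeister~II identity), and it is the heart of the proof. Naming this ``the crucial bookkeeping'' and declaring that it telescopes states the desired conclusion without establishing it; as written this is a gap.

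The injectivity argument is wrong. From \eqref{eq:kitops} one has $T^\beta_\pm 1=\langle\beta,1\rangle\,1=\epsilon(\beta)\,1$, so $B^\alpha_{f(v)}(1^{\oo E})=\epsilon(\alpha)\,1^{\oo E}$: the vector $1^{\oo E}$ does not recover $\alpha$. In particular $\tau(\alpha\oo 1)(1^{\oo E})=\epsilon(\alpha)\,1^{\oo E}=0$ for every $\alpha\in\ker\epsilon$, even though $B^\alpha_{f(v)}\neq 0$ for $\alpha\neq 0$, so evaluation at $1^{\oo E}$ cannot separate elements of $D(H)$. (The $A$-half of your argument is fine, although the identity you quote should read $(\id\oo\epsilon\oo\cdots\oo\epsilon)\circ\Delta^{(|v|-1)}=\id_H$; with all factors equal to $\epsilon$ the left side lands in $\mathbb F$.) A correct proof of injectivity must test $\tau(\alpha\oo h)$ on vectors with non-trivial entries on the face edges; for instance the Haar-integral computation used to prove surjectivity of $\phi$ in Lemma \ref{lem:kitoprel} adapts to an edge shared by $v$ and $f(v)$, or one can combine semisimplicity of $D(H)$ with the faithfulness of the single-edge $\mathcal H(H)$-module from that lemma.
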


Similarly one can show that for any choice of the cilia the vertex operators for different vertices $v,w$ and the  faces operators for different faces $f,g$ of $\Gamma$ commute and that the vertex operators commute with all face operators that satisfy a certain  condition on the cilia.

\begin{lemma} [\cite{Ki,BMCA}] \label{lem:fvcomm} Let $\Gamma$ be a ciliated ribbon graph, $h,k\in H$ and $\alpha,\beta\in H^*$.
\begin{compactenum}
\item For all choices of the ciliation, one has  $A_v^h\circ A_w^k=A^k_w\circ A^h_v$ if $v\neq w$.
\item For all choices of the ciliation, one has $B_f^\alpha\circ B_g^\beta=B_g^\beta\circ B^\alpha_f$ if $f\neq g$.
\item If $v$ is a ciliated vertex  and $f$ a ciliated face  that  is not based at $v$ and does not traverse the cilium at $v$, then  $A^h_v\circ B^\alpha_f=B^\alpha_f\circ A^h_v$.
\end{compactenum}
\end{lemma}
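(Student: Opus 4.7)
The plan is to reduce each commutation to a check on the action at individual edges, exploiting that $\Gamma$ has no loops or multiple edges, so $A^h_v$, $A^k_w$, $B^\alpha_f$, $B^\beta_g$ are each products of triangle operators acting on pairwise distinct tensor factors of $H^{\oo E}$ (one factor per incident edge at a vertex, resp.~per edge in a face). Operators on disjoint tensor factors commute trivially, so the nontrivial content is concentrated on the edges shared between the two operators being commuted.

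For (i), a shared edge $e$ has $\{\st(e),\ta(e)\}=\{v,w\}$, so one of $v,w$ sees $e$ as incoming and contributes an $L_+$-factor (left multiplication by some $\low h i$) on the $e$-tensor factor, while the other sees $e$ as outgoing and contributes an $L_-$-factor (right multiplication by some $S(\low k j)$). Associativity of multiplication in $H$ gives $L^a_+\circ L^b_-=L^b_-\circ L^a_+$ on the $e$-factor, hence $A^h_v\circ A^k_w=A^k_w\circ A^h_v$. For (ii), any edge $e$ shared by two distinct faces is traversed by them in opposite directions, because the ribbon graph structure assigns each oriented side of $e$ to exactly one face. One face then contributes $T_+^{\low\alpha p}$ on $e$ and the other $T_-^{S(\low\beta q)}$, and a direct computation using coassociativity gives $T_+^\gamma\circ T_-^\delta k=\langle\delta,\low k 1\rangle\langle\gamma,\low k 3\rangle\low k 2=T_-^\delta\circ T_+^\gamma k$, so these commute.

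For (iii), if $f$ does not visit $v$ at all, no edge is shared. Otherwise, at each visit $f$ enters and exits $v$ via two edge ends that are adjacent in the cyclic order at $v$. The hypothesis that $f$ is neither based at $v$ nor traverses the cilium at $v$ forces this adjacent pair to be at positions $(i,i+1)$ in the linear order for some $1\le i<|v|$, never the wrap-around pair $(|v|,1)$ that crosses the cilium. The coproducts therefore distribute consecutive Sweedler factors $\low h i\oo \low h {i+1}$ of $\Delta^{(|v|)}(h)$ and $\low\alpha p\oo \low\alpha {p+1}$ of $\Delta^{(|f|)}(\alpha)$ on the two affected tensor factors $e_i,e_{i+1}$. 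A direct computation from \eqref{eq:kitops} yields the single-edge Heisenberg-style relation $T^\beta_+\circ L^a_+=\langle\low\beta 1,\low a 2\rangle\, L^{\low a 1}_+\circ T^{\low\beta 2}_+$ (and three analogues with indices and antipodes adjusted for the remaining $\pm$-combinations). Applying these rearrangements on $e_i$ and $e_{i+1}$ inside $B^\alpha_f\circ A^h_v$, and then using coassociativity to re-expand the inner splits of $\low h i,\low h{i+1}$ and of $\low\alpha p,\low\alpha{p+1}$, the contraction factors produced on the two edges telescope via the counit axiom and one recovers $A^h_v\circ B^\alpha_f$. Multiple visits of $f$ to $v$ are treated independently by the same argument.

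The main obstacle is the case analysis and index bookkeeping in (iii): the four combinations $\epsilon_i\in\{\pm 1\}$ of orientations of $e_i,e_{i+1}$ at $v$, combined with the two possible directions of $f$ on each of these edges, produce several variants of the Heisenberg rearrangement, each with a different placement of the antipodes $S$ arising from the $S^{\tau}$-factors in the definitions of $A^h_v$ and $B^\alpha_f$. The cilium hypothesis is precisely what makes the telescoping work: were $f$ to traverse the cilium at $v$ or be based there, the relevant Sweedler factors of $\Delta^{(|v|)}(h)$ would lie at its two outer ends rather than at consecutive positions, the contractions on $e_i$ and $e_{i+1}$ would no longer collapse through the counit, and one would instead recover the Drinfeld double commutation relation of Lemma~\ref{lem:algrelskit}.
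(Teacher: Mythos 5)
Your argument is correct and, modulo language, is the one the paper itself uses: the single-edge commutations behind (i) and (ii) are exactly the relations \eqref{eq:hdcommutelr} transported through the isomorphism $\rho$ of Lemma \ref{lem:kitoprel}, and your local Heisenberg rearrangement at the two consecutive edge ends in (iii) is precisely the content of Lemma \ref{lem:commhelp}, which the paper's rederivation (Lemma \ref{lem:vertfacecomm}) combines with the same decomposition of the face into its corners at $v$. One small imprecision in (iii): after the rearrangements the surviving pairing on the two shared edges takes the form $\langle \low\beta 1, S(\low h 2)\rangle\langle \low\beta 2,\low h 1\rangle=\langle\beta, S(\low h 2)\low h 1\rangle$, and its collapse rests on the antipode identity $S(\low h 2)\low h 1=\epsilon(h)1$ --- valid here because $S^2=\id$ for a finite-dimensional semisimple $H$, so $S=S^{-1}$ --- followed by the counit axiom, not the counit axiom alone; moreover the reversal of the Sweedler orders of $h$ and $\alpha$ on the shared pair (the face assigns its earlier-traversed edge, sitting at the lower position at $v$, the higher Sweedler index) is what puts the pairing into this collapsible form, and is worth stating explicitly since it is exactly what fails at the cilium corner.
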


If one chooses for $h\in H$ the Haar integral $\ell \in H$ and for $\alpha\in H^*$ the Haar integral $\eta\in H^*$, the associated vertex and face operators 
$A^\ell_v, B^\eta_f: H^{\oo E}\to H^{\oo E}$ no longer depend on the choice of the cilia at $v$ and at $f$. This is a direct consequence of the properties of the Haar integral in Remark \ref{rem:haar}.  The properties of the Haar integral also imply that they are projectors with images
$$
A^\ell_v(H^{\oo E})=\{x\in H^{\oo E}:\, A^h_v x=\epsilon(h) x\;\forall h\in H\}\quad B^\eta_f(H^{\oo E})=\{x\in H^{\oo E}:\, B^\alpha_f x=\epsilon(\alpha) x\;\forall \alpha\in H^*\}
$$ 
and that their  commutation relations from Lemma \ref{lem:algrelskit} simplify to the following.

\begin{lemma} [\cite{Ki,BMCA}] \label{lem:fvhaar} Denote by $\ell\in H$, $\eta\in H^*$ the Haar integrals of $H$ and $H^*$. Then the vertex and face operators $A^\ell_v$, $B^\eta_f$ do not depend on the cilia and form a set of commuting projectors.\end{lemma}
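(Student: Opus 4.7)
The plan is to establish three assertions in turn: cilium-independence of $A^\ell_v$ and $B^\eta_f$, the projector property, and pairwise commutativity of all these operators.

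For cilium-independence, observe that since $\Gamma$ has no loops, the factors $L^{S^{\tau_i}(\ell_{(i)})}_{e_i\epsilon_i}$ entering $A^\ell_v$ act on pairwise distinct tensor factors of $H^{\oo E}$ and hence commute. The cilium at $v$ only controls the linear ordering of the incident edge ends and, through it, the assignment of the successive Sweedler factors of $\Delta^{(|v|-1)}(\ell)$ to these edges; changing the cilium cyclically permutes this assignment. By Remark \ref{rem:haar}, $\Delta^{(|v|-1)}(\ell)$ is invariant under cyclic permutations, so $A^\ell_v$ is unchanged. The identical argument applied to the reduced word of $f$ and to $\Delta^{(|f|-1)}(\eta)$ gives cilium-independence of $B^\eta_f$. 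The projector property then follows from Lemma \ref{lem:algrelskit}: since $h\mapsto A^h_v$ is an algebra homomorphism, $\ell\cdot\ell=\epsilon(\ell)\ell=\ell$ yields $A^\ell_v\circ A^\ell_v=A^\ell_v$ (equivalently, Lemma \ref{lem:haarproj} identifies $A^\ell_v$ as the projector onto the $H$-invariants of this module structure). The analogous argument with $\alpha\mapsto B^\alpha_{f(v)}$ and $\eta\cdot\eta=\eta$, combined with the cilium-independence already established, makes $B^\eta_f$ a projector for every face $f$.

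For mutual commutation, $[A^\ell_v,A^\ell_w]=0$ for $v\neq w$ and $[B^\eta_f,B^\eta_g]=0$ for $f\neq g$ are direct instances of Lemma \ref{lem:fvcomm}(1)-(2). The mixed commutator $[A^\ell_v,B^\eta_f]$ splits by cases: if $v$ is not a vertex of $f$, the two operators act on disjoint edges and commute trivially; if $v$ lies on $f$ with $f\neq f(v)$, then (in a graph without loops and with each face visiting $v$ at most once) $f$ does not traverse the cilium at $v$, and Lemma \ref{lem:fvcomm}(3) applies. The remaining case $f=f(v)$ is the substantive one: Lemma \ref{lem:algrelskit} specialises to
\begin{align*}
A^\ell_v\circ B^\eta_{f(v)}=\langle \eta_{(3)},\ell_{(1)}\rangle\langle \eta_{(1)},S(\ell_{(3)})\rangle\; B^{\eta_{(2)}}_{f(v)}\circ A^{\ell_{(2)}}_v,
\end{align*}
and the task is to show the right-hand side collapses to $B^\eta_{f(v)}\circ A^\ell_v$.

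The strategy for this collapse uses $S(\ell)=\ell$ and $S^2=\id$ to rewrite the antipode in $S(\ell_{(3)})$ via the identity $S(\ell_{(3)})\oo S(\ell_{(2)})\oo S(\ell_{(1)})=\ell_{(1)}\oo\ell_{(2)}\oo\ell_{(3)}$ obtained from $\Delta^{(2)}(S(\ell))=\Delta^{(2)}(\ell)$, and then invokes the cyclic invariance of $\Delta^{(2)}(\ell)$ and $\Delta^{(2)}(\eta)$ from Remark \ref{rem:haar} together with the Haar absorption identities $\langle\alpha_{(1)},\ell\rangle\alpha_{(2)}=\langle\alpha,\ell\rangle\,1=\langle\alpha_{(2)},\ell\rangle\alpha_{(1)}$ and their $\eta$-analogues to reduce $\eta_{(2)}\oo\ell_{(2)}$ back to $\eta\oo\ell$ with the correct scalar factor. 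Equivalently, by the injectivity of the representation $\tau$ in Lemma \ref{lem:algrelskit} this amounts to verifying the identity $(1\oo\ell)(\eta\oo 1)=\eta\oo\ell=(\eta\oo 1)(1\oo\ell)$ in the Drinfeld double $D(H)$, which is forced by the fact that $\eta\oo\ell$ is the two-sided Haar integral of $D(H)$ together with the idempotency of $\ell$ and $\eta$. I expect this Haar bookkeeping to be the principal technical obstacle; the rest of the proof consists of routine applications of the lemmas already cited.
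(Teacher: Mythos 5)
Your proof is correct and follows exactly the route the paper indicates for this lemma (which it cites from the references and justifies only by appeal to Remark \ref{rem:haar} and Lemmas \ref{lem:algrelskit}--\ref{lem:fvcomm}): cyclic invariance of $\Delta^{(n)}(\ell)$ and $\Delta^{(n)}(\eta)$ for cilium-independence, idempotency of the Haar integrals for the projector property, and reduction of the only nontrivial commutator to the identity $(1\oo\ell)\cdot(\eta\oo 1)=(\eta\oo 1)\cdot(1\oo\ell)=\eta\oo\ell$ in $D(H)$, which the paper itself establishes by the same cyclic-invariance computation in the proof of Lemma \ref{lem:hdproj}. One caveat: your closing claim that this identity is ``forced by'' $\eta\oo\ell$ being the two-sided Haar integral of $D(H)$ together with idempotency is not a valid standalone argument (two idempotents $a,b$ whose product $ab$ is an absorbing idempotent need not commute), so the cyclic-invariance computation you sketch first is the one that must actually be carried out.
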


\begin{definition} [\cite{Ki,BMCA}] \label{def:hamiltonian}  Let $\Gamma$ be a ribbon graph, $H$  a finite-dimensional semisimple Hopf algebra and $\ell \in H$, $\eta\in H^*$  the Haar integrals of $H$ and $H^*$.  The {\bf Hamiltonian} of the Kitaev model on $\Gamma$ is 
$$
H_{\mathrm{K}}=\Pi_{v\in V} A^\ell_v\circ \Pi_{f\in F} B^\eta_f: H^{\oo E}\to H^{\oo E}.
$$
Its image is  the {\bf protected space} 
\begin{align*}
\prot&=\ham(H^{\oo E})=\{h\in H^{\oo E}: \ham(h)=h\}.
\end{align*}
\end{definition}

It was shown in \cite{Ki,BMCA} that the protected space is a topological invariant. It depends only  on the homeomorphism class of  the  oriented surface $\Sigma_\Gamma$ obtained by gluing discs to the faces of $\Gamma$. 
Topological {\em excitations} in Kitaev models are obtained by removing the vertex and face operators $A^\ell_v$ and $B^\ell_f$ from the Hamiltonian $\ham$ for a fixed number of sites $(v,f)$ which have no vertices or faces in common.  This yields another projector $\ham'$ whose ground state $\ham'(H^{\oo E})=\{h\in H^{\oo E}: \ham'(h)=h\}$ describes a model with excitations labelled by representations of the Drinfeld double $D(H)$.

While Kitaev models are mostly formulated in terms of operators acting on Hilbert spaces, 
we  take  a more  algebraic viewpoint  and focus on algebra structures and not on specific representations. 
For this we note that the triangle operators $L^y_{e\pm}, T^\alpha_{e\pm}: H^{\oo E}\to H^{\oo E}$ for an edge $e$ of $\Gamma$ form a faithful representation of the  Heisenberg double $\mathcal H(H)$ from Definition \ref{def:hdouble}. This allows one to identify Kitaev's edge operator algebra with the $E$-fold tensor product $\mathcal H(H)^{\oo E}$.

\begin{lemma} \label{lem:kitoprel}  Let $H$  be a  finite-dimensional semisimple Hopf algebra with dual $H^*$. Denote by $S$  the antipodes of $H$ and $H^*$ and by  $S_D: H\oo H^*\to H\oo H^*$  the antipode of $D(H)^*$ from \eqref{eq:ddualmult}.
\begin{compactenum}
\item  The linear isomorphism $\phi: \mathcal H(H)\to \mathrm{End}_{\mathbb F}(H)$, $\phi(y\oo \alpha) h=(L^y_+\circ T^\alpha_+) h= \langle \alpha,\low h 2\rangle\, y\cdot \low h 1$ 
defines a cyclic $\mathcal H(H)$-left module structure on $H$.\\[-2ex]
\item The  triangle operators $L^y_\pm, T^\alpha_\pm: H\to H$ from \eqref{eq:kitops} are given by
\begin{align*}
&L^y_+=\phi(y\oo 1) & &T^\alpha_+=\phi(1\oo\alpha) & &L^y_-= \phi\circ S_D (S(y)\oo 1) & &T^\alpha_-= \phi\circ S_D(1\oo S(\alpha)).
\end{align*}
\item The linear map $\rho=\phi^{\oo E}: \mathcal H(H)^{\oo E}\to \mathrm{End}_{\mathbb F}(H^{\oo E})\cong \mathrm{End}_{\mathbb F}(H)^{\oo E}$
is an algebra isomorphism.
\end{compactenum}
\end{lemma}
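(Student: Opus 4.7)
The plan is to prove the three claims in order: part 1 yields an algebra isomorphism $\phi$ underlying the whole statement, part 2 re-expresses the four basic triangle operators through $\phi$ and $S_D$, and part 3 follows from part 1 by tensor functoriality. For part 1, I would first verify that $\phi$ is an algebra homomorphism by direct computation: using the multiplication \eqref{eq:hd2} of $\mathcal H(H)$ and coassociativity of $\Delta$, both $\phi((y\oo\alpha)(z\oo\beta))(k)$ and $\phi(y\oo\alpha)\bigl(\phi(z\oo\beta)(k)\bigr)$ reduce to $\langle\low\alpha 1,\low z 2\rangle\langle\low\alpha 2,\low k 2\rangle\langle\beta,\low k 3\rangle\, y\low z 1\low k 1$. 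Cyclicity of $H$ as an $\mathcal H(H)$-module is immediate because $\phi(y\oo\epsilon)(1_H)=y$, so the orbit $\mathcal H(H)\cdot 1_H$ exhausts $H$. A dimension count $\dim\mathcal H(H)=(\dim H)^2=\dim\mathrm{End}_{\mathbb F}(H)$ reduces the bijectivity claim to injectivity, which is the standard faithfulness of the Heisenberg double action on $H$; one can verify it directly by writing a kernel element in the basis $\{x_i\oo\alpha^j\}$ and specialising the argument $k$ to distinguish its coefficients.

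For part 2, the identities $L^y_+=\phi(y\oo 1)$ and $T^\alpha_+=\phi(1\oo\alpha)$ are immediate from the definition of $\phi$ after substituting $\gamma=\epsilon$ or $y=1$ and using $\Delta(1_H)=1_H\oo 1_H$ in comparison with \eqref{eq:kitops}. The minus-sign formulas are the interesting cases, and here the key tool is the first commutation in \eqref{eq:hdcommutelr} of Lemma \ref{lem:antipodehd}: it says that $S_D(S(y)\oo 1)$ commutes in $\mathcal H(H)$ with every element $(z\oo 1)$, and transporting this through the algebra morphism $\phi$ shows that $\phi\circ S_D(S(y)\oo 1)$ commutes with every left multiplication $L^z_+$ on $H$. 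Any endomorphism of $H$ commuting with all left multiplications is right multiplication by its value at $1_H$, so it suffices to compute $y_0=\phi\circ S_D(S(y)\oo 1)(1_H)$; using the explicit formula for $S_D$ in \eqref{eq:ddualmult}, the identity $\phi(z\oo\gamma)(1_H)=\epsilon(\gamma)z$, the relation $S^2=\id$ and the dual basis identity $\Sigma_i\epsilon(\alpha^i)x_i=1_H$, this collapses to $y_0=y$. The argument for $T^\alpha_-$ is entirely symmetric: by the second identity in \eqref{eq:hdcommutelr} the operator $\phi\circ S_D(1\oo S(\alpha))$ commutes with every $T^\delta_+$ on $H$, hence is a right $H$-comodule endomorphism of $H$ and so of the form $T^\mu_-$ for some $\mu\in H^*$, and an analogous short dual basis calculation identifies $\mu=\alpha$.

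For part 3, once part 1 provides that $\phi$ is an algebra isomorphism, its $E$-fold tensor product $\phi^{\oo E}$ is an algebra isomorphism from $\mathcal H(H)^{\oo E}$ onto $\mathrm{End}_{\mathbb F}(H)^{\oo E}$, and composing with the canonical algebra isomorphism $\mathrm{End}_{\mathbb F}(H)^{\oo E}\cong\mathrm{End}_{\mathbb F}(H^{\oo E})$ available for finite-dimensional vector spaces delivers $\rho$. The main obstacle in the argument is the conceptual step in part 2, namely recognising that the commutations in \eqref{eq:hdcommutelr} reduce the identification of $\phi\circ S_D(S(y)\oo 1)$ and $\phi\circ S_D(1\oo S(\alpha))$ to the single evaluation at $1_H$, respectively to the composition with $\epsilon$; once this is seen, the remaining manipulations are routine Sweedler and dual basis bookkeeping.
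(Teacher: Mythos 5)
Your proposal is correct, but in two places it takes a genuinely different route from the paper. For bijectivity of $\phi$ in part 1, the paper proves \emph{surjectivity} directly: using the Haar integrals $\ell\in H$, $\eta\in H^*$ it computes $\phi((x_i\oo\eta)\cdot(\ell\oo\alpha^j))x_k=\langle\eta,\ell\rangle\,\delta^j_k\,x_i$ and thereby exhibits every elementary matrix in the image, which is explicit and self-contained. Your dimension count plus faithfulness is a legitimate alternative, but the phrase ``specialising the argument $k$ to distinguish its coefficients'' glosses over the only nontrivial point: injectivity of $y\oo\alpha\mapsto\bigl(k\mapsto\langle\alpha,\low k 2\rangle\,y\low k 1\bigr)$ is not a coefficient-by-coefficient check, and needs e.g.\ the bijectivity of $y\oo k\mapsto y\low k 1\oo\low k 2$ (with inverse $y\oo k\mapsto yS(\low k 1)\oo\low k 2$) or an appeal to $\mathcal H(H)$ being a matrix algebra; if you keep this route, spell that step out. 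In part 2 the paper simply expands $S_D$ from \eqref{eq:ddualmult} and evaluates $\phi\circ S_D(S(y)\oo 1)h=\low h 3\,y\,S(\low h 2)\low h 1=h\cdot y$ and similarly for $T^\alpha_-$ by a direct Sweedler/dual-basis computation. Your commutant argument --- that \eqref{eq:hdcommutelr} forces $\phi\circ S_D(S(y)\oo 1)$ to commute with every $L^z_+$, hence to equal right multiplication by its value at $1_H$, and dually that $\phi\circ S_D(1\oo S(\alpha))$ lies in the commutant of $\{T^\delta_+\}$ and is therefore some $T^\mu_-$ with $\mu=\epsilon\circ\phi\circ S_D(1\oo S(\alpha))$ --- is a clean structural alternative that reduces each identity to a single evaluation; its one implicit ingredient, the identification of the commutant of $\{T^\delta_+:\delta\in H^*\}$ with $\{T^\mu_-:\mu\in H^*\}$ (the commutant of the regular representation of $H^*$ transported to $H$), is correct but deserves a sentence. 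Part 3 is essentially the same in both treatments.
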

\begin{proof}
That  $\phi$ defines an $\mathcal H(H)$-left module structure on $H$ follows by a direct computation from the properties of the left and right regular action of $H$ on itself and on $H^*$ in Definition \ref{def:regacts} and the multiplication of the Heisenberg double in \eqref{eq:hd2}. That the module is cyclic follows from  the identity $h=\phi(h\oo 1) 1$ for all $h\in H$. To show that $\phi$ is an isomorphism, it is sufficient to show that it is surjective.
This can be seen as follows.  Let $\{x_i\}$ be a basis of $H$ and $\{\alpha^i\}$ the dual basis of $H^*$ and $\ell \in H$, $\eta\in H^*$ the Haar integrals. The properties of the Haar integral from Remark \ref{rem:haar} imply
\begin{align*}
&\phi((x_i\oo\eta)\cdot(\ell\oo \alpha^j))x_k=\langle \low\eta 1, \low\ell 2\rangle\, \phi(x_i\low\ell 1\oo \low\eta 2 \alpha^j)x_k=\langle \low\eta 1, \low\ell 2\rangle\langle \low\eta 2\alpha^j, x_{k(2)}\rangle\, x_i\low\ell 1x_{k(1)}\\
&=\langle \eta, \low\ell 2 x_{k(2)}\rangle\langle \alpha^j, x_{k(3)} \rangle x_i\low\ell 1x_{k(1)}=\langle \eta, \ell x_{k(1)}\rangle\langle \alpha^j, x_{k(2)}\rangle\, x_i=\epsilon(x_{k(1)})\,\langle \eta,\ell\rangle\, \langle\alpha^j, x_{k(2)}\rangle x_i=\langle \eta,\ell\rangle\, \delta^j_k x_i.
\end{align*}
As the  Haar integrals  satisfy   $\langle \eta,\ell\rangle\neq 0$ and the linear maps $\phi_{ij}\in \mathrm{End}_{\mathbb F}(H)$ with $\phi_{ij}(x_k)=\delta^j_k x_i$ form a basis of  $\mathrm{End}_{\mathbb F}(H)$, this proves that $\phi$ is surjective.
The formulas in \eqref{eq:kitops} for  $L^y_+$ and $T^\alpha_+$ are  obtained directly from the definition of $\phi$. 
The  ones for $L^y_\pm$ and $T^\alpha_\pm$ follow from the definition of the antipode of $D(H)^*$ in \eqref{eq:ddualmult}, which yields
\begin{align*}
&\phi\circ S_D\circ (S\oo\id)(y\oo 1)=\Sigma_{i,j} \phi(x_iy S(x_j)\oo\alpha^j\alpha^i)=\Sigma_{i,j}\;L_+^{x_iyS(x_j)}\circ T^{ \alpha^j\alpha^i}_+ \\
&\phi\circ S_D\circ (\id\oo S)(1\oo\alpha)=\Sigma_{i,j} \phi(x_iS(x_j)\oo\alpha^j\alpha\alpha^i)=\Sigma_{i,j}\;L_+^{x_iS(x_j)}\circ T^{ \alpha^j\alpha\alpha^i}_+.
\end{align*}
This implies for all $h\in H$
\begin{align*}
&\phi\circ S_D (S(y)\oo 1) h
=\Sigma_{i,j}\,\langle\alpha^j\alpha^i, \low h 2\rangle x_i y S(x_j) \low h 1=\low h 3  y  S(\low h 2)  \low h 1=h \cdot y=L^y_-h\\
&\phi\circ S_D (1\oo S(\alpha)) h=\Sigma_{i,j}\,\langle\alpha^j\alpha\alpha^i, \low h 2\rangle x_i S(x_j) \low h 1=\langle \alpha,\low h 3\rangle\,  \low h 4   S(\low h 2)  \low h 1=\langle \alpha,\low h 1\rangle\, \low h 2=T^\alpha_- h.
\end{align*}
The last claim follows  from  the fact that  $L^y_{e\pm}$, $T^\alpha_{e\pm}$ commute with  $L^y_{f\pm}$, $T^\alpha_{f\pm}$ if $e\neq f$.
\end{proof}

\section{Hopf algebra gauge theory}
\label{sec:gtheory}

The action of vertex and face operators in the Kitaev models on the extended 
space  resemble gauge symmetries. They define representations of the Hopf algebras $H$, $H^*$ and $D(H)$ on the extended 
space $H^{\oo E}$, and the protected space $\prot$ is defined as the set of states that transform trivially 
under these representations. 
 This suggests that Kitaev models could be understood as a Hopf algebra analogue of a lattice gauge theory.

This  requires a concept of a Hopf algebra valued gauge theory on a ribbon graph. An axiomatic description of such a Hopf algebra gauge theory was derived  in \cite{MW} by generalising  lattice gauge theory for a finite group. 
 It is shown there that the resulting Hopf algebra gauge theory coincides with the algebras  
  obtained in \cite{AGSI,AGSII,AS}  and \cite{BR,BR2}  via the combinatorial quantisation of Chern-Simons gauge theory, which were analysed further  in \cite{BFK}. We summarise the description in \cite{MW} but with a  change of notation and  for ribbon graphs without loops or multiple edges.

The general definition of a Hopf algebra gauge theory is obtained by linearising the corresponding structures for a gauge theory based on a finite group.
A lattice gauge theory for a ribbon graph $\Gamma$ and a finite group $G$ consists of the following:
\begin{compactitem}
\item {\bf gauge fields and functions of gauge fields:}
A 
 {\em gauge field} is an assignment  of an element  $g_e\in G$  to each oriented edge $e\in E$ and hence can be interpreted as an element of the {\em set} $G^{\times E}$. Functions of the gauge fields with values in $\mathbb F$ form a commutative algebra $\mathrm{Fun}(G^{\times E})$ with respect to pointwise multiplication and addition.  They are related to gauge fields by an evaluation map $\mathrm{ev}: \mathrm{Fun}(G^{\times E})\times G^{\times E}\to \mathbb F$, $(f,g)\mapsto f(g)$.\\[-2ex]
 
\item  {\bf gauge transformations:}
A {\em gauge transformation} is an assignment of a group element $g_v$ to each vertex $v\in V$. 
Gauge transformations at different vertices commute and the composition of 
 gauge transformations at a given vertex is given by the group multiplication in $G$. A  gauge transformation can therefore be viewed as an element of the {\em group} $G^{\times V}$. \\[-2ex]
 
 \item {\bf action of gauge transformations:}
Gauge transformations act on gauge fields via a left action $\rhd: G^{\times V}\times G^{\times E}\to G^{\times E}$.  This action is  local: a gauge transformation at a vertex $v$ acts only on the components of  gauge fields associated with edges  at $v$. This action  is given by left multiplication and right multiplication  for outgoing and incoming edges.
The left action of gauge transformations on gauge fields induces a right action  $\lhd: \mathrm{Fun}(G^{\times E})\times G^{\times V}\to \mathrm{Fun}(G^{\times E})$ defined by $(f\lhd h)(g)=f(h\rhd g)$ for all $f\in \mathrm{Fun}(G^{\times E})$, $g\in G^{\times E}$ and $h\in G^{\times V}$.\\[-2ex]

\item  {\bf observables:}
The physical observables are the {\em gauge invariant} functions $f\in \mathrm{Fun}(G^{\times E})$ with $f\rhd h=f$ for all $h\in G^{\times V}$.  They form a subalgebra of the commutative algebra $\mathrm{Fun}(G^{\times E})$. 
 \end{compactitem}

The corresponding structures for lattice gauge theories with values in a finite-dimensional Hopf algebra $K$ were obtained in \cite{MW} by {\em linearising} the structures for a finite group $G$. 
\begin{compactitem}
\item {\bf gauge fields and functions of gauge fields:}
The set $G^{\times E}$ of gauge fields  is replaced by the {\em vector space} $K^{\oo E}$. Functions of gauge fields are identified with elements of the dual vector space $K^{*\oo E}$, and the evaluation map is replaced by the pairing $\langle\,,\,\rangle: K^{*\oo E}\oo K^{\oo E}\to \mathbb F$.
One also requires an algebra structure on  $K^{*\oo E}$, although not necessarily the canonical one.\\[-2ex]

\item {\bf gauge transformations:}
The group $G^{\times V}$ of gauge transformations is replaced by the {\em Hopf algebra} $K^{\oo V}$ of gauge transformations. \\[-2ex]

\item {\bf action of gauge transformations:}
The action of gauge transformations on gauge fields takes the form of a $K^{\oo V}$-left module structure $\rhd: K^{\oo V}\oo K^{\oo E}\to K^{\oo E}$. This action must be local in the sense that the component of a gauge transformation associated with a vertex $v$ acts only on the components of gauge fields associated with edges incident at $v$. Instead of the action of $G$ on itself by left and right multiplication, it is given by the left and right regular action of $K$ on itself for incoming and outgoing edges at $v$. 
 Via the pairing, this $K^{\oo V}$-left module structure induces a $K^{\oo V}$-right module structure on the vector space $K^{*\oo E}$, which is given by the left and right regular action of $K$ on $K^*$ for  incoming and outgoing  edges at  $v$.  \\[-2ex]
 
\item {\bf observables:}
The {\em gauge invariant functions} or {\em observables} of the Hopf algebra gauge theory are defined as the invariants of the $K^{\oo V}$-right module structure on $K^{*\oo E}$, the elements $\alpha\in K^{*\oo E}$ with $\alpha\lhd h=\epsilon(h)\, \alpha$ for all $h\in K^{\oo V}$.
They  must form a subalgebra of $K^{*\oo E}$. 
\end{compactitem}

\medskip
The fundamental difference between a Hopf algebra gauge theory and a group gauge theory  is that generally one cannot  equip the vector space $K^{*\oo E}$  with the canonical algebra structure induced by the tensor product. To ensure that the invariants of the $K^{\oo V}$-right module $K^{*\oo E}$ form not only a linear subspace but a  {\em subalgebra} of $K^{*\oo E}$, the algebra structure on $K^{*\oo E}$ must chosen in such a way that $K^{*\oo E}$ is not only a $K^{\oo V}$-right module but a  $K^{\oo V}$-right  module {\em algebra} over $K^{\oo E}$. This is not the case for the canonical algebra structure on $K^{*\oo E}$ unless $K$ is cocommutative. 

This shows that the essential mathematical structure in a Hopf algebra gauge theory is a $K^{\oo V}$-right module algebra structure on the vector space $K^{*\oo E}$. It was shown in \cite{MW} that such a $K^{\oo V}$-right module algebra structure, subject to certain additional locality conditions,  can  be built up  from local Hopf algebra gauge theories on the vertex neighbourhoods of $\Gamma$.

By definition, a  Hopf algebra gauge theory on the vertex neighbourhood $\Gamma_v$ of an $n$-valent vertex $v$  is  a $K$-right module algebra structure on $K^{*\oo n}$. 
 It is shown in \cite{MW} that the natural algebraic ingredient  for a Hopf algebra gauge theory
on $\Gamma_v$ is a {\em ribbon Hopf algebra} $K$. A quasitriangular structure on $K$ is required for the  $K$-module algebra structure on $K^{*\oo n}$ is all edges are incoming. 

The
 condition that $K$ is ribbon is needed for the reversal of edge orientation. 
As we consider only the case of a  finite-dimensional semisimple Hopf algebra  $K$ over a field of characteristic zero,  quasi-triangularity of $K$ implies that $K$  is ribbon \cite{sgel} and edge orientation is reversed with the antipode $S: K^*\to K^*$. 
The $K$-right module algebra structure on $K^{*\oo n}$ is then defined by the following theorem, which makes use of  the  `braided tensor product'  introduced in \cite{majidbraid1,majidbraid}.

\pagebreak

\begin{theorem} [\cite{MW}]\ \label{rem:flipalg} Let $K$ be a finite-dimensional semisimple quasitriangular Hopf algebra  with  $R$-matrix $R\in K\oo K$ and   $\sigma_i\in\{0,1\}$ for $i\in\{1,...,n\}$. Then 
the multiplication law
\begin{align}\label{eq:flipalg}
&(\alpha)_i\cdot (\beta)_i=\begin{cases} 
\langle  \beta_{(1)}\oo\alpha_{(1)}, R\rangle \,(\low\beta 2\low\alpha 2)_i  &\sigma_i=0\\
 (\alpha\beta)_i & \sigma_i=1
\end{cases}\\
&(\alpha)_i\cdot (\beta)_j=\begin{cases}
\langle \beta_{(1)}\oo \alpha_{(1)}, R\rangle \, (\alpha_{(2)}\oo \beta_{(2)})_{ij}  &i>j\\
(\alpha\oo\beta)_{ij} & i<j,
\end{cases}
\nonumber
\end{align}
and the linear map $\lhd:  K^{*\oo n}\oo K\to K^{*\oo n}$ 
\begin{align}\label{eq:act_v}
&(\alpha^1\oo...\oo\alpha^n)\lhd h=\langle \alpha^1_{(1)}\cdots \alpha^n_{(1)}, h\rangle \; \alpha^1_{(2)}\oo\ldots \oo \alpha^n_{(2)}
\end{align}
 define a $K$-right module algebra structure on $K^{*\oo n}$. 
\end{theorem}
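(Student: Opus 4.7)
The plan is to establish three statements in turn: (i) the formula \eqref{eq:act_v} defines a $K$-right module structure on $K^{*\oo n}$; (ii) the multiplication \eqref{eq:flipalg} is associative with unit $(\epsilon)_1\cdots(\epsilon)_n = \epsilon^{\oo n}$; and (iii) the action $\lhd$ and the multiplication satisfy the module algebra axioms $(x y)\lhd h = (x\lhd h_{(1)})(y\lhd h_{(2)})$ and $\epsilon^{\oo n}\lhd h = \epsilon(h)\,\epsilon^{\oo n}$ for all $x,y\in K^{*\oo n}$ and $h\in K$. Statement (i) is a direct consequence of the pairing identity $\langle\alpha\beta, h\rangle = \langle\alpha, h_{(1)}\rangle\langle\beta, h_{(2)}\rangle$, coassociativity in $K^*$, and associativity in $K$.

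\textbf{Associativity.} For (ii), I proceed by case analysis on the positions of three pure tensor factors $(\alpha)_i$, $(\beta)_j$, $(\gamma)_k$ and on the values of $\sigma_i,\sigma_j,\sigma_k$. When $i,j,k$ are pairwise distinct, the equation $((\alpha)_i\cdot(\beta)_j)\cdot(\gamma)_k = (\alpha)_i\cdot((\beta)_j\cdot(\gamma)_k)$ reduces, after moving all factors into increasing order, to a comparison of $R$-matrix insertions which agree by the quasi-triangularity identities $(\Delta\oo\id)(R) = R_{13}R_{23}$ and $(\id\oo\Delta)(R) = R_{13}R_{12}$. The purely diagonal case ($i=j=k$) with $\sigma_i=0$ is the associativity of the braided-opposite product on $K^*$, which follows from the quantum Yang--Baxter equation $R_{12}R_{13}R_{23} = R_{23}R_{13}R_{12}$; the case $\sigma_i=1$ is the associativity of $K^*$. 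The mixed cases (two coincident indices, one distinct) are handled by combining these ingredients with the fact that $\Delta\colon K^*\to K^*\oo K^*$ is an algebra homomorphism. Unitality of $\epsilon^{\oo n}$ follows from $(\epsilon\oo\id)(R) = (\id\oo\epsilon)(R) = 1$.

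\textbf{Module algebra compatibility.} For (iii), it suffices to verify $((\alpha)_i\cdot(\beta)_j)\lhd h = ((\alpha)_i\lhd h_{(1)})\cdot((\beta)_j\lhd h_{(2)})$ on pure tensors. When no $R$-matrix appears (i.e.\ $i<j$, or $i=j$ with $\sigma_i=1$), the identity reduces to the fact that $\Delta$ is an algebra map on $K^*$ together with coassociativity. When an $R$-matrix does appear ($i>j$, or $i=j$ with $\sigma_i=0$), unwinding both sides with \eqref{eq:flipalg} and \eqref{eq:act_v} and collecting Sweedler components leads to the identity
\[
\langle\beta\oo\alpha,\,R\,\Delta(h)\rangle = \langle\beta\oo\alpha,\,\Delta^{op}(h)\,R\rangle,
\]
which is precisely the pairing with $\beta\oo\alpha\in K^{*\oo 2}$ of the defining quasi-triangularity relation $R\,\Delta(h) = \Delta^{op}(h)\,R$. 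Invariance of the unit, $\epsilon^{\oo n}\lhd h = \epsilon(h)\,\epsilon^{\oo n}$, follows from $\Delta(\epsilon) = \epsilon\oo\epsilon$.

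\textbf{Main obstacle.} The most delicate part is the book-keeping of $R$-matrix factors in the associativity argument when two or three indices coincide: one must carefully track the order in which braiding and diagonal multiplication are applied and invoke the appropriate hexagon identity in each sub-case. Once the case list is set up systematically, however, every verification reduces to a finite Sweedler-notation computation using only the quasi-triangular axioms, the Yang--Baxter equation, and the algebra/coalgebra structure of $K^*$; no further analytical input about $K$ is needed.
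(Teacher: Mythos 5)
Your proposal is correct in substance, but it takes a genuinely different route from the one the paper relies on. The paper does not prove Theorem \ref{rem:flipalg} itself: it imports it from \cite{MW}, and the sentence preceding the statement signals the intended argument, namely Majid's \emph{braided tensor product} of module algebras. In that approach each tensor factor $K^*$ is first made into a $K$-right module algebra under the coregular action (the ordinary product of $K^*$ for $\sigma_i=1$, the braided-opposite product $\alpha\cdot\beta=\langle\beta_{(1)}\oo\alpha_{(1)},R\rangle\,\beta_{(2)}\alpha_{(2)}$ for $\sigma_i=0$), and one then invokes the general fact that algebras in the braided category of $K$-modules can be tensored, the braiding $\beta\oo\alpha\mapsto\langle\beta_{(1)}\oo\alpha_{(1)},R\rangle\,\alpha_{(2)}\oo\beta_{(2)}$ supplying exactly the cross-relation for $i>j$; associativity, unitality and the module algebra property then come packaged with the coherence of that construction. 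Your direct verification replaces this by a case analysis, and the identities you invoke are precisely the right ones: the hexagon relations $(\Delta\oo\id)(R)=R_{13}R_{23}$ and $(\id\oo\Delta)(R)=R_{13}R_{12}$ for associativity across distinct slots, the quantum Yang--Baxter equation for the diagonal $\sigma_i=0$ case (the two associations produce $R_{23}R_{13}R_{12}$ and $R_{12}R_{13}R_{23}$ paired against the three legs, which agree by QYBE), and $R\,\Delta(h)=\Delta^{op}(h)\,R$ for compatibility with $\lhd$ — I checked this last reduction and it is exactly as you state. What the categorical route buys is that the bookkeeping is done once and for all; what your route buys is a self-contained, elementary proof.

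One point you should make explicit: \eqref{eq:flipalg} only prescribes products of the generators $(\alpha)_i$, so before associativity can even be posed you must fix a well-defined bilinear product on all of $K^{*\oo n}$ — for instance by writing the closed formula for $(\alpha^1\oo\cdots\oo\alpha^n)\cdot(\beta^1\oo\cdots\oo\beta^n)$ obtained by commuting each $(\beta^j)_j$ leftwards past every $(\alpha^i)_i$ with $i>j$ and then multiplying slotwise — and then check associativity of \emph{that} map. Verifying associativity only on triples of generators tacitly assumes that the reordering procedure is confluent, which is exactly what the braided tensor product construction guarantees for free and what a purely generator-level argument must supply separately. This is where the ``bookkeeping'' you identify as the main obstacle actually lives; once the closed formula is written down, every remaining verification is, as you say, a finite Sweedler computation from the quasitriangularity axioms, and no further input about $K$ is needed.
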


A Hopf algebra gauge theory for a vertex neighbourhood of a {\em ciliated} vertex $v$ with $n$ incident edge ends is then obtained as follows.  The edge ends at $v$ are numbered according to the ordering at $v$
as in Figure \ref{fig:vertex_edgeends},
 and the $i$th copy of $K^*$ in $K^{*\oo n}$ is associated with the $i$th edge end  at $v$.  
One chooses arbitrary parameters $\sigma_i\in\{0,1\}$ and sets 
 $\tau_i=0$ if the $i$th edge end is incoming at $v$ and $\tau_i=1$ if it is outgoing for $i\in\{1,...,n\}$.  The $K$-right module algebra 
  structure from Theorem \ref{rem:flipalg} is then  modified with the involution $S^{\tau_1}\oo...\oo S^{\tau_n}: K^{*\oo n}\to K^{*\oo n}$ to take into account edge orientation.

\begin{definition}[\cite{MW}] \label{th:vertex_gt}  
Let  $v$ be a ciliated vertex with $n$ incident edge ends. 
The  $K$-right module algebra structure $\mathcal A^*_v$  of a Hopf algebra gauge theory on $\Gamma_v$ is defined by the condition  
that the involution $S^{\tau_1}\oo\ldots\oo S^{\tau_n}: K^{*\oo n}\to \mathcal A^*_v$ is a morphism of $K$-right module algebras  when $K^{*\oo n}$ is equipped with  the $K$-right module algebra structure in Theorem \ref{rem:flipalg}. 
\end{definition}

The parameters $\sigma_i\in\{0,1\}$ can be chosen arbitrarily at this stage but play an essential role in gluing together the Hopf algebra gauge theories on different vertex neighbourhoods to a Hopf algebra gauge theory on $\Gamma$. This is achieved by embedding the vector space  $K^{*\oo E}$ into the tensor product $\oo_{v\in V} A^*_v$ via the injective linear map
\begin{align}\label{eq:dualemb}
&G^*=\oo_{e\in E}G^*_e: \,
K^{*\oo E}\to\oo_{v\in V}K^{*\oo |v|}, \quad (\alpha)_e\mapsto  (\low\alpha 2\oo \low\alpha 1)_{s(e)t(e)}.
\end{align}
Note that this map is dual to the map $G: K^{\oo 2E}\to K^{\oo E}$, $(\alpha\oo\beta)_{t(e)s(e)}\to (\alpha\cdot \beta)_e$ that assigns to an edge $e\in E(\Gamma)$  the product of the components of a gauge field of the edge ends $s(e)\in E(\Gamma_{\st(e)})$ and $t(e)\in E(\Gamma_{\ta(e)})$.
As the vertex neighbourhoods are obtained by splitting the edge $e\in E(\Gamma)$ into the edge ends $s(e)$ and $t(e)$, this is 
natural and intuitive from the perspective of gauge theory.

To define a local  Hopf algebra gauge theory on  $\Gamma$,  one  then assigns a universal $R$-matrix $R_v$ to each vertex $v$ and selects for each edge $e\in E(\Gamma)$ one of the associated edge ends $t(e), s(e)\in \cup_{v\in V} E(\Gamma_v)$. This  determines  
 a map $\rhop: E(\Gamma)\to \cup_{v\in V}E(\Gamma_v)$, and for each edge end  $f\in \cup_{v\in V}E(\Gamma_v)$ one sets $\sigma_{f}=0$ if $f$ is in the image of $\rhop$ and $\sigma_f=1$ else.   This defines 
 a Hopf algebra gauge theory  on each vertex neighbourhood $\Gamma_v$ and   a  $K^{\oo V}$-right module algebra  $\oo_{v\in V} \mathcal A^*_v$.
 The $K^{\oo V}$-module algebra structure of the vector space $K^{*\oo E}$ is then obtained from the following theorem.

\begin{theorem} [\cite{MW}]\label{lem:edge_algebra} Let $K$ be a finite-dimensional   
semisimple quasitriangular Hopf algebra and $\Gamma$ a ciliated ribbon graph without loops or multiple edges and equipped with the data above. Then:\\[-3ex]
\begin{compactenum}
\item  $G^*(K^{*\oo E})\subset \otimes_{v\in V} \mathcal A^*_v$  is a subalgebra  and a $K^{\oo V}$-submodule of   $\otimes_{v\in V} \mathcal A^*_v$.\\[-2ex]
\item The induced  $K^{\oo V}$-right module algebra structure  on $K^{*\oo E}$  is a Hopf algebra gauge theory on $\Gamma$.\\[-2ex]
\item If $R_v=R$ for all $v\in V$, the $K^{\oo V}$-right module algebra structure on $K^{*\oo E}$  does not depend on the choice of  $\rhop$.\\[-2ex]
  \end{compactenum}
  The vector space $K^{*\oo E}$ with this $K^{\oo V}$-module algebra structure is denoted $\mathcal A^*_\Gamma$, and its elements are called {\bf functions}. Elements of the vector space $K^{\oo E}$  are called  {\bf gauge fields}, and elements  of the Hopf algebra  $K^{\oo V}$   {\bf gauge transformations}. The invariants of $\mathcal A^*_\Gamma$ are   
 called {\bf gauge invariant functions} or {\bf observables}. 
 \end{theorem}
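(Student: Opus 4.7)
The plan is to verify the three claims in order, exploiting the local structure of the module algebras $\mathcal A^*_v$ and the quasitriangularity of $K$. For the first claim, I would reduce showing that $G^*(K^{*\oo E})$ is a subalgebra to computing the product $G^*((\alpha)_e)\cdot G^*((\beta)_f)$ in $\oo_{v\in V}\mathcal A^*_v$ for edge generators and exhibiting it as $G^*(\gamma)$ for some $\gamma\in K^{*\oo E}$. Since $\Gamma$ has no loops or multiple edges, the cases to consider are: (a) $e,f$ share no vertex, in which case the product is a pure tensor and immediately in the image; and (b) $e,f$ share a single vertex $v$, in which case only two of the four edge ends $s(e),t(e),s(f),t(f)$ live at $v$, and the multiplication in $\mathcal A^*_v$ of the associated factors introduces at most one $R$-matrix twist governed by the ordering of the two edge ends at $v$ and the values of $\sigma$ assigned by $\rhop$. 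Using coassociativity together with the quasitriangularity identities $(\Delta\oo\id)(R)=R_{13}R_{23}$ and $R\Delta=\Delta^{op}R$, this twist can be absorbed into a redefinition of the coproduct at one of the edges, exhibiting the product as $G^*$ of a sum of tensors in $K^{*\oo E}$. The shape of the resulting formula also defines, implicitly, the deformed algebra structure on $K^{*\oo E}$.

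For the submodule property, the action $\lhd$ defined in \eqref{eq:act_v} is local: the component of $h\in K^{\oo V}$ at vertex $v$ acts only on the edge ends at $v$ through iterated coproducts. Applied to $G^*((\alpha)_e)=(\low\alpha 2\oo\low\alpha 1)_{s(e)t(e)}$, coassociativity shows that acting at $\st(e)$ produces a factor $\langle h,\low\alpha 3\rangle$ while leaving $(\low\alpha 2\oo\low\alpha 1)_{s(e)t(e)}$ with one fewer Sweedler level—recognizable as $G^*((h\rhd^*\alpha)_e)$ for the left regular action, and dually at $\ta(e)$ for the right regular action. This establishes both that $G^*(K^{*\oo E})$ is $K^{\oo V}$-stable and that the induced action on $K^{*\oo E}$ is the expected local left/right regular one. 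Claim (2) then follows: once (1) is established, the induced structure is a $K^{\oo V}$-right module algebra by transport, the locality and the identification of the action with the left/right regular actions of $K$ on $K^*$ at $\st(e), \ta(e)$ give exactly the axioms of a Hopf algebra gauge theory on $\Gamma$.

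For the third claim, I would compare the induced structures for two choices $\rhop,\rhop'$ that differ at a single edge $e$, i.e.\ with $\rhop(e)=s(e)$ and $\rhop'(e)=t(e)$ (the general case follows by iterating). This flips $\sigma_{s(e)}$ and $\sigma_{t(e)}$ simultaneously, exchanging plain multiplication with $R$-twisted multiplication at $s(e)$ and vice versa at $t(e)$. When $R_v=R$ for all $v$, the two twists act on Sweedler components of the same element $\alpha$ (since $G^*((\alpha)_e)$ places $\low\alpha 2$ at $s(e)$ and $\low\alpha 1$ at $t(e)$), and the identity $R\Delta=\Delta^{op}R$ together with the involutivity of $R^{-1}R$ shows that the two changes cancel on the image of $G^*$. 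The main obstacle will be case (b) of claim (1): correctly tracking the interaction between the $R$-matrix twist at the shared vertex and the coproduct structure of $G^*$, in particular matching Sweedler indices from the multiplication in $\mathcal A^*_v$ with those appearing in $G^*$, so as to exhibit the twisted product as $G^*$ of a well-defined element of $K^{*\oo E}$.
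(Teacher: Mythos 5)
This theorem is quoted from \cite{MW}; the paper you are reading does not prove it, so there is no internal proof to compare against. Judged on its own merits, your outline has one genuine gap and one related misattribution.

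The gap is in claim (1): your case analysis for products of generators covers only $e\neq f$ (disjoint edges, or edges sharing one vertex) and omits the case $e=f$. This is not a peripheral case. A general element of $G^*(K^{*\oo E})$ has a nontrivial entry at \emph{both} ends of \emph{every} edge, so the product $G^*(x)\cdot G^*(y)$ always involves, for each edge $e$, the self-multiplication of two elements sitting in the \emph{same} copy of $K^*$ at $s(e)$ and again at $t(e)$. By Theorem \ref{rem:flipalg} this self-multiplication is the only place where the parameters $\sigma$ enter at all: the rule for distinct edge ends $i\neq j$ at a vertex is independent of $\sigma$, contrary to your statement that the twist in case (b) is ``governed by \dots the values of $\sigma$ assigned by $\rhop$.'' The whole point of $\rhop$ is that it forces exactly one of $\sigma_{s(e)},\sigma_{t(e)}$ to be $0$, so that the two self-products at the ends of $e$ contribute exactly one $R$-factor, which (using $G^*((\alpha)_e)=(\low\alpha 2\oo\low\alpha 1)_{s(e)t(e)}$, coassociativity and $R\Delta=\Delta^{op}R$) collapses to the single relation $(\beta)_e\cdot(\alpha)_e=\langle\low\alpha 1\oo\low\beta 1,R\rangle\,(\low\alpha 2\low\beta 2)_e$ recorded in Proposition \ref{prop:multrel}. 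If both ends carried $\sigma=0$ the product would acquire two $R$-pairings on different Sweedler legs and would in general \emph{not} lie in $G^*(K^{*\oo E})$. So without the $e=f$ case you have neither proved closure under multiplication nor explained why the construction needs $\rhop$ in the first place.

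Two smaller points. First, your argument for claim (3) correctly identifies $R\Delta=\Delta^{op}R$ as the key identity, but the phrase ``involutivity of $R^{-1}R$'' is vacuous ($R^{-1}R=1$); what is actually needed is the computation, for a single edge $e$, that placing the $R$-twisted self-product at $t(e)$ versus at $s(e)$ yields the same element of $G^*(K^{*\oo E})$, and this again lives entirely in the $e=f$ case you skipped. Second, your treatment of the module structure and of claim (2) is sound in outline: locality of \eqref{eq:act_v}, coassociativity, and the identification with the left/right regular actions at $\ta(e)$ and $\st(e)$ do give the submodule property and the gauge-theory axioms, though the orientation twists $S^{\tau}$ from Definition \ref{th:vertex_gt} must also be threaded through both the subalgebra and the submodule computations, which your sketch does not mention.
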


In the following, we restrict attention to local Hopf algebra gauge theories in which the same $R$-matrix  is assigned to each vertex $v$ and to loop-free ribbon graphs $\Gamma$ without multiple edges.  An explicit description  of the  algebra $\mathcal A^*_\Gamma$  in terms of generators and multiplication relations is given in \cite[Lemma 3.20]{MW}.  For the case at hand, we can summarise 
 the 
relations from \cite[Lemma 3.20]{MW} and the $K^{\oo V}$-module structure on $\mathcal A^*_\Gamma$  in the notation from Section \ref{subsec:conventions} as follows.

\begin{proposition}[{\cite{MW}}]\label{prop:multrel} Let $K$ be a finite-dimensional semisimple quasitriangular Hopf algebra and  $\Gamma$  a ciliated ribbon graph without loops or multiple edges. Assign the same $R$-matrix $R\in K\oo K$ to each vertex $v\in V(\Gamma)$.  Then the algebra $\mathcal A^*_\Gamma$ is generated by the elements $(\alpha)_e$ for  the edges $e$ of $\Gamma$ and $\alpha\in K^*$, subject to the following  relations:
\begin{compactitem}
\item $(\beta)_e\cdot (\alpha)_e=\langle \low\alpha 1\oo\low\beta 1, R\rangle\, (\low \alpha 2\low\beta 2)_e$ for all edges $e$ of $\Gamma$,\\[-2ex]
\item $(\beta)_f\cdot (\alpha)_e=(\alpha)_e\cdot (\beta)_f$ if the edges $e,f$ have no vertex in common,\\[-2ex]
\item $(\beta)_f\cdot (\alpha)_e=\langle S^{\tau_e}(\low\alpha {1+\tau_e})\oo S^{\tau_f}(\low\beta {1+\tau_f}, R\rangle\, (\low\beta {2-\tau_f})_f\cdot (\low \alpha {2-\tau_e})_e$ for edges $e,f$ with a common vertex $v$, where $\tau_h=0$ if $h$ is incoming at $v$ and $\tau_h=1$ if $h$ is outgoing.
\end{compactitem}
Its $K^{\oo V}$-module structure is given by
$(\alpha)_e\lhd(h)_v=\epsilon(h)(\alpha)_e$ for $v\notin\{\st(e), \ta(e)\}$,  $(\alpha)_e\lhd(h)_{\ta(e)}=\langle \low\alpha 1, h\rangle(\alpha)_e$ and $(\alpha)_e\lhd (h)_{\st(e)}=\langle S(\low\alpha 2), h\rangle (\alpha)_e$ for all 
 $\alpha\in K^{*\oo E}$ and $h\in K^{\oo V}$.
\end{proposition}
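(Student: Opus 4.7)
The plan is to derive both the relations and the module structure by pulling everything back through the algebra embedding $G^*\colon \mathcal A^*_\Gamma\hookrightarrow \otimes_{v\in V}\mathcal A^*_v$ of Theorem~\ref{lem:edge_algebra} together with \eqref{eq:dualemb}. Concretely I would write each generator as $G^*((\alpha)_e) = (\low\alpha 2\oo \low\alpha 1)_{s(e)t(e)}$. Because $\Gamma$ has no loops, the edge ends $s(e)$ and $t(e)$ sit in distinct vertex neighbourhoods, so the component of $(\alpha)_e$ landing at an incident vertex $v$ is $\low\alpha 2$ when $e$ is outgoing at $v$ (orientation $\tau_e=1$) and $\low\alpha 1$ when $e$ is incoming at $v$ ($\tau_e=0$)---in both cases $\low\alpha{1+\tau_e}$, with leftover Sweedler index $\low\alpha{2-\tau_e}$. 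By Theorem~\ref{lem:edge_algebra}(iii) the map $\rhop$ is immaterial, so I am free to pick any convenient choice of the parameters $\sigma$.

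For edges $e,f$ with no common vertex, $G^*((\alpha)_e)$ and $G^*((\beta)_f)$ lie in disjoint tensor factors and trivially commute. For the same-edge relation, I would compute $G^*((\beta)_e)\cdot G^*((\alpha)_e)$ separately in the two factors $\mathcal A^*_{\st(e)}$ and $\mathcal A^*_{\ta(e)}$: Definition~\ref{th:vertex_gt} identifies the product in $\mathcal A^*_v$ with the image under $S^{\tau_1}\oo\ldots\oo S^{\tau_n}$ of the twisted product of Theorem~\ref{rem:flipalg}, so one applies the same-index case of \eqref{eq:flipalg} at $s(e)$ (outgoing, $\tau=1$) and at $t(e)$ (incoming, $\tau=0$) and reassembles via $G^*$. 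Together with $\Delta\circ S=(S\oo S)\circ \Delta^{op}$ and $S^2=\id$, this should collapse to $\langle \low\alpha 1\oo\low\beta 1, R\rangle(\low\alpha 2\low\beta 2)_e$. For two edges $e,f$ sharing a common vertex $v$, the only nontrivial interaction occurs in the tensor factor $\mathcal A^*_v$, where the cross-index case $i\neq j$ of \eqref{eq:flipalg} inserts one $R$-matrix paired with the Sweedler components $\low\alpha{1+\tau_e}$ and $\low\beta{1+\tau_f}$ (each composed with $S^{\tau_e}$, $S^{\tau_f}$ by the twisting of Definition~\ref{th:vertex_gt}), while the components living at the opposite vertices are $\low\alpha{2-\tau_e}$ and $\low\beta{2-\tau_f}$; reassembling via $G^*$ yields the stated formula.

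The module structure is obtained analogously: the $K^{\oo V}$-right action on $\mathcal A^*_\Gamma$ is the restriction of the tensor product action on $\otimes_{v\in V}\mathcal A^*_v$ whose local pieces are given by \eqref{eq:act_v} twisted by the $S^{\tau_i}$. Applied to $G^*((\alpha)_e)$, a gauge transformation at $v\notin\{\st(e),\ta(e)\}$ contributes a factor $\epsilon(h)$, one at $\ta(e)$ pairs $h$ with the incoming Sweedler component $\low\alpha 1$, and one at $\st(e)$ pairs $h$ with $S(\low\alpha 2)$ owing to the $\tau=1$ twist, reproducing the stated action. The main obstacle I anticipate is the antipode bookkeeping in the shared-vertex relation: the four sub-cases $(\tau_e,\tau_f)\in\{0,1\}^2$ produce superficially different formulas, and the compressed notation $S^{\tau_e}(\low\alpha{1+\tau_e})$, $\low\alpha{2-\tau_e}$ is exactly what allows them to collapse into a single uniform expression once one tracks, for each orientation, which Sweedler component of $\alpha$ is transported into which tensor factor by $G^*$ and how $S^{\tau}$ acts there.
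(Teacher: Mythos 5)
The paper does not actually prove this proposition: it is quoted from \cite[Lemma 3.20]{MW}, so there is no in-paper argument to compare against. Your reconstruction is nevertheless the intended one, and it is essentially forced by the definitions: push the generators through $G^*$ from \eqref{eq:dualemb}, compute componentwise in $\otimes_{v\in V}\mathcal A^*_v$ using \eqref{eq:flipalg}, undo the $S^{\tau}$-twist of Definition \ref{th:vertex_gt} via $\Delta\circ S=(S\oo S)\circ\Delta^{op}$, $(S\oo S)(R)=R$ and $S^2=\id$, and invoke Theorem \ref{lem:edge_algebra}(3) to fix $\sigma$ conveniently. I checked the same-edge case explicitly with $\sigma_{t(e)}=0$, $\sigma_{s(e)}=1$: the $t(e)$-factor contributes the $R$-pairing with $\low\alpha 1\oo\low\beta 1$ and the $s(e)$-factor contributes $S(S(\low\beta 2)S(\low\alpha 2))=\low\alpha 2\low\beta 2$, which reassembles to $\langle\low\alpha 1\oo\low\beta 1,R\rangle\, G^*((\low\alpha 2\low\beta 2)_e)$ as required; your Sweedler bookkeeping $\low\alpha{1+\tau_e}$ versus $\low\alpha{2-\tau_e}$ for the shared-vertex case is likewise consistent.

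Two points you should make explicit. First, the shared-vertex relation is an \emph{exchange} relation: the $R$-matrix factor arises from the $i>j$ case of \eqref{eq:flipalg} and therefore appears only when the product $(\beta)_f\cdot(\alpha)_e$ is taken against the linear order of the edge ends at $v$ (i.e.\ for $e<f$ at $v$), and the right-hand side must then carry the factors in the opposite order $(\low\alpha{2-\tau_e})_e\cdot(\low\beta{2-\tau_f})_f$ — this is what the proof of Theorem \ref{th:hdcomb} verifies on the Kitaev side. Your sketch does not record which ordering triggers the $R$-matrix, and without it the relation is not well posed. Second, the proposition asserts a \emph{presentation}, so after deriving the relations you still need the (easy) observation that products of generators taken in increasing edge order are exactly the pure tensors of $K^{*\oo E}$, so that the relations suffice to reduce any word to this spanning set and injectivity of $G^*$ gives linear independence. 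With these two additions your argument is complete.
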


By Lemma  \ref{lem:project} the gauge invariant functions, which are the invariants of the $K^{\oo V}$-right module algebra $K^{*\oo E}$, form a subalgebra
 $\mathcal A^*_{\Gamma\,inv}\subset \mathcal A^*_{\Gamma}$.  It is shown in \cite{MW} that this subalgebra is independent of the choice of the cilia and depends only on the homeomorphism  class of the surface $\dot\Sigma_\Gamma$ obtained by gluing annuli to the faces of $\Gamma$.

 \begin{theorem}[\cite{MW}]\label{th:ginvsub} Let 
 $K$ be a finite-dimensional semisimple quasitriangular Hopf algebra with Haar integral $\ell$ and $\Gamma$ a ciliated ribbon graph. 
  Then $\mathcal A^*_{\Gamma\, inv}$ 
  is a subalgebra of $\mathcal A^*_\Gamma$ and depends only on the homeomorphism class of  $\dot\Sigma_\Gamma$. 
The map
$P_{inv}: \mathcal A^*_\Gamma\to\mathcal A^*_\Gamma$, $\alpha\mapsto \alpha\lhd \ell^{\oo V}$  
is a projector on $\mathcal A^*_{\Gamma\,inv}$. 
 \end{theorem}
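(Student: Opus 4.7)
I would handle the three assertions in order: the first and third follow quickly from the general module-algebra results already established, while topological invariance is the substantive part.

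For the subalgebra claim, Theorem~\ref{lem:edge_algebra} states that $\mathcal A^*_\Gamma$ carries a $K^{\oo V}$-right module algebra structure, equivalently a $(K^{\oo V})^{op}$-left module algebra structure. Lemma~\ref{lem:project} then immediately gives that the invariants form a subalgebra. For the projector claim, the tensor product $\ell^{\oo V}\in K^{\oo V}$ is a Haar integral of $K^{\oo V}$ (the defining properties check component-wise via Remark~\ref{rem:haar}, noting that finite tensor products of finite-dimensional semisimple Hopf algebras are again finite-dimensional semisimple), so Lemma~\ref{lem:haarproj} applied to $\mathcal A^*_\Gamma$ as a $(K^{\oo V})^{op}$-module yields that $\alpha\mapsto \alpha\lhd\ell^{\oo V}$ is a projector onto $\mathcal A^*_{\Gamma\,inv}$.

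For the topological invariance, my approach would be to identify a generating set of elementary graph moves $\Gamma\to\Gamma'$ that preserve the homeomorphism class of $\dot\Sigma_\Gamma$ and to construct, for each move, an explicit algebra isomorphism $\mathcal A^*_{\Gamma\,inv}\to \mathcal A^*_{\Gamma'\,inv}$. A sufficient generating set consists of: (a) moving a cilium at a vertex to an adjacent edge end; (b) changing the choice of $\rhop$; (c) subdividing an edge by inserting a bivalent vertex (and its inverse, contracting an edge between distinct vertices sharing only that edge); and (d) the edge-doubling move of Figure~\ref{fig:subdiv}(b) together with its face-subdividing companion, which preserves the surface obtained by gluing annuli to faces. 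Move (b) is immediate from part~(3) of Theorem~\ref{lem:edge_algebra}. For move (c) the candidate isomorphism sends a generator $(\alpha)_e$ to $(\low\alpha 1)_{e_2}\cdot(\low\alpha 2)_{e_1}$, where $e_1, e_2$ are the two new edges after subdivision; this is dual to the map on gauge fields that composes $g_{e_2}g_{e_1}$ into $g_e$, and one checks using Proposition~\ref{prop:multrel} and quasitriangularity that it is a $K^{\oo V}$-equivariant algebra morphism. Its inverse on invariants averages out the gauge transformation at the new bivalent vertex using $\ell$. For move (a), relocating a cilium permutes the $R$-matrix factors in the multiplication rules of Proposition~\ref{prop:multrel}; the natural candidate isomorphism is conjugation by a product built from $R$-matrix components determined by the cyclic ordering, which restricted to invariants becomes well-defined and canonical.

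The main obstacle is verifying compatibility of these candidate maps with the braided, non-commutative multiplication of $\mathcal A^*_\Gamma$. Concretely, for the edge-subdivision morphism one must check that it respects the $R$-matrix-twisted commutation of $(\alpha)_e$ with a generator $(\beta)_f$ sharing a vertex with $e$; this reduces to a Yang-Baxter-type identity using quasitriangularity of $K$. For the cilium-move isomorphism, one must identify the correct conjugating element vertex by vertex and verify that its action descends unambiguously to the invariant subalgebra, where the ambiguity from gauge transformations is killed. Granting these compatibilities, together with the standard topological fact that moves (a)-(d) generate the equivalence of ciliated ribbon graphs defining homeomorphic $\dot\Sigma_\Gamma$, the topological invariance of $\mathcal A^*_{\Gamma\,inv}$ follows by composing the isomorphisms along any sequence of moves relating two such graphs.
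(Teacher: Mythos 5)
Your treatment of the first and third assertions (subalgebra of invariants via Lemma~\ref{lem:project} applied to the $(K^{\oo V})^{op}$-left module algebra structure, and the projector via Lemma~\ref{lem:haarproj} with $\ell^{\oo V}$ the Haar integral of $K^{\oo V}$) is correct and is exactly the route the paper indicates; note, however, that for the topological invariance the paper gives no proof at all but defers entirely to \cite{MW}, so that part of your proposal is being measured against the cited reference rather than against an argument in this text. Your overall strategy there --- a generating set of moves preserving $\dot\Sigma_\Gamma$ together with explicit ($K^{\oo V}$-equivariant, on invariants invertible) algebra morphisms for each move --- is indeed the method of \cite{MW}, and your candidate map $(\alpha)_e\mapsto(\low\alpha 1)_{e_2}\cdot(\low\alpha 2)_{e_1}$ for edge subdivision is the right one.

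The genuine error is move (d). Doubling an edge as in Figure~\ref{fig:subdiv}(b), and likewise subdividing a face, each \emph{increase} the number of faces of $\Gamma$ by one, and therefore add a boundary component to the surface $\dot\Sigma_\Gamma$ obtained by gluing annuli to the faces. These moves preserve the closed surface $\Sigma_\Gamma$ (discs glued in), but not $\dot\Sigma_\Gamma$, so your parenthetical claim that (d) ``preserves the surface obtained by gluing annuli to faces'' is false, and the ``standard topological fact'' that (a)--(d) generate the equivalence of graphs with homeomorphic $\dot\Sigma_\Gamma$ fails with (d) included. Worse, $\mathcal A^*_{\Gamma\,inv}$ is genuinely \emph{not} invariant under (d): adding a puncture changes the algebra of gauge-invariant functions (this is precisely why the paper reserves face-changing moves for the quantum moduli algebra $\mathcal M_\Gamma$ in Theorem~\ref{th:flat}, where the flatness projectors $P_f$ trivialise the extra face, and states invariance of $\mathcal A^*_{\Gamma\,inv}$ only up to homeomorphism of $\dot\Sigma_\Gamma$). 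So the attempt to construct an isomorphism for move (d) would necessarily fail. The repair is to drop (d) entirely and to enlarge (c) from bivalent subdivision to general edge contraction and its inverse (vertex splitting): contractions/expansions together with cilium relocation and edge reversal suffice to relate any two ciliated ribbon graph spines of a fixed compact surface with boundary, which is the correct generating set for the equivalence classified by $\dot\Sigma_\Gamma$.
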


Besides gauge invariance, there is another essential  concept in a lattice gauge theory, namely curvature. 
In a classical gauge theory on an oriented surface curvature is locally a 2-form and hence can be integrated over discs on the surface. In a description based on a ribbon graph $\Gamma$, these discs correspond to the faces of $\Gamma$. Hence, curvature  in a Hopf algebra gauge theory is described by Hopf algebra valued holonomies of faces of $\Gamma$.
  
On the level of gauge fields,  holonomy is an assignment of a  linear map  $\mathrm{Hol}_p: K^{\oo E} \to K$
to each path $p\in \mathcal G(\Gamma)$. 
In the dual picture for functions this corresponds to an assignment of a linear map  $\mathrm{Hol}_p: K^* \to K^{*\oo E}$ to each path  $p\in \mathcal G(\Gamma)$. This assignment must be compatible with trivial paths, composition of paths and inverses and hence define a functor $\mathrm{Hol}:\mathcal G(\Gamma)\to \mathrm{Hom}_{\mathbb F}(K^*,K^{*\oo E})$, where $\mathrm{Hom}_{\mathbb F}(K^*,K^{*\oo E})$ is equipped with the structure of an $\mathbb F$-linear category with a single object, i.~e.~with an associative algebra structure over $\mathbb F$. This  
was defined in \cite{MW} as  a convolution product.
For any algebra $(A,m,1)$ and coalgebra $(C,\Delta, \epsilon)$ over $\mathbb F$ the convolution product
\begin{align}\label{eq:assalg}\bullet: \mathrm{Hom}_{\mathbb F}(C,A)\oo \mathrm{Hom}_{\mathbb F}(C,A)\to \mathrm{Hom}_{\mathbb F}(C,A), \quad \phi\oo\psi\mapsto \phi\bullet \psi=m \circ (\phi\oo\psi)\circ \Delta, \end{align}
defines an  associative algebra structure on  $\mathrm{Hom}_{\mathbb F}(C,A)$
 with unit  $\epsilon\,1$. It is argued in \cite{MW} that the natural choice of $A$ and $C$ for a Hopf algebra gauge theory are  $A=K^{*\oo E}$ and  $C=K^*$, with the canonical algebra and coalgebra structure.
As the path groupoid $\mathcal G(\Gamma)$ 
is generated by the edges of $\Gamma$,  a functor $\mathrm{Hol}: \mathcal G(\Gamma)\to \mathrm{Hom}_{\mathbb F}(K^*, K^{*\oo E})$ is defined uniquely by its values on the paths $e^{\pm 1}\in \mathcal G(\Gamma)$ for $e\in E$. For these paths it is natural to choose $\mathrm{Hol}_e=\iota_e$ and $\mathrm{Hol}_{e^\inv}=\iota_e\circ S$, where $\iota_e: K^*\to K^{*\oo E}$, $\alpha \mapsto (\alpha)_e$ is the map  that sends an element $\alpha\in K^*$
to the pure tensor $(\alpha)_e\in K^{*\oo E}$ that has the entry $\alpha$ in the component of $K^*$ associated with $e\in E$ and $1$ in all other components, as defined at the beginning of  Section \ref{subsec:conventions}.

\begin{definition} [\cite{MW}]\label{def:hol} Let $\Gamma$ be a ciliated ribbon graph and $K$ a  finite-dimensional semisimple quasitriangular Hopf algebra. 
Equip $\mathrm{Hom}_{\mathbb F}(K^*, K^{*\oo E})$ with the algebra structure \eqref{eq:assalg} for $A=K^{*\oo E}$ and $C=K^*$. 
The {\bf holonomy} for a $K$-valued Hopf algebra gauge theory on $\Gamma$   is the functor $\mathrm{Hol}:\mathcal G(\Gamma)\to \mathrm{Hom}_{\mathbb F}(K^*,K^{*\oo E })$ defined by 
$\mathrm{Hol}_{e}=\iota_e$ and $\mathrm{Hol}_{e^\inv}=\iota_e\circ S$ for all $e\in E$. If $f$ is a ciliated face of $\Gamma$, then the holonomy $\mathrm{Hol}_f: K^*\to K^{*\oo E}$ is called a {\bf curvature}.
\end{definition}

It is shown in \cite{MW} that under certain assumptions on $\Gamma$, the curvatures of  ciliated faces $f$ 
take values in the centre of $\mathcal A^*_{\Gamma\,inv}$, give rise to representations of the character algebra $C(K)$ and define projectors on subalgebras of  $\mathcal A^*_{\Gamma\, inv}$. The latter can be viewed as the algebras of gauge invariant  functions on the linear subspace of gauge fields that are flat at $f$. If $\Gamma$ is a regular ciliated ribbon graph, these assumptions on $\Gamma$ are  satisfied, and the results from \cite{MW} can be summarised as follows.

\begin{lemma}[\cite{MW}]\label{lem:facecentral} 
Let  $K$ be a finite-dimensional semisimple quasitriangular Hopf algebra, $\Gamma$ a regular ciliated ribbon graph and $f$  a ciliated face of $\Gamma$ that starts and ends at a cilium.
Then:
\begin{compactenum}
\item  The linear map $P_{inv}\circ \mathrm{Hol}_f: K^*\to \mathcal A^*_{\Gamma\,inv}$ takes values in the centre  $Z(\mathcal A^*_{\Gamma\,inv})$ and depends only on the associated face $f$. \\[-2ex]

\item It satisfies $P_{inv}\circ \mathrm{Hol}_f=\mathrm{Hol}_f\circ \pi_{ad}$, where $P_{inv}$ is the projector from Theorem \ref{th:ginvsub} and $\pi_{ad}: K^*\to K^*$, $\alpha\mapsto \alpha\lhd^*_{ad}\ell$  the projector on the character algebra $C(K)$  from Example \ref{ex:kad}.\\[-2ex]
\item  The map
$\mathrm{Hol}_f\vert_{C(K)}: C(K)\to \mathcal A^*_{\Gamma\,inv}$
 is an algebra morphism with values in $Z(\mathcal A^*_{\Gamma\,inv})$. \\[-2ex]
 \item The map $\lhd: \mathcal A^*_{\Gamma\,inv}\oo C(K)^{\oo F}\to \mathcal A^*_{\Gamma\,inv}$, $\alpha\oo (\beta)_{f(v)} \mapsto \mathrm{Hol}_{f}(\beta)\cdot \alpha$  defines a $C(K)^{\oo F}$-right module structure on $\mathcal A^*_{\Gamma\,inv}$.
 \end{compactenum}
\end{lemma}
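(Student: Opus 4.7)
My plan is to prove the four assertions essentially by evaluating $\mathrm{Hol}_f$ explicitly and then using the regularity of $\Gamma$ to control the interaction of the resulting expression with gauge transformations and with the deformed multiplication on $\mathcal A^*_\Gamma$. Write $f = e_n^{\epsilon_n}\circ\ldots\circ e_1^{\epsilon_1}$, so that by the functoriality of holonomy and the convolution product \eqref{eq:assalg},
\begin{align*}
\mathrm{Hol}_f(\alpha)=\mathrm{Hol}_{e_1^{\epsilon_1}}(\low\alpha 1)\cdots \mathrm{Hol}_{e_n^{\epsilon_n}}(\low\alpha n)=\bigl(S^{\tau_1}(\low\alpha 1)\bigr)_{e_1}\cdots\bigl(S^{\tau_n}(\low\alpha n)\bigr)_{e_n},
\end{align*}
with $\tau_i=\tfrac12(1-\epsilon_i)$. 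Regularity ensures that the edges $e_1,\ldots,e_n$ are pairwise distinct, so no two factors live on the same copy of $K^*$.

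For (2), I would compute the action of a gauge transformation $(h)_v\in K^{\oo V}$ on $\mathrm{Hol}_f(\alpha)$ using Proposition \ref{prop:multrel}. For vertices $v$ not on $f$ the action is trivial by the counit. For a vertex $v$ on $f$ different from the cilium base $v_0$, exactly one of $e_i,e_{i+1}$ is incoming and the other outgoing at $v$, so the contributions of $h$ to adjacent tensor factors telescope via $h_{(1)}S(h_{(2)})=\epsilon(h)$ (or $S^{-1}(h_{(1)})h_{(2)}=\epsilon(h)$), and again give the counit. At $v_0$ the outgoing end of $e_1$ and the incoming end of $e_n$ both contribute, producing the right coadjoint action of $h$ on $\alpha$; this is the only vertex where $h$ acts nontrivially because $f$ contains exactly one cilium. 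Consequently $\mathrm{Hol}_f(\alpha)\lhd \ell^{\oo V}=\mathrm{Hol}_f(\alpha\lhd^*_{ad}\ell)$, which is exactly the identity $P_{inv}\circ \mathrm{Hol}_f=\mathrm{Hol}_f\circ \pi_{ad}$. That $\mathrm{Hol}_f$ takes values in $\mathcal A^*_{\Gamma\,inv}$ on the image of $\pi_{ad}$ then follows because $\pi_{ad}$ is a projector. The independence of the cilium within the face comes from the fact that cyclic permutations of the word for $f$ correspond to re-expanding $\Delta^{(n)}(\alpha)$, and for $\alpha\in C(K)$ these cyclic rearrangements coincide; together with the first part this proves (1) up to the centrality claim.

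For the algebra morphism property in (3), I would multiply $\mathrm{Hol}_f(\alpha)$ and $\mathrm{Hol}_f(\beta)$ in $\mathcal A^*_\Gamma$ and commute the factors $(S^{\tau_i}(\low\beta i))_{e_i}$ past $(S^{\tau_j}(\low\alpha j))_{e_j}$ using the third relation in Proposition \ref{prop:multrel}. At each vertex $v$ on $f$ the two $R$-matrix factors produced are of the form $\langle \low\alpha{a}\oo\low\beta{b},R\rangle$ for indices $a,b$ depending on the orientations of the two edges incident to $v$ on $f$; for $\alpha,\beta\in C(K)$ the identity $\Delta(\alpha)=\Delta^{op}(\alpha)$ lets one apply the quasitriangular axiom $(\Delta\oo\id)(R)=R_{13}R_{23}$ and its mirror to collapse the contributions at $v\neq v_0$, leaving only a clean convolution at $v_0$. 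This yields $\mathrm{Hol}_f(\alpha)\cdot\mathrm{Hol}_f(\beta)=\mathrm{Hol}_f(\alpha\beta)$. To finish centrality in (1) I would use a similar argument: for $\gamma\in \mathcal A^*_{\Gamma\,inv}$ and $\alpha\in C(K)$, commute $\mathrm{Hol}_f(\alpha)$ past $\gamma$ edge by edge, rewrite the accumulated $R$-matrix contributions at each vertex as a coadjoint action on $\alpha$, and apply gauge invariance of $\gamma$ together with $\pi_{ad}(\alpha)=\alpha$ to conclude $\mathrm{Hol}_f(\alpha)\cdot\gamma=\gamma\cdot\mathrm{Hol}_f(\alpha)$. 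Part (4) is then an immediate consequence: different holonomies $\mathrm{Hol}_{f_1}(\beta_1),\mathrm{Hol}_{f_2}(\beta_2)$ with $\beta_i\in C(K)$ land in the centre $Z(\mathcal A^*_{\Gamma\,inv})$ by (1), so they commute with each other, and (3) turns the assignment into an algebra action of $C(K)^{\oo F}$.

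The main obstacle is the centrality claim in (1) and the algebra morphism property in (3). Both hinge on bookkeeping of the $R$-matrix corrections introduced by the deformed multiplication in Proposition \ref{prop:multrel} when two holonomies meet at a common vertex. The regularity hypothesis is essential: without it, a face could traverse the same edge twice or a vertex could host more than one face, and the telescoping at non-cilium vertices would fail. The remaining parts are either formal consequences (as for (4)) or direct applications of the gauge invariance computation (as for the second half of (2)).
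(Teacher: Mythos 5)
This lemma is imported from \cite{MW} and the paper offers no proof of its own, so there is no internal argument to compare against; your outline reproduces the strategy of the proof in the cited reference (explicit expansion of $\mathrm{Hol}_f$, vertex-by-vertex telescoping of the gauge action away from the cilium, coadjoint action at the base point, and $R$-matrix bookkeeping for centrality and multiplicativity) and is structurally sound. Two points deserve more care than your sketch gives them. First, at an intermediate vertex $v$ of the face it is \emph{not} true that exactly one of $e_i,e_{i+1}$ is incoming and the other outgoing with respect to the fixed edge orientations: both may be incoming or both outgoing, depending on $\epsilon_i,\epsilon_{i+1}$. What is true is that the face arrives along one edge end and departs along the other, and the telescoping $\langle \low\alpha a,\low h 1\rangle\langle S(\low\alpha {a+1}),\low h 2\rangle=\epsilon(h)\,\epsilon(\low\alpha a)(\cdots)$ survives all four orientation cases only because the antipode in $\mathrm{Hol}_{e^{-1}}=\iota_e\circ S$ exchanges the source- and target-type regular actions; this case analysis has to be done. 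Second, the independence of the base point in (1) is not purely a matter of cyclic invariance of $\Delta^{(n)}(\alpha)$ for $\alpha\in C(K)$: cyclically permuting the word for $f$ also permutes the order of the factors $(S^{\tau_i}(\low\alpha i))_{e_i}$ in the noncommutative algebra $\mathcal A^*_\Gamma$, so reordering them back produces additional $R$-matrix corrections at the old and new base vertices which must be shown to cancel (they do, using cocommutativity of $\pi_{ad}(\alpha)$ and the quasitriangularity axioms, but this is a computation, not a formality). Neither issue is a gap in the sense of a wrong idea; both are places where the asserted cancellations require the explicit orientation and ordering bookkeeping that constitutes most of the actual proof in \cite{MW}.
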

As the Haar integral $\eta\in K^*$ is contained in the character algebra $C(K)$, it follows from Lemma \ref{lem:facecentral} that the element  $\mathrm{Hol}_f(\eta)$ is central in $\mathcal A^*_{\Gamma\,inv}$ for each  face $f$ that is  based at a cilium of $\Gamma$.
Moreover, as $\mathrm{Hol}_f\vert_{C(K)}: C(K)\to \mathcal A^*_{\Gamma\,inv}$ is an algebra morphism, the element $\mathrm{Hol}_f(\eta)$ is an idempotent.  This associates to each face $f$
an algebra morphism that projects on a subalgebra of $\mathcal A^*_{\Gamma\,inv}$.

\begin{lemma}[\cite{MW}] \label{lem:faceprpr} Let $\Gamma$ be a regular ciliated ribbon graph,  $K$ a finite-dimensional semisimple quasitriangular Hopf algebra  and $\eta\in K^*$ the Haar integral of its dual. Then for each  face
$f$ of $\Gamma$  based at a cilium
the map  $P_{f}:\mathcal A^*_{\Gamma}\to\mathcal A^*_{\Gamma}$, $\alpha\mapsto \mathrm{Hol}_f(\eta)\cdot \alpha$
 is a projector, and its  restriction to $\mathcal A^*_{\Gamma\,inv}$ is an algebra endomorphism. \end{lemma}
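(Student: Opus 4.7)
The plan is to show that the element $\mathrm{Hol}_f(\eta) \in \mathcal A^*_\Gamma$ is a central idempotent in the subalgebra $\mathcal A^*_{\Gamma\,inv}$; both claims then follow by the standard fact that left multiplication by a central idempotent in an algebra is a projector and, when restricted to a subalgebra on which the idempotent is central, a (non-unital) algebra endomorphism.

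First I would recall that the Haar integral $\eta \in K^*$ is idempotent: since $\alpha\cdot\eta = \epsilon(\alpha)\,\eta$ for all $\alpha\in K^*$ and $\epsilon(\eta)=1$ by normalisation, one has $\eta\cdot\eta = \eta$. Moreover $\eta$ lies in the character algebra $C(K)$, as noted after Example \ref{ex:kad} (the Haar integral is invariant under both coadjoint actions, or equivalently satisfies $\Delta(\eta)=\Delta^{op}(\eta)$). By Lemma \ref{lem:facecentral}, the restriction $\mathrm{Hol}_f\vert_{C(K)}: C(K)\to Z(\mathcal A^*_{\Gamma\,inv})$ is an algebra morphism taking values in the centre of $\mathcal A^*_{\Gamma\,inv}$. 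Applying this morphism to the relation $\eta\cdot\eta=\eta$ immediately gives $\mathrm{Hol}_f(\eta)\cdot\mathrm{Hol}_f(\eta)=\mathrm{Hol}_f(\eta)$, so $\mathrm{Hol}_f(\eta)$ is an idempotent lying in the centre of $\mathcal A^*_{\Gamma\,inv}$.

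The projector property is then immediate: for every $\alpha\in\mathcal A^*_\Gamma$,
\begin{align*}
P_f(P_f(\alpha)) = \mathrm{Hol}_f(\eta)\cdot \mathrm{Hol}_f(\eta)\cdot \alpha = \mathrm{Hol}_f(\eta)\cdot\alpha = P_f(\alpha).
\end{align*}
For the algebra endomorphism claim, note first that $P_f$ maps $\mathcal A^*_{\Gamma\,inv}$ into itself: by Theorem \ref{th:ginvsub} the subspace $\mathcal A^*_{\Gamma\,inv}$ is a subalgebra, and it contains $\mathrm{Hol}_f(\eta)$ by Lemma \ref{lem:facecentral}. For $\alpha,\beta\in\mathcal A^*_{\Gamma\,inv}$, using centrality of $\mathrm{Hol}_f(\eta)$ in $\mathcal A^*_{\Gamma\,inv}$ together with its idempotency,
\begin{align*}
P_f(\alpha\cdot\beta) = \mathrm{Hol}_f(\eta)\cdot\alpha\cdot\beta = \mathrm{Hol}_f(\eta)^2\cdot\alpha\cdot\beta = \mathrm{Hol}_f(\eta)\cdot\alpha\cdot\mathrm{Hol}_f(\eta)\cdot\beta = P_f(\alpha)\cdot P_f(\beta),
\end{align*}
which is precisely the algebra morphism property.

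There is no real obstacle internal to this lemma: all the substantive content has been packaged into Lemma \ref{lem:facecentral}, which supplies the two crucial inputs (that $\mathrm{Hol}_f\vert_{C(K)}$ is a morphism of algebras, and that its image lies in $Z(\mathcal A^*_{\Gamma\,inv})$). The only point to watch is that centrality of $\mathrm{Hol}_f(\eta)$ is only asserted inside $\mathcal A^*_{\Gamma\,inv}$, which is why the algebra morphism statement is restricted to the invariant subalgebra rather than formulated on all of $\mathcal A^*_\Gamma$; the projector statement needs no such restriction because it uses only idempotency, not centrality.
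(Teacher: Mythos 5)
Your proof is correct and follows exactly the route the paper itself sketches in the paragraph preceding the lemma: since $\eta$ is an idempotent in the character algebra $C(K)$ and $\mathrm{Hol}_f\vert_{C(K)}$ is an algebra morphism into $Z(\mathcal A^*_{\Gamma\,inv})$ by Lemma \ref{lem:facecentral}, the element $\mathrm{Hol}_f(\eta)$ is a central idempotent of $\mathcal A^*_{\Gamma\,inv}$, from which both claims follow. No gaps; your closing remark correctly identifies why centrality (and hence the morphism property) is only available on the invariant subalgebra.
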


For a regular ciliated ribbon graph $\Gamma$ each face $f$ of $\Gamma$ is represented by a unique ciliated face that starts and ends at a cilium of $\Gamma$.
Hence
Lemma \ref{lem:faceprpr} associates a projector
$P_{f}:\mathcal A^*_{\Gamma\,inv}\to\mathcal A^*_{\Gamma\,inv}$ to each face $f$ of $\Gamma$. 
 Lemma \ref{lem:facecentral} then implies that the elements $\mathrm{Hol}_f(\eta)$ and $\mathrm{Hol}_{f'}(\eta)$ commute for all  faces $f,f'$. By composing the projectors $P_f$ for all faces $f\in F$, one then obtains a projector  on a subalgebra of $\mathcal A^*_{\Gamma\,inv}$. As explained in \cite{MW},  its image can be interpreted as the algebra of functions on the linear subspace of flat gauge fields on $\Gamma$.

\begin{theorem}[\cite{MW}]\label{th:flatinv}  Let  $\Gamma$ be a regular ciliated ribbon graph and $K$ a finite-dimensional semisimple quasitriangular Hopf algebra.  Then  the linear map
$$P_{flat}=\Pi_{f\in F} P_f:\mathcal A^*_{\Gamma\,inv}\to\mathcal A^*_{\Gamma\,inv}, \quad \alpha\mapsto \Pi_{f\in F}\mathrm{Hol}_f(\eta)\cdot \alpha$$  is an algebra morphism and a projector. Its image  $\mathcal M_{\Gamma}=P_{flat}(\mathcal A^*_{\Gamma\,inv})$ is a subalgebra of $\mathcal A^*_{\Gamma\,inv}$, the {\bf quantum moduli algebra}.
\end{theorem}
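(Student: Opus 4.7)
The plan is to assemble the theorem directly from Lemmas \ref{lem:facecentral} and \ref{lem:faceprpr}, since the real work has been done in proving those. The first step is to justify that the displayed product $\Pi_{f\in F}\mathrm{Hol}_f(\eta)$ is well-defined. By Lemma \ref{lem:facecentral}(1), for every face $f$ of the regular ciliated ribbon graph $\Gamma$ (represented by the unique ciliated face based at its cilium), $\mathrm{Hol}_f(\eta)$ lies in $Z(\mathcal A^*_{\Gamma\,inv})$. In particular the family $\{\mathrm{Hol}_f(\eta)\}_{f\in F}$ consists of pairwise commuting idempotents in $\mathcal A^*_{\Gamma\,inv}$, so the product over $F$ is independent of ordering and lives in $Z(\mathcal A^*_{\Gamma\,inv})$. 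Thus $P_{flat}(\alpha)=(\Pi_{f\in F}\mathrm{Hol}_f(\eta))\cdot\alpha$ maps $\mathcal A^*_{\Gamma\,inv}$ into itself and satisfies $P_{flat}=\Pi_{f\in F}P_f$ as endomorphisms of $\mathcal A^*_{\Gamma\,inv}$, where the product is taken in any order.

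Next I would show that the single-face projectors commute on $\mathcal A^*_{\Gamma\,inv}$. For $f\neq f'$ one computes, using centrality,
\begin{equation*}
P_f\circ P_{f'}(\alpha)=\mathrm{Hol}_f(\eta)\cdot\mathrm{Hol}_{f'}(\eta)\cdot\alpha=\mathrm{Hol}_{f'}(\eta)\cdot\mathrm{Hol}_f(\eta)\cdot\alpha=P_{f'}\circ P_f(\alpha).
\end{equation*}
Since each $P_f$ is a projector by Lemma \ref{lem:faceprpr}, the commuting composition $P_{flat}=\Pi_{f\in F}P_f$ satisfies $P_{flat}^2=\Pi_{f\in F}P_f^2=\Pi_{f\in F}P_f=P_{flat}$, so $P_{flat}$ is a projector on $\mathcal A^*_{\Gamma\,inv}$.

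For the algebra morphism property, each $P_f$ restricts to an algebra endomorphism of $\mathcal A^*_{\Gamma\,inv}$ by Lemma \ref{lem:faceprpr}. The composition of algebra endomorphisms is an algebra endomorphism, so $P_{flat}$ is an algebra endomorphism of $\mathcal A^*_{\Gamma\,inv}$. If one prefers a direct verification, centrality and idempotency of $c:=\Pi_{f\in F}\mathrm{Hol}_f(\eta)$ give
\begin{equation*}
P_{flat}(\alpha)\cdot P_{flat}(\beta)=c\,\alpha\,c\,\beta=c^2\,\alpha\beta=c\,\alpha\beta=P_{flat}(\alpha\beta).
\end{equation*}
The image $\mathcal M_\Gamma=P_{flat}(\mathcal A^*_{\Gamma\,inv})$ is then a subalgebra of $\mathcal A^*_{\Gamma\,inv}$ as the image of any algebra endomorphism that is also a projector.

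There is no real obstacle here beyond ensuring that one may represent each face of the regular ciliated ribbon graph $\Gamma$ by a unique ciliated face based at a cilium, which is what allows Lemmas \ref{lem:facecentral} and \ref{lem:faceprpr} to be applied simultaneously to all $f\in F$; this uses exactly the regularity condition that each face contains exactly one cilium (Definition \ref{def:regular}(3)).
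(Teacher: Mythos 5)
Your proposal is correct and follows essentially the same route the paper takes: the theorem is quoted from \cite{MW}, but the surrounding discussion assembles it exactly as you do, representing each face by its unique ciliated face based at a cilium (regularity), invoking Lemma \ref{lem:facecentral} for centrality of the idempotents $\mathrm{Hol}_f(\eta)$ and Lemma \ref{lem:faceprpr} for the single-face projectors, and composing commuting central idempotents. The only point worth making explicit is that centrality of $\mathrm{Hol}_f(\eta)$ comes from Lemma \ref{lem:facecentral} together with $\eta\in C(K)$, so that $P_{inv}\circ\mathrm{Hol}_f(\eta)=\mathrm{Hol}_f(\pi_{ad}(\eta))=\mathrm{Hol}_f(\eta)$, which is exactly the remark the paper makes after that lemma.
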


The quantum moduli algebra was first constructed in \cite{AGSII,BR,BR2}. It  can be defined  more generally for ciliated ribbon graphs with loops or multiple edges and with milder assumptions on the faces \cite{AGSII,BR,BR2,MW}. It was shown in \cite{AGSII,BR2}  and then with different methods in \cite{MW}  that the quantum moduli algebra is a topological invariant. It depends only on the oriented surface $\Sigma_\Gamma$ obtained by gluing discs to the faces of  $\Gamma$ and not on  $\Gamma$  itself or the choice of the cilia. This result holds for more general ciliated ribbon graphs, but  we require only the regular case.

\begin{theorem} [\cite{AGSI,AGSII,BR,MW}]\label{th:flat}Let $\Gamma,\Gamma'$ be regular ciliated ribbon graphs and  $\Sigma_\Gamma$ and $\Sigma_{\Gamma'}$  the  oriented surfaces obtained by gluing discs to the faces of $\Gamma$ and $\Gamma'$.  If  $\Sigma_\Gamma$ and $\Sigma_{\Gamma'}$ are homeomorphic, then the quantum moduli algebras  $\mathcal M_\Gamma$ and $\mathcal M_{\Gamma'}$ are isomorphic.
\end{theorem}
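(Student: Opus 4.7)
The plan is to reduce the claim to checking invariance of $\mathcal M_\Gamma$ under a finite list of elementary graph moves that generate the equivalence relation ``same associated oriented surface up to homeomorphism'' on regular ciliated ribbon graphs. Any two regular ciliated ribbon graphs $\Gamma, \Gamma'$ with $\Sigma_\Gamma \cong \Sigma_{\Gamma'}$ can be connected by a sequence of such moves (refining the procedure of Figure \ref{fig:subdiv}), so it suffices to exhibit, for each move $\Gamma \leadsto \Gamma'$, an algebra isomorphism $\Phi: \mathcal M_\Gamma \to \mathcal M_{\Gamma'}$.

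Concretely, I would work with the following generators of the move groupoid: (i) relocation of a cilium around its vertex; (ii) subdivision of an edge by a bivalent ciliated vertex and its inverse; (iii) edge doubling (with an inserted bivalent vertex on one copy) and its inverse; (iv) subdivision of a face by adjoining an edge between two of its vertices, equipped with cilia so that the resulting faces remain singly-ciliated. Each preserves $\Sigma_\Gamma$ up to homeomorphism, and conversely any two regular ciliated ribbon graphs with the same surface are connected by finitely many such moves.

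For each move the isomorphism is constructed via holonomies. For (i) the cilium-independence of $\mathcal A^*_{\Gamma\,inv}$ is supplied by Theorem \ref{th:ginvsub}, and the projectors $P_f$ are cilium-independent because Lemma \ref{lem:facecentral}(2) gives $\mathrm{Hol}_f(\eta) = \mathrm{Hol}_f(\pi_{ad}(\eta))$ while $\eta\in C(K)$, making $\mathrm{Hol}_f(\eta)$ insensitive to cyclic permutation of the face's edge word. For (ii), inserting a bivalent vertex $w$ splits an edge $e$ into $e_1, e_2$; the assignment $(\alpha)_e \mapsto (\low\alpha 2 \oo \low\alpha 1)_{e_2, e_1}$, which is precisely $\mathrm{Hol}_{e_2} \bullet \mathrm{Hol}_{e_1}$, matches edge holonomies, and gauge-averaging over $w$ with the Haar integral projects the two-edge subalgebra back onto the original one-edge subalgebra while preserving face holonomies. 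For (iii) one combines (ii) with the observation that the new bigon face contributes a flatness projector that forces the two copies of the doubled edge to carry matched holonomies, collapsing the extra algebraic data. For (iv), if the new edge $e_{new}$ splits a face $f$ into $f_1, f_2$ based at the original cilium, then $\mathrm{Hol}_{f_1} \bullet \mathrm{Hol}_{f_2}$ reproduces $\mathrm{Hol}_f$ on the remaining edges, and the composite projector $P_{f_1} \circ P_{f_2}$ restricted to invariants ``integrates out'' $e_{new}$ via the Haar integral, landing in a subalgebra isomorphic to the image of $P_f$ on $\mathcal A^*_{\Gamma\,inv}$.

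The main obstacle is (iv): one must verify that $P_{f_1} \circ P_{f_2}$, restricted to $\mathcal A^*_{\Gamma'\,inv}$, is an algebra endomorphism whose image is canonically isomorphic to $P_f(\mathcal A^*_{\Gamma\,inv})$. This requires (a) the compatibility of the Haar integral $\eta$ of $K^*$ with convolution along concatenated paths, so that averaging over the new edge with $\eta$ glues the two face holonomies back to the original, and (b) commutation of the two new face projectors, which follows from Lemma \ref{lem:facecentral}(4) since $\mathrm{Hol}_{f_1}(\eta), \mathrm{Hol}_{f_2}(\eta) \in Z(\mathcal A^*_{\Gamma'\,inv})$. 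Once these local computations are carried out in a vertex neighbourhood of the new edge using the multiplication relations of Proposition \ref{prop:multrel}, the constructed isomorphism depends only on data local to the move, so the isomorphisms for a chain of moves compose into the desired global algebra isomorphism $\mathcal M_\Gamma \cong \mathcal M_{\Gamma'}$.
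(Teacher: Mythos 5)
The paper does not actually prove Theorem \ref{th:flat}; it imports it from \cite{AGSII,BR2,MW}, and your strategy --- reducing to invariance of $\mathcal M_\Gamma$ under a generating set of elementary graph moves --- is precisely the one used in those references, so at the level of approach you are aligned with the (cited) proof. The treatment of the individual moves is also broadly the right shape: edge subdivision is handled by the holonomy of the composed path together with gauge-averaging at the new bivalent vertex, and face subdivision by showing that the product of the two new face projectors reproduces the old one after integrating out the new edge.

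As written, though, there are two genuine gaps. First, your move (i) is not an internal move of the class of regular ciliated ribbon graphs: regularity requires each face to contain exactly one cilium, so the cilia biject vertices with faces, and sliding the cilium at a vertex past an incident edge end transfers it from one face to an adjacent one, leaving one face with two cilia and another with none. Cilium independence therefore cannot simply be quoted from Theorem \ref{th:ginvsub} and Lemma \ref{lem:facecentral}; one must either establish the statement for the larger class of ciliated ribbon graphs (as \cite{MW} does) or route every change of ciliation through subdivisions, and your appeal to ``$\eta\in C(K)$'' is not by itself the mechanism --- the cyclic invariance of $P_{inv}\circ\mathrm{Hol}_f$ is a separate assertion (part 1 of Lemma \ref{lem:facecentral}) that needs proof. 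Second, and more substantially, the two claims carrying all the weight --- that your moves and their inverses connect any two regular ciliated ribbon graphs with homeomorphic $\Sigma_\Gamma$ while staying in the regular class, and that gauge-averaging at a new bivalent vertex identifies the invariant subalgebra of the subdivided graph with that of the original compatibly with all face projectors --- are asserted rather than proved. The latter is a nontrivial computation: the two half-edges $e_1,e_2$ do not commute in $\mathcal A^*_{\Gamma'}$ but satisfy the $R$-matrix exchange relation of Proposition \ref{prop:multrel} at the new vertex, and one must verify that the Haar projector there maps onto exactly the image of $\alpha\mapsto \mathrm{Hol}_{e_2\circ e_1}(\alpha)$ and intertwines the multiplication and the remaining curvature projectors. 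These local lemmas are the actual content of the cited proofs; without them your argument is a correct plan rather than a proof.
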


\section{Holonomies and gauge symmetries in the Kitaev model}
\label{sec:holkitaev}

We are now ready to relate  Kitaev's lattice model on a ribbon graph $\Gamma$ and  for a finite-dimensional semisimple Hopf algebra $H$  to a Hopf algebra gauge theory  for the Drinfeld double $D(H)$.  
The concept that is essential in relating the two models is {\em holonomy}. 
It will become apparent that Kitaev's {\em ribbon operators} \cite{Ki,BMD} are examples of  holonomies. However, ribbon operators are defined only for paths with certain regularity properties, the {\em ribbon paths}, while we require a more  general notion of holonomy that is not  restricted to ribbon paths.

For this reason, we define holonomies for a Kitaev model  on $\Gamma$   along the same lines as the holonomy for a Hopf algebra gauge theory, but with respect to a different ribbon graph $\gammad$ which was introduced in the context of Kitaev models \cite{Ki,BMD} and is obtained by thickening the ribbon graph $\Gamma$. 
We start by introducing this thickened ribbon graph $\gammad$ in Section \ref{subsec:thickening}. In Section \ref{subsec:kithols}  we then   define holonomies for the Kitaev models and investigate their basic properties, in particular their relation to ribbon operators.   In Section \ref{subsec:vertface}  we show how  vertex and face operators of the Kitaev model arise as examples of holonomies.

\subsection{Thickenings of ribbon graphs}
\label{subsec:thickening}

The thickening of a  ribbon graph  $\Gamma$  is obtained by replacing each edge $e$ by a rectangle $R_e$, each vertex  $v$ by a $|v|$-gon $P_v$ and by gluing two opposite sides of the rectangle $R_e$ to the sides of the polygons $P_{\st(e)}$ and $P_{\ta(e)}$ according to the cyclic ordering  at $\st(e)$ and $\ta(e)$.
This can be viewed as a generalisation of the gluing procedure 
from Section \ref{subsec:graph}. For each ribbon graph $\Gamma$ there is a  punctured surface $\dot\Sigma_\Gamma$ obtained by gluing annuli to the faces of $\Gamma$, and this surface $\dot\Sigma_\Gamma$ is homeomorphic to the  thickening of $\Gamma$. 

This thickening procedure assigns to each ribbon graph $\Gamma$ a 4-valent ribbon graph $\gammad$. 
The oriented edges of $\gammad$ are the  edges of the rectangles $R_e$ for each edge $e$ of $\Gamma$, where the two edges of $R_e$ that are not glued to polygons $P_{\st(e)}$ and $P_{\ta(e)}$ are oriented parallel to $e$. The  remaining two edges are oriented by duality, i.~e.~such that they cross $e$ from the right to the left when viewed in the direction of $e$,  as shown in Figure \ref{fig:foursites}. Vertices of  $\gammad$ are in bijection with  vertices of the  polygons $P_v$. Each  face of $\gammad$  corresponds either to an edge, to a face  or to a vertex of $\Gamma$, as shown in Figure \ref{fig:graphfusion}.

\begin{figure}
\centering
\includegraphics[scale=0.32]{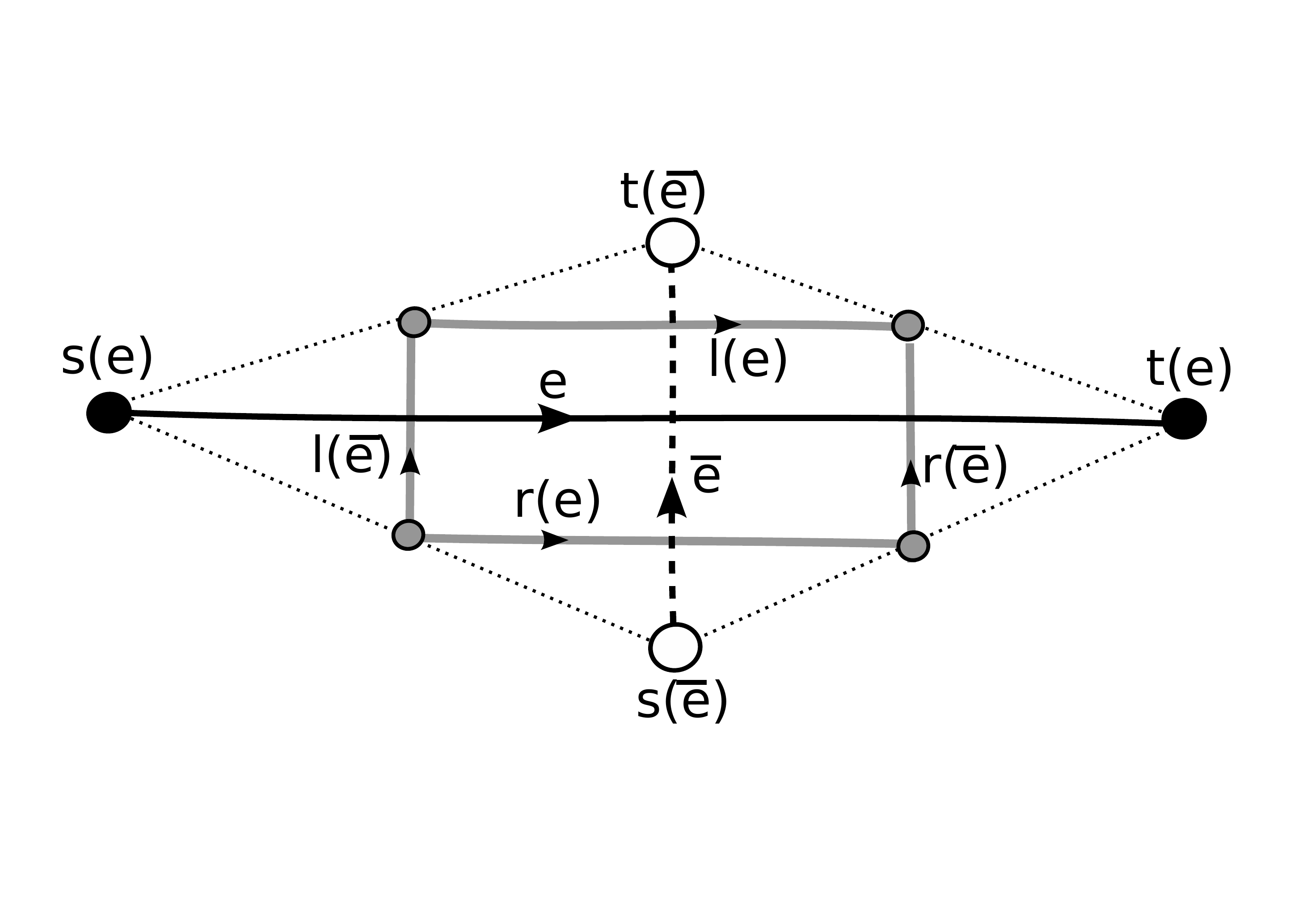}

\vspace{-1.8cm}
\caption{The four edges $r(e)$, $l(e)$, $r(\bar e)$, $l(\bar e)$ of $\gammad$ for an edge $e$ of $\Gamma$.}
\label{fig:foursites}
\end{figure}

Alternatively, the ribbon graph $\gammad$ can also be obtained from the Poincar\'e dual. For this, one embeds $\Gamma$ into the  oriented surface $\Sigma_\Gamma$ and considers its Poincar\'e dual  $\bar\Gamma$. 
 To construct $\gammad$ one connects each vertex $v\in V(\Gamma)$  to those vertices $\bar f\in V(\bar\Gamma)$
 that are dual to faces containing $v$. By selecting a  point in the interior of each of the resulting edges $e_{vf}$, one obtains the vertices of $\gammad$. The edges of $\gammad$ are  obtained by connecting those vertices of $\gammad$ for which the edges $e_{vf}$ and $e_{v'f'}$ are adjacent  at a vertex of $\Gamma$ or at a dual vertex of $\bar\Gamma$. This construction  is shown in Figure \ref{fig:graphfusion}. 

\begin{figure}
\centering
\includegraphics[scale=0.43]{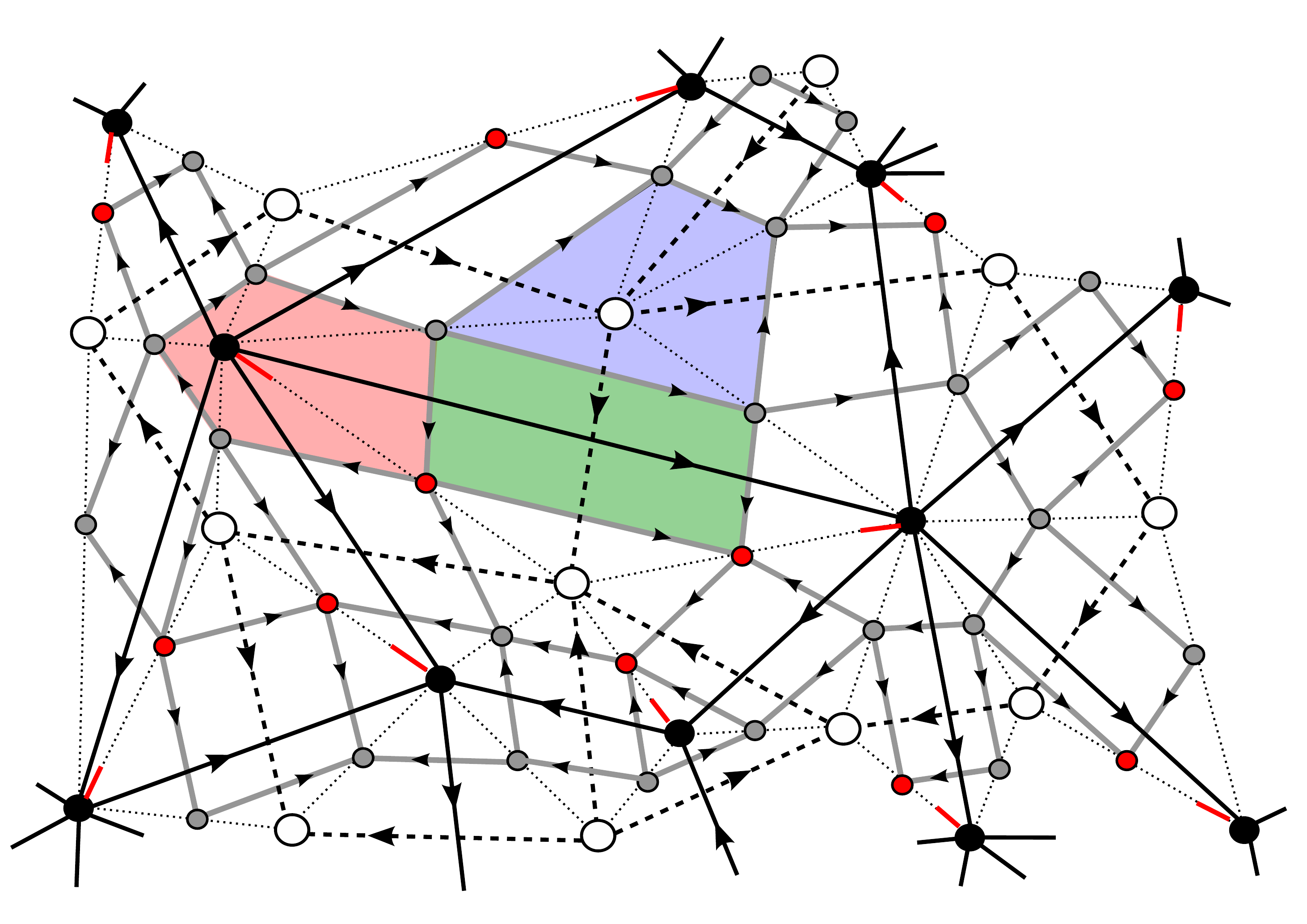}
\caption{Ribbon graph $\Gamma$ (black vertices and black edges), its dual $\bar\Gamma$ (white vertices and dashed edges), the graph $\gammad$ (grey and red vertices, grey edges). The cilia at the vertices of $\Gamma$ and the corresponding vertices of $\gammad$ are indicated in red. The rectangle $R_e$ for an edge $e$ in $\Gamma$ is highlighted in green, the polygon $P_v$ for a vertex $v$ of $\Gamma$ in red and the polygon $P_f$ for a face  $f$ of $\Gamma$ in blue. }
\label{fig:graphfusion}
\end{figure}

The four edges of $\gammad$ that correspond to a given edge $e$ of $\Gamma$ are denoted $r(e)$, $l(e)$, $r(\bar e)$ and $l(\bar e)$,  where  $r(e)$ and  $l(e)$ stand for the edges of $\gammad$ to the right and left of $e$, viewed in the direction of $e$.  Similarly,  $r(\bar e)$, $l(\bar e)$  are the edges of $\gammad$
transversal to $e$ at the target and starting end of $e$, as shown in Figure \ref{fig:foursites}.  
If $e\in E(\Gamma)$ and $\bar e\in E(\bar\Gamma)$ are  oriented edges  and $e^\inv$, $\bar e^{\,\inv}$ the corresponding edges with the reversed orientation, then one has
 \begin{align}\label{eq:orrev}
 r(e^\inv)=l(e)^\inv\qquad 
r(\bar e^\inv)=l(\bar e)^\inv.
 \end{align}
The four vertices  of $\gammad$ associated with a generic edge $e\in E(\Gamma)$ are given by the pairs 
$(\st(e), \st(\bar e))$, $(\st(e), \ta(\bar e))$, $(\ta(e), \st(\bar e))$, $(\ta(e),\ta(\bar e))$
where $\st(e)$ and $\ta(e)$ are the starting and target vertex of $e$ and $\st(\bar e)$ and $\ta(\bar e)$ the starting and target vertex of $\bar e$. 
More specifically, we have in $\gammad$
\begin{align}\label{eq:statarel}
&\st( r(\bar e))= \ta(r(e))=(\ta(e), \st(\bar e)) & &\ta(r(\bar e))=\ta(l(e))=(\ta(e), \ta(\bar e))\\
 &\st(l(\bar e))=\st(r(e))=(\st(e), \st(\bar e)) & &\ta(l(\bar e))=\st(l(e))=(\st(e), \ta(\bar e)),\nonumber
\end{align}
as shown in Figure \ref{fig:foursites}.
Note that for edges of $\Gamma$  that are loops or dual to loops some of these four vertices may coincide, but this does not happen if $\Gamma$ is a regular ciliated ribbon graph. In this case the vertices of $\gammad$ are in bijection with the sites of $\Gamma$ and each edge of $\Gamma$ corresponds to exactly four sites. The four vertices $s(e)$, $t(e)$, $s(\bar e)$, $t(\bar e)$ for an edge $e\in E(\Gamma)$ form four triangles,  $s(e)s(\bar e)t(\bar e)$,  $t(e)t(\bar e)s(\bar e)$,  $t(e)t(\bar e)s(e)$  and $t(e)s(e)s(\bar e)$, which are called {\bf direct triangles} and {\bf dual triangles} in the context of Kitaev models, depending on their number of vertices and dual vertices.

\begin{definition} Let $\Gamma$ be a ribbon graph. The thickening  of $\Gamma$ is the 4-valent ribbon graph $\gammad$ with
\begin{align*}
&E(\gammad)=\cup_{e\in E(\Gamma)}\{r(e), l(e), r(\bar e), l(\bar e)\}\\
&V(\gammad)=\cup_{e\in E(\Gamma)} \{(\st(e), \st(\bar e)), (\st(e), \ta(\bar e)), (\ta(e), \st(\bar e)), (\ta(e),\ta(\bar e))\}\subset V(\Gamma)\times V(\bar \Gamma),
 \end{align*}
subject to the relations \eqref{eq:statarel}.
\end{definition}

Note that a cilia of a ribbon graph $\Gamma$ can also be incorporated into the thickening $\gammad$.
Choosing a cilium at a vertex $v$ amounts to selecting a vertex of the polygon $P_v$ in the thickening $\gammad$,
namely the vertex of $P_v$ that is between the edge ends of highest and lowest order at $v$, as indicated in Figure \ref{fig:graphfusion}.
We will also call this  vertex of $\gammad$  the cilium at $v$ in the following.

\subsection{Holonomies in the Kitaev model}
\label{subsec:kithols}

Holonomies for  a Kitaev model  on $\Gamma$  are defined  analogously to holonomies for a Hopf algebra gauge theory on $\Gamma$, but with respect to the thickened graph $\gammad$  instead of  $\Gamma$. 
As the algebra of triangle operators  is isomorphic to the $E$-fold tensor product $\mathcal H(H)^{\oo E}$ by Lemma \ref{lem:kitoprel} and $\mathcal H(H)\cong H\oo H^*$ as a vector space,  we define holonomy as a functor $\mathrm{Hol}:\mathcal G(\gammad)\to \mathrm{Hom}_{\mathbb F}(H\oo H^*, (H\oo H^*)^{\oo E})$, where $\mathcal G(\gammad)$ is the path groupoid of  $\gammad$ and 
$\mathrm{Hom}_{\mathbb F}(H\oo H^*, (H\oo H^*)^{\oo E})$ is equipped with the structure of an $\mathbb F$-linear category with a single object, i.~e.~an associative algebra structure over $\mathbb F$. 

As in the case of a Hopf algebra gauge theory, this algebra structure is obtained from a coalgebra structure on $H\oo H^*$ and an algebra structure on $(H\oo H^*)^{\oo E}$ via  \eqref{eq:assalg}. As   $H\oo H^*=D(H)^*$ as a vector space and to make contact with the holonomies in a Hopf algebra gauge theory for $D(H)$, it is natural to choose the coalgebra structure on $D(H)^*$ for the former and the canonical algebra structure on $D(H)^{*\oo E }$ for the latter.   
As in the case of a Hopf algebra gauge theory, a holonomy functor is then determined uniquely by its values on the edges of $\gammad$, i.~e.~by its values on the paths $r(e)^{\pm 1}$, $r(\bar e)^{\pm 1}$, $l(e)^{\pm 1}$ and $l(\bar e)^{\pm 1}$ for edges $e\in E(\Gamma)$. With  the notation from Section \ref{subsec:conventions} we obtain

\begin{lemma} \label{lem:functor} Let $\Gamma$ be a ribbon graph with thickening $\gammad$.
Equip $\mathrm{Hom}_{\mathbb F}(H\oo H^*, (H\oo H^*)^{\oo E})$ with the associative algebra structure $\bullet$ from \eqref{eq:assalg} for the coalgebra $D(H)^*$ and the algebra  $D(H)^{*\oo E}$ and denote by $S_D: H\oo H^*\to H\oo H^*$  the antipode of $D(H)^*$.
Then the following defines a functor $\mathrm{Hol}: \mathcal G(\gammad)\to \mathrm{Hom}_{\mathbb F}(H\oo H^*, (H\oo H^*)^{\oo E})$
\begin{align}\label{eq:phirl}
&\mathrm{Hol}_{r(e)}(y\oo \gamma)=\epsilon(y)\, (1\oo \gamma)_e   \qquad \qquad \qquad \mathrm{Hol}_{r(\bar e)}(y\oo\gamma)=\epsilon(\gamma)
  (y\oo 1)_e,  \\
&\mathrm{Hol}_{l(e)}(y\oo\gamma)=(S_D\circ \mathrm{Hol}_{r(e)}\circ S_D)(y\oo\gamma)=\epsilon(y)\Sigma_{i,j} (x_iS(x_j)\oo \alpha^j\gamma\alpha^i)_e\nonumber\\
&\mathrm{Hol}_{l(\bar e)}(y\oo\gamma)=(S_D\circ \mathrm{Hol}_{r(\bar e)}\circ S_D)(y\oo\gamma)=\epsilon(\gamma)\, \Sigma_{i,j}(x_i y S(x_j)\oo\alpha^j\alpha^i)_e\nonumber \\
&\mathrm{Hol}_{x^\inv}=\mathrm{Hol}_{x}\circ S_D \quad \forall x\in E(\gammad), e\in E(\Gamma).\nonumber
\end{align}
\end{lemma}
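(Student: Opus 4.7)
The plan is to treat the claim as an extension problem: the values $\mathrm{Hol}_x$ for $x\in E(\gammad)$ together with the relation $\mathrm{Hol}_{x^\inv}=\mathrm{Hol}_x\circ S_D$ prescribe the would-be functor on the generators of the free groupoid $\mathcal G(\gammad)$, and I must check that this extends consistently to a functor into the $\mathbb F$-linear category $(\mathrm{Hom}_{\mathbb F}(H\oo H^*,(H\oo H^*)^{\oo E}),\bullet)$. First I would record that this target is an associative unital $\mathbb F$-algebra: associativity of $\bullet$ follows from associativity of the multiplication of $D(H)^{*\oo E}$ and coassociativity of $D(H)^*$, while the element $\epsilon\cdot 1$ is a two-sided unit by the counit/unit axioms. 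Since the target has a single object, every trivial path $\o_v$ must be sent to this unit, and extending the prescribed data to a well-defined functor therefore reduces to verifying
\[
\mathrm{Hol}_{x^\inv}\bullet\mathrm{Hol}_x \;=\; \mathrm{Hol}_x\bullet\mathrm{Hol}_{x^\inv}\;=\;\epsilon\cdot 1
\]
for every generator $x\in E(\gammad)$.

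The key structural observation is that each $\mathrm{Hol}_x$ factorises through a single tensor slot. For $x\in\{r(e),r(\bar e),l(e),l(\bar e)\}$ associated with the edge $e\in E(\Gamma)$, one has $\mathrm{Hol}_x=\iota_e\circ\pi_x$, where $\iota_e:D(H)^*\to D(H)^{*\oo E}$ is the canonical unital algebra inclusion into the $e$-th factor and $\pi_x\in\mathrm{End}_{\mathbb F}(D(H)^*)$ is defined by $\pi_{r(e)}(y\oo\gamma)=\epsilon(y)(1\oo\gamma)$, $\pi_{r(\bar e)}(y\oo\gamma)=\epsilon(\gamma)(y\oo 1)$, $\pi_{l(e)}=S_D\circ\pi_{r(e)}\circ S_D$ and $\pi_{l(\bar e)}=S_D\circ\pi_{r(\bar e)}\circ S_D$. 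A direct check from the explicit multiplication \eqref{eq:ddualmult} shows that $\pi_{r(e)}$ and $\pi_{r(\bar e)}$ are unital algebra morphisms of $D(H)^*$. Since $S_D$ is an anti-algebra involution of $D(H)^*$ (using $S_D^2=\id$, which holds because $D(H)$ is semisimple), the conjugates $\pi_{l(e)}$ and $\pi_{l(\bar e)}$ are unital algebra morphisms as well. Hence every $\mathrm{Hol}_x$ is a unital algebra morphism from $D(H)^*$ into $D(H)^{*\oo E}$.

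With this in hand, the inverse relations follow from a standard Hopf-algebraic calculation that uses only the antipode axiom for $D(H)^*$: for any unital algebra morphism $\phi:D(H)^*\to A$ into an algebra $A$,
\[
(\phi\bullet(\phi\circ S_D))(a)=\phi(a_{(1)})\cdot\phi(S_D(a_{(2)}))=\phi\bigl(a_{(1)}\cdot S_D(a_{(2)})\bigr)=\phi(\epsilon(a)\cdot 1)=\epsilon(a)\cdot 1,
\]
and symmetrically for $(\phi\circ S_D)\bullet\phi$. Applied to $\phi=\mathrm{Hol}_x$, this yields the required relations and hence the extension to a functor on all reduced words. The only step demanding care is matching the abstract composition $S_D\circ\mathrm{Hol}_{r(e)}\circ S_D$ (respectively $S_D\circ\mathrm{Hol}_{r(\bar e)}\circ S_D$) with the explicit basis-summed formula for $\mathrm{Hol}_{l(e)}$ (respectively $\mathrm{Hol}_{l(\bar e)}$) displayed in \eqref{eq:phirl}; this requires the basis identities of Lemma \ref{lem:antipodehd} together with $S^2=\id$ on $H$ and $H^*$, but is otherwise purely computational. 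Beyond this bookkeeping, the proof reduces to a one-line consequence of the antipode axiom.
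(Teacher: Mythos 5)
Your proof is correct, and the reduction is the same as the paper's: since $\mathcal G(\gammad)$ is the free groupoid on $E(\gammad)$, one only has to verify $\mathrm{Hol}_x\bullet\mathrm{Hol}_{x^\inv}=\mathrm{Hol}_{x^\inv}\bullet\mathrm{Hol}_x=\epsilon\,(1\oo 1)^{\oo E}$ for each generator. Where you differ is in how that identity is established. The paper first rewrites $\mathrm{Hol}_{x^\inv}$ via the auxiliary identities $\mathrm{Hol}_{x^\inv}(y\oo\gamma)=\epsilon(y)\,\mathrm{Hol}_x(1\oo S(\gamma))$ (and its dual counterpart) and then grinds through the convolution formula using the explicit multiplication and comultiplication of $D(H)^*$, the relation $m\circ(S_D\oo\id)\circ\Delta=\epsilon\,1$ and $S_D^2=\id$. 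You instead isolate the structural fact that each $\mathrm{Hol}_x=\iota_e\circ\pi_x$ is a \emph{unital algebra morphism} $D(H)^*\to D(H)^{*\oo E}$ --- which is easily checked for $\pi_{r(e)},\pi_{r(\bar e)}$ from \eqref{eq:ddualmult} and is inherited by $\pi_{l(e)},\pi_{l(\bar e)}$ because conjugation by the anti-algebra involution $S_D$ preserves algebra morphisms --- and then invoke the general principle that for any unital algebra morphism $\phi$ out of a Hopf algebra, $\phi\circ S_D$ is its two-sided convolution inverse. This is a cleaner and more reusable argument: it avoids the case split between direct and dual edges, makes the role of the antipode axiom transparent, and records a property (each edge holonomy is an algebra morphism in the $D(H)^{*\oo E}$ product) that the paper in fact relies on implicitly in later computations. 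The only bookkeeping you correctly flag as remaining is the purely computational match between $S_D\circ\mathrm{Hol}_{r(e)}\circ S_D$ and the displayed basis-summed formulas, which uses \eqref{eq:basexx}, \eqref{eq:helpanti} and $S^2=\id$.
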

\begin{proof}  As $\mathcal G(\gammad)$ is the free groupoid generated by $E(\gammad)$, it is sufficient to show that  for all $x\in E(\gammad)$ one has
$\mathrm{Hol}_x\bullet \mathrm{Hol}_{x^\inv}= \mathrm{Hol}_{x^\inv}\bullet \mathrm{Hol}_x=\epsilon\, (1\oo 1)^{\oo E}$ .
The definition of $\bullet$ in \eqref{eq:assalg} implies
\begin{align}\label{eq:holhelp1}
\mathrm{Hol}_{x^{\pm 1}\circ x^{\mp 1}}(y\oo\gamma)&= \mathrm{Hol}_{x^{\pm 1}} ((y\oo\gamma)_{(1)})\cdot' \mathrm{Hol}_{x^{\mp 1}}((y\oo\gamma)_{(2)})\\
&=\Sigma_{i,j}\, \mathrm{Hol}_{x^{\pm 1}} (\low y 1\oo \alpha^i\low\gamma 2\alpha^j)\cdot' \mathrm{Hol}_{x^{\mp 1}}(S(x_j)\low y 2 x_i\oo\low\gamma 2),\nonumber
\end{align}
where we use Sweedler notation and $\cdot'$ denotes the multiplication of  $D(H)^{*\oo E}$.  With the expression for $S_D$ in \eqref{eq:ddualmult}, one obtains from 
\eqref{eq:phirl}
\begin{align}\label{eq:helpinv}
&\mathrm{Hol}_{x^\inv}(y\oo\gamma)
=\epsilon(y)\;\mathrm{Hol}_{x}(1\oo S(\gamma))\;\;\forall x\in \{r(e),l(e)\}\\
 &\mathrm{Hol}_{x^\inv}(y\oo\gamma)
=\epsilon(\gamma)\;\mathrm{Hol}_{x}(S(y)\oo  1)\;\;\forall x\in \{\bar r(e),\bar l(e)\}.\qquad\qquad\qquad\nonumber
\end{align}
Inserting this  into \eqref{eq:holhelp1}  with the formulas in  \eqref{eq:ddualmult} for the multiplication and comultiplication of $D(H)^*$ and 
 the identity $m\circ (S_D\oo\id)\circ\Delta=m\circ (\id\oo S_D)\circ\Delta=\epsilon\, 1$ and $S^2_D=\id$  then proves the claim.
\end{proof}

Note that this is not the only possible holonomy functor since there is at least one other candidate for the multiplication on $(H\oo H^*)^{\oo E }$ that enters the definition of $\bullet$ in \eqref{eq:assalg}, namely the multiplication map of  $\mathcal H(H)^{\oo E}$. 
However, the multiplication  of $D(H)^{*\oo E}$ is  more natural from the viewpoint of Hopf algebra gauge theory. Defining the multiplication $\bullet$ in \eqref{eq:assalg} with the comultiplication of $K^*$ and the multiplication of $K^{*\oo E}$ is possible for any Hopf algebra $K$, while the multiplication of  $\mathcal H(H)^{\oo E}$ is only available for  a Drinfeld double $K=D(H)$. Another strong motivation to choose the multiplication of $D(H)^{*\oo E}$ are the properties of the resulting holonomy functor.

\begin{lemma}\label{lem:holprops} The holonomy functor  from Lemma \ref{lem:functor} satisfies:
\begin{compactenum}

\item $\mathrm{Hol}_p(1\oo 1)=(1\oo 1)^{\oo E}$ for all paths $p$ in $\gammad$.\\[-2ex]
\item $\mathrm{Hol}_{p^\inv}=\mathrm{Hol}_p\circ S_D$ if $p$ is a path that traverses each edge of $\gammad$  at most once and  at most one of the edges $r(e)$, $l(e)$ and of the edges $r(\bar e)$, $l(\bar e)$ for each edge $e$ of $\Gamma$.  \\[-2ex]
\item $\mathrm{Hol}_p(y\oo\gamma)=\epsilon(y)\mathrm{Hol}_p(1\oo \gamma)$ if $p$ is a path composed of edges $r(e), l(e)$  for edges  $e$ of $\Gamma$.\\[-2ex] 
\item $\mathrm{Hol}_p(y\oo\gamma)=\epsilon(\gamma)\mathrm{Hol}_p(y\oo 1)$ if $p$ is a path composed of the edges $r(\bar e), l(\bar e)$ 
 for edges  $e$ of $\Gamma$.\\[-2ex] 
\item $\mathrm{Hol}_{r(\bar e)\circ r(e)}=\mathrm{Hol}_{l(e)\circ l(\bar e)}$ and $\mathrm{Hol}_{r(\bar e)^\inv\circ l(e)}=\mathrm{Hol}_{ r(e)\circ l(\bar e)^\inv}.$
\end{compactenum}
\end{lemma}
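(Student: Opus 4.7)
All five claims combine functoriality of $\mathrm{Hol}$, the convolution product from \eqref{eq:assalg}, the Hopf algebra structure of $D(H)^*$ from \eqref{eq:ddualmult}, and the basis identities \eqref{eq:basexx}--\eqref{eq:helpanti2}. I prove (1) by induction on the length of $p$. For each single edge $x\in E(\gammad)$, direct evaluation of \eqref{eq:phirl} at $1\oo 1$ gives $\mathrm{Hol}_x(1\oo 1)=(1\oo 1)_e$; the cases $x\in\{l(e),l(\bar e)\}$ reduce to $\Sigma_{ij}x_iS(x_j)\oo\alpha^j\alpha^i=1\oo 1$, which follows from $S_D(1\oo 1)=1\oo 1$ via the conjugation formula in \eqref{eq:phirl}. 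The inductive step uses that $1\oo 1$ is grouplike in $D(H)^*$: $\Delta_D(1\oo 1)=(1\oo 1)\oo(1\oo 1)$, so $\mathrm{Hol}_{p\circ q}(1\oo 1)=\mathrm{Hol}_p(1\oo 1)\cdot\mathrm{Hol}_q(1\oo 1)=(1\oo 1)^{\oo E}$.

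Claims (3) and (4) also proceed by induction on path length. The base case on single edges (including inverses, for which \eqref{eq:helpinv} together with $\Sigma_i\epsilon(x_i)\alpha^i=\epsilon$ yields $\mathrm{Hol}_{r(e)^\inv}(y\oo\gamma)=\epsilon(y)(1\oo S(\gamma))_e$ and analogous formulas) is immediate from \eqref{eq:phirl}. For the inductive step $p=q\circ x$, I apply $\Delta_D(y\oo\gamma)=\Sigma_{ij}(y_{(1)}\oo\alpha^i\gamma_{(1)}\alpha^j)\oo(S(x_j)y_{(2)}x_i\oo\gamma_{(2)})$, use the induction hypothesis on both factors to reduce each to a scalar $\epsilon(\cdot)$ times a holonomy depending only on the $H^*$-part (respectively the $H$-part for (4)), and collapse the Sweedler and dual-basis summations using $\epsilon(y_{(1)})\epsilon(y_{(2)})=\epsilon(y)$ together with $\Sigma_i\epsilon(x_i)\alpha^i=\epsilon$ (respectively $\Sigma_i\epsilon(\alpha^i)x_i=1$).

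For (2) I induct on the length of $p$, reducing the inductive step to the key identity
\[ (\phi\bullet\psi)\circ S_D=(\psi\circ S_D)\bullet(\phi\circ S_D), \]
valid whenever $\phi$ and $\psi$ have commuting images in $D(H)^{*\oo E}$; it follows from the anti-coalgebra identity $\Delta_D\circ S_D=(S_D\oo S_D)\circ\Delta_D^{op}$ applied to the definition of $\bullet$. The hypothesis on $p$ guarantees that the holonomies of the distinct edges traversed by $p$ pairwise commute: for edges above distinct $e\in E(\Gamma)$ their images lie in distinct tensor factors of $D(H)^{*\oo E}$, and for edges above the same $e$ the hypothesis allows at most one of $\{r(e),l(e)\}$ and at most one of $\{r(\bar e),l(\bar e)\}$, so that a case-by-case computation using \eqref{eq:phirl}, the multiplication $(a\oo\beta)(c\oo\delta)=ca\oo\beta\delta$ of $D(H)^*$, and the basis identities establishes commutativity of each admissible pair inside $D(H)^*_e$. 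Iterating the key identity and using $\mathrm{Hol}_{x^\inv}=\mathrm{Hol}_x\circ S_D$ from \eqref{eq:phirl} then yields $\mathrm{Hol}_{p^\inv}=\mathrm{Hol}_p\circ S_D$.

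Claim (5) is a direct calculation. For $\mathrm{Hol}_{r(\bar e)\circ r(e)}=\mathrm{Hol}_{l(e)\circ l(\bar e)}$, applying \eqref{eq:assalg} and \eqref{eq:ddualmult} and using $(1\oo\beta)(z\oo 1)=z\oo\beta$ reduces the left-hand side to a single expression in the $e$-th tensor factor; the right-hand side involves a quadruple summation coming from the two $\Sigma_{ij}$'s in $\mathrm{Hol}_{l(e)}$ and $\mathrm{Hol}_{l(\bar e)}$, which collapses to the same expression via \eqref{eq:basexx}, \eqref{eq:helpanti}, and \eqref{eq:helpanti2}. The second identity follows by the same method, treating the inverse edges through \eqref{eq:helpinv}. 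The main obstacle is the commutativity case analysis in (2): while $(r(e),r(\bar e))$ is immediate from the fact that $H\oo 1$ and $1\oo H^*$ commute inside $D(H)^*$, the remaining admissible pairs $(r(e),l(\bar e))$, $(l(e),r(\bar e))$, $(l(e),l(\bar e))$ have images not contained in these simple subalgebras, and their commutativity reflects subtle identities in $D(H)^*$ that rely on semisimplicity of $H$ via $S^2=\id$.
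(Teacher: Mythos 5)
Your overall strategy coincides with the paper's: part 1 from the grouplike property of $1\oo 1$, parts 2--4 by induction on the length of the path via the convolution product and the coproduct of $D(H)^*$, part 2 additionally through $\Delta_D\circ S_D=(S_D\oo S_D)\circ\Delta_D^{op}$ plus commutativity of the holonomies of the two subpaths, and part 5 by direct computation with \eqref{eq:ddualmult} and \eqref{eq:basexx}--\eqref{eq:helpanti2}. Parts 1, 3, 4 and 5 are fine and match the paper's proof essentially verbatim.

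The problem sits exactly where you flag ``the main obstacle'' in part 2, and it is not a computation that can be completed as you describe. For the mixed pairs the required elementwise commutativity is false. Take $(r(e),l(\bar e))$: you must commute $(1\oo\gamma)_e$ past $\mathrm{Hol}_{l(\bar e)}(y\oo 1)=\Sigma_{i,j}(x_iyS(x_j)\oo\alpha^j\alpha^i)_e$ in $H^{op}\oo H^*$. Since the $H$-legs of the two products already agree, this amounts to $\Sigma_{i,j}x_iyS(x_j)\oo\gamma\alpha^j\alpha^i=\Sigma_{i,j}x_iyS(x_j)\oo\alpha^j\alpha^i\gamma$, i.e., after pairing the $H^*$-leg with $h\in H$, to
\begin{align*}
\langle\gamma,\low h 1\rangle\,\low h 3\, y\, S(\low h 2)=\langle\gamma,\low h 3\rangle\,\low h 2\, y\, S(\low h 1)\qquad\text{for all }h,y,\gamma .
\end{align*}
This holds for commutative or cocommutative $H$, but not in general: for $H=D(\mathbb F[G])$ with $G$ non-abelian (finite-dimensional semisimple, so $S^2=\id$ buys you nothing extra) the two sides differ by replacing a product $c_0a$ by $ac_0$ inside a delta function, where $c_0$ is a group commutator, and one can arrange $c_0a\neq ac_0$ already in $S_3$. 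The pair $(l(e),r(\bar e))$ fails for the same reason. The pairs that genuinely commute are $(r(e),r(\bar e))$, whose images lie in $1\oo H^*$ and $H\oo 1$, and $(l(e),l(\bar e))$, their images under the anti-automorphism $S_D$ --- plus, of course, edges lying over distinct $e\in E(\Gamma)$. Your induction for part 2 therefore only closes for paths in which, over each edge $e$ of $\Gamma$, the traversed pair is one of these two; these happen to be exactly the combinations occurring in the paths $p_{e,\pm}$, $p_{s(e)<}$, $p_{t(e)<}$ and the vertex and face loops to which part 2 is applied later (the paper's own proof asserts the commutativity at the same level of generality without checking the mixed cases, so you are in good company). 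You should either restrict the class of paths in part 2 accordingly and verify commutativity only for the two good pairs, or produce a genuinely different argument for the mixed pairs --- the case-by-case elementwise commutativity you invoke is not available there.
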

\begin{proof}
Claim 1.~follows directly from the identities $\Delta(1\oo 1)=(1\oo 1)\oo (1\oo 1)$, $\epsilon(1)=1$ together with equation  \eqref{eq:phirl}. Claims 2.-4. follow by induction over the length of $p$. If $p$ is a path of length 1, then  they hold by definition. For a composite  path $p\circ q$ one has
\begin{align}\label{eq:helphelp}
&\mathrm{Hol}_{p\circ q}(y\oo\gamma)=\mathrm{Hol}_{p}( (y\oo\gamma)_{(1)})\cdot' \mathrm{Hol}_{q}( (y\oo\gamma)_{(2)})\\
&\mathrm{Hol}_{(p\circ q)^\inv}(y\oo\gamma)=\mathrm{Hol}_{q^\inv\circ p^\inv}(y\oo\gamma)=\mathrm{Hol}_{q^\inv}( (y\oo\gamma)_{(1)})\cdot' \mathrm{Hol}_{p^\inv}( (y\oo\gamma)_{(2)}).\nonumber
\end{align}
Suppose that 2.~is shown for all paths of length $\leq n$. 
If $p\circ q$ is a path of length $n+1$ that satisfies the assumptions of 2.,  then  so do $p$ and $q$. As they are of length $\leq n$, by induction hypothesis   the last expression in \eqref{eq:helphelp} can be rewritten  as
\begin{align*}
&\mathrm{Hol}_{(p\circ q)^\inv}(y\oo\gamma)
=\mathrm{Hol}_{q}( S_D(y\oo\gamma)_{(2)})\cdot' \mathrm{Hol}_{p}( S_D(y\oo\gamma)_{(1)}),
\end{align*}
where we used the identity $(S_D\oo S_D)\circ \Delta=\Delta^{op}\circ S_D$.
If $p\circ q$ traverses each edge of $\gammad$ at most once and at most one of the edges $r(e)$, $l(e)$ and of $r(\bar e)$, $l(\bar e)$ for each edge $e\in E(\Gamma)$ then 
the holonomies $\mathrm{Hol}_{q}( S_D(y\oo\gamma)_{(2)})$ and $\mathrm{Hol}_{p}( S_D(y\oo\gamma)_{(1)})$ commute in 
$D(H)^{*\oo E}\cong (H^{op}\oo H^*)^{\oo E}$, and one obtains
$$
(\mathrm{Hol}_{p\circ q}\circ S_D)(y\oo\gamma)=\mathrm{Hol}_{p}( S_D(y\oo\gamma)_{(1)})\cdot' \mathrm{Hol}_{q}( S_D(y\oo\gamma)_{(2)})=\mathrm{Hol}_{(p\circ q)^\inv}(y\oo\gamma).
$$
Suppose that 3. is shown for all paths of length $\leq n$. If  $p\circ q$ is a path of length $n+1$ that satisfies the assumptions in 3., then so do 
$p$ and $q$. By induction hypothesis this implies
\begin{align*}
&\mathrm{Hol}_{p\circ q}(y\oo\gamma)=\Sigma_{i,j}\,\mathrm{Hol}_{p}( \low y 1\oo \alpha^i\low \gamma 1 \alpha^j)\cdot'\mathrm{Hol}_{q}( S(x_j)\low y 2 x_i\oo \low\gamma 2)\\
&=\epsilon(\low y 1)\Sigma_{i,j}\,\epsilon(S(x_j)\low y 2 x_i)\; \mathrm{Hol}_{p}(1\oo \alpha^i\low \gamma 1 \alpha^j)\cdot' \mathrm{Hol}_{q}( 1\oo \low\gamma 2)\\
&=\epsilon(y)\;  \mathrm{Hol}_{p}(1\oo \low\gamma 1)\cdot' \mathrm{Hol}_{q}( 1\oo \low\gamma 2)
=\epsilon(y) \Sigma_{i,j}\, \epsilon(x_iS(x_j)) \mathrm{Hol}_{p}(1\oo \alpha^i\low\gamma 1\alpha^j)\cdot' \mathrm{Hol}_{q}( 1\oo \low\gamma 2)\\
&=\epsilon(y)  \mathrm{Hol}_{p}((1\oo\gamma)_{(1)})\cdot'\mathrm{Hol}_{q}( (1\oo \low\gamma 2)_{(2)})=\epsilon(y)  \mathrm{Hol}_{p\circ q}(1\oo \gamma )
\end{align*}
Suppose 4. holds for all paths of length $\leq n$. If $p\circ q$ is a path of length $n+1$ that satisfies the assumptions of 4., then so do 
$p$ and $q$. With the induction hypothesis one obtains
\begin{align*}
&\mathrm{Hol}_{p\circ q}(y\oo\gamma)= \Sigma_{i,j}\,\mathrm{Hol}_{p}( \low y 1\oo \alpha^i\low \gamma 1 \alpha^j)\cdot' \mathrm{Hol}_{q}( S(x_j)\low y 2 x_i\oo \low\gamma 2)\\
&= \Sigma_{i,j}\,\epsilon(\alpha^i\low \gamma 1\alpha^j)\epsilon(\low\gamma 2)\; \mathrm{Hol}_{p}(\low y 1\oo 1)\cdot' \mathrm{Hol}_{q}( S(x_j)\low y 2 x_i\oo 1)\\
&=\epsilon(\gamma)\; \mathrm{Hol}_{p}(\low y 1\oo 1)\cdot' \mathrm{Hol}_{q}( \low y 2\oo 1)=
\epsilon(\gamma) \Sigma_{i,j}\,\epsilon(\alpha^i\alpha^j) \mathrm{Hol}_{p}(\low y 1\oo 1)\cdot' \mathrm{Hol}_{q}( S(x_j)\low y 2x_i\oo 1)\\
&=\epsilon(\gamma) \; \mathrm{Hol}_{p}(  (y\oo 1)_{(1)})\cdot' \mathrm{Hol}_{q}( (y\oo 1)_{(2)})=\epsilon(\gamma) \mathrm{Hol}_{p\circ q}(y\oo 1).
\end{align*}
To prove 5.  we compute the holonomies using the expressions in Lemma \ref{lem:functor}, the expression for the comultiplication and antipode of $D(H)^*$ in \eqref{eq:ddualmult} and the identities \eqref{eq:basexx}, \eqref{eq:helpanti} and \eqref{eq:helpanti2}. This yields
\begin{align*}
&\mathrm{Hol}_{r(\bar e)\circ r(e)}(y\oo\gamma)= \Sigma_{i,j}\,\epsilon(\alpha^i\low\gamma 1\alpha ^j)\epsilon(S(x_j)\low y 2x_i) (\low y 1\oo 1)_e\cdot' (1\oo\low\gamma 2)_e=(y\oo\gamma)_e,
\intertext{}\\[-10ex]
&\mathrm{Hol}_{l(e)\circ l(\bar e)}(y\oo\gamma)\\
&=\epsilon(\low y 1)\epsilon(\low\gamma 2)  \Sigma_{i,j,r,s,u,v}\,(x_rx_s\oo S(\alpha^s)\alpha^i\low\gamma 2\alpha^j\alpha^r)_e\cdot' (x_uS(x_j)\low y 2x_ix_v\oo S(\alpha^v)\alpha^u)_e\\
&=\Sigma_{i,j,r,s,u,v}\,(x_uS(x_j) y x_ix_vx_rx_s\oo S(\alpha^s)\alpha^i\gamma \alpha^j\alpha^rS(\alpha^v)\alpha^u)_e=(y\oo\gamma)_e,\intertext{}
 \\[-10ex]
&\mathrm{Hol}_{ r(\bar e)^\inv\circ l(e)}(y\oo\gamma)\\
&=\Sigma_{i,j,r,s,u,v}\,\epsilon(S(\alpha^s) S(\alpha^i \low\gamma 1\alpha^j)\alpha^r)\epsilon(S(x_j)\low y 2x_i) (x_rS(\low y 1)x_s\oo 1)_e\cdot' (x_ux_v\oo S(\alpha^v)\low\gamma 2\alpha^u)_e\\
&= \Sigma_{u,v}\,(S(y )\oo 1)_e\cdot' (x_ux_v\oo S(\alpha^v)\gamma \alpha^u)_e=\Sigma_{u,v}\,(x_ux_vS(y)\oo S(\alpha^v)\gamma \alpha^u)_e,\intertext{}
 \\[-10ex]
&\mathrm{Hol}_{ r(e)\circ l(\bar e)^\inv}(y\oo\gamma)\\
&= \epsilon(\low y 1)\Sigma_{i,j,r,s,u,v}\, \epsilon(S(\alpha^s)S(\low\gamma 2)\alpha^r)  (1\oo \alpha^i\low\gamma 1\alpha^j)_e\cdot' ( x_u x_s S(S(x_j)\low y 2 x_i)x_r x_v\oo S(\alpha^v)\alpha^u )_e\\
&= \Sigma_{i,j,u,v}\, (1\oo \alpha^i\gamma \alpha^j)_e\cdot' ( x_u  S(x_i) S(y)x_j  x_v\oo S(\alpha^v)\alpha^u )_e\\
&= \Sigma_{i,j,u,v}\,  ( x_u  S(x_i) S(y)x_j  x_v\oo \alpha^i\gamma \alpha^jS(\alpha^v)\alpha^u )_e
= \Sigma_{u,v}\, ( x_u  x_v S(y)\oo S(\alpha^v)\gamma \alpha^u )_e.\\[-7ex]
\end{align*}
\end{proof}

The identities in Lemma \ref{lem:holprops}, 5. have a direct geometrical interpretation,  shown  in
Figure \ref{fig:edge_comp1}. They state that for the rectangle $R_e$ associated to an edge $e\in E(\Gamma)$ the two paths  from a vertex of $R_e$  to the diagonally opposite vertex have the same holonomy.  In particular, this implies that the holonomies of   two paths in $\gammad$ that involve only the edges $r(e), l(e), r(\bar e), l(\bar e)$ and their inverses agree whenever the paths have the same starting and target vertex. 
Had one defined the holonomy functor   by  expressions  \eqref{eq:phirl}   but  with the multiplication of $\mathcal H(H)^{\oo E}$ instead of $D(H)^{*\oo E}$ in \eqref{eq:assalg},  this result would not hold. 
 
Although the choice of the multiplication  of $D(H)^{*\oo E}$ and the multiplication  $\mathcal H(H)^{\oo E}$
for the multiplication in \eqref{eq:assalg} generally lead to different notions of holonomies, 
there is a class of paths for which the resulting holonomies agree. These are precisely the {\em ribbon paths} introduced in 
 \cite{Ki,BMD}, and their holonomies are the {\em ribbon operators} from  \cite{Ki,BMD}.  
 In a formulation adapted to our notation and conventions,  ribbon paths are defined as follows.

\begin{figure}
\centering
\includegraphics[scale=0.42]{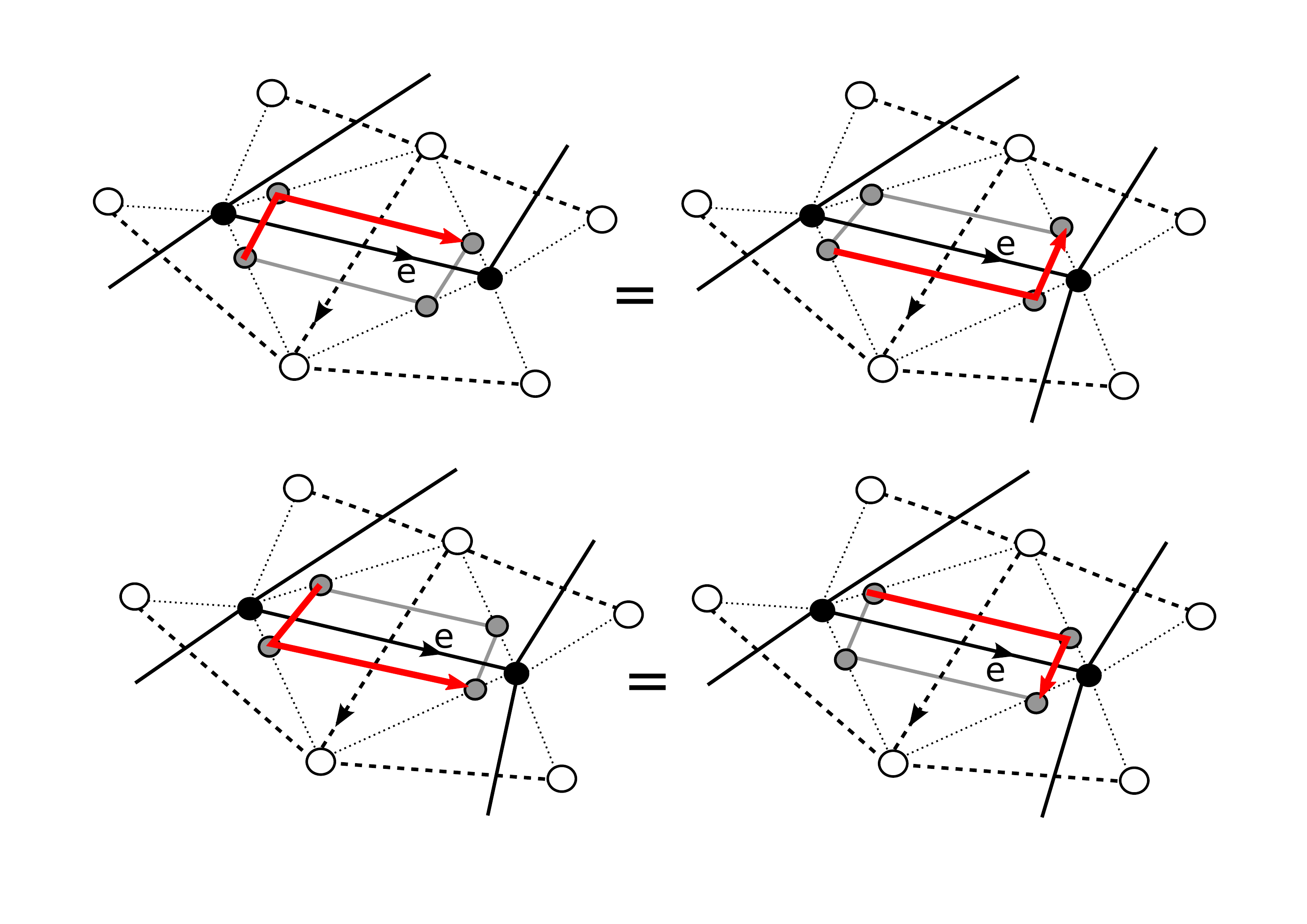}

\vspace{-1cm}
\caption{The relations from  Lemma \ref{lem:holprops}, 5.}
\label{fig:edge_comp1}
\end{figure}

\begin{definition} \label{def:rpath}A path $p\in \mathcal G(\gammad)$  is called a  {\bf ribbon path} if it traverses each edge of $\gammad$ at most once and  for each edge $e\in E(\Gamma)$ traverses either  edges in $\{r(e), l(e)\}$ or edges in $\{r(\bar e), l(\bar e)\}$.
\end{definition}

The name {\em ribbon path} is motivated by the fact that a ribbon  path can be thickened to a ribbon on a surface by associating to each edge $x\in  E(\gammad)$ one of the four triangles in Figure \ref{fig:foursites} \cite{Ki, BMD}, namely the triangle $s(e)s(\bar e)t(\bar e)$  to $l(\bar e)$, the triangle $t(e)t(\bar e)s(\bar e)$   to $r(\bar e)$, the  triangle $t(e)t(\bar e)s(e)$ to $l(e)$ and the triangle $t(e)s(e)s(\bar e)$ to $r(\bar e)$. 
If $p\in \mathcal G(\gammad)$ is a ribbon path, then the triangles for edges in $p$  overlap only on their boundaries and  thicken $p$ to a ribbon, as shown in Figure \ref{fig:graphfusion2}.

\begin{figure}
\centering
\includegraphics[scale=0.45]{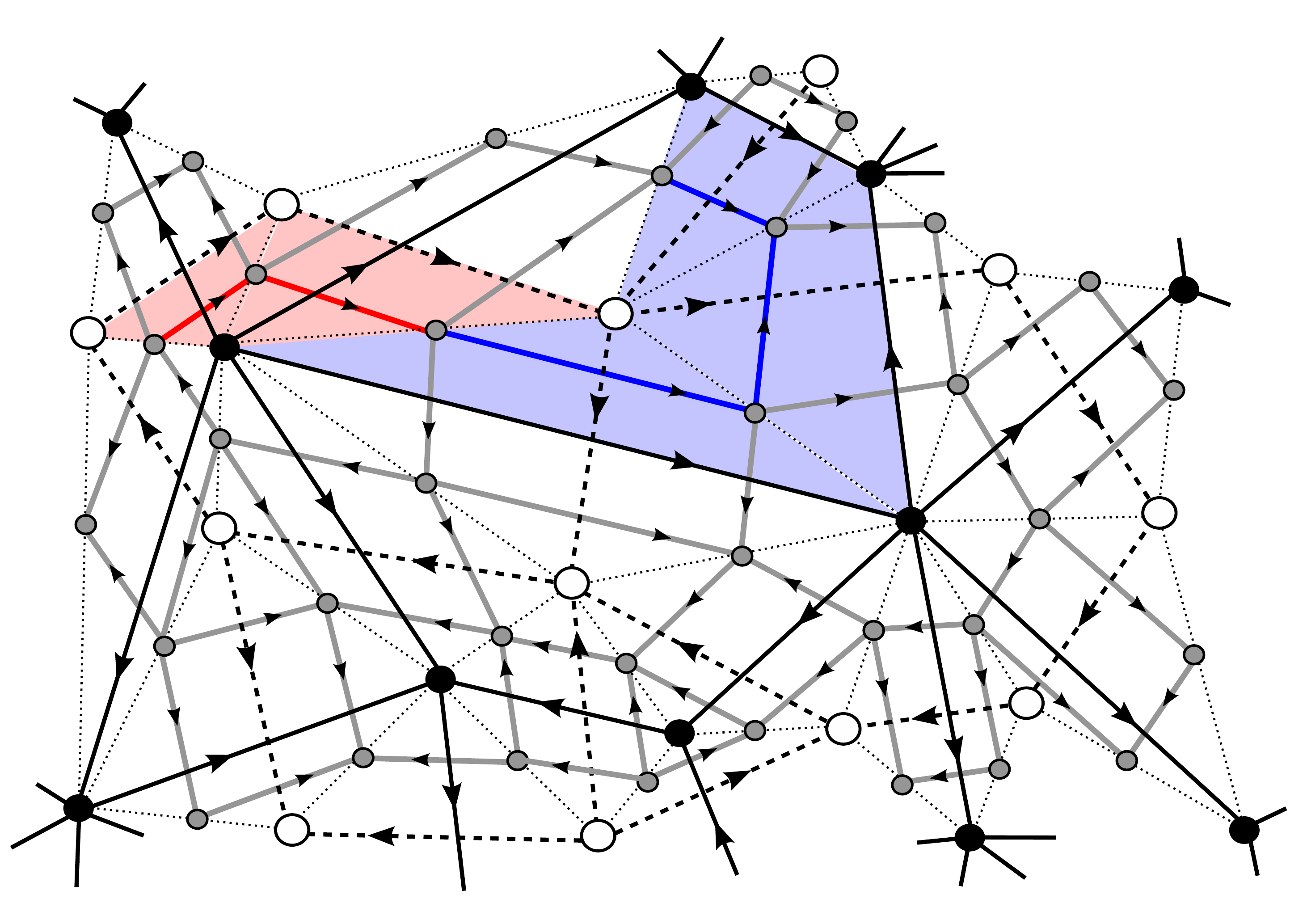}
\caption{A ribbon graph in the thickened graph $\gammad$ and its thickening by triangles. Edges of the form $r(\bar e)^{\pm1}$, $l(\bar e)^{\pm 1}$ in the path and the associated triangles are indicated in red and edges of the form $r(e)^{\pm 1}$, $l(e)^{\pm 1}$ and the associated triangles in blue.}
\label{fig:graphfusion2}
\end{figure}

The {\em ribbon operators} from  \cite{Ki,BMD,BMCA}  associate to each element of $H\oo H^*$ and each ribbon path $p\in \mathcal G( \gammad)$ a linear map $H^{\oo E}\to H^{\oo E}$, which corresponds to a unique element of the algebra $\mathcal H(H)^{\oo E}$ by Lemma \ref{lem:kitoprel}. Hence, ribbon operators can be viewed as a linear map  $H\oo H^* \to (H\oo H^*)^{\oo E}$. A  close inspection of the formulas in \cite{Ki,BMD} show that this map is obtained by   choosing the comultiplication of $D(H)^*$ and the multiplication of $\mathcal H(H)^{\oo E}$  in \eqref{eq:assalg}.  This is made explicit in \cite{BMD} for  the group algebra of a finite group and  generalised implicitly in \cite{BMCA} to finite-dimensional semisimple Hopf algebras.
It turns out that for ribbon paths that satisfy a mild additional assumption  the two notions of holonomies agree and our notion of  holonomy  yields precisely the {\em ribbon operators}.

\begin{lemma} \label{lem:rpath}Let  $p\in \mathcal G(\gammad)$ be  a ribbon path such  that for every edge $e\in E(\Gamma)$ for which $p$ traverses both $r(e),l(e)\in E(\gammad)$ the edge  $r(e)$ is traversed first and for each edge $e\in E(\Gamma)$ for which $p$ traverses both $r(\bar e), l(\bar e)\in E(\gammad)$ the edge $r(\bar e)$ is traversed first.  Then the holonomies of $p$ with respect to the multiplication  of $\mathcal H(H)^{\oo E}$ and  $D(H)^{*\oo E}$ agree.
\end{lemma}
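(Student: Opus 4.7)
The plan is to proceed by induction on the length $n$ of the ribbon path $p$. The base case $n=1$ is immediate since $\mathrm{Hol}_{x}$ is defined by the same formula (\ref{eq:phirl}) for a single edge in both algebras, no multiplication being involved. For the inductive step, decompose $p = p_n\circ q$ with $p_n\in E(\gammad)$ the latest edge and $q$ a ribbon path of length $n-1$; $q$ inherits the ordering condition from $p$ since the relevant pairs of $\gammad$-edges can only be restricted, never enlarged, by removing $p_n$. By the functor property $\mathrm{Hol}_p = \mathrm{Hol}_{p_n}\bullet\mathrm{Hol}_q$, and by the inductive hypothesis $\mathrm{Hol}_q$ yields the same element of $(H\oo H^*)^{\oo E}$ in both algebras, so it suffices to check that the convolution product itself gives the same answer.

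Both multiplications on $(H\oo H^*)^{\oo E}$ act componentwise over $E$ and agree on any component to which one of the factors contributes only $(1\oo 1)$. A potential discrepancy can therefore only occur on the unique edge $e\in E(\Gamma)$ to which $p_n$ belongs. By the ribbon property $q$ contributes at most one $\gammad$-edge of the same type (direct or dual) as $p_n$ on $e$; combined with the ordering condition and the fact that $p_n$ is the latest edge, this forces $p_n$ to be the $l$-variant whenever a pair is present. I then distinguish three cases. In Case~1 ($q$ contributes $(1\oo 1)$ on $e$), the product on $e$ is just $\mathrm{Hol}_{p_n}|_e$ and there is nothing to check. In Case~2 ($p_n=l(e)$ with $r(e)$ in $q$), the right factor on $e$ has the form $(1\oo\gamma)_e$; both multiplications send $(a\oo\beta)\cdot(1\oo\gamma)$ to $(a\oo\beta\gamma)$, since the Heisenberg twist in $\mathcal H(H)$ collapses via $\langle\beta_{(1)},1\rangle=\epsilon(\beta_{(1)})$.

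Case~3 (dual pairing: $p_n=l(\bar e)$ with $r(\bar e)$ in $q$) is the heart of the argument, since the right factor on $e$ now has the form $(y_R\oo 1)_e$ and the two algebras a priori disagree: in $D(H)^*$ one has $(a\oo\beta)(b\oo 1)=(ba\oo\beta)$, whereas in $\mathcal H(H)$ one has $\langle\beta_{(1)}, b_{(2)}\rangle ab_{(1)}\oo\beta_{(2)}$. The left factor comes from $\mathrm{Hol}_{l(\bar e)}$ and hence has the specific form $\sum_{u,v}(x_u y_L S(x_v)\oo\alpha^v\alpha^u)_e$ dictated by the canonical element. Reorganising the sums in the $\mathcal H(H)$ product by means of $\sum_v\Delta(\alpha^v)\oo x_v=\sum_{a,b}\alpha^a\oo\alpha^b\oo x_ax_b$ from (\ref{eq:basexx}) and its $S$-twisted analogue, together with tracking the Sweedler components of $y_R$, reduces the required equality to the Hopf algebra identity
\[
\sum y_{(3)}\, W\, S(y_{(2)})\, y_{(1)} \;=\; yW \qquad \text{for all }y,W\in H.
\]

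The main obstacle is precisely the verification of this identity, which I expect to establish as follows. Applying the antipode axiom to $y_{(1)}$ and coassociativity gives the auxiliary identity $\sum S(y_{(1)})y_{(2)}\oo y_{(3)}=1\oo y$. Applying $S$ (which is anti-multiplicative and, by semisimplicity of $H$, involutive) and multiplying through by $S(A)$ on the right turns this into $\sum S(y_{(1)})y_{(2)} S(A) S(y_{(3)})=S(A)S(y)=S(yA)$; applying $S$ once more and using $S^2=\id$ yields the displayed identity. This identity is the precise algebraic mechanism by which the Heisenberg twist of $\mathcal H(H)$ compensates for the opposite $H$-multiplication of $D(H)^*$. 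Finally, the case where $p_n$ is an inverse edge is reduced to the forward case using Lemma~\ref{lem:holprops}(2) and the fact that $S_D$ is an algebra automorphism of both $D(H)^{*\oo E}$ and $\mathcal H(H)^{\oo E}$ (the latter by Lemma~\ref{lem:antipodehd}).
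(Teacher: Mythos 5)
Your proof is correct and follows essentially the same route as the paper: both reduce the comparison to the single edge $e\in E(\Gamma)$ carrying the pair, dismiss the direct pair $\{r(e),l(e)\}$ because the right factor has the form $(1\oo\delta)_e$, and resolve the dual pair $\{r(\bar e),l(\bar e)\}$ by exploiting the canonical-element form of $\mathrm{Hol}_{l(\bar e)}$ together with \eqref{eq:basexx}. Your induction on path length and the explicitly extracted identity $\sum y_{(3)}WS(y_{(2)})y_{(1)}=yW$ (valid since $S^2=\id$) are just a cleaner packaging of the paper's inline insertion $\Sigma_{i,j}\,zx_ixS(x_j)\oo\alpha^j\alpha^i=\Sigma_{i,j}\,z_{(3)}x_ixS(x_j)S(z_{(2)})z_{(1)}\oo\alpha^j\alpha^i$.
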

\begin{proof} 
We denote by $\cdot$ the multiplication of $\mathcal H(H)^{\oo E}$ and by $\cdot'$ the multiplication of $D(H)^{*\oo E}$.
Suppose that $p$ is given by a reduced word $p=x_1^{\epsilon_1}\circ ...\circ x_n^{\epsilon_n}$ with $x_i\in E(\gammad)$ and $\epsilon_i\in\{\pm 1\}$. Then the holonomy of $p$ with respect to the multiplication of $D(H)^{*\oo E}$  takes the form
\begin{align*}
\mathrm{Hol}_p(y\oo\gamma)=\mathrm{Hol}_{x_1^{\epsilon_1}}((y\oo\gamma)_{(1)})\cdot' ....\cdot' \mathrm{Hol}_{x_n^{\epsilon_n}}((y\oo\gamma)_{(n)}),
\end{align*}
and the  expression for the holonomy  of $p$ with  respect to the multiplication of $\mathcal H(H)^{\oo E}$ is obtained by replacing $\cdot'$ with $\cdot$ in this expression.
If $p$ is a ribbon path, then for each edge $e\in E(\gammad)$ there are at most two distinct $i,j\in\{1,...,n\}$ with  $x_i,x_j\in\{r(e),l(e), r(\bar e), l(\bar e)\}$, and if there are two of them,  one has
either $\{x_i,x_j\}=\{r(e),l(e)\}$ or $\{x_i,x_j\}=\{r(\bar e), l(\bar e)\}$.  Hence, the contribution of $\mathrm{Hol}_p(y\oo\gamma)$ to the copy of $H\oo H^*$
associated with an edge $e\in E(\Gamma)$ is one of the following
\begin{compactenum}[(i)]
\item  $\mathrm{Hol}_{x}(z\oo\delta)$ with $z\in H$, $\delta\in H^*$ and  $x\in \{r(e), l(e), r(\bar e), l(\bar e)\}$ if  $p$ traverses at most one of the edges $r(e), l(e), r(\bar e), l(\bar e)$, \\[-2ex]
\item $\mathrm{Hol}_{l(e)}(x\oo\beta)\cdot'\mathrm{Hol}_{r(e)}(z\oo\delta)$ with $x,z\in H$, $\beta,\delta\in H^*$ if $p$ traverses both $l(e)$ and $r(e)$, \\[-2ex]
\item $\mathrm{Hol}_{l(\bar e)}(x\oo\beta)\cdot'\mathrm{Hol}_{r(\bar e)}(z\oo\delta)$ with $x,z\in H$, $\beta,\delta\in H^*$ if $p$ traverses both $l(\bar e)$ and $r(\bar e)$.
\end{compactenum}
As the different copies of $H\oo H^*$ on $(H\oo H^*)^{\oo E}$ commute with respect to both $\cdot$ and $\cdot'$, it is sufficient to consider the last two cases.  For (ii) and (iii) one computes with \eqref{eq:phirl},  the expressions for the multiplication, comultiplication and antipode of $D(H)^*$ in \eqref{eq:ddualmult}, the multiplication of $\mathcal H(H)$ in \eqref{eq:hd2} and with the identities \eqref{eq:basexx} and \eqref{eq:helpanti}
\begin{align*}
&\mathrm{Hol}_{l(e)}(x\oo\beta)\cdot'\mathrm{Hol}_{r(e)}(z\oo\delta)=\epsilon(x)\epsilon(z) \Sigma_{i,j}\,(x_i S(x_j)\oo \alpha^j\beta\alpha^i )_e\cdot ' (1\oo\delta)_e\\
&=\epsilon(x)\epsilon(z)\Sigma_{i,j}\, (x_i S(x_j)\oo \alpha^j\beta\alpha^i \delta)_e= \epsilon(x)\epsilon(z)\Sigma_{i,j}\, (x_i S(x_j)\oo \alpha^j\beta\alpha^i )_e\cdot (1\oo\delta)_e\\
&=\mathrm{Hol}_{l(e)}(x\oo\beta)\cdot \mathrm{Hol}_{r(e)}(z\oo\delta)\\[+1ex]
&\mathrm{Hol}_{l(\bar e)}(x\oo\beta)\cdot'\mathrm{Hol}_{r(\bar e)}(z\oo\delta)=\epsilon(\beta)\epsilon(\gamma)\Sigma_{i,j}\,(x_i xS(x_j)\oo\alpha^j\alpha^i)\cdot' (z\oo 1)\\
&=\epsilon(\beta)\epsilon(\delta)\Sigma_{i,j}\,(zx_i xS(x_j)\oo\alpha^j\alpha^i)=\epsilon(\beta)\epsilon(\delta) \Sigma_{i,j}\,  (z_{(3)}x_ixS(x_j)S(z_{(2)})\low z 1\oo \alpha^j\alpha^i)\\
&=\epsilon(\beta)\epsilon(\delta) \Sigma_{i,j}\,\langle \alpha^j_{(1)}\alpha^i_{(1)},\low z 2\rangle  (x_ixS(x_j)\low z 1\oo \alpha^j_{(2)}\alpha^i_{(2)})\\
&=\epsilon(\beta)\epsilon(\delta)\Sigma_{i,j}\,(x_i xS(x_j)\oo\alpha^j\alpha^i)\cdot (z\oo 1)
=\mathrm{Hol}_{l(\bar e)}(x\oo\beta)\cdot\mathrm{Hol}_{r(\bar e)}(z\oo\delta).
\end{align*}
This shows that the holonomies of $p$ with respect to  the multiplications $\cdot$ and $\cdot'$ coincide.
\end{proof}

Although for ribbon paths the holonomies  from  Lemma \ref{lem:functor} yield the ribbon operators from \cite{Ki,BMD, BMCA} and hence coincide with the notion of holonomy in the Kitaev models, the notion of holonomy in  Lemma \ref{lem:functor} is more conceptual.  
 More importantly, it not restricted to ribbon paths but defined 
 for any path $p$ in $\gammad$ and hence more general than  ribbon operators in Kitaev models.
 This will be essential when we relate
the  Kitaev models to a Hopf algebra gauge theory in Section \ref{sec:kithopf}. We will show that the relation between the two models is given by the holonomies  of certain paths  in $\gammad$ that are {\em not} ribbon paths.  The identity in Lemma \ref{lem:holprops}, 5, which holds only if one defines holonomy with the multiplication of $D(H)^{*\oo E}$ in \eqref{eq:assalg}, will be essential in establishing this relation.

\subsection{Vertex and face operators}
    \label{subsec:vertface}

 In this subsection we consider  the holonomies of loops in the thickened ribbon graph $\gammad$ that go clockwise around the vertices and faces of $\Gamma$ and relate them to the vertex and face operators in Kitaev models.
 We then determine the commutation relations of these holonomies and prove  analogues of Lemma \ref{lem:algrelskit} and Lemma \ref{lem:fvcomm}. 
 As these loops are ribbon paths, this is essentially a rederivation of the results on vertex and face operators in \cite{Ki,BMD,BMCA}, and readers familiar with them may skip this subsection.
However, as we use a different notion of holonomy, with different conventions and build on these results  in the following, it is necessary to derive them rigorously.   Another reason to so is to  make the paper self-contained and accessible to other communities.

\begin{definition} \label{def:vertloop} Let $\Gamma$ be  ribbon graph without loops  or multiple edges.
\begin{compactenum}
\item If   $v$ is a ciliated vertex of $\Gamma$ with $n$ incident edges  $e_1,...,e_n$, numbered according to the ordering at $v$  and such that $e_1^{\epsilon_1}$,...,$e_n^{\epsilon_n}$ are incoming, then 
 the  {\bf vertex loop} for $v$  is the path 
$$p_v=r(\bar e_1^{\epsilon_1})\circ\ldots\circ r(\bar e_n^{\epsilon_n})\in \mathcal G(\gammad),
$$

\item If $f=e_1^{\epsilon_1}\circ \ldots\circ e_n^{\epsilon_n}$ is a ciliated face of $\Gamma$, then the associated {\bf face loop} is the path  
$$p_f=r(e_1^{\epsilon_1})\circ \ldots\circ r(e_n^{\epsilon_n})\in \mathcal G(\gammad).$$
\end{compactenum}
\end{definition}

An example of a vertex loop  is given in Figure \ref{fig:vertexpath} and an example of a face loop in Figure \ref{fig:facepath}.
Note  that a vertex  and face loops are in duality. A vertex loop  $p_v$ for a ciliated vertex $v$ of $\Gamma$ can be viewed as a face loop for the associated ciliated  face of  the Poincar\'e dual  $\bar\Gamma$. Similarly,   a face loop $p_f$ for a  ciliated face $f$ of $\Gamma$ corresponds to a vertex loop for the associated ciliated  vertex of $\bar\Gamma$.

\begin{figure}
\centering
\includegraphics[scale=0.45]{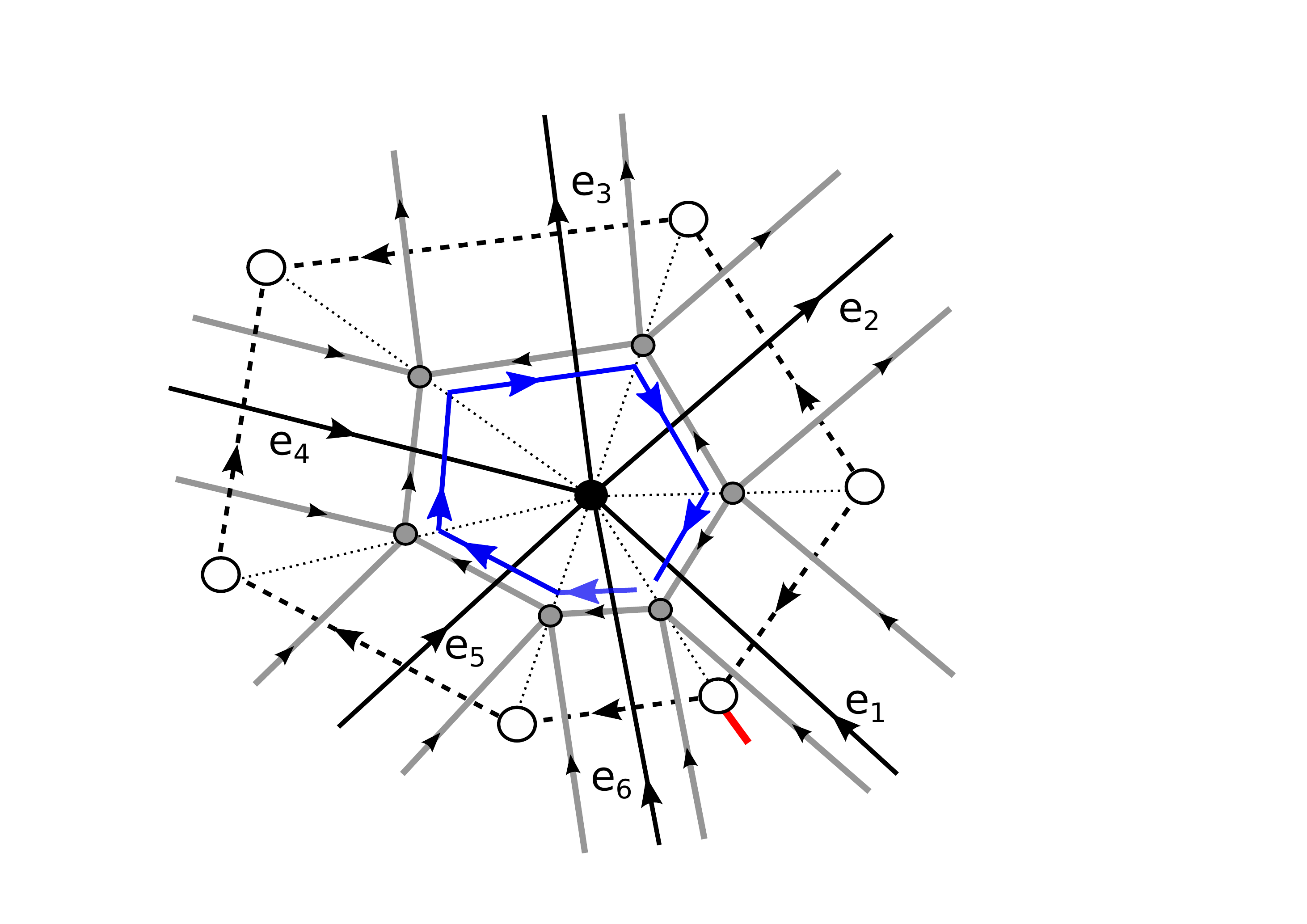}

\vspace{-.5cm}
\caption{The vertex loop
$p_v=r(\bar e_1)\circ l(\bar e_2)^\inv\circ l(\bar e_3)^\inv\circ r(\bar e_4) \circ r(\bar e_5)\circ r(\bar e_6).$}
\label{fig:vertexpath}
\end{figure}

\begin{figure}
\centering
\includegraphics[scale=0.4]{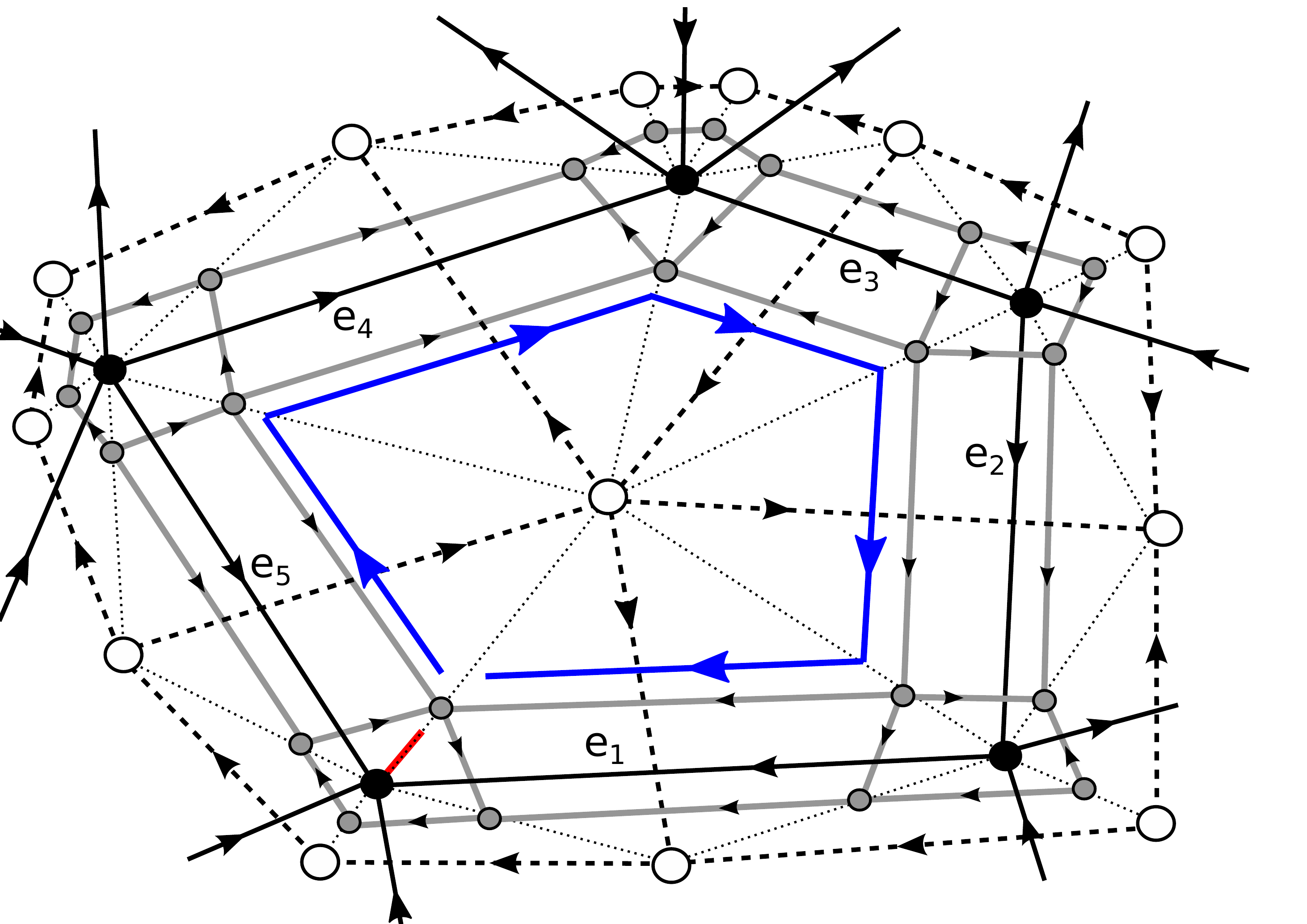}
\caption{The face loop 
$p_f=r(e_1)\circ r(e_2)\circ l(e_3)^\inv\circ r(e_4) \circ l(e_5)^\inv$ for  $f=e_1\circ e_2\circ e_3^\inv\circ e_4\circ e_5^\inv$. }
\label{fig:facepath}
\end{figure}

With the definition of the holonomies from Lemma \ref{lem:functor}, the correspondence between the holonomies of vertex and face loops and vertex and face operators requires mild additional assumptions 
that  can be satisfied for any ciliated ribbon graph $\Gamma$ by reversing the orientations of certain edges.   However, to keep the notation simple and because this is the only case required in the following, we restrict attention to ciliated ribbon graphs $\Gamma$  that satisfy the stronger regularity conditions in Definition \ref{def:regular}. The holonomies of vertex and face loops  are then mapped to the vertex and face operators from Definition \ref{def:vertexoperator} by the
algebra isomorphism $\rho: \mathcal H(H)\to\mathrm{End}_{\mathbb F}(H^{\oo E})$ from Lemma \ref{lem:kitoprel}. 

\begin{lemma}\label{lem:holab}Let $\Gamma$ be a regular ciliated ribbon graph with thickening  $\gammad$.  
For every ciliated vertex $v$  and every ciliated face $f$ of $\Gamma$, the maps
  $\mathrm{Hol}_{p_v}\vert_H: H\to\mathcal H(H)^{\oo E}$  and $\mathrm{Hol}_{p_f}\vert_{H^*}: H^*\to \mathcal H(H)^{\oo E}$ are algebra morphisms and for all $y\in H$ and $\alpha\in H^*$ one has
\begin{align*}
\rho \circ \mathrm{Hol}_{p_v}(y\oo\alpha)=\epsilon(\alpha)\,A_v^y\qquad  \rho \circ \mathrm{Hol}_{p_f}(y\oo\alpha)=\epsilon(y)\, B^\alpha_f.  
\end{align*}    
\end{lemma}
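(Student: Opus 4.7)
The plan is to unfold the convolution definition of the holonomies on the loops $p_v$ and $p_f$, reduce the $D(H)^*$-coproduct in each to a coproduct on $H$ or $H^*$, and then identify each single-edge factor with a triangle operator via Lemma \ref{lem:kitoprel}. The first step is to apply Lemma \ref{lem:holprops} items 3 and 4: since $p_v$ uses only edges $r(\bar e)^{\pm 1}, l(\bar e)^{\pm 1}$ and $p_f$ only edges $r(e)^{\pm 1}, l(e)^{\pm 1}$, one obtains immediately $\mathrm{Hol}_{p_v}(y\oo \alpha) = \epsilon(\alpha)\,\mathrm{Hol}_{p_v}(y\oo 1)$ and $\mathrm{Hol}_{p_f}(y\oo \alpha) = \epsilon(y)\,\mathrm{Hol}_{p_f}(1\oo \alpha)$. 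This produces the $\epsilon$-prefactors in the claim. Regularity of $\Gamma$ also ensures that all edges incident at $v$ and all edges of $f$ are distinct, so $p_v$ and $p_f$ are ribbon paths satisfying the hypothesis of Lemma \ref{lem:rpath} trivially; hence one may freely pass between the $D(H)^{*\oo E}$- and $\mathcal H(H)^{\oo E}$-products, and everything happens in the algebra where $\rho$ is an isomorphism.

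For the identification with $A_v^y$, I observe that the map $\psi\colon D(H)^*\to H$, $y\oo\gamma\mapsto \epsilon(\gamma)y$, is a coalgebra morphism (a short computation using \eqref{eq:ddualmult} together with $\Sigma_i\alpha^i(1)x_i=1$). Since each single-edge factor $\mathrm{Hol}_{r(\bar e_i^{\pm 1})}(z\oo\delta)$ factors as $\epsilon(\delta)$ times a function of $z$ alone, the convolution $\mathrm{Hol}_{p_v}(y\oo 1)$ reduces to $F_1(y_{(1)})\cdots F_n(y_{(n)})$ where the iterated coproduct $y_{(1)}\oo\ldots\oo y_{(n)}$ is taken in $H$, and each $F_i\colon H\to\mathcal H(H)$ lands in the $e_i$-component. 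For $\epsilon_i=1$, $F_i(z)=(z\oo 1)_{e_i}$, which maps under $\rho$ to $L^z_{e_i+}=L^{S^{\tau_i}(z)}_{e_i\epsilon_i}$. For $\epsilon_i=-1$, I expand $\mathrm{Hol}_{r(\bar e_i^{-1})}=\mathrm{Hol}_{l(\bar e_i)}\circ S_D$ and collapse the resulting $\epsilon$'s and basis sums using the identities \eqref{eq:helpanti} and \eqref{eq:helpanti2} to conclude $F_i(z)=S_D(z\oo 1)_{e_i}$; this maps under $\rho$ to $L^{S(z)}_{e_i-}=L^{S^{\tau_i}(z)}_{e_i\epsilon_i}$ by Lemma \ref{lem:kitoprel}. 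Composing the single-edge operators on the distinct edges $e_1,\ldots,e_n$ (which commute since they act on different tensor factors of $H^{\oo E}$) reproduces precisely the defining formula of $A_v^y$ from Definition \ref{def:vertexoperator}. The identification $\rho\circ\mathrm{Hol}_{p_f}(1\oo\alpha)=B^\alpha_f$ is entirely analogous using the dual coalgebra morphism $\psi'\colon D(H)^*\to H^*$, $y\oo\gamma\mapsto\epsilon(y)\gamma$, and the corresponding formulas for $T^\alpha_\pm$ in Lemma \ref{lem:kitoprel}.

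The algebra-morphism property I prove directly from the single-edge building blocks. Each map $F_i\colon H\to\mathcal H(H)$ satisfies $F_i(y)\cdot F_i(z)=F_i(yz)$ in the $e_i$-copy of $\mathcal H(H)$: for $\epsilon_i=1$ this is an immediate computation $(y\oo 1)(z\oo 1)=(yz\oo 1)$ in $\mathcal H(H)$ from \eqref{eq:hd2}, and for $\epsilon_i=-1$ it follows from the fact that $S_D\colon\mathcal H(H)\to\mathcal H(H)$ is an algebra automorphism by Lemma \ref{lem:antipodehd}. Combining this with multiplicativity of $\Delta^{(n-1)}_H$ and the fact that different edge components commute in $\mathcal H(H)^{\oo E}$ yields $\mathrm{Hol}_{p_v}(yz\oo 1)=\mathrm{Hol}_{p_v}(y\oo 1)\cdot\mathrm{Hol}_{p_v}(z\oo 1)$, and analogously for $p_f$. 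The main bookkeeping obstacle will be the case $\epsilon_i=-1$: one must verify cleanly that $\mathrm{Hol}_{r(\bar e_i^{-1})}(z\oo 1)$ collapses to $S_D(z\oo 1)_{e_i}$ after all basis sums and counits are simplified using \eqref{eq:basexx}-\eqref{eq:helpanti2}, and likewise for the transversal face case. Once this is done the statement falls out with no further work, but the symbolic manipulations have to be carried out carefully to keep the antipodes in the right slots.
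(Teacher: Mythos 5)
Your proposal is correct and follows essentially the same route as the paper's proof: factor the holonomy of $p_v$ (resp.\ $p_f$) into single-edge contributions via Lemma \ref{lem:rpath}, observe that the iterated $D(H)^*$-coproduct collapses to the iterated coproduct of $H$ (resp.\ $H^*$) up to the counit prefactor, identify each factor $\mathrm{Hol}_{r(\bar e_i^{\epsilon_i})}(z\oo 1)$ with $L^{S^{\tau_i}(z)}_{e_i\epsilon_i}$ (resp.\ $T^{S^{\tau_i}(\delta)}_{e_i\epsilon_i}$) via Lemma \ref{lem:kitoprel}, and deduce multiplicativity from the single-edge identities together with the commutation of distinct tensor factors guaranteed by regularity. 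Your explicit coalgebra morphism $\psi\colon y\oo\gamma\mapsto\epsilon(\gamma)y$ is a slightly more formal packaging of what the paper does directly in equation \eqref{eq:holformula2}, but the substance is identical.
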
    
\begin{proof}  Throughout the proof let  
 $\cdot$ be the multiplication of $\mathcal H(H)^{\oo E}$. If  $v$  is a ciliated vertex with incident 
edges $e_1,...,e_n$, numbered according to the ordering at $v$ and  such that   $e_1^{\epsilon_1}$, ..., $e_n^{\epsilon_n}$ are incoming at $v$, then the
 associated vertex loop $p_v$ from Definition \ref{def:vertloop} is  given by
$
p_v=r(\bar e_1^{\epsilon_1}) \circ \ldots\circ r(\bar e_n^{\epsilon_n}).
$
Similarly, for a ciliated face $f=e_1^{\epsilon_1}\circ \ldots\circ e_n^{\epsilon_n}$  the associated face loop from Definition \ref{def:vertloop} is given by 
$p_f=r(e_1^{\epsilon_1})\circ \ldots\circ r(e_n^{\epsilon_n})$, both subject to  convention  \eqref{eq:orrev}.  
By Lemma \ref{lem:rpath} their holonomies are 
\begin{align}\label{eq:holformula2}
&\mathrm{Hol}_{p_v}(y\oo\alpha)=\epsilon(\alpha)\, \mathrm{Hol}_{r(\bar e_1^{\epsilon_1})}(\low y 1\oo 1)\cdot ... \cdot \mathrm{Hol}_{r(\bar e_n^{\epsilon_n})}(\low y n \oo 1)\\
&\mathrm{Hol}_{p_f}(y\oo\alpha)=\epsilon(y)\, \mathrm{Hol}_{r(e_1^{\epsilon_1})}(1\oo \low \alpha 1)\cdot ...\cdot \mathrm{Hol}_{r( e_n^{\epsilon_n})}(1\oo \low \alpha n ),\nonumber
\end{align}
and with Definition \ref{def:kitdef} and  Lemma \ref{lem:kitoprel}   this yields
\begin{align}\label{eq:phiex2}
\rho\circ \mathrm{Hol}_{r(\bar e_i^{\epsilon_i})}(y\oo 1)
=\begin{cases}
L^{y}_{e_i,+} & \epsilon_i=1\\
L^{S(y)}_{e_i,-} & \epsilon_i=-1,\end{cases}\qquad 
\rho\circ \mathrm{Hol}_{r( e_i^{\epsilon_i})}(1\oo\alpha)
=\begin{cases}
T^{\alpha}_{e_i,+} & \epsilon_i=1\\
T^{S(\alpha)}_{e_i,-} & \epsilon_i=-1.
\end{cases}
\end{align}
By applying the algebra isomorphism $\rho: \mathcal H(H)^{\oo E}\to\mathrm{End}_{\mathbb F}(H^{\oo E})$ from Lemma \ref{lem:kitoprel}  to \eqref{eq:holformula2} and 
inserting \eqref{eq:phiex2} one obtains the operators  $A^y_v$, $B^\alpha_f$ from Definition  \ref{def:vertexoperator}.
 That  $\mathrm{Hol}_{p_v}\vert_H:H\to\mathcal H(H)^{\oo E}$  is an algebra morphism  follows  from \eqref{eq:holformula2}, because one has $\bar e_i\neq \bar e_j$ for $i\neq j$ by the assumptions on $\Gamma$ and hence
the holonomy of $r(\bar e_i^{\epsilon_i})$ commutes with the holonomy of $r(\bar e_j^{\epsilon_j})$ for $i\neq j$.
From  \eqref{eq:phirl} one has  $\mathrm{Hol}_{r(\bar e_i^{\epsilon_i})}(y\oo 1)\cdot \mathrm{Hol}_{r(\bar e_i^{\epsilon_i})}(z\oo 1)=\mathrm{Hol}_{r(\bar e_i^{\epsilon_i})}(yz\oo 1)$ for all $y,z\in H$.
Similarly, the assumptions on $\Gamma$ imply that $e_i\neq e_j$ for all $i\neq j$ and hence
the holonomy of $r({e_i}^{\epsilon_i})$ commutes with the holonomy of $r({e_j}^{\epsilon_j})$ for $i\neq j$. Formula \eqref{eq:phirl}  implies  $\mathrm{Hol}_{r({e_i}^{\epsilon_i})}(1\oo\alpha)\cdot \mathrm{Hol}_{r({e_i}^{\epsilon_i})}(1\oo \beta)=\mathrm{Hol}_{r({e_i}^{\epsilon_i})}(1\oo \alpha\beta)$ for all $\alpha,\beta\in H^*$, and it follows that  $\mathrm{Hol}_{p_f}\vert_{H^*}:H^*\to\mathcal H(H)^{\oo  E}$ is an algebra morphism.
    \end{proof}

We now consider the algebraic properties of the holonomies of vertex and face loops with respect to the algebra structure  of $\mathcal H(H)^{\oo E}$ and prove analogues of Lemma \ref{lem:algrelskit} and Lemma \ref{lem:fvcomm}.
This requires  the following technical lemma.

\begin{lemma}\label{lem:commhelp} Let $\Gamma$ be a regular ciliated ribbon graph and $v$  a vertex with $n$ incoming edges $e_1,...,e_n$, numbered according to the ordering at $v$. Then the  holonomies of  the path $l(e_{i+1})^\inv\circ r(e_i)$ commute with the holonomies of the paths  $r(e_{i+1})$, $l(e_i)$ and  $r(\bar e_i)\circ r(\bar e_{i+1})$.
\end{lemma}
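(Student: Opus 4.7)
The plan is to first compute each of the four holonomies explicitly and then to verify the commutativity claims (understood as commutativity of the outputs in $\mathcal H(H)^{\oo E}$ for all inputs, equivalently as operator identities on $H^{\oo E}$ via Lemma~\ref{lem:kitoprel}) factorwise. Using Lemma~\ref{lem:holprops} items~3 and~4 to eliminate the dependence on $y$ or $\gamma$ where appropriate, the defining formulas of Lemma~\ref{lem:functor}, and the identity $\sum_{p,q}x_pS(x_q)\oo\alpha^q\omega\alpha^p=S_D(1\oo S(\omega))$ (which follows from \eqref{eq:ddualmult} and \eqref{eq:helpanti}), one obtains
\begin{align*}
\mathrm{Hol}_{l(e_{i+1})^{-1}\circ r(e_i)}(y\oo\gamma)&=\epsilon(y)\,(S_D(1\oo\low\gamma 1))_{e_{i+1}}\cdot(1\oo\low\gamma 2)_{e_i},\\
\mathrm{Hol}_{r(e_{i+1})}(y\oo\gamma)&=\epsilon(y)(1\oo\gamma)_{e_{i+1}},\qquad \mathrm{Hol}_{l(e_i)}(y\oo\gamma)=\epsilon(y)(S_D(1\oo S(\gamma)))_{e_i},\\
\mathrm{Hol}_{r(\bar e_i)\circ r(\bar e_{i+1})}(y\oo\gamma)&=\epsilon(\gamma)\,(\low y 1\oo 1)_{e_i}\cdot(\low y 2\oo 1)_{e_{i+1}}.
\end{align*}
Regularity of $\Gamma$ ensures $e_i\neq e_{i+1}$, so each output has non-trivial contributions in at most the two distinct tensor factors indexed by $e_i$ and $e_{i+1}$.

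Distinct tensor factors of $\mathcal H(H)^{\oo E}$ automatically commute, hence the three commutativity claims reduce to identities in $\mathcal H(H)$ inside the tensor factors that are shared. For $\mathrm{Hol}_{r(e_{i+1})}$ the overlap is the single factor $e_{i+1}$, and the required identity is $S_D(1\oo\low\gamma 1)\cdot(1\oo\delta)=(1\oo\delta)\cdot S_D(1\oo\low\gamma 1)$; for $\mathrm{Hol}_{l(e_i)}$ the overlap is $e_i$ and the identity is $S_D(1\oo S(\delta))\cdot(1\oo\low\gamma 2)=(1\oo\low\gamma 2)\cdot S_D(1\oo S(\delta))$. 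Both are precisely the second equality in~\eqref{eq:hdcommutelr} of Lemma~\ref{lem:antipodehd}, part~1, so these two cases hold termwise in the Sweedler expansion and require no further argument.

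The commutator with $\mathrm{Hol}_{r(\bar e_i)\circ r(\bar e_{i+1})}$ is the delicate case and is what I expect to be the main obstacle. Now both $e_i$ and $e_{i+1}$ are shared, and the pairs to be commuted — $S_D(1\oo\low\gamma 1)$ with $(\low y 2\oo 1)$ in $e_{i+1}$, and $(1\oo\low\gamma 2)$ with $(\low y 1\oo 1)$ in $e_i$ — do \emph{not} commute termwise in $\mathcal H(H)$ for generic $\gamma,y$ (as is already visible for a non-abelian group algebra). The equality holds only after the joint Sweedler summations over $\gamma$ and $y$ are performed. My plan for this step is to expand both products with the multiplication formulas~\eqref{eq:hd2} and~\eqref{eq:ddualmult}, use coassociativity to rewrite the Sweedler indices, and then collapse the resulting dual-basis sums via~\eqref{eq:basexx} and~\eqref{eq:helpanti2}. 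Conceptually, the identity encodes the compatibility of the algebra morphism $\phi_1\colon\mathcal H(H)\to\mathcal H(H)^{\oo 2}$ of Lemma~\ref{lem:antipodehd}, part~2 — whose image placed in the factors $e_i$ and $e_{i+1}$ reproduces $\mathrm{Hol}_{r(\bar e_i)\circ r(\bar e_{i+1})}(y\oo 1)$ — with the factorwise commutation~\eqref{eq:hdcommutelr}, mediated by the Yang--Baxter-type relation $(\id\oo\phi_2)\circ\phi_1=(\phi_1\oo\id)\circ\phi_2$ from~\eqref{eq:hdmapids}.
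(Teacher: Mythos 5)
Your setup is correct and coincides with the paper's. The explicit formulas you give for the four holonomies are right (including the rewriting of $\mathrm{Hol}_{l(e_{i+1})^{\inv}}$ through $S_D$ via the dual-basis identity you quote), and the commutations with $\mathrm{Hol}_{r(e_{i+1})}$ and $\mathrm{Hol}_{l(e_i)}$ do reduce, exactly as you say, to the single shared tensor factor and to the second identity of \eqref{eq:hdcommutelr}; this is precisely how the paper disposes of these two cases. You have also correctly diagnosed that the commutation with $\mathrm{Hol}_{r(\bar e_i)\circ r(\bar e_{i+1})}$ is the real content of the lemma: both factors $e_i$ and $e_{i+1}$ are shared, the relevant pairs fail to commute termwise in $\mathcal H(H)$, and the identity holds only after the joint Sweedler summations over $y$ and $\gamma$ are performed.

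The gap is that this decisive step is only announced, not carried out. The paper's proof of the lemma consists essentially of this one computation: expanding $\mathrm{Hol}_{l(e_{i+1})^{\inv}\circ r(e_i)}(z\oo\beta)\cdot\mathrm{Hol}_{r(\bar e_i)\circ r(\bar e_{i+1})}(y\oo\alpha)$ with the Heisenberg-double multiplication \eqref{eq:hd2}, pushing the $(\low y k\oo 1)$ factors past the $H^*$-components, and cancelling the resulting pairings and dual-basis sums until the two holonomies reappear in the opposite order. Your stated plan --- expand via \eqref{eq:hd2} and \eqref{eq:ddualmult}, reshuffle Sweedler indices by coassociativity, collapse via \eqref{eq:basexx} and \eqref{eq:helpanti2} --- is exactly that computation, and there is no reason to expect it to fail; but as written the proof is incomplete at its one nontrivial point. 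The closing remark relating the identity to $\phi_1$ and to $(\id\oo\phi_2)\circ\phi_1=(\phi_1\oo\id)\circ\phi_2$ is a reasonable heuristic, but it is not a substitute for the calculation, which is what you should supply to finish the argument.
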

\begin{proof}  We denote by $\cdot$ the multiplication of $\mathcal H(H)^{\oo E}$ and by $\cdot'$ the multiplication of $D(H)^{*\oo E}$. 
That $\mathrm{Hol}_{l(e_{i+1}^\inv)\circ r(e_i)}(z\oo\beta)$ commutes with $\mathrm{Hol}_{r(e_{i+1})}(y\oo\alpha)$ and with $\mathrm{Hol}_{l(e_{i})}(y\oo\alpha)$ for all $y,z\in H$ and $\alpha,\beta\in H^*$ follows from the definition of the holonomy functor and the fact that for all $e\in E(\Gamma)$ the holonomies of $r(e)$ and $l(e)$ commute, which  is a consequence of their definition  in \eqref{eq:phirl}   and the identities in \eqref{eq:hdcommutelr}.
To prove that the holonomy of  $l(e_{i+1}^\inv)\circ r(e_i)$ commutes  with the holonomy of $r(\bar e_i)\circ r(\bar e_{i+1})$, we compute with the multiplication law \eqref{eq:hd2} of the Heisenberg double
\begin{align*}
&\mathrm{Hol}_{l(e_{i+1})^\inv\circ r(e_{i})}(z\oo\beta)\cdot \mathrm{Hol}_{r(\bar e_i)\circ  r(\bar e_{i+1})}(y\oo\alpha)\\
&=\epsilon(z)\epsilon(\alpha)\; (x_kx_l\oo S(\alpha^l)S(\low\beta 1)\alpha^k)_{e_{i+1}}\cdot (1\oo\low\beta 2)_{e_{i}}\cdot (\low y 1\oo 1)_{e_i}\cdot (\low y 2\oo 1)_{e_{i+1}}\\
&=\epsilon(z)\epsilon(\alpha)\; \langle S(\alpha^l_{(2)})S(\low \beta 2)\alpha^k_{(1)},\low y 4\rangle\langle\low\beta 3,\low y 2\rangle\;
(x_kx_l\low y 3\oo S(\alpha^l_{(1)})S(\low\beta 1)\alpha^k_{(2)})_{e_{i+1}}\cdot(\low y 2\oo\low\beta 4)_{e_i}
\\
&=\epsilon(z)\epsilon(\alpha)\; \langle S(\low \beta 2),\low y 5\rangle\langle\low\beta 3,\low y 2\rangle\;
(\low y 6x_kx_lS(\low y 4)\low y 3\oo S(\alpha^l)S(\low\beta 1)\alpha^k)_{e_{i+1}}\cdot(\low y 2\oo\low\beta 4)_{e_i}
\\
&=\epsilon(z)\epsilon(\alpha)\; \langle S(\low \beta 2),\low y 3\rangle\langle\low\beta 3,\low y 2\rangle\;
(\low y 4x_kx_l\oo S(\alpha^l)S(\low\beta 1)\alpha^k)_{e_{i+1}}\cdot(\low y 2\oo\low\beta 4)_{e_i}
\\
&=\epsilon(z)\epsilon(\alpha)\; 
(\low y 2x_kx_l\oo S(\alpha^l)S(\low\beta 1)\alpha^k)_{e_{i+1}}\cdot(\low y 1\oo\low\beta 2)_{e_i}
\\
&=\epsilon(z)\epsilon(\alpha)\;  (\low y 1\oo 1)_{e_i}\cdot (\low y 2\oo 1)_{e_{i+1}}\cdot (x_kx_l\oo S(\alpha^l)S(\low\beta 1)\alpha^k)_{e_{i+1}}\cdot (1\oo\low\beta 2)_{e_{i+1}}\\
&=\mathrm{Hol}_{r(\bar e_i)\circ  r(\bar e_{i+1})}(y\oo\alpha)\cdot \mathrm{Hol}_{l(e_{i+1})^\inv\circ r(e_{i})}(z\oo\beta).
\end{align*}
Note that we replaced the multiplication of $D(H)^{*\oo E}$  by the multiplication of $\mathcal H(H)^{\oo E}$ in the expressions for the holonomies of $l(e_{i+1}^\inv)\circ r(e_i)$ and $r(\bar e_i)\circ r(\bar e_{i+1})$ because the paths  $l(e_{i+1}^\inv)\circ r(e_i)$ and $r(\bar e_i)\circ r(\bar e_{i+1})$ satisfy the assumptions of Lemma \ref{lem:rpath}.
\end{proof}

\begin{figure}
\centering
\includegraphics[scale=0.33]{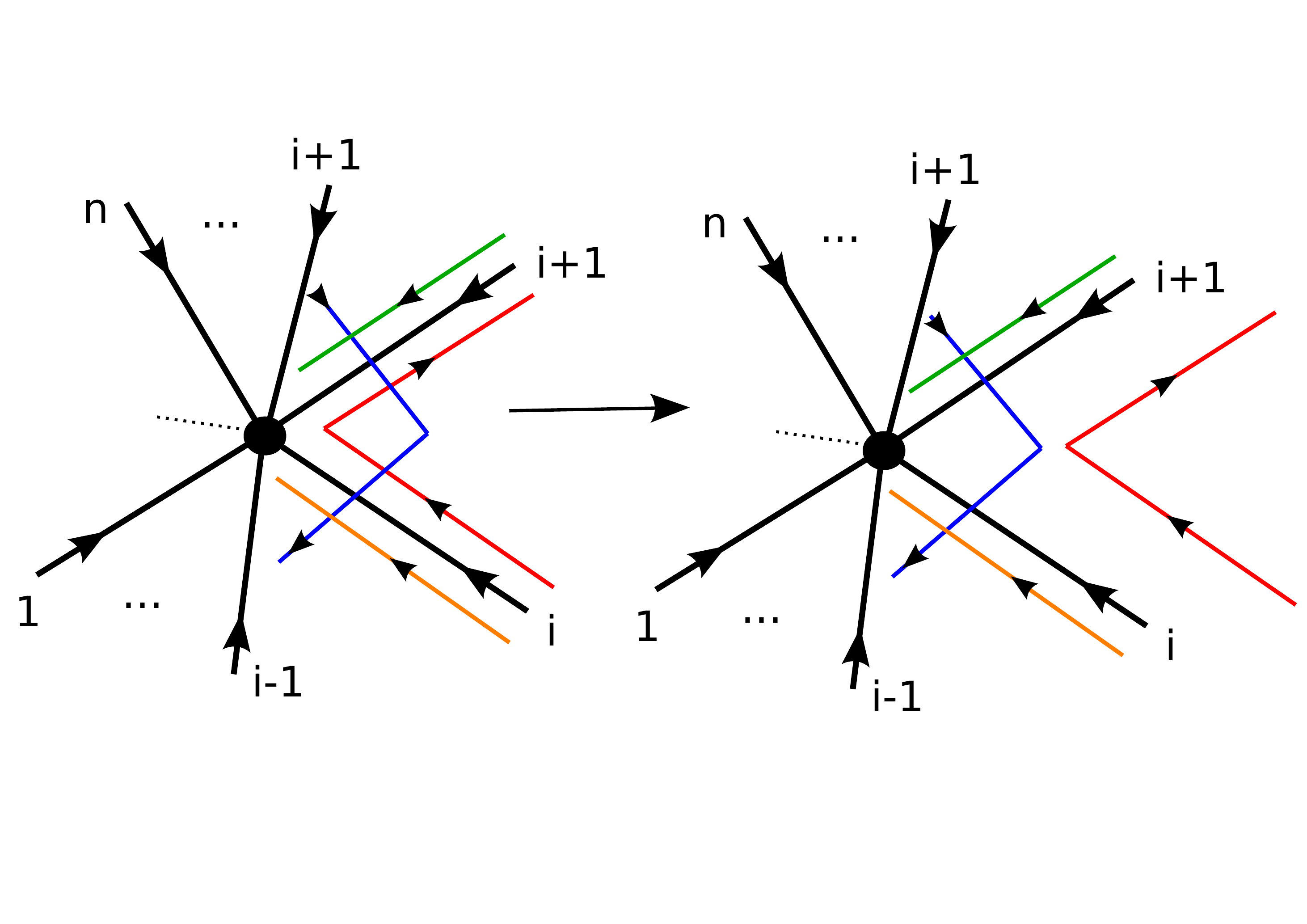}

\vspace{-1.7cm}
\caption{The paths from Lemma \ref{lem:commhelp} and the Reidemeister II move at the vertex:\newline $l(e_{i+1})^\inv\circ r(e_i)$ (red), $r(\bar e_i)\circ r(\bar e_{i+1})$ (blue), $l(e_i)$ (orange) and $r(e_{i+1})$ (green).}
\label{fig:reidemeister}
\end{figure}

The identities in Lemma \ref{lem:commhelp} have a natural geometrical interpretation. They state that the holonomies
of non-intersecting paths in $\gammad$ commute and that an analogue of the Reidemeister II  move can be used to remove intersection points of the paths $l(e_{i+1}^\inv)\circ r(e_i)$ and $r(\bar e_i)\circ r(\bar e_{i+1})$. 

\begin{lemma}[see Lemma \ref{lem:fvcomm}]\label{lem:vertfacecomm}  Let $\Gamma$ be a regular ciliated ribbon graph,   $v,v'$  be two distinct vertices  of $\Gamma$ and $f, f'$ two ciliated faces  that start and end at different cilia of $\Gamma$.  Then:
\begin{compactenum}
\item The holonomies of the vertex loops $p_v$ and $p_{v'}$ commute,
\item The holonomies of the face loops $p_{f}$ and $p_{f'}$ commute,
\item If the cilium of $f$ does not coincide with the cilium at $v$,  the holonomies of $p_v$ and $p_f$ commute. 
\end{compactenum}        
\end{lemma}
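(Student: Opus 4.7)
The plan is to reduce each claim to algebraic identities already available, namely Lemma \ref{lem:holprops}, the commutation relations \eqref{eq:hdcommutelr} from Lemma \ref{lem:antipodehd}, and Lemma \ref{lem:commhelp}. Since vertex loops and face loops are ribbon paths satisfying the hypothesis of Lemma \ref{lem:rpath}, I would compute all relevant holonomies in $\mathcal H(H)^{\oo E}$ rather than in $D(H)^{*\oo E}$. By Lemma \ref{lem:holprops}, parts 3 and 4, it suffices to check the commutations for the restricted arguments $\mathrm{Hol}_{p_v}(y\oo 1)$ and $\mathrm{Hol}_{p_f}(1\oo\alpha)$; the general case then follows upon multiplying by the appropriate $\epsilon$-factors. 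With this reduction each product of holonomies factorises over the edges of $\Gamma$, and the analysis proceeds edge by edge.

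For (1), regularity of $\Gamma$ (no loops, no multiple edges) implies that two distinct vertices $v\neq v'$ share at most one edge $e$ of $\Gamma$. Edges incident to only one of $v,v'$ contribute to only one of the two holonomies. For a shared edge $e$, the two loops traverse different edges of $\gammad$: if $\st(e)=v$ and $\ta(e)=v'$, then $p_v$ uses $l(\bar e)^\inv$ at $v$ while $p_{v'}$ uses $r(\bar e)$ at $v'$. A short computation from \eqref{eq:phirl} shows that the resulting $e$-factors in $\mathcal H(H)$ are of the form $S_D(h\oo 1)_e$ and $(k\oo 1)_e$ for suitable $h,k\in H$ built from the coproducts of $y,z$, and these commute by the first identity of \eqref{eq:hdcommutelr}. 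Claim (2) is parallel: two distinct ciliated faces can share an edge of $\Gamma$ only by traversing it in opposite directions (regularity forbids a face from traversing an edge twice), yielding $e$-contributions $(1\oo\delta)_e$ and $S_D(1\oo\delta')_e$ which commute by the second identity of \eqref{eq:hdcommutelr}.

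Claim (3) is the main obstacle, because on an edge $e$ incident to $v$ and traversed by $f$ the $H$-contribution from $p_v$ and the $H^*$-contribution from $p_f$ do not commute factorwise in $\mathcal H(H)$: their commutator encodes the defining Heisenberg cross-relation. The commutation must therefore be extracted globally across the edges at $v$ that $f$ traverses, and this is the role of Lemma \ref{lem:commhelp}, whose geometric content is a Reidemeister II move. If $f$ passes through $v$, it enters and exits along two consecutive incident edges $e_i,e_{i+1}$ in the cyclic order at $v$, and the turn of $p_f$ at $v$ is (up to orientations) the subpath $l(e_{i+1})^\inv\circ r(e_i)$. The hypothesis that the cilium of $f$ does not coincide with the cilium at $v$ excludes the cilium of $v$ from lying between $e_i$ and $e_{i+1}$; otherwise the face turning maximally right at $v$ from that cilium would be $f$ itself, contradicting that $f$ is not based at $v$. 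Consequently the corresponding segment of $p_v$ is the consecutive piece $r(\bar e_i^{\epsilon_i})\circ r(\bar e_{i+1}^{\epsilon_{i+1}})$, and Lemma \ref{lem:commhelp} supplies the required commutation of this segment with the turn of $p_f$.

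After performing this commutation at every passage of $f$ through $v$, the remaining portions of $p_v$ and $p_f$ have disjoint support on the edges of $\Gamma$: what remains of $p_v$ sits on edges at $v$ not traversed by $f$, and what remains of $p_f$ sits on edges of $f$ not incident to $v$; these factors commute in $\mathcal H(H)^{\oo E}$. The main obstacle in executing this plan is the bookkeeping: enumerating the orientation cases so that each turn of $p_f$ at $v$ is brought into the canonical form treated by Lemma \ref{lem:commhelp}, and verifying, via the geometric description of cilia in terms of the polygons $P_v$ in $\gammad$, that the hypothesis on cilia really excludes every configuration in which the two loops would fail to commute.
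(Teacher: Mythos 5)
Your proposal is correct and follows essentially the same route as the paper: parts (1) and (2) are reduced edge-by-edge to the commutation relations \eqref{eq:hdcommutelr} via the explicit formulas \eqref{eq:phirl}, and part (3) is handled by decomposing $p_f$ into segments with support away from $v$ and turns $l(e_{i+1})^\inv\circ r(e_i)$ at $v$, to which Lemma \ref{lem:commhelp} applies, with $S_D$ absorbing the orientation cases. Your explanation of why the cilium hypothesis guarantees that each passage of $f$ through $v$ pairs with a consecutive segment $r(\bar e_i^{\epsilon_i})\circ r(\bar e_{i+1}^{\epsilon_{i+1}})$ of $p_v$ matches the role this hypothesis plays in the paper's argument.
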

\begin{proof} $\quad$\newline
1. The vertex loop $p_v$ is composed of the paths $r(\bar g)$ for edges $g\in E(\Gamma)$ with $\ta(g)=v$ and of paths $l(\bar g)^\inv$ for edges $g\in E(\Gamma)$ with $\st(g)=v$.  If $g\in E(\Gamma)$ is incident at $v$ but not at $v'$  then the holonomies of  $r(\bar g)$ and $l(\bar g)^\inv$ commute with the holonomy of $p_{v'}$.  
If $g\in E(\Gamma)$ is incident at both $v$ and $v'$, we can suppose without restriction of generality that $\ta(g)=v$ and $\st(g)=v'$. Then $p_v$ contains only  factors  of the form $r(\bar g)$ and $r(\bar h)$, $l(\bar h)^\inv$ for edges $h\neq g$, and $p_{v'}$ contains only   factors of the form $l(\bar g)^\inv$ and  $r(\bar h)$, $l(\bar h)^\inv$ for edges $h\neq g$. The holonomies of $r(\bar g)$ and $l(\bar g)^\inv$ commute with the holonomies of $r(\bar h)$, $l(\bar h)^\inv$ for  $h\neq g$, and by  \eqref{eq:hdcommutelr}  they commute with each other.  Hence, the holonomies of  $p_v$ and $p_{v'}$ commute. 

2. If $f,f'$ are ciliated faces of $\Gamma$ that start and end at different cilia of $\Gamma$, they are non-equivalent. For any edge $g\in E(\Gamma)$ 
that is traversed by both $f$ and $f'$ either  $p_f$ contains  a factor $r(g)$ and 
 $p_{f'}$ contains a factor  $l(g)$ or vice versa.   The holonomies of $r(g)$ and $l(g)$ commute by \eqref{eq:phirl} and \eqref{eq:hdcommutelr}. As the holonomies of $r(g)$ and $l(g)$ also commute with the holonomies of 
$r(h)$ and $l(h)$ for all  edges $h\neq g$, the holonomies of $p_f$ and $p'_f$ commute. 

3. As $f$ starts and ends at a cilium and $\Gamma$ is regular, the  face loop $p_f$ can be decomposed into paths $q$ that do not traverse any edges incident at $v$  and into paths of the form $r(\bar f^{\pm 1})\circ r(\bar g^{\pm 1})$, where $f$, $g$ are adjacent edges at $v$ with $g<f$. 
The holonomies of the former commute with  the holonomy of $p_v$ by definition and the holonomies of the latter  by Lemma 
 \ref{lem:commhelp} because $\mathrm{Hol}_{r(\bar f^\inv)}=\mathrm{Hol}_{l(\bar f)^\inv}=S_D\circ \mathrm{Hol}_{r(\bar f)}$, $\mathrm{Hol}_{r( f^\inv)}=\mathrm{Hol}_{l(f)^\inv}=S_D\circ \mathrm{Hol}_{r(f)}$ and  $S_D: \mathcal H(H)\to \mathcal H(H)$ is an algebra homomorphism by Lemma \ref{lem:antipodehd}. Hence, the holonomies of $p_v$ and $p_f$ commute. 
 \end{proof}

\begin{lemma} [see Lemma \ref{lem:algrelskit}]\label{lem:helpvertface} Let $\Gamma$ be a regular ciliated ribbon graph.  Denote for each vertex $v$ by   $f(v)$ the ciliated  face that starts and ends at the cilium at $v$.  
Then one obtains an algebra homomorphism
\begin{align}\label{eq:tau}
\tau: D(H)^{\oo V}\to \mathcal H(H)^{\oo E},\quad (\delta\oo z)_v\mapsto \mathrm{Hol}_{p_{f(v)}}(1\oo \delta)\cdot\mathrm{Hol}_{p_v}(z\oo 1).
\end{align}
\end{lemma}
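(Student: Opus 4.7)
My plan is to split the algebra-morphism condition into a per-vertex multiplicativity claim and a disjoint-vertex commutation claim. By the universal property of the tensor product of algebras, $\tau$ is an algebra morphism if and only if both (a) each restriction $\tau_v: D(H)\to \mathcal H(H)^{\oo E}$, $\delta\oo z\mapsto \mathrm{Hol}_{p_{f(v)}}(1\oo\delta)\cdot \mathrm{Hol}_{p_v}(z\oo 1)$ is an algebra morphism, and (b) for distinct vertices $v\neq v'$ the images $\tau_v(D(H))$ and $\tau_{v'}(D(H))$ commute in $\mathcal H(H)^{\oo E}$, hold. Claim (b) is immediate from Lemma \ref{lem:vertfacecomm}: the vertex-vertex pair commutes by part 1, the face-face pair by part 2 (since $f(v)$ and $f(v')$ start and end at different cilia when $v\neq v'$), and the mixed pair by part 3 (since regularity of $\Gamma$ forbids $f(v')$ from traversing the cilium at $v$).

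For (a), I would exploit the factorization $(\delta\oo z)\cdot(\delta'\oo z') = (\delta\oo 1)\cdot(1\oo z)\cdot(\delta'\oo 1)\cdot(1\oo z')$ in $D(H)$, which is valid because $H^{*cop}\cong H^{*cop}\oo 1$ and $H\cong 1\oo H$ are Hopf subalgebras by Theorem \ref{lem:ddouble2}. Combined with Lemma \ref{lem:holab}, which states that $\mathrm{Hol}_{p_v}\vert_H$ and $\mathrm{Hol}_{p_{f(v)}}\vert_{H^*}$ are algebra morphisms, multiplicativity of $\tau_v$ reduces to a single mixed relation between the face and vertex holonomies at the cilium, namely
\begin{align*}
\mathrm{Hol}_{p_v}(z\oo 1)\cdot \mathrm{Hol}_{p_{f(v)}}(1\oo \delta) = \langle \low\delta 3, \low z 1\rangle \langle S^\inv(\low\delta 1), \low z 3\rangle\, \mathrm{Hol}_{p_{f(v)}}(1\oo \low\delta 2)\cdot \mathrm{Hol}_{p_v}(\low z 2\oo 1),
\end{align*}
which is precisely the content of \eqref{eq:dmult} applied to the product $(1\oo z)\cdot(\delta\oo 1)$ in $D(H)$.

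Verifying this mixed relation is the main obstacle. Using Lemma \ref{lem:rpath} together with Lemma \ref{lem:holab}, both holonomies expand as products over the single edges of $\gammad$: $\mathrm{Hol}_{p_v}(z\oo 1)$ factors along the edges $r(\bar e_i^{\epsilon_i})$ for the incident edges $e_1,\ldots, e_n$ at $v$, and $\mathrm{Hol}_{p_{f(v)}}(1\oo \delta)$ factors along edges $r(g_j^{\mu_j})$ for the edges $g_1,\ldots, g_m$ of $f(v)$. The regularity of $\Gamma$, together with the fact that $f(v)$ starts and ends at the cilium of $v$ and traverses each edge at most once, implies that precisely two edges of $f(v)$ are incident at $v$, namely the two bordering the cilium; all other factors of $\mathrm{Hol}_{p_{f(v)}}$ act on copies of $\mathcal H(H)$ associated to edges not incident at $v$ and therefore commute with $\mathrm{Hol}_{p_v}(z\oo 1)$. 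The commutator consequently collapses to a braiding computation on the two rectangles attached to these two boundary edges, which can be carried out directly from the elementary edge formulas \eqref{eq:phirl} and the Heisenberg double multiplication \eqref{eq:hd2}, using the auxiliary identities \eqref{eq:basexx}, \eqref{eq:helpanti}, \eqref{eq:helpanti2}. The two pairings involving $S^\inv$ and the identity respectively correspond to the two boundary edges, whose orientations relative to $p_{f(v)}$ and $p_v$ differ, producing the characteristic asymmetry of \eqref{eq:dmult} between the first edge along which $f(v)$ leaves $v$ and the last edge along which it returns.
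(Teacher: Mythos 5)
Your overall strategy matches the paper's: reduce to a single vertex via the commutation results of Lemma \ref{lem:vertfacecomm}, use that $\mathrm{Hol}_{p_v}\vert_H$ and $\mathrm{Hol}_{p_{f(v)}}\vert_{H^*}$ are algebra morphisms (Lemma \ref{lem:holab}), and then verify one mixed exchange relation between the vertex and face holonomies, to be compared with the multiplication \eqref{eq:dmult} of $D(H)$. Up to the standard identity $\langle S^\inv(\low\delta 1),\low z 3\rangle=\langle \low\delta 1, S(\low z 3)\rangle$ (valid since $S^2=\id$ here), the mixed relation you write down is the one the paper establishes.

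However, there is a genuine gap in your justification of that mixed relation. You assert that regularity of $\Gamma$ forces precisely two edges of $f(v)$ to be incident at $v$, namely the two bordering the cilium, so that all remaining factors of $\mathrm{Hol}_{p_{f(v)}}(1\oo\delta)$ live on copies of $\mathcal H(H)$ attached to edges not incident at $v$ and commute trivially with $\mathrm{Hol}_{p_v}(z\oo 1)$. This does not follow from Definition \ref{def:regular}: the conditions (no loops or multiple edges, each face traverses each edge at most once, exactly one cilium per face) do not prevent the face $f(v)$ from returning to $v$ one or more times in its interior, entering along an edge $g$ and leaving along the adjacent edge $h<g$ without crossing the cilium. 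Each such return contributes two further edges of $f(v)$ incident at $v$, and the corresponding factors of $\mathrm{Hol}_{p_{f(v)}}$ do \emph{not} commute with $\mathrm{Hol}_{p_v}(z\oo 1)$ edge by edge; only the holonomy of the composite segment $l(h)^\inv\circ r(g)$ commutes with the relevant segment $r(\bar g)\circ r(\bar h)$ of the vertex loop. This is exactly the content of Lemma \ref{lem:commhelp} (the Reidemeister~II move of Figure \ref{fig:reidemeister}), which the paper invokes after decomposing the interior portion $q$ of $p_{f(v)}=r(e_n)\circ q\circ l(e_1)^\inv$ into subpaths away from $v$ and subpaths of the form $r(g^{\pm1})\circ r(h^{\pm1})$ for adjacent edges at $v$. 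Without this step your collapse of the commutator to a braiding computation on the two cilium-adjacent rectangles is unjustified whenever $f(v)$ revisits $v$; with it, the rest of your outline goes through and reproduces the paper's computation, in which the two nontrivial pairings indeed arise from the first and last edges $e_n$ and $e_1$ of the face loop.
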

\begin{proof} 
By Lemma \ref{lem:helpvertface} the holonomies of $p_v$  and $p_{f(v)}$ commute with the holonomies of $p_w$ and $p_{f(w)}$ for all vertices $w\neq v$. Hence it is sufficient to show that  for each vertex $v$ the linear map
\begin{align}\label{eq:tauv}
\tau_v=\tau\circ \iota_v: D(H)\to \mathcal H(H)^{\oo E},\quad \delta\oo z\mapsto \mathrm{Hol}_{p_{f(v)}}(1\oo \delta)\cdot\mathrm{Hol}_{p_v}(z\oo 1)
\end{align}
is an algebra morphism. For this, let 
$v$ be a vertex with $n$ incident edges $e_1,...,e_n$, numbered according to the ordering at $v$. As $\Gamma$ is loop-free and  the map $S_D:\mathcal H(H)\to\mathcal H(H)$ is an algebra isomorphism by Lemma \ref{lem:antipodehd}, we can suppose without loss of generality that all edges are incoming. Then the vertex loop $p_v$ is given by $p_v=r(\bar e_1)\circ\ldots\circ r(\bar e_n)$ and
 the associated  face loop is of the form 
$p_{f(v)}=r(e_n)\circ q\circ l(e_1)^\inv$, where $q$ is a path that turns maximally right at each vertex, traverses each edge at most once  and does not traverse any cilia. 
 This implies that $q$ can be decomposed into paths that do not traverse any edges incident at $v$ and paths of the form $r(g^{\pm 1})\circ r(h^{\pm 1})$ where $g,h$ are adjacent edges at $v$ with $h<g$. The holonomies of the former commute with the holonomy of $p_v$ by \eqref{eq:phirl} and the holonomies of the  latter  by Lemma \ref{lem:commhelp}. This implies that the holonomy of $q$ commutes with the holonomy of $p_v$. As  the maps $\mathrm{Hol}_{p_v}\vert_H: H\to \mathcal H(H)^{\oo E}$ and $\mathrm{Hol}_{p_{f(v)}}\vert_{H^*}: H^*\to\mathcal H(H)^{\oo E}$ are algebra homomorphisms by Lemma \ref{lem:holab},  we  obtain
\begin{align*}
&\mathrm{Hol}_{p_{f(v)}}(1\oo\delta)\cdot \mathrm{Hol}_{p_v}(1\oo z)=(1\oo\low\delta 1)_{e_n}\cdot \mathrm{Hol}_{q}(1\oo\low \delta 2)\cdot (x_kx_l\oo S(\alpha^l) S(\low\delta 3)\alpha^k)_{e_1} \cdot \mathrm{Hol}_{p_v}( z\oo 1)\\
&=\langle S(\low \delta 4), \low z 1\rangle \; (1\oo\low\delta 1)_{e_n}\cdot \mathrm{Hol}_{q}(1\oo\low \delta 2)\cdot \mathrm{Hol}_{p_v}(1\oo \low z 2) \cdot (x_kx_l\oo S(\alpha^l) S(\low\delta 3)\alpha^k)_{e_1} \\
&=\langle S(\low \delta 5), \low z 1\rangle \langle \low\delta 1, \low z 3\rangle \; \mathrm{Hol}_{p_v}(1\oo \low z 2) \cdot (1\oo\low\delta 2)_{e_n}\cdot \mathrm{Hol}_{q}(1\oo\low \delta 3) \cdot (x_kx_l\oo S(\alpha^l) S(\low\delta 4)\alpha^k)_{e_1} \\
&=\langle S(\low \delta 3), \low z 1\rangle \langle \low\delta 1, \low z 3\rangle \; \mathrm{Hol}_{p_v}(1\oo \low z 2) \cdot \mathrm{Hol}_{p_{f(v)}}(1\oo\low\delta 2).\end{align*}
This implies for all $y,z\in H$ and $\gamma,\delta\in H^*$
\begin{align*}
\mathrm{Hol}_{p_v}(1\oo  z ) \cdot \mathrm{Hol}_{p_{f(v)}}(1\oo\delta)=\langle \low \delta 3, \low z 1\rangle \langle \low\delta 1, S(\low z 3)\rangle \; \mathrm{Hol}_{p_{f(v)}}(1\oo\low \delta 2)\cdot \mathrm{Hol}_{p_v}(1\oo \low z 2).
\end{align*}
A comparison with the multiplication of $D(H)$ in \eqref{eq:dmult} then proves the claim.
\end{proof}

\section{Gauge symmetries and flatness in  Kitaev models}
\label{subsec:gsymmflat}

\subsection{Gauge symmetries and gauge invariance}
\label{subsec:gaugesym}

As explained in  Section \ref{sec:gtheory}, the essential algebraic structure  in a Hopf algebra gauge theory is the {\em module algebra} of functions over the {\em Hopf algebra} of gauge transformations. 
In this section we show that the Hopf algebra of gauge transformations in a Kitaev model  is the $V$-fold tensor product $D(H)^{\oo V}$ and the algebra $\mathcal H(H)^{op\oo E}$ of triangle operators can be given the structure of a module algebra over this Hopf algebra. We can thus interpret  $D(H)^{\oo V}$ as a {\em gauge symmetry} of the Kitaev model  
similar to the gauge symmetries in a Hopf algebra gauge theory and  obtain an associated subalgebra $\mathcal H(H)^{op\oo E}_{inv}$ of invariants or {\em gauge invariant observables}. 

In Section \ref{subsec:flatkit} we investigate the notion of curvature in  Kitaev models and show that only those faces of $\gammad$ that correspond to vertices and faces of $\Gamma$ give rise to curvatures. We then construct a subalgebra  $\mathcal H(H)^{op\oo E}_{flat}$ that can be viewed as the algebra of gauge invariant functions of  flat gauge fields and  acts on the protected space.

The $D(H)^{\oo V}$- right module structure on Kitaev's triangle operator algebra  is induced by the algebra homomorphism \eqref{eq:tau}, i.~e.~by the holonomies of the vertex and face loop based at a cilium of $\Gamma$. 

\begin{theorem} \label{lem:hdmodule} Let $\Gamma$ be a regular ciliated ribbon graph. Denote for each  vertex $v$ of $\Gamma$ by $f(v)$ the ciliated face that starts and ends at the cilium at $v$. 
Then  $\lhd:  \mathcal H(H)^{\oo E}\oo D(H)^{\oo V}\to  \mathcal H(H)^{\oo E }$  
\begin{align}
X \lhd (\delta\oo z)_{v}&= \mathrm{Hol}_{p_v}(S(\low z 2)\oo 1)\cdot \mathrm{Hol}_{p_{f(v)}}(1\oo S(\low\delta 1))\cdot 
 X
 \cdot \mathrm{Hol}_{p_{f(v)}}(1\oo \low\delta 2)\cdot \mathrm{Hol}_{p_v}(\low z 1\oo 1)\nonumber\\
 &=(\tau_v\circ S)  ((\delta\oo z)_{(2)})\cdot X\cdot \tau_v((\delta\oo z)_{(1)})
\end{align}
defines a $D(H)^{\oo V}$-right module algebra structure on $\mathcal H(H)^{op\oo E}$.
\end{theorem}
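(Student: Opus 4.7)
The proof will proceed in three steps, exploiting that by Lemma \ref{lem:helpvertface} the map $\tau : D(H)^{\oo V} \to \mathcal H(H)^{\oo E}$ is an algebra morphism. The strategy is to recognise the formula in the theorem as the standard inner adjoint action associated to an algebra map from a Hopf algebra, and to verify the module algebra axioms abstractly from this point of view.

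First, I would verify that the two forms of the action given in the theorem agree. Using $\Delta(\delta\oo z)=\delta_{(2)}\oo z_{(1)}\oo\delta_{(1)}\oo z_{(2)}$ from \eqref{eq:dmult}, one has $(\delta\oo z)_{(1)}=\delta_{(2)}\oo z_{(1)}$, so $\tau_v((\delta\oo z)_{(1)})=\mathrm{Hol}_{p_{f(v)}}(1\oo\delta_{(2)})\cdot\mathrm{Hol}_{p_v}(z_{(1)}\oo 1)$ matches the right factor in the explicit formula directly. For the left factor one factorises $\delta_{(1)}\oo z_{(2)}=(\delta_{(1)}\oo 1)\cdot(1\oo z_{(2)})$ in $D(H)$, applies the antipode to obtain $S(\delta_{(1)}\oo z_{(2)})=(1\oo S(z_{(2)}))(S(\delta_{(1)})\oo 1)$, and uses that $\tau_v$ is an algebra morphism together with the commutation relation between $\mathrm{Hol}_{p_v}$ and $\mathrm{Hol}_{p_{f(v)}}$ derived in the proof of Lemma \ref{lem:helpvertface}. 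After absorbing the pairings coming from the multiplication law of $D(H)$ into those provided by that commutation relation, $\tau_v(S(\delta_{(1)}\oo z_{(2)}))$ reduces to $\mathrm{Hol}_{p_v}(S(z_{(2)})\oo 1)\cdot\mathrm{Hol}_{p_{f(v)}}(1\oo S(\delta_{(1)}))$, proving the two forms agree.

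Second, I would verify the module algebra axioms for the adjoint form $X\lhd a := \tau(S(a_{(2)}))\cdot X\cdot \tau(a_{(1)})$ on $\mathcal H(H)^{\oo E}$. The unit axiom $X\lhd 1 = X$ is immediate from $\tau(1)=1$ and $S(1)=1$. The associativity $(X\lhd a)\lhd b = X\lhd(ab)$ follows because $\tau$ is an algebra morphism and $\Delta(ab)=a_{(1)}b_{(1)}\oo a_{(2)}b_{(2)}$, which gives $\tau(S(b_{(2)})S(a_{(2)}))\cdot X\cdot\tau(a_{(1)}b_{(1)})=\tau(S((ab)_{(2)}))\cdot X\cdot\tau((ab)_{(1)})$. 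The identity $1\lhd a = \tau(S(a_{(2)})a_{(1)})=\epsilon(a)\cdot 1$ also follows at once. The key module algebra compatibility with the opposite multiplication $X*Y := Y\cdot X$ is the identity $(X*Y)\lhd a = (X\lhd a_{(1)})*(Y\lhd a_{(2)})$; expanding the right-hand side produces $\tau(S(a_{(4)}))\cdot Y\cdot \tau(a_{(3)}S(a_{(2)}))\cdot X\cdot\tau(a_{(1)})$, and using $a_{(3)}S(a_{(2)})=\epsilon(a_{(2)})\cdot 1$ this collapses to $\tau(S(a_{(2)}))\cdot Y\cdot X\cdot\tau(a_{(1)})=(Y\cdot X)\lhd a=(X*Y)\lhd a$, as required.

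Third, the extension from a single vertex $v$ to the Hopf algebra $D(H)^{\oo V}$ is automatic: by Lemma \ref{lem:vertfacecomm} the holonomies $\mathrm{Hol}_{p_v}$ and $\mathrm{Hol}_{p_{f(v)}}$ commute with $\mathrm{Hol}_{p_w}$ and $\mathrm{Hol}_{p_{f(w)}}$ for all vertices $w\neq v$, so the local adjoint actions at different vertices commute, and this matches the statement of Lemma \ref{lem:helpvertface} that $\tau : D(H)^{\oo V}\to\mathcal H(H)^{\oo E}$ is an algebra morphism on the whole tensor product.

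The main obstacle will be Step 1, the identification of the explicit and adjoint forms: the Sweedler bookkeeping is delicate because $D(H)$ has a twisted comultiplication and its antipode involves two pairings of iterated coproducts, so one has to interleave the antipode formula, the multiplication of $D(H)$, and the $R$-matrix commutation relation between $\mathrm{Hol}_{p_v}$ and $\mathrm{Hol}_{p_{f(v)}}$. Once this identification is established, Steps 2 and 3 are formal and flow from the general principle that conjugation by an algebra morphism from a Hopf algebra produces a module algebra structure on the opposite algebra.
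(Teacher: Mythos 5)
Your proposal is correct and takes essentially the same route as the paper, whose entire proof consists of the observation that $X\lhd k=\tau(S(k_{(2)}))\cdot X\cdot\tau(k_{(1)})$ equips $A^{op}$ with a right module algebra structure for an algebra morphism $\tau$ from a Hopf algebra, combined with Lemma \ref{lem:helpvertface}. Two small remarks: the identity $a_{(3)}S(a_{(2)})=\epsilon(a_{(2)})\,1$ in your second step amounts to $b_{(2)}S(b_{(1)})=\epsilon(b)\,1$, which fails for general Hopf algebras and is legitimate here only because $S^2=\mathrm{id}$ on $D(H)$ for finite-dimensional semisimple $H$; and your first step is easier than you fear, since $S(\delta_{(1)}\oo z_{(2)})=(1\oo S(z_{(2)}))\cdot(S(\delta_{(1)})\oo 1)$ followed by the algebra morphism $\tau_v$ already yields the two holonomy factors in the correct order, with no pairings or commutation relations required.
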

\begin{proof}  It follows by a direct computation that for any Hopf algebra $K$, any algebra $A$  and any  algebra homomorphism $\tau: K\to A$ the linear map $\lhd: A\oo K\to A$, $a\lhd k=\tau(S(\low k 2))\cdot a\cdot \tau(\low k 1)$  
equips $A^{op}$ with the structure of a $K$-right module algebra. 
As the linear map $\tau: D(H)^{\oo V}\to \mathcal H(H)^{\oo E}$ is an algebra homomorphism by Lemma \ref{lem:helpvertface}, the claim follows.
\end{proof}

As the $D(H)^{\oo V}$-right module structure  from
Theorem \ref{lem:hdmodule} gives $\mathcal H(H)^{op\oo E}$ the structure of a $D(H)^{\oo V}$-module algebra, Lemma \ref{lem:haarproj} defines a projector on its subalgebra  of invariants. As $H$, $H^*$ and $D(H)$ are semisimple and $\text{char}(\mathbb F)=0$,  all  tensor products of these Hopf  algebras over $\mathbb F$ are semisimple as well and hence equipped with Haar integrals. 
If  $\ell \in H$ and $\eta\in H^*$ denote the  Haar integrals of $H$ and $H^*$, then  $\eta\oo\ell\in H^*\oo H$ is the  Haar integral for $D(H)$ and $(\eta\oo \ell)^{\oo V }$ the Haar integral for $D(H)^{\oo V}$.  By inserting the latter into the $D(H)^{\oo V}$-module structure from Theorem \ref{lem:hdmodule} one obtains a projector on the gauge invariant subalgebra.

\begin{lemma}\label{lem:hdproj} Let $\Gamma$ be a  regular ciliated ribbon graph and $H$ a finite-dimensional semisimple Hopf algebra. Consider the  $D(H)^{\oo V}$-right module algebra structure on $\mathcal H(H)^{op\oo E}$ from Theorem \ref{lem:hdmodule}. 
\begin{compactenum}
\item The linear map 
$Q_{inv}: \mathcal H(H)^{\oo E}\to\mathcal H(H)^{\oo E}$,  $X\mapsto X\lhd (\eta\oo\ell)^{\oo V}$
is a projector.\\[-2ex]
\item Its image $\mathcal H(H)^{\oo E}_{inv}=Q_{inv}(\mathcal H(H)^{\oo E})$ is a subalgebra of $\mathcal H(H)^{\oo E}$. \\[-2ex]

\item For all vertices $v$ the maps   $Q_{inv}\circ \mathrm{Hol}_{p_v}$ and $Q_{inv}\circ \mathrm{Hol}_{p_{f(v)}}$ take values in the centre of $\mathcal H(H)^{\oo E}_{inv}$.
\end{compactenum}
\end{lemma}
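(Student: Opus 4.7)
Parts (1) and (2) will follow immediately from general Hopf-algebraic results. Since $H$, $H^*$ and $D(H)$ are finite-dimensional semisimple in characteristic zero, so is $D(H)^{\oo V}$, with Haar integral $(\eta\oo\ell)^{\oo V}$. Theorem \ref{lem:hdmodule} endows $\mathcal H(H)^{op\oo E}$ with a right $D(H)^{\oo V}$-module algebra structure; the right-module analogue of Lemma \ref{lem:haarproj} then gives that $Q_{inv}$ is a projector onto the invariants, and the right-module analogue of Lemma \ref{lem:project} that this subspace is a subalgebra. The invariant subset is the same whether one uses the multiplication of $\mathcal H(H)^{\oo E}$ or its opposite, so we get a subalgebra of $\mathcal H(H)^{\oo E}$.

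For part (3), my plan is to reduce centrality to a local computation at $v$ followed by a commutant argument in the matrix algebra. First, by Lemma \ref{lem:vertfacecomm} the images $\tau_v(D(H))$ and $\tau_w(D(H))$ commute pointwise for $w\neq v$; together with the identity $S(k_{(2)})k_{(1)} = \epsilon(k)1$ (valid because $D(H)$ is semisimple, hence $S^2 = \id$), this yields $\tau_v(y)\lhd(\delta\oo z)_w = \epsilon(\delta\oo z)\tau_v(y)$ for any $y\in D(H)$, so $\tau_v(y)$ is invariant under the gauge action at every $w\neq v$ and $Q_{inv}(\tau_v(y)) = \tau_v(y)\lhd(\eta\oo\ell)_v$. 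A direct Sweedler computation with the action formula of Theorem \ref{lem:hdmodule}, using $S(\Lambda) = \Lambda$ and the cocommutativity $\Delta(\Lambda) = \Delta^{op}(\Lambda)$ from Remark \ref{rem:haar}, then gives $Q_{inv}(\tau_v(y)) = \tau_v(\hat y)$ with $\hat y := \Lambda_{(1)} y S(\Lambda_{(2)})$ for $\Lambda = \eta\oo\ell$; this $\hat y$ is the adjoint average of $y$ under $\Lambda$ and therefore lies in $Z(D(H))$.

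To promote this centrality to centrality in $\mathcal H(H)^{\oo E}_{inv}$, I would use the faithful matrix representation $\rho: \mathcal H(H)^{\oo E}\to\mathrm{End}_{\mathbb F}(H^{\oo E})$ from Lemma \ref{lem:kitoprel}. The image $\tau(D(H)^{\oo V})$ is a semisimple subalgebra of the matrix algebra $\mathrm{End}_{\mathbb F}(H^{\oo E})$. One shows that $\mathcal H(H)^{\oo E}_{inv}$ coincides with the commutant $C := C_{\mathcal H(H)^{\oo E}}(\tau(D(H)^{\oo V}))$: the inclusion $C\subset\mathcal H(H)^{\oo E}_{inv}$ is immediate from $S(k_{(2)})k_{(1)} = \epsilon(k)1$, and the reverse is obtained by averaging the invariance relation with the Haar integral and rearranging Sweedler factors using $\Delta(\Lambda) = \Delta^{op}(\Lambda)$. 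The double commutant theorem for semisimple subalgebras of matrix algebras then gives $Z(\mathcal H(H)^{\oo E}_{inv}) = Z(C) = \tau(D(H)^{\oo V})\cap C$, which contains $\tau_v(\hat y)$ since $\hat y\in Z(D(H))\subset Z(D(H)^{\oo V})$. The images of $Q_{inv}\circ\mathrm{Hol}_{p_v}$ and $Q_{inv}\circ\mathrm{Hol}_{p_{f(v)}}$ are covered by applying this to elements of the form $1\oo y$ and $\delta\oo 1$ in $D(H)$ respectively.

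The hard part will be establishing the commutant identification $\mathcal H(H)^{\oo E}_{inv} = C$: the easy inclusion follows from $S^2 = \id$, but the reverse requires Haar-averaging the invariance identity and using $\Delta(\Lambda) = \Delta^{op}(\Lambda)$ to commute Sweedler factors past the invariant element. Everything else is either a structural appeal to semisimplicity of $D(H)^{\oo V}$ or a routine Sweedler calculation.
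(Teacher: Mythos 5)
Your argument is correct. Parts (1) and (2) are handled exactly as in the paper, via Lemmas \ref{lem:haarproj} and \ref{lem:project} applied to the Haar integral $(\eta\oo\ell)^{\oo V}$ of $D(H)^{\oo V}$. For part (3) you take a genuinely different route. The paper's proof is a direct Sweedler computation: using the separability idempotents $S(\ell_{(1)})\oo\ell_{(2)}$ and $S(\eta_{(1)})\oo\eta_{(2)}$ and the commutativity of $\eta\oo 1$ and $1\oo\ell$ in $D(H)$, it verifies by hand that $\mathrm{Hol}_{p_v}(y\oo\gamma)\lhd(\eta\oo\ell)_v$ commutes with $X\lhd(\eta'\oo\ell')_v$ for every $X\in\mathcal H(H)^{\oo E}$, which suffices because every invariant element is of that form. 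You instead identify $\mathcal H(H)^{\oo E}_{inv}$ with the commutant $C$ of the unital semisimple subalgebra $\tau(D(H)^{\oo V})$ inside the matrix algebra $\mathcal H(H)^{\oo E}\cong\mathrm{End}_{\mathbb F}(H^{\oo E})$ and invoke the double centralizer theorem to get $Z(\mathcal H(H)^{\oo E}_{inv})=C\cap\tau(D(H)^{\oo V})$, which visibly contains $Q_{inv}(\tau_v(y))=\tau_v\bigl(y\lhd_{ad}(\eta\oo\ell)\bigr)\in\tau_v(Z(D(H)))$. Your route buys a stronger structural statement (a description of the whole centre of the invariant algebra) and replaces the paper's longest computation by soft representation theory, at the cost of relying on the faithfulness of $\rho$ from Lemma \ref{lem:kitoprel} and the semisimplicity of the image of $\tau$, neither of which the paper's purely Hopf-algebraic computation needs. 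Two small remarks: the reverse inclusion $\mathcal H(H)^{\oo E}_{inv}\subset C$ needs no Haar averaging, since $X\cdot\tau(k)=\tau(k_{(2)})\cdot(X\lhd k_{(1)})=\tau(k)\cdot X$ for invariant $X$; and the element produced by the action formula is $S(\Lambda_{(2)})\,y\,\Lambda_{(1)}$ rather than $\Lambda_{(1)}\,y\,S(\Lambda_{(2)})$, but the two coincide by $S(\Lambda)=\Lambda$ and $\Delta(\Lambda)=\Delta^{op}(\Lambda)$, as you note, and either lies in $Z(D(H))$ because the adjoint average with the Haar integral projects onto the adjoint invariants.
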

\begin{proof} That $Q_{inv}$ is a projector and $Q_{inv}(\mathcal H(H)^{\oo E})$  a subalgebra  of $\mathcal H(H)^{\oo E}$  follows with Lemma \ref{lem:haarproj} and Lemma \ref{lem:project}, because $(\eta\oo \ell)^{\oo V}$ is a Haar integral for $D(H)^{\oo V}$ and $\mathcal H(H)^{op\oo E}$ a $D(H)^{\oo V}$-right module algebra by Theorem \ref{lem:hdmodule}.
To prove the third claim, note first that $(\eta\oo 1)\cdot (1\oo\ell)=(1\oo \ell)\cdot (\eta\oo 1)$ in $D(H)$, which 
follows directly from formula \eqref{eq:dmult} for the multiplication of $D(H)$ and the cyclic invariance of 
$\Delta^{(3)}(\ell)$ and $\Delta^{(3)}(\eta)$.
This implies for all vertices $v$ of $\Gamma$
\begin{align}\label{eq:holgauge}
\mathrm{Hol}_{p_v}(y\oo\gamma)\lhd_v(\eta\oo\ell)=&\epsilon(\gamma)\,  \mathrm{Hol}_{p_v}(S(\low\ell 2)y \low\ell 1\oo 1) \lhd (\eta\oo 1)_v\\
\mathrm{Hol}_{p_{f(v)}}(y\oo\gamma)\lhd_v(\eta\oo\ell)=&\epsilon(y)\,  \mathrm{Hol}_{p_{f(v)}}(1\oo S(\low\eta 1)\gamma\low\eta 2) \lhd (1\oo \ell)_v. \nonumber
\end{align}
As $S(\low\ell 1)\oo\low\ell 2$ and $S(\low\eta 1)\oo\low\eta 2$ are separability idempotents for $H$ and $H^*$ and  $(1\oo\ell)$ and $(\eta\oo 1)$ commute in $D(H)$  we obtain from  the $D(H)^{\oo V }$-right module structure of $\mathcal H(H)^{\oo E}$ 
\begin{align*}
&(\mathrm{Hol}_{p_v}(y\oo\gamma)\lhd(\eta\oo\ell)_v)\cdot (X\lhd (\eta'\oo\ell')_v)\\
&=\epsilon(\gamma)\,  \tau_v( (S(\low\eta 1)\oo 1) \cdot (1\oo S(\low\ell 2)y \low\ell 1) \cdot (\low\eta 2 S(\low {\eta'} 1)\oo 1))\cdot  (X\lhd (1\oo \ell')_v) \cdot \tau_v( \low {\eta'} 2\oo 1)\\
&=\epsilon(\gamma)\,  \tau_v( (S(\low\eta 1)\oo 1) \cdot (1\oo S(\low\ell 2)y \low\ell 1) \cdot ( S(\low {\eta'} 1)\oo 1))\cdot  (X\lhd (\ell'\oo 1)_v) \cdot \tau_v( \low {\eta'} 2\low\eta 2\oo 1)\\
 &=\epsilon(\gamma)\,  \tau_v( (S(\low\eta 1)\oo 1) \cdot (1\oo S(\low\ell 2)y \low\ell 1 S(\low {\ell'} 2))) \cdot (X\lhd (\eta'\oo 1)_v) \cdot \tau_v((1\oo \low {\ell'} 1 ) \cdot ( \low\eta 2\oo 1))\\
 &=\epsilon(\gamma)\,  \tau_v( (S(\low\eta 1)\oo 1)
\cdot (1\oo S(\low {\ell'} 2))) \cdot (X\lhd (\eta'\oo 1)_v) \cdot \tau_v(1\oo \low {\ell'} 1  S(\low\ell 2)y \low\ell 1) \cdot  \tau_v( \low\eta 2\oo 1)\\
 &=\epsilon(\gamma)\, \tau_v( S(\low\eta 1S(\low {\eta'} 1)\oo 1)  \cdot (X\lhd (1\oo \ell')_v)  \cdot \tau_v( (\low {\eta'} 2\oo 1) \cdot (1\oo S(\low\ell 2)y \low\ell 1)
\cdot ( \low\eta 2\oo 1))\\
 &=\epsilon(\gamma)\, \tau_v(S(\low {\eta'} 1)\oo 1)  \cdot (X\lhd (1\oo \ell')_v)  \cdot \tau_v(( \low {\eta'} 2S(\low\eta 1)\oo 1) \cdot   (1\oo S(\low\ell 2)y \low\ell 1)
\cdot ( \low\eta 2\oo 1))\\
&=(X\lhd(\eta'\oo\ell')_v)\cdot (\mathrm{Hol}_{p_v}(y\oo\gamma)\lhd(\eta\oo\ell)_v)
\end{align*}
for all $y\in H$, $\gamma\in H^*$ and $X\in \mathcal H(H)^{\oo E}$.
As the different copies of $D(H)$ in $D(H)^{\oo V }$ commute, this proves the third claim for the paths $p_v$.
The proof for the paths $p_{f(v)}$  is analogous.
\end{proof}

Theorem \ref{lem:hdmodule} and Lemma \ref{lem:hdproj} will  allow us  later to interpret the representations of the Drinfeld double $D(H)$ associated with each vertex of $\Gamma$ as gauge transformations and to view the subalgebra $\mathcal H(H)^{\oo E}_{inv}\subset \mathcal H(H)^{\oo E}$ as the algebra of gauge invariant functions of a Hopf algebra gauge theory on $\Gamma$ with values in $D(H)$. 

\subsection{Flatness and the protected space}
\label{subsec:flatkit}

It remains to introduce a notion of curvature into the Kitaev models that is an analogue of curvature in a Hopf algebra gauge theory.
By Definition \ref{def:hol},  curvatures in a Hopf algebra gauge theory on  $\Gamma$ are given by the holonomies of ciliated faces of $\Gamma$. 
This suggests that curvatures in the Kitaev models should be given by the holonomies of faces of the  thickened ribbon graph $\gammad$.

As explained in Section \ref{subsec:thickening}, each face of $\gammad$  corresponds either to a unique edge, to a unique face or to a unique vertex  of $\Gamma$. The face of $\gammad$ corresponding to an edge $e$ of $\Gamma$ is the rectangle $R_e$ in Figure \ref{fig:foursites}, and by Lemma \ref{lem:holprops}, 5.~the holonomy of any ciliated face around the boundary of $R_e$  is trivial. Hence, the faces of $\gammad$
associated with edges of $\Gamma$ do not give rise to curvatures.

The faces of $\gammad$ that correspond to a vertex or a face of $\Gamma$ are represented by the vertex  or face loop from Definition \ref{def:vertloop}. If $\Gamma$ is a regular ciliated ribbon graph, each face $f$  of $\Gamma$ is represented by  a unique ciliated face $f(v)$  that starts and ends at the  cilium at $v$, and the pair $(v,f(v))$ is a site.  
This makes it  natural to combine the  associated vertex loop $p_v$  and face loop $p_{f(v)}$ and to consider the product of their holonomies.

By inserting the Haar integrals  of $H$ and $H^*$ into these holonomies, we then obtain a collection of idempotents in the centre of the gauge invariant subalgebra. Right-multiplication with these idempotents defines  a projector, whose image can be viewed as the analogue of the subalgebra of gauge invariant functions on the set of flat gauge fields  in a Hopf algebra gauge theory.

\begin{lemma} \label{lem:hdproj2}Let   $\Gamma$ be a regular ciliated ribbon graph, and denote for each vertex $v$ by $f(v)$ the ciliated face that starts and ends at the cilium at $v$. Then:
\begin{compactenum}
\item  The elements $G_v=\mathrm{Hol}_{p_v}(\ell \oo 1)\cdot \mathrm{Hol}_{p_{f(v)}}(1\oo\eta)$ are idempotents in the centre of $\mathcal H(H)^{\oo E}_{inv}$.\\[-2ex]
\item The linear maps
$ Q_v: \mathcal H(H)^{\oo E}\to\mathcal H(H)^{\oo E}$, $X\mapsto  X\cdot G_v$  form a set of commuting  projectors
with
$Q_v(X\lhd(\eta\oo \ell)_v)=G_v\cdot X\cdot G_v$ 
 for all $X\in \mathcal H(H)^{\oo E}$. \\[-2ex]

\item  $Q_{flat}=\Pi_{v\in V} Q_v:  \mathcal H(H)^{\oo E}\to\mathcal H(H)^{\oo E}$, $X\mapsto X\cdot \Pi_{v\in V}G_v$ is a projector.  Its restriction
to $\mathcal H(H)^{\oo E}_{inv}$  is an algebra morphism
and projects on a subalgebra   $\mathcal H(H)^{\oo E}_{flat}=Q_{flat}(\mathcal H(H)^{\oo E}_{inv})$.
\end{compactenum}
\end{lemma}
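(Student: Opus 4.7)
\textbf{Proof plan for Lemma \ref{lem:hdproj2}.}

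The key observation underpinning the whole proof is that, since $(\eta \oo 1)$ and $(1 \oo \ell)$ commute in $D(H)$ (as noted in the proof of Lemma \ref{lem:hdproj}) and $\tau_v$ is an algebra morphism by Lemma \ref{lem:helpvertface}, one has
$$
G_v \;=\; \mathrm{Hol}_{p_v}(\ell\oo 1)\cdot \mathrm{Hol}_{p_{f(v)}}(1\oo\eta) \;=\; \tau_v(1\oo\ell)\cdot\tau_v(\eta\oo 1) \;=\; \tau_v(\eta\oo\ell).
$$
From this presentation, idempotency is immediate: since $\eta\oo\ell$ is a Haar integral for $D(H)$ with $\epsilon(\eta\oo\ell)=1$, one has $(\eta\oo\ell)^2=\eta\oo\ell$, and applying $\tau_v$ yields $G_v^2=G_v$. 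I will also use the uniqueness of the Haar integral to note $S_{D(H)}(\eta\oo\ell)=\eta\oo\ell$.

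To prove $G_v\in\mathcal H(H)^{\oo E}_{inv}$, I check invariance under each generator $(\eta\oo\ell)_w$. For $w\neq v$, Lemma \ref{lem:vertfacecomm} (the cilium at $w$ is distinct from that at $v$) shows that the holonomies making up $\tau_w$ commute with those making up $\tau_v$, so $\tau_w(a)$ commutes with $G_v$ for all $a\in D(H)$; using $m\circ(S\oo\id)\circ\Delta=\epsilon\,1$ in $D(H)^{\oo V}$ then yields $G_v\lhd(\eta\oo\ell)_w=G_v$. For $w=v$ I use $G_v=\tau_v(\eta\oo\ell)$ together with the Haar property $\tau_v(a)\cdot G_v=\epsilon(a)G_v=G_v\cdot\tau_v(a)$, giving $G_v\lhd(\eta\oo\ell)_v=\tau_v(S((\eta\oo\ell)_{(2)})(\eta\oo\ell)(\eta\oo\ell)_{(1)})=\tau_v(\eta\oo\ell)=G_v$. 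The step I expect to be the main obstacle is proving centrality of $G_v$ in $\mathcal H(H)^{\oo E}_{inv}$. For this, let $X\in\mathcal H(H)^{\oo E}_{inv}$, so that $X\lhd(\eta\oo\ell)_v=X$, i.e.\ $(\tau_v\circ S)((\eta\oo\ell)_{(2)})\cdot X\cdot\tau_v((\eta\oo\ell)_{(1)})=X$. Right-multiplying by $G_v$ and collapsing $\tau_v((\eta\oo\ell)_{(1)})\cdot G_v=\epsilon((\eta\oo\ell)_{(1)})G_v$ via the Haar property leaves $\tau_v(S(\eta\oo\ell))\cdot X\cdot G_v=X\cdot G_v$; since $S(\eta\oo\ell)=\eta\oo\ell$, this gives $G_v\cdot X\cdot G_v=X\cdot G_v$. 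Left-multiplying by $G_v$ in the same identity yields $G_v\cdot X=G_v\cdot X\cdot G_v$, and comparison delivers $G_v\cdot X=X\cdot G_v$.

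Part 2 is then routine. Each $Q_v$ is a projector because $Q_v^2(X)=X\cdot G_v^2=X\cdot G_v=Q_v(X)$. For $v\neq w$, commutativity of $Q_v$ and $Q_w$ reduces to $G_v\cdot G_w=G_w\cdot G_v$, which follows from Lemma \ref{lem:vertfacecomm} applied to the holonomies composing $G_v$ and $G_w$. The identity $Q_v(X\lhd(\eta\oo\ell)_v)=G_v\cdot X\cdot G_v$ is precisely the computation used above in establishing centrality, expanded using the definition of the module action in Theorem \ref{lem:hdmodule}.

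For part 3, commutativity of the projectors $\{Q_v\}_{v\in V}$ immediately implies that $Q_{flat}=\Pi_{v\in V}Q_v$ is a projector, acting as right-multiplication by $G_{flat}=\Pi_{v\in V}G_v$. Since $\mathcal H(H)^{\oo E}_{inv}$ is a subalgebra containing each $G_v$, it contains $G_{flat}$, and by part 1 each $G_v$ is central in $\mathcal H(H)^{\oo E}_{inv}$, hence so is $G_{flat}$. Therefore, for $X,Y\in\mathcal H(H)^{\oo E}_{inv}$,
$$
Q_{flat}(X)\cdot Q_{flat}(Y)\;=\;X\cdot G_{flat}\cdot Y\cdot G_{flat}\;=\;X\cdot Y\cdot G_{flat}^2\;=\;X\cdot Y\cdot G_{flat}\;=\;Q_{flat}(XY),
$$
showing that $Q_{flat}|_{\mathcal H(H)^{\oo E}_{inv}}$ is an algebra morphism. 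Its image $\mathcal H(H)^{\oo E}_{flat}$ is therefore a subalgebra of $\mathcal H(H)^{\oo E}_{inv}$ with unit $G_{flat}$.
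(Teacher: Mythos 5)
Your proposal is correct, and for most steps it coincides with the paper's argument: the identification $G_v=\tau_v(\eta\oo\ell)$, idempotency from the Haar property, invariance $G_v\lhd(\eta\oo\ell)_w=G_v$ for $w=v$ and $w\neq v$, the identity $Q_v(X\lhd(\eta\oo\ell)_v)=G_v\cdot X\cdot G_v$, and all of part 3 are essentially verbatim what the paper does. The one place where you take a genuinely different route is the centrality of $G_v$ in $\mathcal H(H)^{\oo E}_{inv}$. The paper deduces this by citing Lemma \ref{lem:hdproj}, 3., i.e.\ the previously established (and computationally heavy) fact that $Q_{inv}\circ\mathrm{Hol}_{p_v}$ and $Q_{inv}\circ\mathrm{Hol}_{p_{f(v)}}$ take values in the centre of the invariant subalgebra. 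You instead derive centrality directly: starting from the invariance relation $\tau_v(S((\eta\oo\ell)_{(2)}))\cdot X\cdot\tau_v((\eta\oo\ell)_{(1)})=X$, you multiply once on the right and once on the left by $G_v$, collapse the adjacent factors via the two-sided integral property $\tau_v(a)\cdot G_v=\epsilon(a)\,G_v=G_v\cdot\tau_v(a)$ together with $S_{D(H)}(\eta\oo\ell)=\eta\oo\ell$, and obtain $X\cdot G_v=G_v\cdot X\cdot G_v=G_v\cdot X$. This is self-contained, uses only the algebra-morphism property of $\tau_v$ and the Haar axioms, and avoids any appeal to the centrality statement of Lemma \ref{lem:hdproj}; it also makes transparent why the identity in part 2 and the centrality in part 1 are two faces of the same computation. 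The only cosmetic caveat is that the cancellation you invoke for $w\neq v$ is $S(a_{(2)})a_{(1)}=\epsilon(a)1$ rather than the literal antipode axiom $m\circ(S\oo\id)\circ\Delta=\epsilon\,1$; this is harmless here because $S^2=\id$ for the semisimple Hopf algebras in play, and the paper performs the same cancellation.
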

\begin{proof} The elements $G_v$ are given by $G_v=\tau_v(\eta\oo\ell)=\tau((\eta\oo \ell)_v)$, where  $\tau: D(H)^{\oo V}\to \mathcal H(H)^{\oo E}$   and $\tau_v=\tau\circ\iota_v: D(H)\to \mathcal H(H)^{\oo E}$  are the algebra morphisms from \eqref{eq:tau} and \eqref{eq:tauv}. 
As $\eta\oo \ell$ is a Haar integral for $D(H)$, it  follows  that the elements $G_v$ are idempotents and and $G_v$ and $G_w$ commute for all vertices $v,w\in V$.  
From the  Theorem \ref{lem:hdmodule} together with Lemma \ref{lem:helpvertface} and the properties of the Haar integral $\eta\oo \ell \in D(H)$  one obtains for $v\neq w$  
\begin{align*}
G_v\lhd (\eta\oo\ell)_v&=\tau_v( S((\eta\oo \ell)_{(2)})) \cdot \tau_v (\eta'\oo \ell')\cdot \tau_v ((\eta\oo\ell)_{(1)})=\tau_v( S((\eta\oo \ell)_{(2)})\cdot (\eta'\oo \ell')\cdot (\eta\oo\ell)_{(1)})\\
&=\epsilon((\eta\oo \ell)_{(2)})\epsilon((\eta\oo\ell)_{(1)})\, \tau_v(\eta'\oo\ell')=\tau_v(\eta'\oo\ell')=G_v\\
G_v\lhd (\eta\oo\ell)_w&=\tau_w( S((\eta\oo \ell)_{(2)})) \cdot \tau_v (\eta'\oo \ell')\cdot \tau_w ((\eta\oo\ell)_{(1)})=\tau_w( S((\eta\oo \ell)_{(2)})\cdot  (\eta\oo\ell)_{(1)})\cdot \tau_v(\eta'\oo\ell')\\
&=\epsilon(\eta\oo \ell)\, \tau_v(\eta'\oo\ell')=G_v.
\end{align*}
This  implies $Q_{inv}(G_v)=G_v$, and by
 Lemma \ref{lem:hdproj}, 3.  the elements $G_v$ are in the centre of $\mathcal H(H)^{\oo E}_{inv}$. 
That the maps $Q_v$  form a set of commuting projectors follows because the elements $G_v$ are commuting idempotents. 
A direct computation using again  Theorem \ref{lem:hdmodule} and Lemma \ref{lem:helpvertface} yields
 \begin{align*}
&Q_v(X\lhd(\eta\oo \ell)_v)= \tau_v(S( (\eta\oo \ell)_{(2)}))
\cdot X\cdot \tau_v((\eta\oo\ell)_{(1)})\cdot G_v \\
&=  \tau_v(S( (\eta\oo \ell)_{(2)}))
\cdot X\cdot \tau_v((\eta\oo\ell)_{(1)})\cdot \tau_v(\eta'\oo\ell') = \tau_v( S( (\eta\oo \ell)_{(2)}))
\cdot X\cdot \tau_v((\eta\oo\ell)_{(1)}\cdot (\eta'\oo\ell'))\\
&=\epsilon((\eta\oo\ell)_{(1)})\,  \tau_v(S( (\eta\oo \ell)_{(2)}))
\cdot X\cdot \tau_v(\eta'\oo\ell')=\tau_v(\eta\oo\ell)\cdot X\cdot \tau_v(\eta'\oo\ell')=G_v\cdot X\cdot G_v.
 \end{align*}
The product $\Pi_{v\in V} G_v$   of the commuting idempotents  $G_v$  is an idempotent, and hence $Q_{flat}$ is a projector. As the elements $G_v$ and  $\Pi_{v\in V} G_v$ are in the centre of $\mathcal H(H)^{\oo E}_{inv}$, its restriction to $\mathcal H(H)^{\oo E}_{inv}$ is an algebra morphism and its image a subalgebra of $\mathcal H(H)^{\oo E}_{inv}$. 
\end{proof}

Note that the projectors $Q_v$ act  by right multiplication with the idempotents $G_v$, while the corresponding projectors for a Hopf algebra gauge theory in Lemma \ref{lem:faceprpr} act by  left multiplication. This is because it is the opposite  $\mathcal H(H)^{op\oo E}$ and not  $\mathcal H(H)^{\oo E}$ that is a $D(H)^{\oo V}$-right module algebra. This convention for the projectors will simplify  the correspondence between Kitaev lattice models and  Hopf algebra gauge theories in the following sections.
We  now relate the gauge invariant subalgebra $\mathcal H(H)^{\oo E}_{inv}$ and its subalgebra $\mathcal H(H)^{\oo E}_{flat}$ to the Hamiltonian of Kitaev's lattice model and to the protected space from Definition \ref{def:hamiltonian}. 

\begin{proposition} \label{lem:protrep} Let $\Gamma$ be a regular ciliated ribbon graph and
 $\rho:\mathcal H(H)^{\oo E}\to\mathrm{End}_{\mathbb F}(H^{\oo E})$  the algebra isomorphism from Lemma \ref{lem:kitoprel}. Then  Kitaev's Hamiltonian  is given by
$H_{K}=\rho\circ Q_{\mathrm{flat}}( 1)$, and  $\rho$  induces an algebra homomorphism   $\rho_{\mathrm{pr}}: \mathcal H(H)^{\oo E}_{inv}\to\mathrm{End}_{\mathbb F}(\prot)$ that satisfies for all $X\in \mathcal H(H)^{\oo E}_{inv}$
$$
\rho_{\mathrm{pr}}(X)=\rho_{\mathrm{pr}}(Q_{flat}(X))=\ham\circ \rho(X)\vert_{\prot}.
$$
\end{proposition}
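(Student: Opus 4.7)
The plan is to translate the algebraic identities of Lemma \ref{lem:hdproj2} through the algebra isomorphism $\rho:\mathcal H(H)^{\oo E}\to \mathrm{End}_{\mathbb F}(H^{\oo E})$ and then use centrality of the idempotents $G_v$ in $\mathcal H(H)^{\oo E}_{inv}$ to show that gauge invariant elements preserve the protected space.

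First I would identify $\rho\circ Q_{\mathrm{flat}}(1)$ with $\ham$. Since $Q_{\mathrm{flat}}(1)=\Pi_{v\in V}G_v$ with $G_v=\mathrm{Hol}_{p_v}(\ell\oo 1)\cdot \mathrm{Hol}_{p_{f(v)}}(1\oo\eta)$, the algebra morphism property of $\rho$ combined with Lemma \ref{lem:holab} yields $\rho(G_v)=A_v^\ell\circ B_{f(v)}^\eta$. The regularity assumption on $\Gamma$ gives the bijection $v\mapsto f(v)$ between $V$ and $F$, and Lemma \ref{lem:fvhaar} ensures all $A_v^\ell$ and $B_f^\eta$ are commuting projectors independent of the cilia, so that their product can be rearranged as  $\rho(\Pi_{v\in V}G_v)=\Pi_{v\in V}A_v^\ell\circ \Pi_{f\in F}B_f^\eta=\ham$.

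Next I would show that $\rho(X)$ maps $\prot$ into itself for every $X\in\mathcal H(H)^{\oo E}_{inv}$. By Lemma \ref{lem:hdproj2} each $G_v$, and therefore the product $\Pi_{v\in V}G_v$, lies in the centre of $\mathcal H(H)^{\oo E}_{inv}$, so it commutes with $X$. Applying $\rho$ gives $\ham\circ\rho(X)=\rho(X)\circ\ham$; hence for $h\in\prot$ one has $\ham(\rho(X)h)=\rho(X)(\ham\,h)=\rho(X)h$, so that $\rho(X)h\in\prot$. This allows one to define $\rho_{\mathrm{pr}}(X)=\rho(X)\vert_{\prot}$, and both linearity and multiplicativity are inherited from $\rho$, making $\rho_{\mathrm{pr}}:\mathcal H(H)^{\oo E}_{inv}\to\mathrm{End}_{\mathbb F}(\prot)$ an algebra homomorphism.

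Finally I would verify the stated triple equality. For $X\in\mathcal H(H)^{\oo E}_{inv}$ and $h\in\prot$, using $h=\ham\,h=\rho(\Pi_{v\in V}G_v)h$ together with centrality of $\Pi_{v\in V}G_v$ in $\mathcal H(H)^{\oo E}_{inv}$ one computes
\begin{align*}
\rho(X)h&=\rho(X)\circ\rho(\Pi_{v\in V}G_v)h=\rho\bigl(X\cdot\Pi_{v\in V}G_v\bigr)h=\rho(Q_{\mathrm{flat}}(X))h,\\
\ham\circ \rho(X)h&=\rho(\Pi_{v\in V}G_v)\circ\rho(X)h=\rho\bigl(\Pi_{v\in V}G_v\cdot X\bigr)h=\rho(Q_{\mathrm{flat}}(X))h,
\end{align*}
which gives $\rho_{\mathrm{pr}}(X)=\rho_{\mathrm{pr}}(Q_{\mathrm{flat}}(X))=\ham\circ\rho(X)\vert_{\prot}$. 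The main conceptual input is the centrality of the $G_v$ in $\mathcal H(H)^{\oo E}_{inv}$ from Lemma \ref{lem:hdproj2}; everything else is routine transport of identities through the algebra isomorphism $\rho$.
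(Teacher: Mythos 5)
Your proposal is correct and follows essentially the same route as the paper: both identify $\rho(G_v)=A_v^\ell\circ B^\eta_{f(v)}$ via Lemma \ref{lem:holab}, and both reduce the invariance of $\prot$ and the triple equality to the algebraic properties of the idempotents $G_v$ established in Lemma \ref{lem:hdproj2}. The only cosmetic difference is that the paper works with arbitrary $X$ and the identity $Q_v(X\lhd(\eta\oo\ell)_v)=G_v\cdot X\cdot G_v$ to get $\rho(Q_{inv}(X))\circ\ham=\ham\circ\rho(X)\circ\ham$, whereas you invoke centrality of $\Pi_{v\in V}G_v$ in $\mathcal H(H)^{\oo E}_{inv}$ directly for invariant $X$; both arguments are valid and rest on the same lemma.
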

\begin{proof}The expression for the Hamiltonian follows directly from  Definition \ref{def:hamiltonian}, from the expressions for the vertex and face operators in Lemma \ref{lem:holab} and from the definition of the elements $G_v$ in Lemma \ref{lem:hdproj2}. That $\rho$ induces a representation of $\mathcal H(H)^{\oo E}_{inv}$ on the protected space $\prot=\ham(H^{\oo E})$ follows directly from the expression of the Hamiltonian and  Lemma \ref{lem:hdproj2}, which yield for all $X\in \mathcal H(H)^{\oo E}$
\begin{align*}
&\rho(Q_{inv}(X)) \circ \ham=
\rho(Q_{inv}(X))\circ \rho(\Pi_{v\in V} G_v)=\rho(Q_{inv}(X)\cdot \Pi_{v\in V}G_v)\\
&=
\rho(\Pi_{v\in V} Q_v(X\lhd(\eta\oo\ell)_v) ) 
=\rho(\Pi_{v\in V} G_v\cdot X\cdot \Pi_{w\in V}G_w)\\
&=\rho( \Pi_{v\in V}G_v\cdot Q_{flat}(X) )=
\ham\circ \rho(Q_{flat}(X))\\
&=\rho(\Pi_{v\in V} G_v)\circ \rho(X)\circ \rho(\Pi_{w\in V} G_w)=\ham\circ \rho(X)\circ \ham
\end{align*}
This shows that $\prot=\ham(H^{\oo E})$ is invariant under $\mathcal H(H)^{\oo E}_{inv}=Q_{inv}(\mathcal H(H)^{\oo E})$ and that $\rho$ induces an algebra homomorphism
$\rho_{\mathrm{pr}}: \mathcal H(H)^{\oo E}_{inv}\to\mathrm{End}_{\mathbb F}(\prot)$. As $\mathcal \prot=\ham(H^{\oo E})$,  
this representation satisfies the equations in the Lemma. 
\end{proof}

Although by Proposition \ref{lem:protrep} the protected space carries  representations of both, the gauge invariant subalgebra $\mathcal H(H)^{\oo E}_{inv}$ and the flat subalgebra $\mathcal H(H)^{\oo E}_{flat}=Q_{flat}(\mathcal H(H)^{\oo E}_{inv})$, 
 the former is a trivial extension of the latter, since $\rho_{\mathrm{pr}}=\rho_{\mathrm{pr}}\circ Q_{flat}$. Moreover, by Proposition \ref{lem:protrep}  the algebra isomorphism $\rho:\mathcal H(H)^{\oo E}\to\mathrm{End}_{\mathbb F}(H^{\oo E})$ induces an algebra isomorphism
$$
\mathcal H(H)^{\oo E}_{flat}\cong\{Y\in \mathrm{End}_{\mathbb F}(H): \, \ham\circ Y\circ \ham=Y\}.
$$ 
If one interprets the representations of $D(H)$ at the ciliated vertices of $\Gamma$ as the gauge symmetries of the model, it is then natural 
to view the protected space as the gauge invariant state space of the theory and
the  subalgebra $\mathcal H(H)^{\oo E}_{flat}$  as the algebra of gauge invariant observables. 
In fact, we will show in Section \ref{sec:qmod} that this subalgebra is isomorphic to the quantum moduli algebra of a Hopf algebra gauge theory from Theorem \ref{th:flat}.

\section{Kitaev models as a Hopf algebra gauge theory}
\label{sec:kithopf}

In this section we  relate algebra  of triangle operators for a $H$-valued Kitaev model on a ciliated ribbon graph $\Gamma$  to the algebra of functions  of a $D(H)$-valued Hopf algebra gauge theory on $\Gamma$. 
This is achieved by assigning to each edge $e$ of  $\Gamma$  two paths $p_{e,\pm}$ in the thickened graph $\gammad$. We then show  that the holonomies of these paths define an  algebra isomorphism  from the algebra $\mathcal A^*_\Gamma$ of functions in the Hopf algebra gauge theory to Kitaev's triangle operator algebra  $\mathcal H(H)^{op\oo E}$. 

The two  paths $p_{e,\pm}$ in $\gammad$ for an edge $e$ of $\Gamma$ are shown in Figure \ref{fig:edge_variable2}. 
The  path $p_{e,+}$ starts at the cilium at the starting vertex $\st(e)$ of $e$. It goes counterclockwise around the vertex $\st(e)$ until it reaches the edge $e$, then  follows  $e$ to the {\em right} of $e$ until it reaches its target vertex $\ta(e)$ and then goes clockwise around $\ta(e)$ until it reaches the cilium at $\ta(e)$. The edge path $p_{e,-}$ starts at the cilium at $\st(e)$, goes counterclockwise around  $\st(e)$ until it reaches the edge $e$, then  follows  $e$ to the {\em left} of $e$ until it reaches $\ta(e)$ and then goes clockwise around $\ta(e)$ until it reaches the cilium at $\ta(e)$. 
With the conventions from Definition \ref{def:vertex_nb} and \eqref{eq:orrev}, we have the following definition.

\begin{definition}  \label{def:edgepath}  Let  $\Gamma$ be a regular ciliated ribbon graph and $e$  an edge of $\Gamma$ that  is the $i$th edge  at its target vertex $\ta(e)$ and  the $j$th edge  at its starting vertex $\st(e)$.  Suppose the incident edges at $\ta(e)$  and $\st(e)$
of lower order than $t(e)$ and $s(e)$  are given by  $e_1,...,e_{i-1}$ and $f_1,...,f_{j-1}$, numbered according to the ordering at $\ta(e)$ and $\st(e)$  as  in Figure \ref{fig:edge_variable2}. Set $\epsilon_k=1\,(\epsilon_k=-1)$ and $\phi_l=1\,(\phi_l=-1)$ if $e_k$ and $f_l$ are  incoming   (outgoing) at $\ta(e)$ and 
$\st(e)$. Then the  {\bf edge paths} $p_{e,\pm}\in \mathcal G(\gammad)$ for  $e$ are 
\begin{align}\label{eq:edgepath}
&p_{e,+}=p_{t(e)<}\circ r(\bar e)\circ r(e)\circ p_{s(e)<}^\inv & &p_{e,-}=p_{t(e)<}\circ l( e)\circ l(\bar e)\circ p_{s(e)<}^\inv\\
\text{with}\quad &p_{t(e)<}=r(\bar e_1^{\epsilon_1})\circ\ldots\circ r(\bar e_{i-1}^{\epsilon_{i-1}}) & &p_{s(e)<}=r(\bar f_1^{\phi_1})\circ \ldots\circ r(\bar f_{j-1}^{\phi_{j-1}})\nonumber.
\end{align}
\end{definition}

Note that the  paths $p_{e,\pm}$  are {\em not} ribbon paths in the sense of Definition \ref{def:rpath} and \cite{Ki,BMD, BMCA}.
paths $r(\bar e)\circ r(e)$ and $l(e)\circ l(\bar e)$ each involve two overlapping triangles in Figure \ref{fig:foursites}, namely
the triangles $t(e)s(\bar e)s(e)$, $t(\bar e)t(e)s(\bar e)$ and $s(e)t(\bar e)t(e)$, $s(\bar e)s(e)t(\bar e)$, respectively. Nevertheless, they are natural from the geometric perspective. Similar paths were first investigated in in \cite{AM} in the context of moduli space of flat connections and then in \cite{BNR,MN}  where it was shown that they have a direct geometrical interpretation in 3d gravity.
Note also that the  path $p_{e^\inv, \pm }$ 
for the edge $e^\inv$ with the reversed orientation 
does not coincide with 
the reversed path  
$p_{e,\pm }^\inv$. Instead,  Definition \ref{def:edgepath}  and 
\eqref{eq:orrev}    imply
$p_{e^\inv,\pm }=p_{e,\mp }^\inv$. 
Nevertheless,  Lemma \ref{lem:holprops},  5. ensures that the holonomies of these paths agree. 

\begin{figure}
\centering
\includegraphics[scale=0.33]{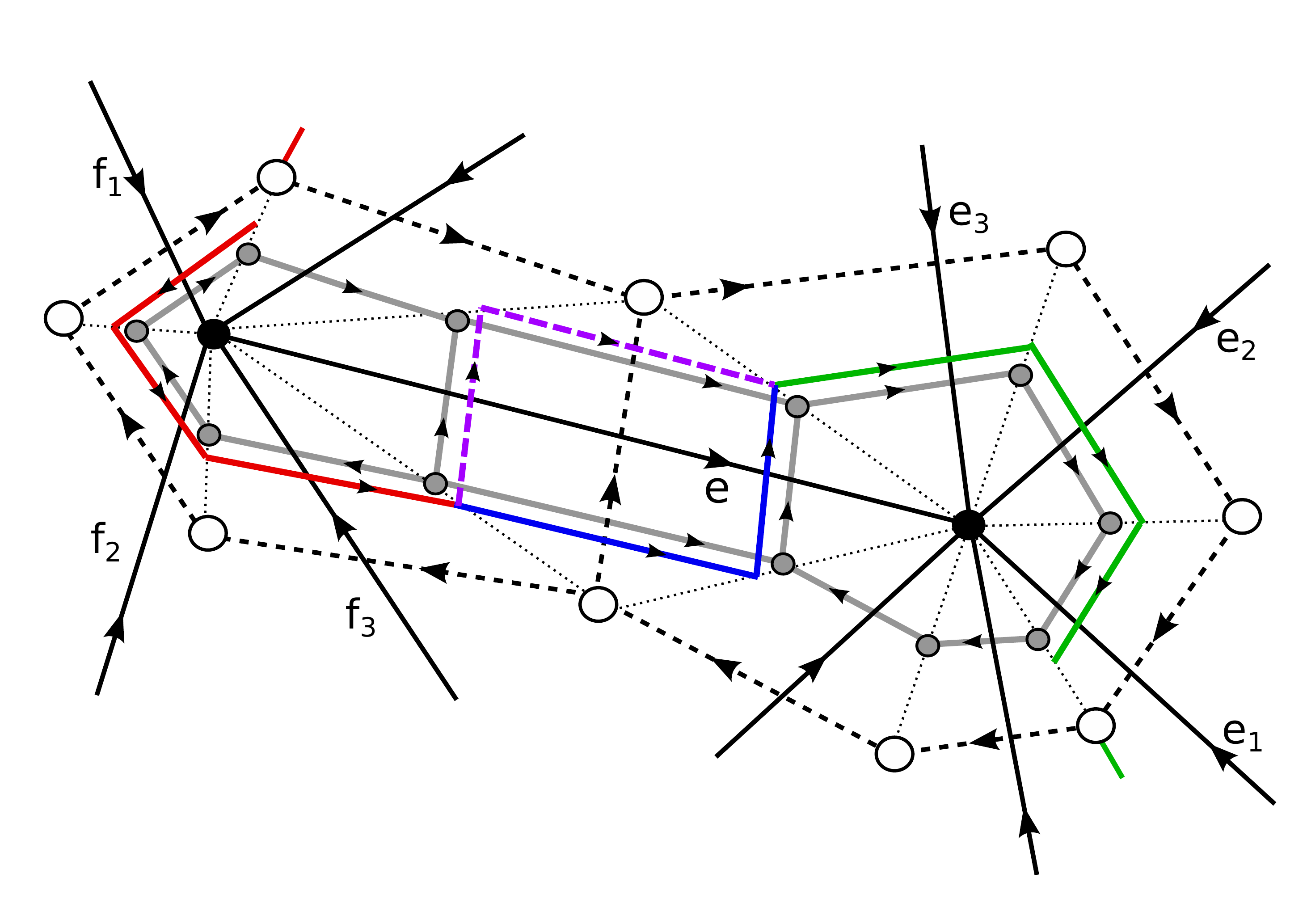}

\vspace{-.6cm}
\caption{The paths 
$p_{e,+}
=p_{t(e)<}\circ r(\bar e)\circ r(e)\circ p_{s(e)}^\inv$ and 
$p_{e,-}
=p_{t(e)<}\circ  l(e)\circ l(\bar e)\circ p_{s(e)}^\inv$  in $\gammad$ for an edge $e$ of $\Gamma$ and the subpaths 
 $p_{s(e)<}$ (red),  $p_{t(e)<}$ (green), $r(\bar e)\circ r (e)$ (blue) and  $l(e)\circ l(\bar e)$  (purple).}
\label{fig:edge_variable2}
\end{figure}

\begin{lemma}\label{lem:holl}  
Let $\Gamma$ be a regular ciliated ribbon graph and $e$ an edge of $\Gamma$ as in Definition \ref{def:edgepath}. Then the holonomies of the paths $p_{e,\pm}$ from \eqref{eq:edgepath} coincide and define an algebra homomorphism $\mathrm{Hol}_{p_{e,\pm}}: \mathcal H(H)\to\mathcal H(H)^{\oo E}$ given by
\begin{align}
&\mathrm{Hol}_{p_{e,\pm}}= \iota_{e_1\ldots e_{i-1}e f_{j-1}\cdots f_{1}}\circ (S^{\tau_1}_D\oo...\oo S^{\tau_{i-1}}_D\oo \id\oo S^{\rho_{j-1}}_D\oo ...\oo S^{\rho_1}_D)\circ (\phi_1^{\oo(i-1)}\oo \phi_1^{op\oo(j-1)})\circ \xi_1.\nonumber
\end{align}
where  $\phi_1,\xi_1:\mathcal H(H)\to\mathcal H(H)\oo \mathcal H(H)$  are the injective algebra morphisms from Lemma \ref{lem:antipodehd}, $S_D$ is  the antipode of $D(H)^*$ and  $2\tau_k=1-\epsilon_k$, $2\rho_l=1-\phi_l$.
\end{lemma}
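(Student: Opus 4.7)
The plan is to proceed in three stages. First I establish the equality $\mathrm{Hol}_{p_{e,+}} = \mathrm{Hol}_{p_{e,-}}$; second I compute their common value by decomposing the paths into building blocks and applying the convolution product; third I match the resulting expression to the claimed formula and deduce the algebra morphism property as a consequence.

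For the first stage, $p_{e,+}$ and $p_{e,-}$ differ only in their middle subpaths: $r(\bar e)\circ r(e)$ in $p_{e,+}$ versus $l(e)\circ l(\bar e)$ in $p_{e,-}$. By Lemma \ref{lem:holprops}, part 5, these two subpaths have equal holonomies. Functoriality of $\mathrm{Hol}$ with respect to the convolution-algebra structure \eqref{eq:assalg} on $\mathrm{Hom}_{\mathbb F}(H\oo H^*,(H\oo H^*)^{\oo E})$ then gives $\mathrm{Hol}_{p_{e,+}} = \mathrm{Hol}_{p_{e,-}}$.

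For the second stage, I decompose $p_{e,+} = p_{t(e)<}\circ r(\bar e)\circ r(e)\circ p_{s(e)<}^\inv$. The middle piece gives $\mathrm{Hol}_{r(\bar e)\circ r(e)}(y\oo\gamma) = (y\oo\gamma)_e$, as shown in the proof of Lemma \ref{lem:holprops}, part 5. The outer pieces are composed entirely of edges of the form $r(\bar e_k^{\pm 1})$ and $r(\bar f_l^{\pm 1})$; by Lemma \ref{lem:holprops}, parts 3--4, their holonomies depend only on the $H$-component of the input. Using the comultiplication of $D(H)^*$ from \eqref{eq:ddualmult} to distribute $y\oo\gamma$ across the three pieces and inserting the antipode rule $\mathrm{Hol}_{r(\bar e_k^\inv)} = \mathrm{Hol}_{r(\bar e_k)}\circ S_D$ for the reversed edges produces an explicit formula as a sum over dual-basis indices.

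For the third stage, I match this expression to the claimed formula by identifying the combinatorial pattern with the maps $\xi_1$ and $\phi_1$ from Lemma \ref{lem:antipodehd}. The single ``central split'' that isolates the contribution $(y\oo\gamma)_e$ at the edge $e$ from the $H$-only contributions at the surrounding dual edges is encoded by $\xi_1$, since its first output carries the full $H\oo H^*$ data needed at $e$ while its second output is an $H$-only factor to be distributed among the dual-edge pieces. The iterated splits among $e_1,\ldots,e_{i-1}$ and $f_{j-1},\ldots,f_1$ are then governed by iterated applications of $\phi_1$ and $\phi_1^{op}$, the compatibility relations \eqref{eq:hdmapids} ensuring that these iterated splits are well defined and independent of bracketing. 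The antipode factors $S_D^{\tau_k}$ and $S_D^{\rho_l}$ correspond to the reversed-orientation cases $\epsilon_k=-1$ and $\phi_l=-1$. Once the formula is established, the algebra morphism property of $\mathrm{Hol}_{p_{e,\pm}}$ is immediate: $\xi_1$ and $\phi_1$ are injective algebra morphisms and $S_D$ is an algebra automorphism by Lemma \ref{lem:antipodehd}, while $\iota_{e_1\ldots e_{i-1} e f_{j-1}\cdots f_1}$ is an algebra morphism because the regularity of $\Gamma$ ensures the edges are pairwise distinct and different tensor factors of $\mathcal H(H)^{\oo E}$ commute.

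The main obstacle will be the Sweedler-notation bookkeeping through the iterated coproducts of $D(H)^*$, together with the careful tracking of the dual-basis identities \eqref{eq:basexx}, \eqref{eq:helpanti} and \eqref{eq:helpanti2} that arise when collapsing sums of the form $\Sigma_m \epsilon(\alpha^m)\,x_m = 1$ created by the trivial $H$- or $H^*$-components produced at each step. I expect a double induction on $i-1$ and $j-1$, building up the formula one dual edge at a time, to give the cleanest route.
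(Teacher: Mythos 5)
Your proposal follows essentially the same route as the paper's proof: the equality $\mathrm{Hol}_{p_{e,+}}=\mathrm{Hol}_{p_{e,-}}$ via Lemma \ref{lem:holprops}, 5, the three-piece decomposition of $p_{e,+}$ with the outer holonomies depending only on the $H$-component, and the identification of the resulting expression with $\xi_1$, $\phi_1$ and $S_D$ from Lemma \ref{lem:antipodehd}. The one step to make fully explicit is that the three factors are supported on pairwise distinct edges (no loops or multiple edges), so the convolution product computed in $D(H)^{*\oo E}$ agrees with the product in $\mathcal H(H)^{\oo E}$ --- this is what lets you read the formula as a map into $\mathcal H(H)^{\oo E}$ and is exactly the remark the paper makes at this point.
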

\begin{proof}  Throughout the proof we denote by $\cdot$ the multiplication of $\mathcal H(H)^{\oo E}$ and by $\cdot'$ the multiplication of $D(H)^{*\oo E}$.
From the definition of the holonomy  and 
Lemma \ref{lem:holprops},  5.~one obtains 
\begin{align*}
\mathrm{Hol}_{p_{e,+}}=&\mathrm{Hol}_{p_{t(e)<}}  \bullet \mathrm{Hol}_{r(\bar e)\circ r(e)}\bullet \mathrm{Hol}_{p^\inv_{s(e)<}}=\mathrm{Hol}_{p_{t(e)<}}  \bullet \mathrm{Hol}_{l( e)\circ l(\bar e)}\bullet \mathrm{Hol}_{p^\inv_{s(e)<}}=\mathrm{Hol}(p_{e,-}).
\end{align*}
where $\bullet$ is the multiplication from \eqref{eq:assalg}.
As $p_{e,+}=p_{t(e)<}\circ r(\bar e)\circ r(e)\circ p_{s(e)<}^\inv$  we obtain with Lemma \ref{lem:functor},  Lemma \ref{lem:holprops}, 2. and 4. and  with the expression for the comultiplication of $D(H)^*$ in \eqref{eq:ddualmult}
\begin{align*}
\mathrm{Hol}_{p_e,\pm}(y\oo\gamma)
&=\Sigma_{i,j}\, \mathrm{Hol}_{p_{t(e)<}}(\low y 1\oo 1)\cdot'  (\low y 2\oo\alpha^i \gamma \alpha^j)_e \cdot' \mathrm{Hol}_{p_{s(e)<}} (S(x_j)S(\low y 3)x_i\oo 1).
\end{align*}
As $\Gamma$ has no multiple edges, we have  $\{e_1,...,e_{i-1}\}\cap\{f_1,...,f_{j-1}\}=\emptyset$. As   $e\notin\{e_1,...,e_{i-1}, f_1,...,f_{j-1}\}$, the three terms  in this product commute with respect to $\cdot'$ and $\cdot$ and their products with respect to $\cdot'$  and to $\cdot$ agree. To compute the holonomies of $p_{t(e)<}$ and $p_{s(e)<}$, we use  again Lemma \ref{lem:holprops}, 4.~together with  Lemma \ref{lem:functor} and 
the identity  $\mathrm{Hol}_{r(\bar f)^\inv}=S_D\circ\mathrm{Hol}_{r(\bar f)}$  from Lemma \ref{lem:functor}. This yields
\begin{align*}
\mathrm{Hol}_{p_{t(e)<}}(y\oo\gamma)
&=\mathrm{Hol}_{r(\bar e_1^{\epsilon_1})}((y\oo\gamma)_{(1)})\cdot' ...\cdot' \mathrm{Hol}_{r(\bar e_{i-1}^{\epsilon_{i-1}})}((y\oo\gamma)_{(i-1)})\\
&=\epsilon(\gamma)\; \mathrm{Hol}_{r(\bar e_1^{\epsilon_1})}(\low y 1\oo 1)\cdot'...\cdot' \mathrm{Hol}_{r(\bar e_{i-1}^{\epsilon_{i-1}})}(\low y {i-1}\oo 1)\\
&=\epsilon(\gamma)\;(S^{\tau_1}_D(\low y 1\oo 1))_{e_1}\cdot' ...\cdot' (S^{\tau_{i-1}}_D(\low y {i-1}\oo 1))_{e_{i-1}}
\end{align*}
and an analogous expression for $p_{s(e)<}$. As all factors in this product  commute  with respect to $\cdot$ and $\cdot'$ and their products with respect to  $\cdot'$ and $\cdot$ agree, we obtain for the holonomies of the paths $p_{e,\pm}$
\begin{align}
\label{eq:holedgepath}
&\mathrm{Hol}_{p_e,\pm}(y\oo\gamma)=
\Sigma_{i,j}\,\mathrm{Hol}_{p_{t(e)<}}(\low y 1\oo 1)\cdot (\low y 2\oo\alpha^i\gamma\alpha^j)_e\cdot \mathrm{Hol}_{p_{s(e)<}}(S(x_i)S(\low y 3)x_j\oo 1)\nonumber\\
\text{with}\quad &\mathrm{Hol}_{p_{t(e)<}}(y\oo\gamma)=\epsilon(\gamma)\;(S^{\tau_1}_D(\low y 1\oo 1))_{e_1}\cdot...\cdot (S^{\tau_{i-1}}_D(\low y {i-1}\oo 1))_{e_{i-1}}\nonumber\\
&\mathrm{Hol}_{p_{s(e)<}}(y\oo\gamma)=\epsilon(\gamma)\; (S_D^{\rho_1}(\low y 1\oo 1))_{f_1}\cdot ...\cdot (S_D^{\rho_{j-1}}(\low y {j-1}\oo 1))_{f_{j-1}}.
\end{align}
Comparing these expressions  with the ones for the algebra morphisms $\phi_1,\xi_1:\mathcal H(H)\to\mathcal H(H)$ from Lemma \ref{lem:antipodehd} and using  that $S_D$ is an algebra homomorphism by Lemma \ref{lem:antipodehd} proves the claim. 
\end{proof}

With the expressions for the holonomies  from Lemma \ref{lem:holl} we can now prove that they  relate  the algebra  of functions of a $D(H)$-valued Hopf algebra gauge theory on $\Gamma$  to the algebra  of triangle operators for the $H$-valued Kitaev model on $\Gamma$.

\begin{theorem} \label{th:hdcomb} Let $\Gamma$ be a regular ciliated ribbon graph. 
Consider the algebra structure $\mathcal A^*_\Gamma$ from Theorem \ref{lem:edge_algebra} for $K=D(H)$ and the $R$-matrix from \eqref{eq:dmult}.   For  edges $e,f$ of $\Gamma$ set $ e<f$ if  $e$ and $f$ have no common vertex  or if $e$ and $f$ share a vertex $v$ with $e<f$ at $v$. 
Then the map $\chi:\mathcal A^*_\Gamma\to \mathcal H(H)^{op \oo E}$
\begin{align*}
&\chi((y\oo\gamma)_e):=\mathrm{Hol}_{p_e,\pm} (y\oo\gamma) & &\forall e\in E\\
&\chi((y^1\oo\gamma^1)_{e_1}\cdot_{\mathcal A^*} ... \cdot_{\mathcal A^*} (y^k\oo\gamma^k)_{e_k}):= \chi((y^k\oo\gamma^k)_{e_k})\cdots \chi((y^1\oo\gamma^1)_{e_1}) & &\text{if } e_1<e_2<...<e_k.
\end{align*}
 is an algebra homomorphism. 
\end{theorem}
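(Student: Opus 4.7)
The plan is to check that $\chi$ respects each of the three families of defining relations of $\mathcal A^*_\Gamma$ listed in Proposition \ref{prop:multrel}. Regularity of $\Gamma$ guarantees that any two edges either coincide, share a single vertex, or have no common vertex, so the verification cleanly splits into three cases. Throughout, the main technical tool is the factorisation of $\mathrm{Hol}_{p_{e,\pm}}$ as a composition of the algebra morphisms $\xi_1$, $\phi_1$, $\phi_1^{op}$ and $S_D$ given by Lemma \ref{lem:holl} and Lemma \ref{lem:antipodehd}.

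First, I would handle the case of edges $e, f$ with no common vertex. By the explicit formula in Lemma \ref{lem:holl}, $\mathrm{Hol}_{p_{e,\pm}}(y\oo\gamma)$ has nontrivial components only in the tensor factors of $\mathcal H(H)^{\oo E}$ indexed by $e$ and by the lower-order edges incident at $\ta(e)$ and $\st(e)$. When $e$ and $f$ share no vertex, the corresponding sets of $\Gamma$-edges are disjoint, so the two holonomies live in commuting slots of $\mathcal H(H)^{\oo E}$. This simultaneously shows that the definition of $\chi$ depends only on the partial order $<$ and not on any extension to a total order (hence $\chi$ is well defined) and that relation $(\beta)_f\cdot_{\mathcal A^*}(\alpha)_e=(\alpha)_e\cdot_{\mathcal A^*}(\beta)_f$ is preserved.

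Second, I would verify the single-edge relation. Since $\xi_1$, $\phi_1$ and $S_D$ are algebra morphisms, the composition in Lemma \ref{lem:holl} defines an algebra morphism $\mathrm{Hol}_{p_{e,\pm}}\colon \mathcal H(H)\to\mathcal H(H)^{\oo E}$. It therefore suffices to verify that, under the identification of vector spaces $H\oo H^*=D(H)^*=\mathcal H(H)$, the multiplication of $\mathcal H(H)$ reproduces the deformed product
\[
(\beta)_e\cdot_{\mathcal A^*}(\alpha)_e = \langle \low\alpha 1\oo\low\beta 1, R\rangle\,\bigl(\low\alpha 2\cdot_{D(H)^*}\low\beta 2\bigr)_e,
\]
with the order reversal absorbed by the $op$ in the target. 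This is a direct computation combining the explicit $R$-matrix from \eqref{eq:dmult}, the coproduct of $D(H)^*$ from \eqref{eq:ddualmult} and the Heisenberg double multiplication \eqref{eq:hd2}.

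The shared-vertex relation is the hard part, and the main obstacle of the proof. If $e, f$ share a vertex $v$ with $e < f$ at $v$, then the prefix path of $p_{f,\pm}$ traverses $e$ itself, so $\mathrm{Hol}_{p_{f,\pm}}(\beta)$ has a nontrivial component in the $e$-slot of $\mathcal H(H)^{\oo E}$ of the form $S_D^{\tau_e}(\low{\beta}{k}\oo 1)$, while $\mathrm{Hol}_{p_{e,\pm}}(\alpha)$ contributes in the same slot through $\xi_1$ and also in the slots indexed by the edges of lower order at $v$. Commuting the two holonomies past each other requires reorganising iterated applications of $\phi_1$, $\phi_1^{op}$ and $\xi_1$ by means of the compatibility relations \eqref{eq:hdmapids}, together with the commutation \eqref{eq:hdcommutelr} of the two halves of $\mathcal H(H)$ under $S_D$. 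Tracing the resulting rearrangement produces exactly the pairing with $R$ prescribed by the third relation in Proposition \ref{prop:multrel}, with the factors $S^{\tau_e}$ and $S^{\tau_f}$ accounting for the orientations of $e$ and $f$ at $v$. The number of sub-cases (which of $\st(e),\ta(e)$ equals $v$, idem for $f$, and whether $e<f$ or $f<e$ at $v$) is reduced by symmetry using $p_{e^\inv,\pm}=p_{e,\mp}^\inv$ together with $S_D^2=\id$, so that essentially one representative configuration needs to be computed in full; the remaining cases then follow by the same bookkeeping.
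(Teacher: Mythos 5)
Your overall strategy --- verifying the three families of relations from Proposition \ref{prop:multrel} case by case, using the algebra-morphism property of $\mathrm{Hol}_{p_{e,\pm}}$ from Lemma \ref{lem:holl} together with the $R$-matrix evaluation for the single-edge relation, and reducing the shared-vertex case to one representative configuration via $p_{e^\inv,\pm}=p_{e,\mp}^\inv$ and $S_D$ --- is exactly the route the paper takes. Your cases (i) and (iii) match the paper's proof in structure; the decisive computation for the shared-vertex case is the explicit evaluation \eqref{eq:multrels0} on a single vertex with two incoming edges and univalent sources, which you defer but correctly locate.

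However, your argument for edges with no common vertex contains a genuine error. You claim that the supports of $\mathrm{Hol}_{p_{e,\pm}}$ and $\mathrm{Hol}_{p_{f,\pm}}$ in $\mathcal H(H)^{\oo E}$ are disjoint when $e$ and $f$ share no vertex. This is false: the support of $\mathrm{Hol}_{p_{e,\pm}}$ consists of $e$ together with all edges of lower order than $e$ at $\st(e)$ or $\ta(e)$, and an edge $g$ joining a vertex of $e$ to a vertex of $f$ can perfectly well lie in both supports (for instance $g$ of lower order than $e$ at $\ta(e)=\st(g)$ and of lower order than $f$ at $\ta(g)=\ta(f)$). In that overlapping slot the two holonomies contribute elements of $H\oo 1\subset \mathcal H(H)$, possibly twisted by $S_D$, and two elements $(z\oo 1)$, $(w\oo 1)$ of $H\oo 1$ do \emph{not} commute in general, so the conclusion does not follow from ``living in different slots''. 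What saves the argument --- and what the paper's proof uses --- is that such a $g$ is necessarily incoming at one of the two relevant vertices and outgoing at the other, so one path contributes a factor $r(\bar g)^{\pm 1}$ and the other a factor $l(\bar g)^{\pm 1}$, whose holonomies commute by \eqref{eq:hdcommutelr} and \eqref{eq:phirl}. Without this observation, both the commutation relation for vertex-disjoint edges and the well-definedness of $\chi$ (which you derive from the same disjointness claim) remain unproved.
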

\begin{proof}   Throughout the proof we denote by $\cdot$ the multiplication of $\mathcal H(H)^{\oo E}$, by $\cdot'$ the multiplication of $D(H)^{*\oo E}$ and by $\cdot''$ the multiplication of $D(H)^*$. 
As $\Gamma$ is a ribbon graph without loops or multiple edges, we have for any two distinct edges $e,f\in E$ either $e<f$ or $f<e$ or both -  the latter if and only if $e$ and $f$ do not share a vertex. As the elements $(y\oo\gamma)_e$ for $e\in E$ generate $\mathcal A^*_\Gamma$ multiplicatively and the elements $(y\oo\gamma)_e$  and $(z\oo\delta)_f$ commute in $\mathcal A^*_\Gamma$ for any two edges $e,f$ that do not share a vertex, the map $\chi$ is well-defined. To show that it is an algebra homomorphism from $\mathcal A^*_\Gamma$ to $\mathcal H(H)^{op\oo E}$,  we have to verify the multiplication relations in Proposition \ref{prop:multrel}. 

As $\Gamma$ has no loops or multiple edges, any two edges $e,f\in E$ satisfies exactly one of the following: (i) $e=f$, (ii) $e$ and $f$ are not incident at a common vertex, (iii) $e$ and $f$ share exactly one vertex. 
As the paths $p_{e,\pm}$ from Definition \ref{def:edgepath} satisfy the assumptions of Lemma \ref{lem:holprops}, 2. 
and edge reversal in $\mathcal A^*_\Gamma$ corresponds to applying the antipode of $D(H)^*$, it is  sufficient to consider products of holonomies of the following form:
\begin{compactenum}[(i)]
\item $\mathrm{Hol}_{p_e,\pm}\cdot \mathrm{Hol}_{p_e,\pm}$
\item $\mathrm{Hol}_{p_e,\pm}\cdot \mathrm{Hol}_{p_f,\pm}$ for edges $e,f$ without a common vertex
\item $\mathrm{Hol}_{p_e,\pm}\cdot \mathrm{Hol}_{p_f,\pm}$ with $\ta(e)=\ta(f)$, $\st(e)\neq\st(f)$ and $e<f$ at $\ta(e)$.
\end{compactenum}
$\bullet$ {case (i):}  In this case, the claim follows directly from  the fact that $\mathrm{Hol}_{p_e,\pm}: \mathcal H(H)\to\mathcal H(H)^{\oo E}$ is an algebra morphism by Lemma \ref{lem:holl}.This yields for all $y,z\in H$, $\gamma,\delta\in H^*$
\begin{align*}
\mathrm{Hol}_{p_{e,\pm}}(y\oo\gamma)\cdot \mathrm{Hol}_{p_{e,\pm}}(z\oo\delta)
=\langle\low \gamma 1,\low z 2\rangle\; \mathrm{Hol}_{p_{e\pm}}(y\low z 1\oo \low\gamma 2\delta).
\end{align*}
With the expression for the $R$-matrix in \eqref{eq:dmult}  and  the multiplication of  $D(H)^*=H^{op}\oo H$  we obtain
 \begin{align}\label{eq:reval}
&\langle  (y\oo\gamma)_{(1)}\oo (z\oo\delta)_{(1)}, R\rangle\; (y\oo\gamma)_{(2)}\cdot'' (z\oo\delta)_{(2)}
=\langle\low\gamma 1, \low z 2\rangle\; (S(\low z 3)y\low z 1\oo\low\gamma 2) \cdot'' (\low z 4\oo\delta)\nonumber\\
&=\langle\low\gamma 1, \low z 2\rangle\; z_4S(\low z 3)y\low z 1\oo\low\gamma 2\delta=\langle\low\gamma 1, \low z 2\rangle\; y\low z 1\oo\low\gamma 2\delta,
 \end{align}
which allows us to rewrite the product of the holonomies as
\begin{align}\label{eq:holequal}
\mathrm{Hol}_{p_{e,\pm}}(y\oo\gamma)\cdot \mathrm{Hol}_{p_{e,\pm}}(z\oo\delta)= \langle (y\oo\gamma)_{(1)}\oo (z\oo\delta)_{(1)}, R\rangle\; \mathrm{Hol}_{p_{e,\pm}} ( (y\oo\gamma)_{(2)}\cdot'' (z\oo\delta)_{(2)}).
\end{align}
$\bullet$ {case (ii):} besides the factors $r(\bar e)$ and $r(e)$, the  
 expression for $p_{e,+}$ involves only factors $r(\bar g)^{\pm 1}$ (factors $l(\bar g)^{\pm 1}$) for  edges $g\in E$  that are incoming (outgoing) at $\st(e)$ or $\ta(e)$.  It therefore has non-trivial entries  only in the
  copies of $\mathcal H(H)$ in $\mathcal H(H)^{\oo E}$ that are associated with $e$ or with such edges $g$. 
  If $g\in E$ is an edge that is incident at one of the vertices $\ta(e)$ and $\st(e)$ and at one of the vertices  $\ta(f)$ and $\st(f)$, then $g\notin\{e,f\}$ and $g$ contributes a factor $r(\bar g)^{\pm}$ in one of the paths $p_{e,+}$, $p_{f,+}$ and a factor $l(\bar g)^{\pm}$  in the other, because it is incoming at one of these vertices and outgoing at the other.  By  \eqref{eq:hdcommutelr} and \eqref{eq:phirl}, the holonomies of $r(\bar g)^{\pm 1}$ and $l(\bar g)^{\pm 1}$ commute with respect to the multiplication $\cdot'$ of $\mathcal H(H)^{\oo E}$. As the different copies of $\mathcal H(H)$ in $\mathcal H(H)^{\oo E}$ commute as well, 
it follows that the holonomies of $p_{e,+}$ and $p_{f,+}$ commute  if $e$ and $f$ have no common vertex.

$\bullet$ {case  (iii):} To prove (iii) we decompose the  edge paths $p_{e,\pm}$ from \eqref{eq:edgepath} as
\begin{align}\label{eq:decompp}
&p_{e,+}=p_{t(e)\leq }\circ p_{s(e)<}^\inv & &p_{e,-}=p_{t(e)<}\circ p_{s(e)\leq }^\inv\\
\text{with}\quad
&p_{t(e)\leq}=p_{t(e)<}\circ r(\bar e)\circ r(e) & &p_{s(e)\leq}=p_{s(e)<}\circ r(\bar e^\inv)\circ r(e^\inv).\nonumber
\end{align}
Because  $\{e_1,...,e_{i-1}\}\cap\{f_1,...,f_{j-1}\}=\emptyset$ and $e\notin\{e_1,...,e_{i-1}, f_1,...,f_{j-1}\}$, the holonomy of 
 $p_{s(e)<}$ commutes with the holonomies of  $r(\bar e)\circ r(e)$, $p_{t(e)<}$ and $p_{t(e)\leq}$,
 and the holonomy of 
 $p_{t(e)<}$ commutes with the holonomies of  $r(\bar e)\circ r(e)$, $p_{s(e)<}$ and $p_{s(e)\leq}$.
As the holonomies of ${r(\bar e)}$ and ${l(\bar e)}$ commute  due to \eqref{eq:hdcommutelr} and  \eqref{eq:phirl},  the holonomy of $p_{t(e)<}$ (of  $p_{s(e)<}$) also commutes  with the holonomies of all paths $p_{g, \pm}$, for which $g$ is not not incident at  $\ta(e)$ (at $\st(e)$). 

Suppose now that $e,f$ share the vertex $\ta(e)=\ta(f)$. As $\Gamma$ has no loops or multiple edges, this implies $\st(e)\notin\{\st(f),\ta(e),\ta(f)\}$ and $\st(f)\notin\{\st(e),\ta(e),\ta(f)\}$.
Due to  Lemma \ref{lem:holl},  the fact that  $S_D:\mathcal H(H)\to \mathcal H(H)$ and $\phi_1,\xi_1: \mathcal H(H)\to \mathcal H(H)\oo \mathcal H(H)$ are algebra morphisms by Lemma \ref{lem:antipodehd}  and because  $\phi_1$ is coassociative,
 it is sufficient to consider a ciliated ribbon graph $\Gamma$ that consists of a vertex $v$  with two incoming edges $e,f$ such that $e<f$  at $v$ and such that  the vertices $\st(e)$ and $\st(f)$ are distinct and univalent.  In this case one has $p_{e,+}=p_{t(e)\leq}$ and $p_{f,+}=p_{t(f)\leq}$.
If one associates the first copy of $\mathcal H(H)$ in $\mathcal H(H)\oo \mathcal H(H)$ with  $e$ and the second with  $f$, one 
obtains
\begin{align*}
&\mathrm{Hol}_{p_{e,+}}(y\oo\gamma)=
(y\oo\gamma)\oo (1\oo 1)\qquad
\mathrm{Hol}_{p_{f,+}}(y\oo\gamma)
=(\low y 1\oo 1)\oo(\low y 2\oo \gamma),
\end{align*}
and this implies
\begin{align}\label{eq:multrels0}
&\mathrm{Hol}_{p_{e,+}}(y\oo\gamma)\cdot \mathrm{Hol}_{p_{f,+}}(z\oo\delta)
=\langle\low\gamma 1,\low z 2\rangle\; (y\low z 1\oo\low \gamma 2)\oo (\low z 3\oo\delta)\\
&=\langle\low\gamma 1,\low z 2\rangle\;  ((\low z 4\oo 1)\cdot (S(\low z 3)y\low z 1\oo\low \gamma 2))\oo (\low z 5\oo\delta)\nonumber\\
&=\langle \low\gamma 1,\low z 2\rangle\; \mathrm{Hol}_{p_{e,+}}(\low z 4\oo\delta)\cdot \mathrm{Hol}_{p_{f,+}}(S(\low z 3)y\low z 1\oo\low \gamma 2).\nonumber\qquad\qquad\qquad\qquad
\end{align}
With formula \eqref{eq:reval} this yields
\begin{align*}
&\mathrm{Hol}_{p_{e,+}}(y\oo\gamma)\!\cdot\! \mathrm{Hol}_{p_{f,+}}(z\oo\delta)\!=\!\langle  (y\oo\gamma)_{(1)}\!\oo\! (z\oo\delta)_{(1)}, R\rangle \;\mathrm{Hol}_{p_{f,+}}((z\oo\delta)_{(2)})\!\cdot\! \mathrm{Hol}_{p_{e,+}}((y\oo\gamma)_{(2)}).\nonumber
\end{align*}
Comparing this expression and expression \eqref{eq:holequal} with  the multiplication relations in Proposition \ref{prop:multrel} then proves 
that  $\chi:\mathcal A^*_\Gamma\to \mathcal H(H)^{op \oo E}$ is an algebra homomorphism.
\end{proof}

We will now show that the linear map $\chi: \mathcal A^*_\Gamma\to \mathcal H(H)^{\oo E}$ is not only an algebra homomorphism but an algebra {\em isomorphism}.  This is achieved by expressing it in terms of certain  algebra homomorphisms  from the algebra of functions $\mathcal A^*_v$ on each vertex neighbourhood $\Gamma_v$ into  $\otimes_{v\in V}\mathcal H(H)^{\oo |v|}$. 
These algebra homomorphisms are  obtained from  the holonomies of  paths in $\gammad$ that generalise the  paths $p_{e,\pm}$ from Definition \ref{def:edgepath}. 

\begin{definition}\label{def:nbpaths} 
Let $v$ be a ciliated vertex with $n$ incident edges $e_1,...e_n$, numbered according to the ordering at $v$ and such that $e_1^{\epsilon_1}$,..., $e_n^{\epsilon_n}$ are incoming.  Then for $i\in\{1,...,n\}$  we define
\begin{align*}
p_{e_i,0}:
=r(\bar e_1^{\epsilon_1})\circ...\circ r(\bar e_{i-1}^{\epsilon_{i-1}})\circ r(\bar e_i)\circ r(e_i)\qquad p_{e_i,1}:= 
r(\bar e_1^{\epsilon_1})\circ...\circ r(\bar e_{i-1}^{\epsilon_{i-1}})\circ l(e_i).
\end{align*}
\end{definition}

We equip the vector space $(H\oo H^*)^{\oo n}$ with  the algebra structure $\mathcal A^*_v$ from  
Definition \ref{th:vertex_gt} for the Hopf algebra $K=D(H)$, arbitrary parameters $\sigma_i\in\{0,1\}$ and the universal $R$-matrix from \eqref{eq:dmult}. 

\begin{lemma} \label{lem:vertnb} 
The map  $\chi_{\tau,\sigma} : \mathcal A^*_v\to \mathcal H(H)^{op\oo n}$ with
\begin{align*}
&\chi_{\tau,\sigma}((y\oo\gamma)_i)=\mathrm{Hol}_{p_{e_i},\sigma_i}(y\oo\gamma)\\
&\chi_{\tau,\sigma}(y^1\oo\gamma^1\oo...\oo y^n\oo\gamma^n)=\chi_{\tau,\sigma}((y^n\oo \gamma^n)_n)\cdot...\cdot\chi_{\tau,\sigma}((y^1\oo\gamma^1)_1)
\end{align*}
 is an algebra homomorphism. It is an  isomorphism iff $\sigma_i=0$ for all $i\in\{1,...,n\}$ or $\dim_{\mathbb F}(H)=1$.
\end{lemma}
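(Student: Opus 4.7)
The strategy is to verify that $\chi_{\tau,\sigma}$ respects the defining relations \eqref{eq:flipalg} of $\mathcal A^*_v$, and then to determine when it is a bijection via a dimension count combined with Lemma~\ref{lem:antipodehd}. As a preliminary reduction I would assume that all incident edges at $v$ are incoming: by Lemma~\ref{lem:antipodehd}, $S_D:\mathcal H(H)\to\mathcal H(H)$ is an algebra automorphism, and by Lemma~\ref{lem:functor} one has $\mathrm{Hol}_{x^{-1}}=\mathrm{Hol}_x\circ S_D$ for every $x\in E(\gammad)$. Thus reversing $e_k$ amounts to applying $S_D$ to the $k$-th tensor factor, which is precisely the involution $S^{\tau_1}\oo\cdots\oo S^{\tau_n}$ built into Definition~\ref{th:vertex_gt}; so I may take $\epsilon_k=1$ and $\tau_k=0$ for all $k$.

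Next I would verify the relations \eqref{eq:flipalg} case by case, mirroring the computation in the proof of Theorem~\ref{th:hdcomb}. Within a single slot $i$ with $\sigma_i=0$, the subpath $r(\bar e_i)\circ r(e_i)$ together with the coproduct $\Delta_D$ of $D(H)^*$ produces exactly the $R$-matrix factor appearing in \eqref{eq:flipalg}, via the identity \eqref{eq:reval} and the argument leading to \eqref{eq:holequal}. For $\sigma_i=1$, the path $l(e_i)$ is a single edge whose holonomy \eqref{eq:phirl} together with Lemma~\ref{lem:holprops} gives a direct multiplication, matching the absence of the $R$-matrix in \eqref{eq:flipalg} for $\sigma_i=1$. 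For the cross-slot relation $(\alpha)_i\cdot(\beta)_j$ with $i<j$, the holonomies $\mathrm{Hol}_{p_{e_i,\sigma_i}}$ and $\mathrm{Hol}_{p_{e_j,\sigma_j}}$ interact only through the factor $r(\bar e_i^{\epsilon_i})$ occurring in the prefix of $p_{e_j,\sigma_j}$, and moving them past each other produces precisely the $R$-matrix factor in \eqref{eq:flipalg}, as in case (iii) of the proof of Theorem~\ref{th:hdcomb}. This establishes that $\chi_{\tau,\sigma}$ is a well-defined algebra morphism into $\mathcal H(H)^{op\oo n}$.

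Both algebras have dimension $(\dim H)^{2n}$, so $\chi_{\tau,\sigma}$ is an isomorphism iff it is injective. If all $\sigma_k=0$, then the local analogue of Lemma~\ref{lem:holl} (omitting the $p_{s(e)<}^{-1}$ suffix) identifies the image of the $i$-th generator with $\iota_{e_1\cdots e_i}\circ(S_D^{\tau_1}\oo\cdots\oo\id)\circ\phi_1^{\oo(i-1)}(y\oo\gamma)$; since $\phi_1$ is an injective algebra morphism (Lemma~\ref{lem:antipodehd}) whose image has a triangular structure in $\mathcal H(H)\oo\mathcal H(H)$, a straightforward triangularity argument together with the coassociativity identities \eqref{eq:hdmapids} and the multiplicative relations already verified shows that $\chi_{\tau,\sigma}$ is injective (the case $\dim H=1$ is trivial). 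Conversely, if some $\sigma_i=1$ and $\dim H>1$, then \eqref{eq:phirl} yields $\mathrm{Hol}_{l(e_i)}(y\oo\gamma)=\epsilon(y)\sum_{j,k}(x_jS(x_k)\oo\alpha^k\gamma\alpha^j)_{e_i}$, which depends on $y$ only through $\epsilon(y)$; any nonzero $y_0\in H$ with $\epsilon(y_0)=0$ therefore produces a nonzero element $(y_0\oo 1)_i\in\mathcal A^*_v$ in the kernel, so $\chi_{\tau,\sigma}$ fails to be injective. The main obstacle is the careful bookkeeping of $R$-matrix factors and Sweedler indices in the verification of the relations, but because the paths $p_{e_i,\sigma_i}$ are local at $v$ and share a common prefix, each computation reduces to a vertex-local specialisation of the one already carried out in detail in the proof of Theorem~\ref{th:hdcomb}.
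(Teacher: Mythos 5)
Your overall strategy (reduce to all edges incoming via $S_D$, verify the relations of Theorem \ref{rem:flipalg} by vertex-local specialisations of the computations in Theorem \ref{th:hdcomb}, and settle bijectivity by a dimension count plus an injectivity analysis) is essentially the route the paper takes; for the $\sigma\equiv 0$ case the paper even writes down an explicit inverse where you invoke a triangularity argument, and both are fine.

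However, there is a genuine gap in your proof of non-injectivity when some $\sigma_i=1$. You claim that $(y_0\oo 1)_i$ with $\epsilon(y_0)=0$ lies in $\ker\chi_{\tau,\sigma}$ because $\mathrm{Hol}_{l(e_i)}(y\oo\gamma)$ depends on $y$ only through $\epsilon(y)$. But by Definition \ref{def:nbpaths} the generator $(y\oo\gamma)_i$ is sent to the holonomy of the full path $p_{e_i,1}=r(\bar e_1^{\epsilon_1})\circ\cdots\circ r(\bar e_{i-1}^{\epsilon_{i-1}})\circ l(e_i)$, not of the single edge $l(e_i)$. The prefix records the first $i-1$ legs of the iterated coproduct of $y$ in the slots $e_1,\dots,e_{i-1}$: with all edges incoming one finds
\begin{align*}
\chi_{0,\sigma}\bigl((y_0\oo 1)_i\bigr)=(\low{y_0}{1}\oo 1)\oo\cdots\oo(\low{y_0}{i-1}\oo 1)\oo(1\oo 1)^{\oo(n-i+1)},
\end{align*}
which is nonzero for every $y_0\neq 0$ since the iterated coproduct is injective. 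So your element lies in the kernel only in the special case $i=1$; if the minimal index $k$ with $\sigma_k=1$ satisfies $k\ge 2$, your argument produces nothing. The paper repairs exactly this point by using a \emph{product} of two adjacent generators, $(S(\low y 1)\oo 1)_{k-1}\cdot_{\mathcal A^*}(\low y 2\oo 1)_k$, whose image collapses to $\epsilon(y)(1\oo 1)^{\oo n}$ because the prefix contribution of slot $k$ cancels against slot $k-1$ (using $S^2=\id$); since $\ker\epsilon\neq\{0\}$ for $\dim_{\mathbb F}H>1$, this gives a genuine kernel element. You need this (or an equivalent) construction to close the argument.
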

\begin{proof}  

If we identify the $i$th copy of $\mathcal H(H)$ in $\mathcal H(H)^{\oo n}$ with the $i$th edge $e_i$, then by the proof of Lemma \ref{lem:holl}  the holonomies of the paths $p_{e_i,\sigma_i}$ are given by
 \begin{align*}
 &\mathrm{Hol}_{p_{e_i,\sigma_i}}=(S^{\tau_1}_D\oo...\oo S^{\tau_n}_D)\circ \mathrm{Hol}_{p_{e_i,\sigma}}^{\tau=0}\circ S^{\tau_i}_D,\\
 \text{where}\quad &\mathrm{Hol}_{p_{e_i,\sigma_i}}^{\tau=0}(y\oo\gamma)=\begin{cases}
 (\low y 1\oo 1)\oo ...\oo (\low y {i-1}\oo 1)\oo(\low y i\oo\gamma)\oo(1\oo 1)^{\oo (n-i)} & \sigma_i=0\\
(\low y 1\oo 1)\oo ...\oo (\low y {i-1}\oo 1)
\oo S_D(1\oo S(\gamma))\oo(1\oo 1)^{\oo(n-i)} & \sigma_i=1
 \end{cases}
 \end{align*}
 denotes the holonomies for the case where all edges $e_1,...,e_n$ are incoming at $v$, $2\tau_i=1-\epsilon_i$ and $S_D$ is the antipode of $D(H)^*$. 
The map $S_D:\mathcal H(H)\to\mathcal H(H)$ is an algebra isomorphism by Lemma \ref{lem:antipodehd}, and the algebra structure from Definition  \ref{th:vertex_gt} for general edge orientation is defined by the condition that  $S^{\tau_1}_D\oo...\oo S^{\tau_n}_D$ is an algebra morphism to the corresponding  algebra structure for $\tau_1=...=\tau_n=0$. It is therefore sufficient to consider the case $\tau_i=0$ for all $i\in\{1,...,n\}$. 
If $\sigma_i=0$ for all $i\in\{1,....,n\}$ the claim follows from Theorem \ref{th:hdcomb}, applied to a graph that consists of a central vertex $v$ with $n$ incident edges and $n$ univalent vertices.  For general $\sigma$, we have
\begin{align*}
(\chi_{0,\sigma}\circ \iota_i)(y\oo\gamma)=\begin{cases} \phi_1^{ (i-1)}(y\oo\gamma)\oo(1\oo 1)^{\oo (n-i)} & \sigma_i=0\\
\phi_1^{(i-2)}(y\oo 1)\oo S_D(1\oo S(\gamma))\oo(1\oo 1)^{\oo(n-i)} & \sigma_i=1,
\end{cases}
\end{align*}
where $\phi_1: \mathcal H(H)\to\mathcal H(H)\oo
\mathcal H(H)$ is the coassociative algebra homomorphism from Lemma \ref{lem:antipodehd}. It is therefore sufficient to consider the case $n=2$ and to compute the products $(y\oo\gamma)_i\cdot (y\oo\gamma)_i$ for $i\in\{1,2\}$ and $\sigma_i=1$, $(y\oo\gamma)_1\cdot (z\oo\delta)_2$ for $\sigma_1=\sigma_2=1$, $\sigma_1=1-\sigma_2=0$ and $\sigma_1=1-\sigma_2=1$. For the first two cases, we obtain
\begin{align*}
&\chi_{0,\sigma}((y\oo\gamma)_1)\cdot \chi_{0,\sigma}((z\oo\delta)_1)=\epsilon(y)\epsilon(z) \big( S_D(1\oo S(\gamma))\oo (1\oo 1)\big)\cdot \big( S_D(1\oo S(\delta))\oo (1\oo 1)\big)\\
&=\epsilon(yz)\, S_D(1\oo S(\delta\gamma))\oo (1\oo 1)=\chi_{0,\sigma}((yz\oo\delta\gamma)_1)=\chi_{0,\sigma}((z\oo\delta)\cdot''(y\oo\gamma))\\
&\chi_{0,\sigma}((y\oo\gamma)_2)\cdot \chi_{0,\sigma}((z\oo\delta)_2)=\big((\low y 1\oo 1)\oo (S_D(1\oo S(\gamma)))\big)\cdot \big((\low z 1\oo 1)\oo (S_D(1\oo S(\delta))) \big)\quad\\
&=(\low y 1\low z 1\oo 1)\oo (S_D(1\oo S(\low\delta\gamma)))=\chi_{0,\sigma}( yz\oo \delta\gamma)=\chi_{0,\sigma}((z\oo\delta)\cdot''(y\oo\gamma)),\end{align*}
where $\cdot''$ denotes the multiplication of $D(H)^*=H^{op}\oo H^*$.
If $\sigma_1=1-\sigma_2=0$, we obtain
\begin{align*}
&\chi_{0,\sigma}((y\oo\gamma)_1)\cdot \chi_{0,\sigma}((z\oo\delta)_2)=\big((y\oo\gamma)\oo(1\oo 1)\big)\cdot \big((\low z 1\oo 1)\oo S_D(1\oo S(\delta))\big)\\
&=\langle\low\gamma 1,\low z 2\rangle\, (y\low z 1\oo \low\gamma 2)\oo S_D(1\oo S(\delta)) \\
&=\langle \low\gamma 1,\low z 2\rangle\, \big((\low z 4\oo 1)\oo S_D(1\oo S(\delta))\big)\cdot \big((S(\low z 3)y z_1\oo \gamma)\oo (1\oo 1) \big)\\
&=\langle (y\oo \gamma)_{(1)}\oo (z\oo\delta)_{(1)},R\rangle\, \chi_{0,\sigma}(((z\oo\delta)_{(2)})_2)\cdot \chi_{0,\sigma}(((y\oo\gamma)_{(2)})_1),\qquad\qquad\qquad\qquad\qquad
\end{align*}
where we used equation \eqref{eq:reval} in the last step. Similarly,  for $\sigma_1=1-\sigma_2=1$ we have
\begin{align*}
&\chi_{0,\sigma}((y\oo\gamma)_1)\cdot \chi_{0,\sigma}((z\oo\delta)_2)=\epsilon(y)\, \big( S_D(1\oo S(\gamma))\oo(1\oo 1)\big)\cdot \big((\low z 1\oo 1)\oo (\low z 2\oo\delta)\big)\\
&=\epsilon(y)\, \big((x_kS(x_l)\oo\alpha^l\gamma\alpha^k)\oo(1\oo 1)\big)\cdot \big((\low z 1\oo 1)\oo (\low z 2\oo\delta)\big)\\
&=\epsilon(y)\langle\alpha^l_{(1)}\low\gamma 1\alpha^k_{(1)},\low z 2\rangle\, (x_kS(x_l)\low z 1\oo \alpha^l_{(2)}\low\gamma 2\alpha^k_{(2)})\oo (\low z 3\oo\delta) \\
&=\epsilon(y)\langle \low\gamma 1,\low z 1\rangle\, (\low z 2 x_kS(x_l)\oo \alpha^l\low\gamma 2\alpha^k)\oo (\low z 3\oo\delta) \\
&=\epsilon(S(\low z 3)y \low z 1)\langle \low\gamma 1,\low z 2\rangle\,  \big((\low z 4\oo 1)\oo (\low z 5\oo\delta)\big)\cdot \big((x_kS(x_l)\oo\alpha^l\low\gamma 2\alpha^k)\oo (1\oo 1)\big)\\
&=\langle (y\oo \gamma)_{(1)}\oo (z\oo\delta)_{(1)}, R\rangle\, \chi_{0,\sigma}(((z\oo\delta)_{(2)})_2)\cdot \chi_{0,\sigma}(((y\oo\gamma)_{(2)})_1),\qquad\;\;\;
\end{align*}
and an analogous computation yields the same identity for $\sigma_1=\sigma_2=1$. Comparing these expressions with the multiplication relations in Theorem \ref{rem:flipalg}  proves that $\chi_{0,\sigma}$ is an algebra homomorphism.
Clearly, $\chi_{\tau,\sigma}$ is bijective if $\dim_{\mathbb F}(H)=1$. 
To show that  for $\dim_{\mathbb F}(H)>1$ the map  $\chi_{\tau,\sigma}$ is bijective if and only if $\sigma_i=0$ for all $i\in\{1,...,n\}$, it is again sufficient to consider the case $\tau_1=...=\tau_n=0$, because the map $S^{\tau_1}_D\oo...\oo S^{\tau_n}_D$ is an involution.  For $\sigma_1=...=\sigma_n=0$  one has
\begin{align*}
&\chi_{0,0}((y^1\oo\gamma^1)\oo...\oo(y^n\oo\gamma^n))\\
&=(y^n_{(1)}\cdots y^2_{(1)}y^1\oo\gamma^1)\oo (y^n_{(2)}\cdots y^2_{(2)}\oo\gamma^2)\oo \ldots\oo (y^n_{(n-1)}y^{n-1}_{(n-1)}\oo\gamma^{n-1})\oo (y^n_{(n)}\oo\gamma^n),
\end{align*}
and hence $\chi_{0,0}$ is invertible with inverse
 \begin{align*}
&\chi_{0,0}^\inv((z^1\oo\delta^1)\oo...\oo (z^n\oo\delta^n))\\
&=(S(z^2_{(1)})z^1\oo \delta^1)\oo (S(z^3_{(1)})z^2_{(2)}\oo\delta^2)\oo...\oo (S(z^n_{(1)})z^{n-1}_{(2)}\oo\delta^{n-1})\oo (z^n_{(2)}\oo\delta^n).\qquad
 \end{align*}
 If  $k=\mathrm{min}\{i:\sigma_i=1\}\in \{1,...,n\}$ and $\dim_{\mathbb F}(H)>1$, one has 
  \begin{align*}
 &\chi_{0,\sigma}((S(\low y 1)\oo 1)_{k-1}\cdot_{\mathcal A^*} (\low y 2\oo 1)_k)
 =\phi_1^{(k-2)}(\low y 2S(\low y 1)\oo 1)\oo(1\oo 1)^{\oo(n-k+1)}=\epsilon(y)\,(1\oo 1)^{\oo n}.
 \end{align*}
As $\dim_{\mathbb F}(H)>1$  implies $\mathrm{ker}(\epsilon)\neq \{0\}$, it  follows  that $\mathrm{ker}(\chi_{0,\sigma})\neq \{0\}$.
\end{proof}

The  paths $p_{e_i,\sigma_i}$ in the vertex neighbourhoods that define the algebra homomorphism $\chi_{\sigma,\tau}$ give a geometrical interpretation to the parameters $\sigma_i\in\{0,1\}$ that arise in the definition of a Hopf algebra gauge theory  in Theorem \ref{rem:flipalg}. While it  was shown in \cite{MW} that these parameters are necessary to combine the algebra structures on the different vertex neighbourhoods into an algebra, i.~e.~to ensure that the image of the map $G^*$ from \eqref{eq:dualemb} is a subalgebra of $\oo_{v\in V}\mathcal A^*_v$, 
they were introduced there by purely algebraic considerations.
From the perspective of the Kitaev model they have a geometrical meaning. 
Passing from the ciliated ribbon graph $\Gamma$ to its vertex neighbourhoods $\Gamma_v$  involves a subdivision of each edge $e$ of $\Gamma$ into two edge ends $s(e)$ and $t(e)$. The  paths $p_{e,\pm}$  from Definition \ref{def:edgepath} then correspond to the paths
\begin{align*}
&p'_{e,+}=p_{t(e)<}\circ r(\overline{t(e)})\circ r(t(e))\circ r(s(e))\circ p^\inv_{s(e)<}=p_{t(e), 0}\circ p_{s(e), 1}^\inv\\
&p'_{e,-}=p_{t(e)<}\circ l(t(e))\circ l(s(e)) \circ  l(\overline{s(e)})\circ p^\inv_{s(e)<})=p_{t(e), 1}\circ p_{s(e), 0}^\inv
\end{align*}  in the thickening of the subdivided graph, which split naturally into paths $p_{t(e), \sigma}$ and $p_{s(e),1-\sigma}$ in the vertex neighbourhoods $\Gamma_{\ta(e)}$ and $\Gamma_{\st(e)}$, as shown in Figure \ref{fig:edge_split}.
 These are precisely the paths from Definition \ref{def:nbpaths} whose holonomies  define the algebra homomorphism in Lemma \ref{lem:vertnb}. Hence, the
parameters $\sigma_i\in\{0,1\}$ in Theorem \ref{rem:flipalg} describe the splitting of the associated paths 
 $p'_{e,\pm}$ in the thickened graph into two paths  $p_{t(e),\sigma}$ and $p_{s(e), 1-\sigma}$ in the vertex neighbourhoods $\Gamma_{\ta(e)}$ and $\Gamma_{\st(e)}$. 

\begin{figure}
\centering
\includegraphics[scale=0.32]{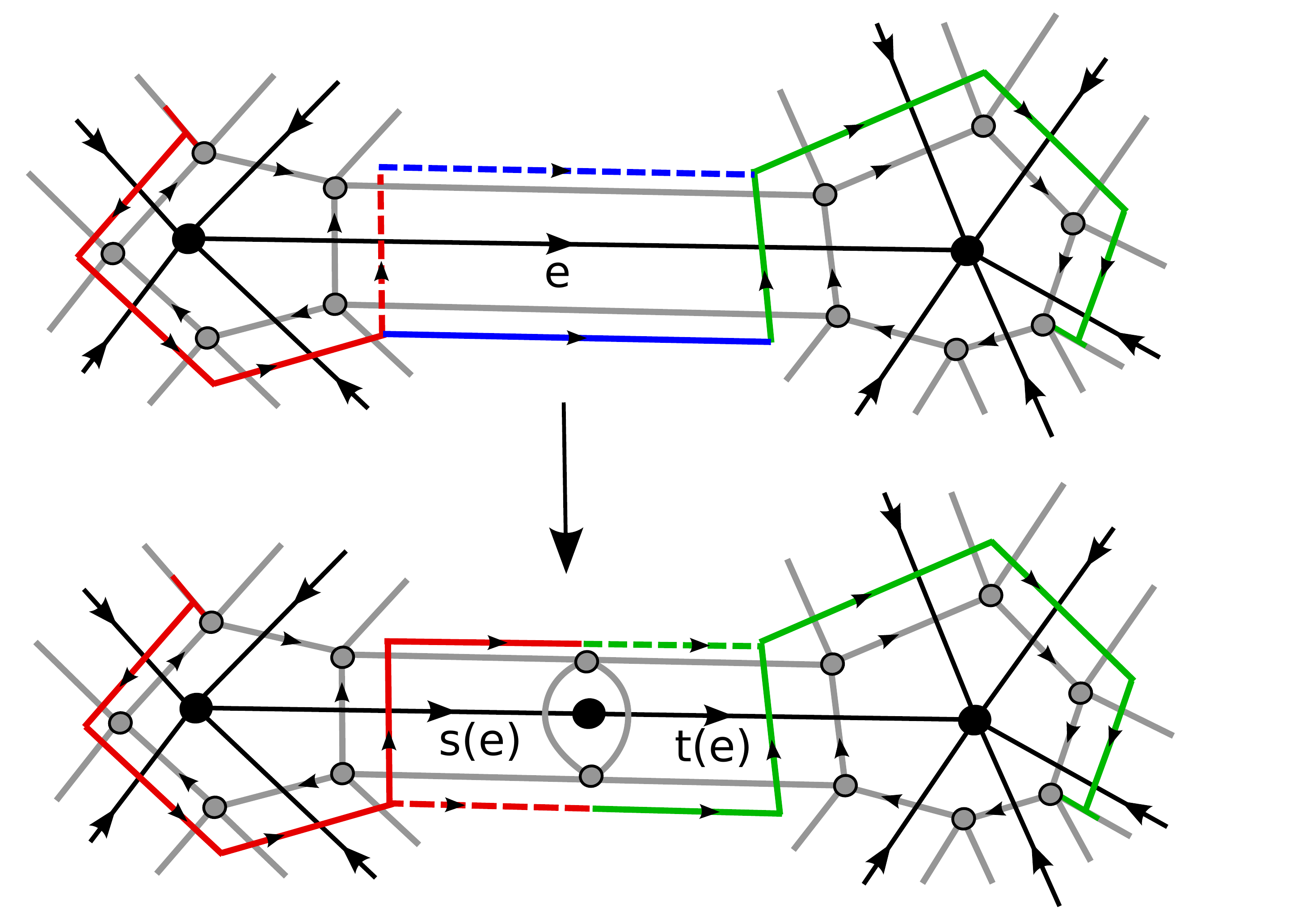}

\vspace{-.3cm}
\caption{The paths $p_{e,\pm}$ in $\gammad$ and the splitting of the associated paths $p'_{e,\pm}$  in the thickening of the subdivision into  paths $p_{t(e),0}$, $p_{t(e),1}$  (green) and $p_{s(e),0}$, $p_{s(e),1}$ (red) in the vertex neighbourhoods of $t(e)$ and $s(e)$.
}
\label{fig:edge_split}
\end{figure}


\begin{theorem} \label{th:chiiso}For any regular ciliated ribbon graph $\Gamma$, 
 the map $\chi:\mathcal A^*_\Gamma\to \mathcal H(H)^{op \oo E}$ from Theorem \ref{th:hdcomb} is an algebra isomorphism.
\end{theorem}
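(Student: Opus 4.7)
By Theorem~\ref{th:hdcomb}, $\chi$ is an algebra homomorphism. Since $\dim_{\mathbb F}\mathcal A^*_\Gamma = \dim_{\mathbb F} D(H)^{*\otimes E} = (\dim_{\mathbb F} H)^{2|E|} = \dim_{\mathbb F}\mathcal H(H)^{op\otimes E}$, it suffices to prove that $\chi$ is injective.

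The plan is to filter both algebras by the $H^*$-support of pure tensors and reduce injectivity on each graded piece to the local vertex isomorphism from Lemma~\ref{lem:vertnb}. Fix bases of $H$ and $H^*$ containing the units $1_H$ and $1_{H^*}$, and define the \emph{$H^*$-support} of a pure tensor $\prod_e (x_{i_e}\otimes \alpha^{j_e})_e$ as $\{e : \alpha^{j_e}\neq 1_{H^*}\}\subseteq E$. Using the explicit formula \eqref{eq:holedgepath} for $\mathrm{Hol}_{p_{e,\pm}}$ together with the duality identity $\sum_i \epsilon(x_i)\alpha^i = 1_{H^*}$ and the counit axioms for the coproduct of $y$, a direct computation shows that $\chi((y\otimes \gamma)_e)$ has $H^*$-support exactly $\{e\}$ when $\gamma\neq 1_{H^*}$ and trivial $H^*$-support when $\gamma = 1_{H^*}$: the terms $\mathrm{Hol}_{p_{t(e)<}}(y_{(1)}\otimes 1)$ and $\mathrm{Hol}_{p_{s(e)<}}(\cdots \otimes 1)$ contribute only $H$-factors at their respective edges. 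Since different edges commute in $\mathcal H(H)^{op\otimes E}$, this property extends multiplicatively, so $\chi$ preserves the decomposition of $\mathcal A^*_\Gamma$ and $\mathcal H(H)^{op\otimes E}$ into the subspaces $V_S$ spanned by basis elements of $H^*$-support exactly $S\subseteq E$.

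Injectivity of $\chi$ reduces to injectivity of $\chi|_{V_S}$ for each $S\subseteq E$. The key idea is to recognise $\chi|_{V_S}$ as arising from the vertex-local isomorphisms $\chi_{\tau_v, 0}: \mathcal A^*_v\to \mathcal H(H)^{op\otimes |v|}$ of Lemma~\ref{lem:vertnb}, combined with the injective embedding $G^*: \mathcal A^*_\Gamma\hookrightarrow \bigotimes_{v\in V}\mathcal A^*_v$ of \eqref{eq:dualemb}. Concretely, the geometric decomposition of $p_{e,\pm}$ into vertex-neighbourhood paths $p_{t(e),\sigma}$ and $p_{s(e), 1-\sigma}$, discussed after Lemma~\ref{lem:vertnb} and depicted in Figure~\ref{fig:edge_split}, implies that the contributions to $\chi$ coming from $\mathrm{Hol}_{p_{t(e)<}}$ and $\mathrm{Hol}_{p_{s(e)<}}$ assemble at each vertex $v$ into the image under $\chi_{\tau_v, 0}$ of the corresponding tensor factor of $\bigotimes_v \mathcal A^*_v$. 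The main obstacle is to make this factorisation precise at the level of graded pieces, that is, to verify that the linear ``gluing'' map $m: \bigotimes_v \mathcal H(H)^{op\otimes|v|}\to \mathcal H(H)^{op\otimes E}$ which multiplies the two $\mathcal H(H)^{op}$-factors corresponding to the edge ends $s(e), t(e)$ of each edge $e$ is injective on the image of $(\bigotimes_v \chi_{\tau_v, 0})\circ G^*$. Since this image has dimension $(\dim_{\mathbb F} H)^{2|E|}$, equal to $\dim \mathcal H(H)^{op\otimes E}$, injectivity of $m|_{\mathrm{im}}$ is equivalent to surjectivity, which is verified within each $V_S$ using the explicit inversion formula for $\chi_{0,0}$ given in the proof of Lemma~\ref{lem:vertnb} together with the formula in Lemma~\ref{lem:holl}.
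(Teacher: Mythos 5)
Your opening reduction (equal dimensions, so injectivity suffices) and the idea of factoring $\chi$ through $G^*$ and the vertex-local maps of Lemma \ref{lem:vertnb} are the right starting points and match the paper. But the argument has two genuine gaps. First, the ``$H^*$-support'' filtration does not exist. By \eqref{eq:holedgepath} the contribution of $\mathrm{Hol}_{p_{t(e)<}}$ at an edge $e_k$ is $(S_D^{\tau_k}(y_{(k)}\oo 1))_{e_k}$, and for an outgoing edge ($\tau_k=1$) one has $S_D(y\oo 1)=\Sigma_{i,j}\,x_iS(y)x_j\oo S(\alpha^j)\alpha^i$, which has nontrivial $H^*$-components; so $\chi((y\oo\gamma)_e)$ does \emph{not} have $H^*$-support contained in $\{e\}$. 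Moreover the middle factor at $e$ is $\Sigma_{i,j}(y_{(2)}\oo\alpha^i\gamma\alpha^j)_e$ with the indices $i,j$ correlated to the $H$-factors at the edges of $p_{s(e)<}$, and products of non-unit elements of $H^*$ can have components along $1_{H^*}$, so the subspaces $V_S$ are neither preserved by $\chi$ nor by multiplication. The decomposition into graded pieces therefore cannot be used to localise the injectivity question.

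Second, and more seriously, the step you defer (``surjectivity \dots is verified within each $V_S$ using the explicit inversion formula for $\chi_{0,0}$'') is precisely where all the content lies, and your argument never invokes the regularity condition that every face of $\Gamma$ contains exactly one cilium. The local maps that actually occur in the factorisation $\chi=M^*\circ(\oo_v\chi_v)\circ G^*$ are $\chi_{\tau_v,\sigma_v}$ with $\sigma=1$ at one end of every edge (this is forced by Theorem \ref{lem:edge_algebra}: the image of $G^*$ is a subalgebra only for such a choice of $\rhop$), and by Lemma \ref{lem:vertnb} these maps are \emph{not} injective when $\dim_{\mathbb F}H>1$; replacing them by $\chi_{\tau_v,0}$ does not reproduce $\chi$. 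The paper's proof composes with explicit left inverses $\mu_v$ and shows that the kernel of $(\oo_v\mu_v)\circ(\oo_v\chi_v)\circ G^*$ is nontrivial exactly when there is a cyclic sequence of edges forming a face without a cilium — indeed $\chi_{0,\sigma}\big((S(y_{(1)})\oo 1)_{k-1}\cdot_{\mathcal A^*}(y_{(2)}\oo 1)_k\big)=\epsilon(y)\,(1\oo 1)^{\oo n}$ exhibits explicit kernel elements in the local picture. Since the statement is false without condition 3 of Definition \ref{def:regular}, any proof that does not use it cannot be completed; you need to identify the potential kernel elements and show that regularity excludes them.
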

\begin{proof}  As $\mathcal A^*_\Gamma\cong (H\oo H^*)^{\oo E}\cong \mathcal H(H)^{\oo E}$ as vector spaces, it is sufficient to show that $\chi$ is injective. We relate $\chi$ to the maps from Lemma \ref{lem:vertnb} via the injective linear map $G^*: \mathcal A^*_\Gamma\to \oo_{v\in V}\mathcal A^*_v$ from \eqref{eq:dualemb}. For this, define  $\rhop:E(\Gamma) \to \cup_{v\in V}E(\Gamma_v)$  by setting $\rhop(e)=t(e)$ for each edge $e\in E(\Gamma)$. For each vertex neighbourhood $\Gamma_v$ and edge end $f\in E(\Gamma_v)$ we set $\tau_f=\sigma_f=0$ if  $f$ is incoming at $v$ and $\tau_f=\sigma_f=1$ else and equip $\Gamma_v$ with the algebra structure $\mathcal A^*_v$ from Definition \ref{th:vertex_gt}. Then by 
Theorem \ref{lem:edge_algebra} the algebra $\mathcal A^*_\Gamma$ is isomorphic to the image
of  $G^*$.  
Denote  by $\chi_v=\chi_{\tau_v,\sigma_v}: \mathcal A^*_v\to\mathcal H(H)^{\oo |v|}$ the linear map from Lemma \ref{lem:vertnb} for the vertex neighbourhood $\Gamma_v$. Then one has
\begin{align*}
&\chi=M^*\circ (\oo_{v\in V}\chi_v)\circ G^*\quad\text{with}\\
&M^*: \oo_{v\in V}\mathcal H(H)^{\oo |v|}\to \mathcal H(H)^{\oo E},\quad (y\oo \gamma\oo z\oo\delta)_{t(e)s(e)}\mapsto \epsilon(\delta)\,(yz\oo\gamma)_e. 
\end{align*}
From the definition of  $\chi_v=\chi_{\sigma_v\tau_v}$ in Lemma \ref{lem:vertnb} it follows  that the restriction of $M^*$ to the image of $(\oo_{v\in V}\chi_v)\circ G^*_e$ is injective. Hence, it is sufficient to prove that  $ (\oo_{v\in V}\chi_v)\circ G^*:\mathcal A^*_\Gamma\to \oo_{v\in V}\mathcal H(H)^{\oo |v|}$ is injective. For this we assign to a vertex $v$ of $\Gamma$  of valence $|v|=n$  the map
\begin{align*}
&\mu_v=(S_D^{\tau_{1}}\oo ... \oo S_D^{\tau_{n}})\circ \mu_{v,0}\circ(S_D^{\tau_{1}}\oo ... \oo S_D^{\tau_{n}}): \mathcal H(H)^{\oo n }\to \mathcal A^*_v\\
&\mu_{v,0}(z^1\oo\delta^1\oo...\oo z^n\oo\delta^n)=(S(z^2_{(1)})z^1\oo\delta^1)\oo(S(z^3_{(1)})z^2_{(2)}\oo\delta^2)\oo ....\oo (z^n_{(2)}\oo\delta^n),
\end{align*}
as in the proof of Lemma \ref{lem:vertnb}. Then  $\mu_v\circ \chi_v=(S_D^{\tau_{1}}\oo ... \oo S_D^{\tau_{n}})\circ \mu_{v,0}\circ \chi_{v,0}\circ (S_D^{\tau_{1}}\oo ... \oo S_D^{\tau_{n}})$ and  
\begin{align*}
&\mu_{v,0}\circ \chi_{v,0}(y^1\oo \gamma^1\oo ...\oo y^n\oo\gamma^n)\\
&=m_1(y^1\oo y^2_{(1)}\oo\gamma^1)\oo m_2(y^2_{(2)}\oo y^3_{(1)}\oo\gamma^2)\oo ...\oo m_{n-1}(y^{n-1}_{(2)}\oo y^n_{(1)}\oo \gamma^{n-1})\oo m_n(y^n_{(2)}\oo 1\oo \gamma^n)\\
&\text{with}\quad m_i(y\oo z\oo\delta)=\begin{cases} \epsilon(z)\, y\oo\delta & \sigma_i=\sigma_{i+1}=0\\
\epsilon(y)\, \Sigma_{r,s}\, zx_rS(x_s) \oo \alpha^s\delta\alpha^r    & \sigma_i=\sigma_{i+1}=1\\
\epsilon(y)\epsilon(z) \Sigma_{r,s}\, x_rS(x_s) \oo \alpha^s\delta\alpha^r & \sigma_i=1-\sigma_{i+1}=1\\
zy\oo\delta &  \sigma_i=1-\sigma_{i+1}=0,
\end{cases}
\end{align*}
where we set $\sigma_{n+1}=0$. As $(\epsilon\oo\id)\circ\Delta=(\id\oo\epsilon)\circ\Delta=\id$ and for each edge $e$ of $\Gamma$ one has $\sigma_{s(e)}=1-\sigma_{t(e)}=0$,  it follows  that   $(\oo_{v\in V}\mu_v)\circ(\oo_{v\in V}\chi_v)\circ G^*:\mathcal A^*_\Gamma\to\oo_{v\in V}\mathcal H(H)^{\oo |v|}$ is injective unless  there is a finite sequence of edges $e_1$, $e_2$,...., $e_n$ such that $\ta(e_i)=\st(e_{i+1})$ for  $i\in\{1,...,n-1\}$, $\ta(e_n)=\st(e_1)$,  $e_i$ and $e_{i+1}$ are adjacent at $\st(e_{i+1})=\ta(e_i)$ with $e_{i+1}<e_i$ and $e_1$ and $e_n$ are adjacent at $\st(e_1)=\ta(e_n)$ with $e_1<e_n$. However, this implies that the path $f=e_1^\inv\circ ...\circ e_n^\inv$ is a ciliated face that represents a face of $\Gamma$  that does not contain a cilium, a contradiction to  the assumption that $\Gamma$ is regular. Hence $(\oo_{v\in V}\mu_v)\circ(\oo_{v\in V}\chi_v)\circ G^*:\mathcal A^*_\Gamma\to\oo_{v\in V}\mathcal H(H)^{\oo |v|}$ and  $(\oo_{v\in V}\chi_v)\circ G^*$ are injective, and  the claim follows.
\end{proof}

\section{The quantum moduli algebra and the protected space}
\label{sec:qmod}

The last section proves that for each regular ciliated ribbon graph $\Gamma$,  the algebra $\mathcal H(H)^{op\oo E}$ of triangle operators for a $H$-valued Kitaev model on $\Gamma$ is isomorphic to the algebra of functions $\mathcal A^*_\Gamma$  of a  $D(H)$-valued Hopf algebra gauge theory. In this section we show, that the isomorphism $\chi:\mathcal A^*_\Gamma\to\mathcal H(H)^{op \oo E}$ is compatible with  the action of gauge symmetries and the curvatures in the two models. We start by proving that it is a module morphism with respect to the $D(H)^{\oo V}$-module structures on $\mathcal A^*_\Gamma$ and on $\mathcal H(H)^{op\oo E}$. We then show  that it sends the holonomies of faces of $\Gamma$ that are based at the cilium of $\Gamma$ to the product of the associated vertex and face operator. The final result  is an algebra isomorphism between the quantum moduli algebra of the Hopf algebra gauge theory from Theorem \ref{th:flatinv} and the  subalgebra of Kitaev's triangle operator algebra from Lemma \ref{lem:hdproj2}.

To show that the map $\chi:\mathcal A^*_\Gamma\to\mathcal H(H)^{op \oo E}$ from Theorem \ref{th:hdcomb} is not only an algebra isomorphism but also a module morphism with respect to gauge transformations,  we consider the $D(H)^{\oo V}$-right module algebra structure  on the algebra  $\mathcal A^*_\Gamma$   from Theorem \ref{lem:edge_algebra} and Proposition \ref{prop:multrel}  and  the $D(H)^{\oo V}$-right module algebra structure on the  algebra $\mathcal H(H)^{op\oo E}$ from Theorem \ref{lem:hdmodule}. 

\begin{theorem} \label{lem:actedge}Let $\Gamma$ be a regular ciliated ribbon graph.  Then for all edges $e$ of $\Gamma$ one has
\begin{align*}
\mathrm{Hol}_{p_{e,\pm}}(y\oo\gamma)\lhd (\delta\oo z)_v
&=\begin{cases}
\epsilon(z)\epsilon(\delta)\,\mathrm{Hol}_{p_{e,\pm}}(y\oo\gamma) & v\notin\{\st(e),\ta(e)\}\\
\langle  \delta\oo z, (y\oo\gamma)_{(1)}\rangle\; \mathrm{Hol}_{p_{e,\pm}}((y\oo\gamma)_{(2)}) & v=\ta(e)\\
\langle  \delta\oo z, S_D((y\oo\gamma)_{(2)})\rangle\; \mathrm{Hol}_{p_{e,\pm}}((y\oo\gamma)_{(1)}) & v=\st(e).
\end{cases}
\end{align*}
 The map  $\chi: \mathcal A^*_\Gamma\to\mathcal H(H)^{op \oo E }$  from Theorem \ref{th:hdcomb} is an isomorphism of $D(H)^{\oo V }$-right module algebras and restricts to an algebra isomorphism $\chi:\mathcal A^*_{\Gamma\,inv}\to \mathcal H(H)^{op\oo E }_{inv}$.
\end{theorem}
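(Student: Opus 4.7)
The theorem has two parts: the three case formulas for the action $\mathrm{Hol}_{p_{e,\pm}}(y\oo\gamma)\lhd(\delta\oo z)_v$, and the conclusion that $\chi$ is a morphism of module algebras. The second part follows quickly from the first: both $\mathcal A^*_\Gamma$ and $\mathcal H(H)^{op\oo E}$ are $D(H)^{\oo V}$-right module algebras (Theorem \ref{lem:edge_algebra} and Theorem \ref{lem:hdmodule}) and $\chi$ is an algebra isomorphism by Theorem \ref{th:chiiso}; it therefore suffices to verify on generators $(y\oo\gamma)_e$ that $\chi((y\oo\gamma)_e\lhd(\delta\oo z)_v)=\chi((y\oo\gamma)_e)\lhd(\delta\oo z)_v$. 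The action on the gauge theory side is described in Proposition \ref{prop:multrel} (for $K=D(H)$), and a direct comparison with the three case formulas, using $\Delta(\delta\oo z)=\delta_{(2)}\oo z_{(1)}\oo\delta_{(1)}\oo z_{(2)}$ and the pairing $\langle\delta\oo z,y\oo\gamma\rangle=\langle\delta,y\rangle\langle\gamma,z\rangle$, shows that the two actions agree. The restriction $\chi:\mathcal A^*_{\Gamma\,inv}\to\mathcal H(H)^{op\oo E}_{inv}$ is then automatic from Lemma \ref{lem:project}. So the work is in the three case formulas.

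\textbf{Case $v\notin\{\st(e),\ta(e)\}$.} Using Theorem \ref{lem:hdmodule}, the claim reduces to showing that $\tau_v(\delta\oo z)=\mathrm{Hol}_{p_{f(v)}}(1\oo\delta)\cdot\mathrm{Hol}_{p_v}(z\oo 1)$ commutes with $\mathrm{Hol}_{p_{e,\pm}}(y\oo\gamma)$ in $\mathcal H(H)^{\oo E}$: once commutativity holds, the antipode axiom $m\circ(S\oo\id)\circ\Delta=\epsilon\,1$ applied inside $\tau_v$ collapses the conjugation to $\epsilon(\delta)\epsilon(z)$ times the original holonomy. The proof of commutativity exploits that, by Lemma \ref{lem:holl}, the support of $\mathrm{Hol}_{p_{e,\pm}}(y\oo\gamma)$ consists of the copies of $\mathcal H(H)$ indexed by $e$ and by the edges of $\Gamma$ at $\ta(e)$ and $\st(e)$ of lower order than $e$, while the vertex loop $p_v$ only touches edges incident at $v$ (hence disjoint indices). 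For the face loop contribution, I would combine Lemma \ref{lem:commhelp} (the Reidemeister-II-type identity at each vertex) with Lemma \ref{lem:vertfacecomm}(3), applied along the path $p_{f(v)}$ decomposed into pieces disjoint from $\{\st(e),\ta(e)\}$ and pieces crossing there; each crossing pair is handled by the commutation identities \eqref{eq:hdcommutelr} for the antipode of the Heisenberg double.

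\textbf{Cases $v=\ta(e)$ and $v=\st(e)$.} For $v=\ta(e)$, both $p_v$ and $p_{f(v)}$ are based at the same cilium where $p_{e,\pm}$ terminates. I would exploit Lemma \ref{lem:holprops}(5) to rewrite the concatenations $p_v\circ p_{e,\pm}$ and $p_{f(v)}\circ p_{e,\pm}$ in a form where one can read off the conjugation: inserting the explicit expressions from Lemma \ref{lem:holl}, the product $\tau_{\ta(e)}(S((\delta\oo z)_{(2)}))\cdot\mathrm{Hol}_{p_{e,\pm}}(y\oo\gamma)\cdot\tau_{\ta(e)}((\delta\oo z)_{(1)})$ expands via the multiplication of $\mathcal H(H)^{\oo E}$ \eqref{eq:hd2} and the identity $S^2_D=\id$. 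The terms from the vertex and face loops that lie on edges $e_1,\ldots,e_{i-1}$ with lower order than $e$ commute past $\mathrm{Hol}_{p_{e,\pm}}$ (by the same commutation arguments as above), leaving a factor acting only on the copy associated with $e$. Using \eqref{eq:dmult} for the comultiplication of $D(H)$ and the definitions of the triangle operators, this factor is exactly $\langle\delta\oo z,(y\oo\gamma)_{(1)}\rangle$ and shifts $y\oo\gamma$ to $(y\oo\gamma)_{(2)}$. The case $v=\st(e)$ follows by an entirely analogous computation at the starting vertex; the asymmetry (antipode on the pairing) comes from the $p_{s(e)<}^{-1}$ factor in $p_{e,\pm}$, which contributes $S_D$ as in Lemma \ref{lem:functor}.

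\textbf{Main obstacle.} The most delicate step is the first case, where one must show that the face holonomy $\mathrm{Hol}_{p_{f(v)}}(1\oo\delta)$ commutes with $\mathrm{Hol}_{p_{e,\pm}}(y\oo\gamma)$ for any face $f(v)$ not based at $\st(e)$ or $\ta(e)$. The face $f(v)$ can be long and may pass through edges adjacent to $\ta(e)$ or $\st(e)$, so the two holonomies do share copies of $\mathcal H(H)$ in $\mathcal H(H)^{\oo E}$; commutativity must be established as a nontrivial algebraic identity inside the Heisenberg double, not as disjointness of supports. The key input is that the paths $p_{f(v)}$ and $p_{e,\pm}$ only ever meet via the Reidemeister-II configurations of Lemma \ref{lem:commhelp}, so that the local contributions commute, while the identities of Lemma \ref{lem:holprops}(5) guarantee that changes of representative of the face do not alter the holonomy. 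Packaging these local arguments into a single statement valid for all regular ciliated ribbon graphs is the technical heart of the proof.
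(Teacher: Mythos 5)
Your proposal follows the same architecture as the paper's proof: reduce the case $v\notin\{\st(e),\ta(e)\}$ to commutativity of $\tau_v(\delta\oo z)$ with $\mathrm{Hol}_{p_{e,\pm}}(y\oo\gamma)$ and collapse the conjugation with the antipode axiom; establish that commutativity by decomposing $p_{f(v)}$ into pieces handled by Lemma \ref{lem:commhelp} and \eqref{eq:hdcommutelr}; compute the cases $v=\ta(e)$ and $v=\st(e)$ by explicit expansion at the vertex; and deduce the module-morphism statement by comparing with Proposition \ref{prop:multrel}, from which the restriction to the invariant subalgebras is immediate.

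One concrete inaccuracy: your claim that the vertex loop $p_v$ has support disjoint from that of $p_{e,\pm}$ when $v\notin\{\st(e),\ta(e)\}$ is false. An edge $g$ joining $v$ to $\ta(e)$ with $g<e$ at $\ta(e)$ contributes to \emph{both} holonomies in the copy of $\mathcal H(H)$ indexed by $g$, since $p_{e,\pm}$ contains the prefix $p_{t(e)<}$ running over all lower-order edges at $\ta(e)$. Commutativity nevertheless holds because such a $g$ is incoming at one of the two vertices and outgoing at the other, so one path contributes a factor $r(\bar g)^{\pm 1}$ and the other a factor $l(\bar g)^{\pm 1}$, and these commute by \eqref{eq:hdcommutelr}; this is exactly the mechanism you already invoke for the face loop, so the gap is repaired by applying it to the vertex loop as well. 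Beyond that, the paper organises the ciliated cases slightly differently from you: it treats $\delta=1$ and $z=1$ separately at $v=\ta(e)$ and combines the two using that $\tau_v$ is an algebra morphism, and it obtains the $v=\st(e)$ formula from the $v=\ta(e)$ one via $\mathrm{Hol}_{p_{e^\inv,\pm}}=\mathrm{Hol}_{p_{e,\pm}}\circ S_D$ and $\Delta\circ S_D=(S_D\oo S_D)\circ\Delta^{op}$ rather than redoing the computation at the starting vertex. Both routes terminate in the same Sweedler computations, so your plan is viable once the disjointness claim is corrected.
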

\begin{proof}  1.~Denote for each vertex $v$ by $f(v)$ the ciliated face of $\Gamma$ that starts and ends at the cilium at $v$ and by $p_v$ and $p_{f(v)}$ the associated vertex and face loop in $\gammad$.  
To prove the first equation in the theorem, we consider a vertex $v\notin\{\st(e),\ta(e)\}$ and show that the holonomies of $p_v$ and $p_{f(v)}$ commute with the holonomy of $p_{e,\pm}$. The equation then follows from the formula for the $D(H)^{\oo V}$-module structure in Theorem \ref{lem:hdmodule}.
For this, note that the vertex loop $p_v$ is composed of paths $r(\bar g)$ and $l(\bar g)^{-1}$ for edges $g\in E(\Gamma)$ incident at $v$. The path $p_{e,\pm}$ is composed of  paths  $r(\bar h)^{\pm 1}$, $l(\bar h)^{\pm 1}$, $r(h)^{\pm 1}$, $l( h)^{\pm 1}$ for $h\in E(\Gamma)$ incident at $\st(e)$ or $\ta(e)$. It follows that the holonomy of $p_v$ commutes with the holonomy of $p_{e,\pm}$ if $v\notin\{\st(e),\ta(e)\}$.

To prove the corresponding statement for the face loop $p_{f(v)}$, consider an edge $e$ as in Definition \ref{def:edgepath} and let  $p_{e,\pm}$ be the associated edge path from \eqref{eq:edgepath}. 
As $S_D: \mathcal H(H)\to\mathcal H(H)$ is an algebra morphism, we  can suppose  that all  edges $e_1$,...,$e_{i-1}$ and $f_1$,...,$f_{j-1}$ are incoming. If $v\notin \{\st(e),\ta(e)\}$  the regularity of $\Gamma$ implies that  $f(v)$ does no traverse the cilium at $\st(e)$ or $\ta(e)$ and 
  the edges $e_1,...,e_{i-1}, f_1,...,f_{j-1}, e$ and their inverses are not  the first or last edge in $f(v)$. Hence, the face loop $p_{f(v)}$
 can be decomposed into 
\begin{compactenum}[(i)]
\item subpaths which do not contain $r(g)^{\pm 1}$ or $l(g)^{\pm 1}$ for $g\in \{e_1,...,e_{i-1}, e, f_1,..,f_{j-1}\}$,
\item subpaths of the form $l(e_{a+1})^\inv\circ r(e_{a})$ or $l(f_{b+1})^\inv\circ r(f_{b})$ with $1\leq a\leq i-1$, $1\leq b\leq j-1$,
\item subpaths of the form $r(e)\circ r(f_{j-1})$ or  $l(e)^\inv\circ r(e_{i-1})$,
\end{compactenum}
as shown in Figure \ref{fig:reidemeister}. The  holonomies of the paths in (i) commute with the holonomy of $p_{e,\pm}$ by definition of  $\mathcal H(H)^{\oo E}$, the holonomies of the paths in (ii)  and the holonomy of  $l(e)^\inv\circ r(e_{i-1})$ in (iii) commute with 
the holonomy of $p_{e,\pm}$ by Lemma \ref{lem:commhelp}. 
The  same holds for the holonomy  of  $r(e)\circ r(f_{j-1})$  in (iii), since  $r(e)\circ r(f_{j-1})=l(e)^\inv\circ r(f_{j-1})$  and $\mathrm{Hol}_{p_{e,\pm }^\inv}=\mathrm{Hol}_{p_{e^\inv,\pm }}$ by Lemma \ref{lem:holl}.
This shows that the holonomy of $p_{f(v)}$ commutes with the holonomies of all paths $p_{e,\pm}$ with $v\notin\{\st(e),\ta(e)\}$ and completes the proof of  the first equation.  

2.~As $\mathrm{Hol}_{p_{e^\inv,\pm}}=\mathrm{Hol}_{p_{e,\pm}^\inv}=\mathrm{Hol}_{p_{e,\pm}}\circ S_D$ and $\Delta\circ S_D=(S_D\oo S_D)\circ \Delta^{op}$ in $D(H)^*$, the third equation follows from the second equation. To prove the second equation, we compute the commutation relations of the holonomies of $p_v$ and $p_{f(v)}$
 with the holonomy of $p_{e,\pm}$ for $v=\ta(e)$. Due to the formula in Lemma \ref{lem:holl}
and because $S_D:\mathcal H(H)\to\mathcal H(H)$   is an algebra morphism by Lemma \ref{lem:antipodehd},
it is sufficient to consider a vertex $v$ at which all edges are incoming. 
 Suppose that the incident edges at $v$ are  
 $e_1,...,e_n$, numbered according to the ordering at $v$.

 2(a).~We start by computing the commutation relations of the holonomy of the vertex loop $p_{v}$  with the holonomies of the  paths $p_{e_i,\pm}$. As the edges $e_1,..,e_n$ are incoming at $v$, one has
 $p_v=r(\bar e_1)\circ \ldots\circ r(\bar e_n)$, and the paths $p_{e_i,+}$ can be decomposed as $p_{e_i,+}=p_{t(e_i)\leq}\circ p_{s(e_i)<}^\inv$, where 
 $p_{t(e_i)\leq}=p_{t(e_i)<}\circ r(\bar{e_i})\circ r(e_i)$. Note that $p_{s(e_i)<}$ does not contain the factors $r(\bar e_i)$, $l(\bar e_i)$, $r(e_i)$ and $l(e_i)$  or their inverses since $\Gamma$ has no loops or multiple edges. It follows that the holonomy of $p_v$ commutes with the holonomy of $p_{s(e_i)<}$ for all $i\in\{1,...,n\}$.  As  $\mathrm{Hol}_{p_v}(z\oo \delta)=\epsilon(\delta)\,\mathrm{Hol}_{p_{t(e_n)\leq}}(z\oo 1)$, equation \eqref{eq:multrels0} 
implies for the paths $p_{t(e_i)\leq}$ with $i\in\{1,...,n-1\}$ 
\begin{align*}
& \mathrm{Hol}_{p_{t(e_i)\leq}}(y\oo\gamma)\cdot \mathrm{Hol}_{p_v}(z\oo 1)=\langle \low\gamma 1,\low z 2\rangle \; \mathrm{Hol}_{p_v}(\low z 4\oo 1)\cdot  \mathrm{Hol}_{p_{t(e_i)\leq} } (S(\low z 3)y \low z 1\oo\gamma).\qquad\qquad
\end{align*}
For the paths $p_{e_n,\pm}$ we obtain the same identity  using the fact that $\mathrm{Hol}_{p_{e_n,\pm}}: \mathcal H(H)\to\mathcal H(H)^{\oo E}$ is an algebra morphism by Lemma \ref{lem:antipodehd}:
\begin{align*}
&\mathrm{Hol}_{p_{t(e_n)\leq}}(y\oo\gamma)\cdot \mathrm{Hol}_{p_v}(z\oo 1)
=\mathrm{Hol}_{p_{t(e_n)\leq}}(y\oo\gamma)\cdot \mathrm{Hol}_{p_{t(e_n)\leq}}(z\oo 1)\\
&=\langle \low\gamma 1, \low z 2\rangle\; \mathrm{Hol}_{p_{t(e_n)\leq}}(\low z 1y\oo\low \gamma 2)
=\langle \low\gamma 1, \low z 2\rangle\; \mathrm{Hol}_{p_{t(e_n)\leq}}(\low z 4 S(\low z 3)y \low z 1y\oo\low \gamma 2)\qquad\qquad\quad\;
\\
&=\langle \low\gamma 1,\low z 2\rangle \; \mathrm{Hol}_{p_{t(e_n)\leq}}(\low z 4\oo 1)\cdot  \mathrm{Hol}_{p_{t(e_n)\leq}} (S(\low z 3)y \low z 1\oo\gamma)
\\
&=\langle \low\gamma 1,\low z 2\rangle \; \mathrm{Hol}_{p_v}(\low z 4\oo 1)\cdot  \mathrm{Hol}_{p_{t(e_n)\leq}} (S(\low z 3)y \low z 1\oo\gamma).\end{align*}

This implies for all $i\in\{1,...,n\}$ 
\begin{align}\label{eq:helpacomm}
&\mathrm{Hol}_{p_{e_i,\pm}}(y\oo\gamma)\lhd (1\oo z)_v\\
&=\mathrm{Hol}_{p_v}(S(\low z 2)\oo 1)\cdot  \mathrm{Hol}_{p_{t(e_i)\leq}}((y\oo\gamma)_{(1)})\cdot \mathrm{Hol}_{p_{s(e_i)<}^\inv}((y\oo\gamma)_{(2)})\cdot  \mathrm{Hol}_{p_v}(\low z 1\oo 1)\nonumber\\
&=\Sigma_{k,l}\,\mathrm{Hol}_{p_v}(S(\low z 2)\oo 1)\cdot  \mathrm{Hol}_{p_{t(e_i)\leq}}( \low y 1\oo \alpha^k\gamma\alpha^l)\cdot  \mathrm{Hol}_{p_v}(\low z 1\oo 1) \cdot \mathrm{Hol}_{p_{s(e_i)<}^\inv}( S( x_l) \low y 2 x_k\oo 1)\nonumber\\
&=\Sigma_{k,l}\,\langle \alpha^k_{(1)}\low\gamma 1\alpha^l_{(1)}, \low z 2\rangle\; 
\mathrm{Hol}_{p_{t(e_i)\leq}}( S(\low z 3) \low y 1\low z 1\oo \alpha^k_{(2)}\low \gamma 2\alpha^l_{(2)})\cdot
\mathrm{Hol}_{p_{s(e_i)<}^\inv}(S( x_l) \low y 2 x_k\oo 1)\nonumber\\
&=\Sigma_{k,l}\,\langle \low\gamma 1, \low z 3\rangle\; 
\mathrm{Hol}_{p_{t(e_i)\leq}}( S(\low z 5) \low y 1\low z 1\oo \alpha^k\low \gamma 2\alpha^l)\cdot
\mathrm{Hol}_{p_{s(e_i)<}^\inv}( S( x_l) S(\low z 4)\low y 2 \low z 2 x_k\oo 1)\nonumber\\
&=\langle \low\gamma 1, \low z 2\rangle\; \mathrm{Hol}_{p_{e_i,\pm}}(S(\low z 3)  y \low z 1\oo\low\gamma 2)\nonumber
\end{align}
and proves the second equation for  $z\in H$ and $\delta=1$.

2(b).~To prove the second equation for $z=1$ and   $\delta\in H^*$, we compute the commutation relations of the holonomy of the face loop $f(v)$ with the holonomies of the paths $p_{e_i,\pm}$. 
As $p_{f(v)}$ starts and ends at the cilium at $v$, it can be decomposed as $p_{f(v)}=q\circ l(e_1)^\inv$, where
$q=r(e_n)\circ t\in\mathcal G(\gammad)$ is a path from $\st(e_1)$ to $v$ that turns maximally right at each vertex, traverses each edge at most once and does not traverse any cilia, as shown  in Figure \ref{fig:faceact}.
The holonomy of $p_{f(v)}$  is given by
 \begin{align*}
 \mathrm{Hol}_{p_{f(v)}}(y\oo\gamma)&=\epsilon(y)\; \mathrm{Hol}_{q}(1\oo\low\gamma 1)\cdot \mathrm{Hol}_{l(e_1)^\inv}(1\oo\low\gamma 2).
 \end{align*}
By 1.~and by Lemma  \ref{lem:commhelp},  the holonomy of $q$ commutes with the holonomies of  the paths $p_{e_i,\pm}$ for all $i\in\{1,...,n-1\}$ and with the holonomy of $p_{e_n,-}$. This is apparent from Figure \ref{fig:faceact}. As none of the edges in  $p_{s(e_i)<}$ are the first or last edge in $p_{f(v)}$,  by 1.~the holonomies of $p_{f(v)}$, $l(e_1)^\inv$ and $q$  commute with the holonomy of $p_{s(e_i)<}$ for all $i\in\{1,...,n-1\}$. 
This implies for all  $i\in\{1,...,n\}$
\begin{align}\label{eq:helpcom}
&\mathrm{Hol}_{p_{e_i,\pm}}(y\oo\gamma)\lhd(\delta\oo 1)_v=\mathrm{Hol}_{p_{f(v)}}(1\oo S(\low \delta 1))\cdot \mathrm{Hol}_{p_{e_i,\pm}}(y\oo\gamma)\cdot \mathrm{Hol}_{p_{f(v)}}(1\oo\low \delta 2)\\
&=\mathrm{Hol}_{l(e_1)}(1\oo \low\delta 1)\cdot \mathrm{Hol}_{p_{t(e_i)\leq }}((y\oo\gamma)_{(1)}) 
\cdot \mathrm{Hol}_{l(e_1)}(1\oo S(\low\delta 2)) 
\cdot \mathrm{Hol}_{p_{s(e_i)<}}( S((y\oo\gamma)_{(2)})).\qquad\qquad\qquad\qquad\nonumber
\end{align}
To evaluate this expression further, we use the identities
\begin{align*}
&\mathrm{Hol}_{l(e_1)}(1\oo \delta  )\cdot ( y \oo \gamma)_{e_1}
=\langle \low\delta 1, \low y 1\rangle\; (\low y 2 \oo \gamma)_{e_1}\cdot \mathrm{Hol}_{l(e_1)}(1\oo \low \delta 2) \\[+1ex]
&\mathrm{Hol}_{l(e_1)}(1\oo  \delta )\cdot ( y \oo \gamma)_{e_i}=( y \oo \gamma)_{e_i}\cdot \mathrm{Hol}_{l(e_1)}(1\oo  \delta )\quad\qquad\qquad\qquad\qquad\qquad\qquad \forall i\in\{2,...,n\},
\end{align*}
 which follow from  the algebra structure of the Heisenberg double and  Lemma \ref{lem:functor}.  
 Inserting them into \eqref{eq:helpcom} and using the identity  $\mathrm{Hol}_{l(e_1)}(1\oo\gamma)\cdot \mathrm{Hol}_{l(e_1)}(1\oo\delta)=\mathrm{Hol}_{l(e_1)}(1\oo\gamma\delta)$, which follows from  Lemma \ref{lem:functor}, we obtain
\begin{align}\label{eq:secc}
&\mathrm{Hol}_{p_{e_i,\pm}}(y\oo\gamma)\lhd(1\oo\delta)_v\\
&=\Sigma_{k,l}\mathrm{Hol}_{l(e_1)}(1\oo \low\delta 1)\cdot \mathrm{Hol}_{p_{t(e_i)\leq }}(\low y 1\oo\alpha^k\low\gamma 1\alpha^l)
\cdot \mathrm{Hol}_{l(e_1)}(1\oo S(\low\delta 2)) 
\cdot \mathrm{Hol}_{p_{s(e_i)<}^\inv}( S(x_l)\low y 2 x_l\oo \low\gamma 2)\nonumber\\
&=\Sigma_{k,l}\langle \low\delta 1,\low y 1\rangle\;  \mathrm{Hol}_{p_{t(e_i)\leq }}(\low y 2\oo\alpha^k\low\gamma 1\alpha^l)
\cdot \mathrm{Hol}_{l(e_1)}(1\oo \low\delta 2S(\low\delta 3)) 
\cdot \mathrm{Hol}_{p_{s(e_i)<}^\inv}(S(x_l)\low y 3 x_k\oo \low\gamma 2)\nonumber\\
&=\Sigma_{k,l}\langle \delta ,\low y 1\rangle\;  \mathrm{Hol}_{p_{t(e_i)\leq }}(\low y 2\oo\alpha^k\low\gamma 1\alpha^l)
\cdot \mathrm{Hol}_{p_{s(e_i)<}^\inv}(S(x_l)\low y 3 x_k\oo \low\gamma 2)
=\langle \delta ,\low y 1\rangle\; \mathrm{Hol}_{p_{e_i,\pm}}(\low y 2\oo\gamma).\nonumber
\end{align}
This proves the second equation for  $\delta\in H^*$ and $z=1$. 

2(c).~Combining \eqref{eq:secc} and \eqref{eq:helpacomm} and using the  fact that the linear map $\tau_v:D(H)\to \mathcal H(H)^{\oo E}$, $\delta\oo z\mapsto \mathrm{Hol}_{p_{f(v)}}(1\oo\delta)\cdot \mathrm{Hol}_{p_v}(z\oo 1)$ from \eqref{eq:tauv} is an algebra homomorphism,  we obtain the second equation  for general $y,z\in H$ and $\gamma,\delta\in H^*$
\begin{align*}
 &\mathrm{Hol}_{p_{e_i,\pm}}(y\oo\gamma)\lhd (\delta\oo z)_v=(\mathrm{Hol}_{p_{e_i,\pm}}\lhd (\delta\oo 1)_v)\lhd (1\oo z)_v
=\langle \delta ,\low y 1\rangle\;    \mathrm{Hol}_{p_{e_i,\pm}}(\low y 2\oo\gamma)\lhd (1\oo z)_v\\
&=\langle \delta ,\low y 1\rangle\;  \langle \low\gamma 1, \low z 2\rangle\; \mathrm{Hol}_{p_{e_i,\pm}}(S(\low z 3)\low y 2\low z 1\oo\low \gamma 2)\\
&=\Sigma_{k,l}\langle \delta ,\low y 1\rangle\;  \langle \alpha^k\low\gamma 1\alpha^l, z \rangle\;
\mathrm{Hol}_{p_{e,\pm}}(S(x_l)\low y 2 x_k\oo\low \gamma 2)\\
&=\Sigma_{k,l}\langle \delta\oo z, \low y 1 \oo \alpha^k\low\gamma 1\alpha^l \rangle\; \mathrm{Hol}_{p_{e,\pm}}(S(x_l)\low y 2x_k\oo\low\gamma 2)=\langle \delta \oo z, (y\oo\gamma)_{(1)}\rangle\; \mathrm{Hol}_{p_{e,\pm}}((y\oo\gamma)_{(2)}).\qquad
\end{align*}
3.~That $\chi$ is a morphism of $D(H)^{\oo V}$-right modules 
 follows directly by comparing the formulas  in this theorem with the expressions for the $D(H)^{\oo V}$-module structure on $\mathcal A^*_\Gamma$ in Proposition \ref{prop:multrel}. The subalgebras $\mathcal A^*_{\Gamma\, inv}\subset\mathcal A^*_\Gamma$ and $\mathcal H(H)^{op\oo E}_{inv}\subset\mathcal H(H)^{op \oo E}$
are the images of the projectors
$$P_{inv}:\mathcal A^*_\Gamma\to\mathcal A^*_\Gamma, \; X\mapsto X\lhd(\eta\oo \ell)^{\oo V}\qquad Q_{inv}: \mathcal H(H)^{op\oo E}\to\mathcal H(H)^{op\oo E}, \; X\mapsto X\lhd(\eta\oo \ell)^{\oo V}$$
from Theorem \ref{th:ginvsub} and  Lemma \ref{lem:hdproj}.
As $\chi$ is an isomorphism of $D(H)^{\oo V}$-right module algebras, it satisfies $\chi\circ P_{inv}=Q_{inv}\circ \chi$ and hence induces an algebra isomorphism  $\chi:\mathcal A^*_{inv}\to \mathcal H(H)^{op\oo E}_{inv}$.
\end{proof}

\begin{figure}
\centering
\includegraphics[scale=0.27]{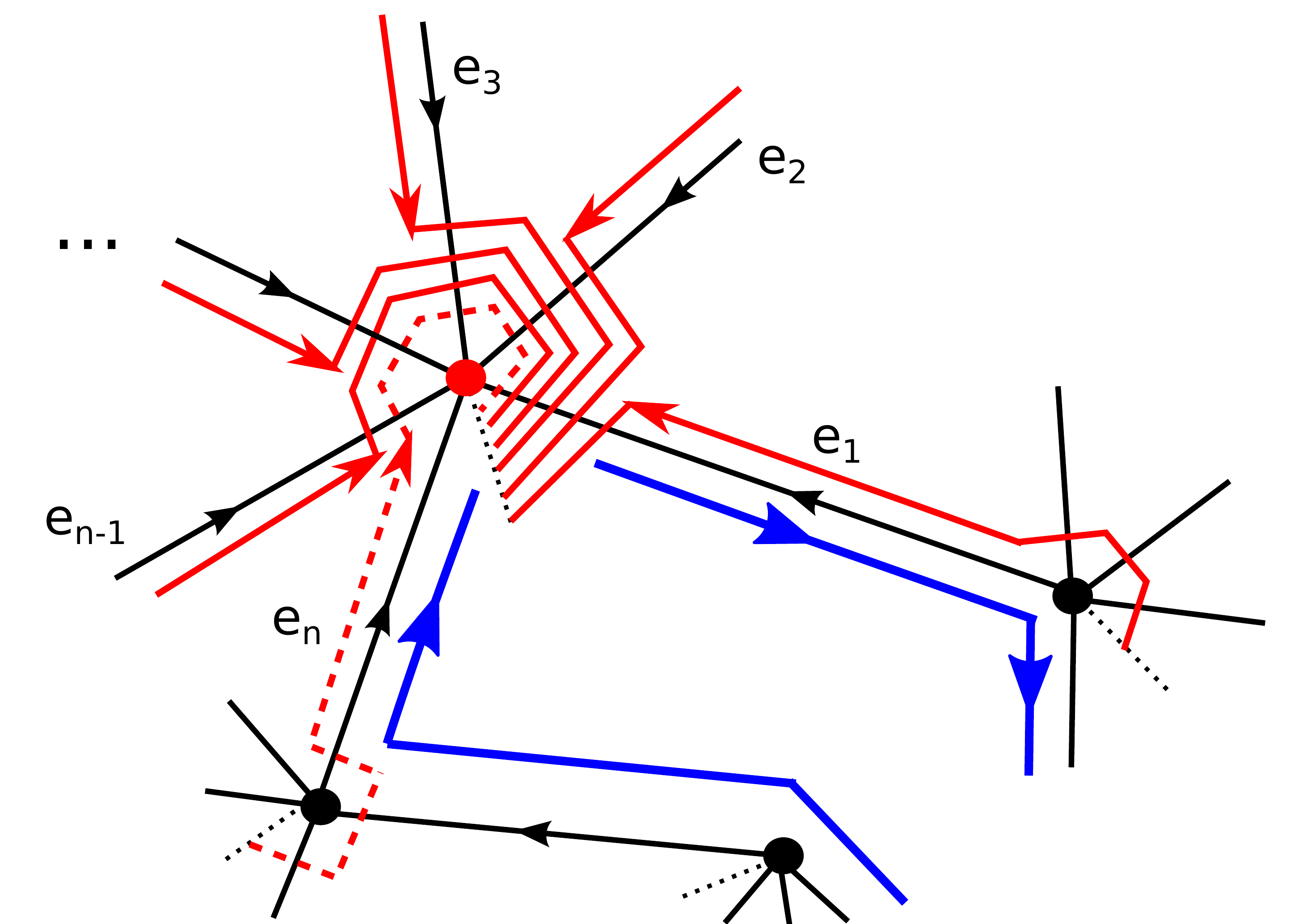}
\caption{A face loop (blue)  based at a ciliated vertex  of $\Gamma$ (red),  the paths $p_{e_1,+}$, ..., $p_{e_{n-1},+}$ (solid red) and  the path $p_{e_n,-}$ (dashed red) for the incident edges $e_1,...,e_n$. }
\label{fig:faceact}
\end{figure}

After clarifying the relation between gauge transformations in the two models, we  can now relate  curvatures in  a Hopf algebra gauge theory  on $\Gamma$ to the curvatures  in the associated  Kitaev model.  As discussed in Section \ref{sec:gtheory}, the curvatures of a Hopf algebra gauge theory on a regular ciliated ribbon graph $\Gamma$ are given by the holonomies of ciliated faces  that start and end at a cilia of $\Gamma$. Similarly, it was shown in  Section \ref{subsec:flatkit} that the curvatures in the Kitaev models are given by the holonomies of the associated vertex loops and face loops  in its thickening $\gammad$ that start and end at these cilia.  
The isomorphism  $\chi:\mathcal A^*_\Gamma\to\mathcal H(H)^{op \oo E}$  from Theorem \ref{th:hdcomb} relates the former to the latter.

\begin{proposition} \label{lem:facetransfer}Let  $\Gamma$ be a regular ciliated ribbon graph, $v$  a vertex of $\Gamma$, $f(v)$ the ciliated face that starts and ends at the cilium at $v$ and $p_v$, $p_{f(v)}$ the associated vertex and face loops. Then
$$\chi\circ \mathrm{Hol}^\Gamma_{f(v)}(y\oo\gamma)=\mathrm{Hol}^{\gammad}_{p_v}(y\oo 1)\cdot \mathrm{Hol}^{\gammad}_{p_{f(v)}}(1\oo\gamma)\qquad \forall y\in H, \gamma\in H^*,$$
and the projectors $P_f: \mathcal A^*_\Gamma\to\mathcal A^*_\Gamma$ and   $Q_v:\mathcal H(H)^{\oo E}\to\mathcal H(H)^{\oo E}$ from Lemma \ref{lem:faceprpr}  and \ref{lem:hdproj2} satisfy
$$\chi\circ P_{f(v)}=Q_v\circ \chi.$$
\end{proposition}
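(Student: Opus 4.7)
My plan is to derive the second identity from the first as a short corollary, and to establish the first identity by expanding both sides in $\mathcal H(H)^{\oo E}$ and matching term-by-term. For the second identity, the Haar integral of $D(H)^* = H \oo H^*$ is $\ell \oo \eta$ (where $\ell, \eta$ are the Haar integrals of $H, H^*$), so Lemma \ref{lem:faceprpr} gives $P_{f(v)}(\alpha) = \mathrm{Hol}^\Gamma_{f(v)}(\ell \oo \eta) \cdot_{\mathcal A^*_\Gamma} \alpha$. Because $\chi : \mathcal A^*_\Gamma \to \mathcal H(H)^{op\oo E}$ is an algebra isomorphism by Theorem \ref{th:chiiso}, the two factors swap under $\chi$ to give $\chi(P_{f(v)}(\alpha)) = \chi(\alpha) \cdot \chi(\mathrm{Hol}^\Gamma_{f(v)}(\ell \oo \eta))$ in $\mathcal H(H)^{\oo E}$. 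Applying the first identity with $y=\ell, \gamma=\eta$ will identify $\chi(\mathrm{Hol}^\Gamma_{f(v)}(\ell\oo\eta))$ with $G_v$ from Lemma \ref{lem:hdproj2}, whence $\chi \circ P_{f(v)} = Q_v \circ \chi$.

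For the first identity, write $f(v) = e_1^{\epsilon_1} \circ \ldots \circ e_n^{\epsilon_n}$. By the convolution definition \eqref{eq:assalg} of the holonomy functor, $\mathrm{Hol}^\Gamma_{f(v)}(y\oo\gamma)$ is the product in $\mathcal A^*_\Gamma$ over $i=1,\ldots,n$ of the Sweedler components $(S_D^{(1-\epsilon_i)/2}((y\oo\gamma)_{(i)}))_{e_i}$. Because $f(v)$ turns maximally right at every intermediate vertex, the adjacency conditions $t(e_i^{\epsilon_i}) < s(e_{i+1}^{\epsilon_{i+1}})$ at each $v_i$ and $s(e_1^{\epsilon_1}) < t(e_n^{\epsilon_n})$ at $v$ hold, so $e_1 < e_2 < \ldots < e_n$ in the total order used in Theorem \ref{th:hdcomb}. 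Applying $\chi$ therefore produces the reverse-ordered product in $\mathcal H(H)^{\oo E}$ of the individual factor holonomies $\mathrm{Hol}^\gammad_{p_{e_i,\pm}}(S_D^{(1-\epsilon_i)/2}((y\oo\gamma)_{(i)}))$.

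The core step will be to use Lemma \ref{lem:holl} to split each $\mathrm{Hol}^\gammad_{p_{e_i,\pm}}$ into an \emph{edge piece} at $e_i$ coming from $r(\bar e_i^{\epsilon_i})\circ r(e_i^{\epsilon_i})$ (or its $l\circ l$ variant) and two \emph{vertex pieces} from $p_{t(e_i)<}$ and $p_{s(e_i)<}^\inv$ supported at the lower-ordered edges at $\ta(e_i^{\epsilon_i})$ and $\st(e_i^{\epsilon_i})$. I expect the edge pieces across $i$ to collect, via Lemma \ref{lem:rpath} applied to the ribbon path $p_{f(v)} = r(e_1^{\epsilon_1}) \circ \ldots \circ r(e_n^{\epsilon_n})$, into $\mathrm{Hol}^\gammad_{p_{f(v)}}(1\oo\gamma)$ after using the factorisation $y\oo\gamma = (y\oo 1)\cdot(1\oo\gamma)$ in $D(H)^*$ to route the $H^*$-component there. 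Simultaneously, at each intermediate vertex $v_i$ the vertex piece $p_{t(e_i)<}$ of the $i$-th factor and the piece $p_{s(e_{i+1})<}^\inv$ of the $(i+1)$-th factor should telescope to the identity, since the maximal right-turn equates their endpoints; I plan to verify this cancellation using the commutation identities of Lemma \ref{lem:commhelp} together with the algebra-morphism properties of $\phi_1, \xi_1$ from Lemma \ref{lem:antipodehd}. Only $p_{s(e_1)<}^\inv$ from the first factor and $p_{t(e_n)<}$ from the last factor then survive, and since $f(v)$ starts and ends at the cilium of $v$, together they traverse every incident edge end at $v$ counterclockwise, reassembling exactly the vertex loop $p_v$ and contributing the factor $\mathrm{Hol}^\gammad_{p_v}(y\oo 1)$.

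The main obstacle will be the Sweedler book-keeping: applying $\Delta^{(n)}$ to $y\oo\gamma$ in $D(H)^*$ produces $2n$ coupled indices entangled by the universal $R$-matrix of $D(H)$, and verifying the telescoping at each intermediate vertex requires pushing these indices through antipodes and comultiplications in a controlled way. I expect the cleanest organisation to be induction on $n$, with the base case $n=1$ reducing directly to the definition of $\chi$ on a single edge, and the inductive step peeling off the last edge $e_n^{\epsilon_n}$ by commuting its vertex contribution past the previous factors using Theorem \ref{lem:actedge} and the path identities of Lemma \ref{lem:holprops}.
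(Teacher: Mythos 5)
Your plan is essentially the paper's own proof: expand $\mathrm{Hol}^\Gamma_{f(v)}$ as the ordered product of its edge factors, apply $\chi$ factor-by-factor via $\chi\circ\iota_e=\mathrm{Hol}^{\gammad}_{p_{e,\pm}}$, split each $p_{e_i,\pm}$ into its edge piece and the two vertex pieces $p_{t(e_i)<}$, $p_{s(e_i)<}^\inv$, telescope the vertex pieces at the intermediate vertices, identify the survivors with $p_v$ and $p_{f(v)}$, and then deduce $\chi\circ P_{f(v)}=Q_v\circ\chi$ from the first identity together with the fact that $\chi$ reverses products into $\mathcal H(H)^{op\oo E}$. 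Two details to correct when executing it: with the paper's conventions the maximal right turns give $e_{i+1}<e_i$ at each shared vertex (so the order is $e_n<\cdots<e_1$, the reverse of what you wrote), and the boundary terms do not split $p_v$ between the first and last factors — because $f(v)$ is based at the cilium, the piece $p_{s(e_n^{\epsilon_n})<}$ is the empty path while $p_{t(e_1^{\epsilon_1})<}\circ r(\bar e_1^{\epsilon_1})$ is already the entire vertex loop $p_v$ sitting as the leftmost factor, which is what puts $\mathrm{Hol}^{\gammad}_{p_v}(y\oo 1)$ on the left of the product without any further commutation.
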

\begin{proof} Denote by $\cdot$ the multiplication of $\mathcal H(H)^{\oo E}$ 
and by $\cdot_{\mathcal A^*}$ the multiplication of $\mathcal A^*_\Gamma$. Suppose the ciliated face $f(v)\in \mathcal G(\Gamma)$ is given by the reduced word $f(v)=e_1^{\epsilon_1}\circ ...\circ e_n^{\epsilon_n}$ with $e_i\in E(\Gamma)$ and $\epsilon_i\in\{\pm 1\}$. As $f(v)$ traverses each edge $e_i$ at most once, its  holonomy is given by
\begin{align}
\mathrm{Hol}^\Gamma_{f(v)}(y\oo\gamma)&=
(S^{\tau_1}_D((y\oo\gamma)_{(1)})\oo S^{\tau_2}_D((y\oo\gamma)_{(2)})\oo ...\oo S^{\tau_n}_D((y\oo\gamma)_{(n)})    )_{e_1...e_n},\nonumber
\end{align}
where $2\tau_i=1-\epsilon_i$. 
As $f(v)$  turns maximally right at each vertex and does not traverse any cilia, the edges $e_i$ and $e_{i+1}$ are adjacent at the vertex $\ta(e_{i+1}^{\epsilon_{i+1}})=\st(e_{i}^{\epsilon_{i}})$ with $e_{i+1}<e_{i}$ for all $i\in\{1,...,n-1\}$.  As $f(v)$ starts and ends at the cilium at $v$,  the edge $e_n$ is the edge of lowest  and the edge $e_1$  the edge of highest order at $v$,  as shown in Figure \ref{fig:faceedgepath}. Because $\Gamma$ has no loops or multiple edges, it follows from Proposition \ref{prop:multrel} that 
$(y\oo\gamma\oo z\oo \delta)_{e_ie_{j}}=(z\oo\delta)_{e_{j}}\cdot_{\mathcal A^*}(y\oo\gamma)_{e_i}$ for all $1\leq i<j\leq n$. Because $f(v)$ traverses each edge of $\Gamma$ at most once, this implies
\begin{align}
\mathrm{Hol}^\Gamma_{f(y)}(y\oo\gamma)=(S^{\tau_n}_D((y\oo\gamma)_{(n)}))_{e_n}\cdot_{\mathcal A^*} ...\cdot_{\mathcal A^*} (S^{\tau_2}_D((y\oo\gamma)_{(2)}))_{e_2}\cdot_{\mathcal A^*} (S^{\tau_1}_D((y\oo\gamma)_{(1)}))_{e_1}.\nonumber
\end{align}
As $\chi:\mathcal A^*_\Gamma\to \mathcal H(H)^{op \oo E}$ is an algebra isomorphism with $\chi\circ \iota_e=\mathrm{Hol}^\gammad_{p_{e,\pm}}$ and  $\mathrm{Hol}^{\gammad}_{p_{e,\pm}}\circ S_D=\mathrm{Hol}^{\gammad}_{p_{e^\inv,\pm}}$ for all edges $e$ of $\Gamma$, this implies
\begin{align}\label{eq:chillep}
\chi\circ \mathrm{Hol}^{\Gamma}_{f(v)}(y\oo\gamma)
=\mathrm{Hol}^{\gammad}_{p_{e_1^{\epsilon_1},\pm }}((y\oo\gamma)_{(1)})\cdot ... \cdot \mathrm{Hol}^{\gammad}_{p_{e_n^{\epsilon_n},\pm }}((y\oo\gamma)_{(n)}),
\end{align}
where  $p_{e,\pm}$  are the paths from Definition \ref{def:edgepath}:
$$p_{e_i^{\epsilon_i}+}=p_{t(e_i^{\epsilon_i})<}\circ r(\bar e_i^{\epsilon_i})\circ r(e_i^{\epsilon_i})\circ p_{s(e_i^{\epsilon_i})<}^\inv=:{t_i}\circ r(e_i^{\epsilon_i})\circ s_i^\inv.$$
  As $e_i$ and $e_{i+1}$ are adjacent at  the vertex $\ta(e_{i+1}^{\epsilon_{i+1}})=\st(e_{i}^{\epsilon_{i}})$ with $e_{i+1}<e_{i}$ for  $i\in\{1,...,n-1\}$ and $e_n$ and $e_1$ are  the edges of lowest and highest order at $v$, respectively, one has $s_i=t_{i+1}$ for all $i\in\{1,...,n-1\}$, 
  ${s_n}=\o_v$, $t_1=p_v$ and $p_{f(v)}=r(e_1^{\epsilon_1})\circ ...\circ r(e_n^{\epsilon_n})$,
 as shown in Figure \ref{fig:faceedgepath}. 
Inserting these identities into \eqref{eq:chillep} one obtains
\begin{align*}
\chi\circ \mathrm{Hol}^\Gamma_{f(v)}(y\oo\gamma)
&=\mathrm{Hol}^\gammad_{t_1}((y\oo\gamma)_{(1)(1)})\cdot  \mathrm{Hol}^{\gammad}_{r(e_1^{\epsilon_1})}((y\oo\gamma)_{(1)(2)})\cdot \mathrm{Hol}^\gammad_{s_1^\inv}((y\oo\gamma)_{(1)(3)})\\
&\;\;\cdot \mathrm{Hol}^\gammad_{t_2}((y\oo\gamma)_{(2)(1)})\cdot  \mathrm{Hol}^{\gammad}_{r(e_2^{\epsilon_2})}((y\oo\gamma)_{(2)(2)})\cdot \mathrm{Hol}^\gammad_{s_2^\inv}((y\oo\gamma)_{(2)(3)})\\
&\;\;\quad\vdots\\
&\;\;\cdot\mathrm{Hol}^\gammad_{t_n}((y\oo\gamma)_{(n)(1)})\cdot  \mathrm{Hol}^{\gammad}_{r(e_n^{\epsilon_n})}((y\oo\gamma)_{(n)(2)})\cdot \mathrm{Hol}^\gammad_{s_n^\inv}((y\oo\gamma)_{(2)(n)})\\
&=\mathrm{Hol}^{\gammad}_{p_v}(y\oo 1)\cdot \mathrm{Hol}^{\gammad}_{p_{f(v)}}(1\oo\gamma),
\end{align*}
where we used that the last holonomy in each line cancels with the first  in the next one, because $s_i=t_{i+1}$,  that the first holonomy is equal to the holonomy of $p_{v}$ and that the last one is trivial. The remaining holonomies combine to form the holonomy of $p_{f(v)}$.
This implies for all $X\in \mathcal A^*_\Gamma$
\begin{align*}
\chi\circ P_{f(v)}(X)&=\chi(\mathrm{Hol}^\Gamma_{f(v)}(\ell\oo\eta)\cdot X)
=\chi(X)\cdot \mathrm{Hol}^{\gammad}_{p_v}(\ell\oo 1)\cdot \mathrm{Hol}^{\gammad}_{p_{f(v)}}(1\oo\eta)=Q_v\circ\chi(X).\\[-9ex]
\end{align*}
\end{proof}

\begin{figure}
\centering
\includegraphics[scale=0.35]{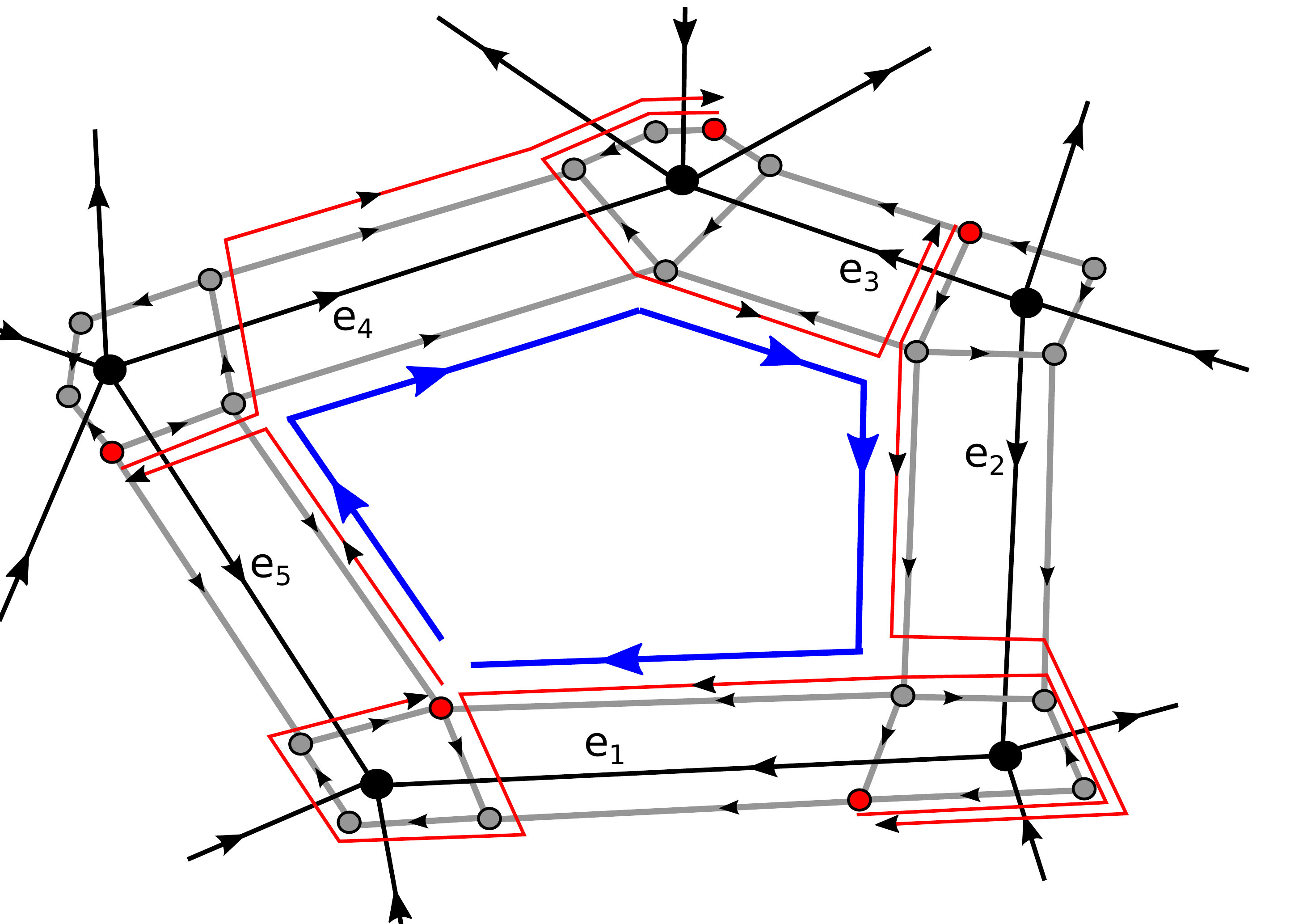}
\caption{The  paths $p_{e_i,\pm}$ in $\gammad$ for the ciliated face $f(v)=e_1\circ e_2\circ e_3^\inv\circ e_4\circ e_5^\inv$ .}
\label{fig:faceedgepath}
\end{figure}

In particular, Proposition \ref{lem:facetransfer} shows that projecting out the curvature of the ciliated face $f(v)$ in the Hopf algebra gauge theory amounts to projecting out  the curvatures of both, the ciliated faces $p_v$ and $p_{f(v)}$, in the Kitaev model. This allows one to relate  the projectors on the subalgebras of gauge invariant functions on flat gauge fields in the two models. 

By composing the projectors $P_{f(v)}:\mathcal A^*_{\Gamma}\to\mathcal A^*_{\Gamma}$ from Lemma \ref{lem:faceprpr} for all vertices $v$ of a regular ribbon graph $\Gamma$ and restricting them to the gauge invariant subalgebra $\mathcal A^*_{\Gamma\,inv}\subset \mathcal A^*_\Gamma$, one obtains the projector $P_{flat}:\mathcal A^*_{\Gamma\,inv}\to\mathcal A^*_{\Gamma\, inv}$ from Theorem \ref{th:flatinv}. 
Its image is the quantum moduli algebra $\mathcal M_\Gamma\subset \mathcal A^*_\Gamma$, which  is a topological invariant  and can be interpreted as the algebra of gauge invariant functions on the linear subspace of flat gauge fields.

Similarly, composing the projectors $Q_v:\mathcal H(H)^{\oo E}\to \mathcal H(H)^{\oo E}$ from Lemma \ref{lem:hdproj2} for all vertices $v$ of $\Gamma$  and restricting them to the gauge invariant subalgebra $\mathcal H(H)^{\oo E}_{inv}$ yields the projector $Q_{flat}:\mathcal H(H)^{\oo E}_{inv}\to\mathcal H(H)^{\oo E}_{inv}$  from Lemma \ref{lem:hdproj2}.  Its image, the subalgebra  $\mathcal H(H)^{\oo E}_{flat}\subset \mathcal H(H)^{\oo E}_{inv}$, can be interpreted as the algebra of operators acting on the protected space and contains those  elements $X\in \mathcal H(H)^{\oo E}$ with $\ham\cdot X\cdot \ham=X$. As the protected space is a topological invariant, the same holds for the algebra  $\mathcal H(H)^{\oo E}_{flat}$. 
With Proposition \ref{lem:facetransfer}
it then follows that $\chi$ induces an algebra isomorphism between this algebra and the quantum moduli algebra of the Hopf algebra gauge theory and hence relates the topological invariants of the two models.

\begin{theorem}\label{th:modth} Let $\Gamma$ be a regular ciliated ribbon graph. Then   $\chi: \mathcal A^{*}_\Gamma\to\mathcal H(H)^{op \oo E }$   induces an algebra isomorphism
$\chi: \mathcal M_\Gamma\to \mathcal H(H)^{op \oo E}_{flat}$.
\end{theorem}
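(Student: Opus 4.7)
The proof plan is to combine the algebra isomorphism between gauge-invariant subalgebras from Theorem \ref{lem:actedge} with the intertwining of face projectors from Proposition \ref{lem:facetransfer}, then pass to images under the projectors $P_{flat}$ and $Q_{flat}$. Since $\Gamma$ is regular, the cilia establish a bijection $v \leftrightarrow f(v)$ between vertices of $\Gamma$ and faces of $\Gamma$, where $f(v)$ is the unique ciliated face starting and ending at the cilium at $v$. Consequently
\begin{align*}
P_{flat}=\Pi_{f\in F}P_f=\Pi_{v\in V}P_{f(v)}\quad\text{on } \mathcal A^*_{\Gamma\,inv},\qquad Q_{flat}=\Pi_{v\in V}Q_v\quad\text{on } \mathcal H(H)^{op\oo E}_{inv}.
\end{align*}

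The key intertwining step is to show that $\chi\circ P_{flat}=Q_{flat}\circ\chi$ as maps $\mathcal A^*_{\Gamma\,inv}\to\mathcal H(H)^{op\oo E}_{inv}$. By Proposition \ref{lem:facetransfer} we have $\chi\circ P_{f(v)}=Q_v\circ\chi$ for every vertex $v\in V$. The projectors $P_{f(v)}$ commute with each other by Lemma \ref{lem:facecentral} (they are left multiplications by central elements of $\mathcal A^*_{\Gamma\,inv}$), and the projectors $Q_v$ commute by Lemma \ref{lem:hdproj2} (they are right multiplications by commuting central idempotents $G_v$). Therefore we may iterate the intertwining relation in any chosen order:
\begin{align*}
\chi\circ P_{flat}=\chi\circ \Pi_{v\in V}P_{f(v)}=\Pi_{v\in V}Q_v\circ\chi=Q_{flat}\circ\chi.
\end{align*}

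To finish, I will use that $\chi$ restricts to an algebra isomorphism $\chi:\mathcal A^*_{\Gamma\,inv}\to\mathcal H(H)^{op\oo E}_{inv}$ by Theorem \ref{lem:actedge}. Taking images under the projectors on both sides of $\chi\circ P_{flat}=Q_{flat}\circ\chi$, one obtains
\begin{align*}
\chi(\mathcal M_\Gamma)=\chi\bigl(P_{flat}(\mathcal A^*_{\Gamma\,inv})\bigr)=Q_{flat}\bigl(\chi(\mathcal A^*_{\Gamma\,inv})\bigr)=Q_{flat}\bigl(\mathcal H(H)^{op\oo E}_{inv}\bigr)=\mathcal H(H)^{op\oo E}_{flat}.
\end{align*}
Injectivity of the restriction is inherited from $\chi$, and the restriction is an algebra morphism since $\chi$ is. By Theorem \ref{th:flatinv} the restriction of $P_{flat}$ to $\mathcal A^*_{\Gamma\,inv}$ is an algebra morphism onto $\mathcal M_\Gamma$, and by Lemma \ref{lem:hdproj2} the restriction of $Q_{flat}$ to $\mathcal H(H)^{op\oo E}_{inv}$ is an algebra morphism onto $\mathcal H(H)^{op\oo E}_{flat}$, so the bijection of images is automatically an algebra isomorphism.

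There is no serious obstacle here, since all of the heavy lifting—the isomorphism property of $\chi$, its compatibility with gauge transformations, and the intertwining $\chi\circ P_{f(v)}=Q_v\circ\chi$—has already been carried out. The only point requiring mild care is the book-keeping of the indexing bijection $v\leftrightarrow f(v)$, which depends crucially on the regularity of $\Gamma$: for a regular ciliated ribbon graph each face contains exactly one cilium, giving $|V(\Gamma)|=|F(\Gamma)|$ and ensuring that $\Pi_{f\in F}P_f$ on the gauge theory side matches $\Pi_{v\in V}Q_v$ on the Kitaev side under $\chi$.
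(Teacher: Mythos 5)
Your proposal is correct and follows essentially the same route as the paper: it invokes Theorem \ref{lem:actedge} for the isomorphism on gauge-invariant subalgebras, Proposition \ref{lem:facetransfer} for the intertwining $\chi\circ P_{f(v)}=Q_v\circ\chi$, the regularity-induced bijection between vertices and faces, and the commutativity of the projectors to conclude $\chi\circ P_{flat}=Q_{flat}\circ\chi$. Your write-up merely spells out the final passage to images in slightly more detail than the paper does.
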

\begin{proof} 
By Theorem \ref{th:flatinv} and Lemma \ref{lem:hdproj2}, the subalgebras $\mathcal M_\Gamma\subset \mathcal A^*_{\Gamma\,inv}$  and  $\mathcal H(H)^{op\oo E}_{flat}\subset \mathcal H(H)^{op\oo E}_{inv}$ are the images  of the projectors 
$$P_{flat}=\Pi_{f\in F} P_f: \mathcal A^*_{\Gamma\,inv}\to \mathcal A^*_{\Gamma\,inv}\qquad 
Q_{flat}=\Pi_{v\in V} Q_v: \mathcal H(H)^{op\oo E}_{inv}\to\mathcal H(H)^{op\oo E}_{inv},$$
and all projectors $P_f$ and $Q_v$ commute. 
By Theorem \ref{lem:actedge} the map $\chi:\mathcal A^*_{\Gamma\, inv}\to\mathcal H(H)^{op\oo E}_{inv}$ is an algebra isomorphism, and by 
Proposition \ref{lem:facetransfer} one has $\chi\circ P_{f(v)}=Q_v\circ\chi$ for all  vertices $v\in V$. As $\Gamma$ is regular, the faces of $\Gamma$ are in bijection with the ciliated faces $f(v)$  based at the cilia of the vertices  $v\in V$. This implies  $\chi\circ P_{flat}=Q_{flat}\circ \chi$, and 
 the claim follows.
\end{proof}

Theorems \ref{lem:actedge} and \ref{th:modth} also allow one to understand excitations in the Kitaev model from the viewpoint of Hopf algebra gauge theory. Creating an excitation in a Kitaev model at a site $(v,f(v))$ corresponds to removing the associated vertex operator $A_v^\ell$ and face operator $B^\eta_{f(v)}$ from the Hamiltonian $\ham$.
On the level of triangle operators, this amounts to removing the gauge transformations at the vertex $v$  from the projector $Q_{inv}$ in Lemma \ref{lem:hdproj} and the projector $Q_v$ from the projector $Q_{flat}$ in Lemma \ref{lem:hdproj2}. 
  By Theorem  \ref{lem:actedge} and Theorem  \ref{th:modth}, this is equivalent  to removing the  projector $P_v$ from the projector  $P_{inv}$ in Theorem \ref{th:ginvsub} and removing the projector $P_{f(v)}$ 
 from the projector $P_{flat}$ in Theorem \ref{th:flatinv}.
Hence, from the viewpoint of  Hopf algebra gauge theory, excitations are created by relaxing gauge invariance at a vertex $v$ of $\Gamma$
and flatness at the associated face $f(v)$.

Together, Theorems \ref{th:chiiso},  \ref{lem:actedge} and  \ref{th:modth} establish the full equivalence of a $D(H)$-valued Hopf algebra gauge theory on a regular ribbon graph  $\Gamma$ and the associated Kitaev model for $H$.  Theorem \ref{th:chiiso} 
shows that there is an algebra isomorphism 
between the  algebra of functions  of the Hopf algebra gauge theory and the algebra of triangle operators in the Kitaev model.  Theorem \ref{lem:actedge} shows that  this algebra isomorphism  is compatible with the action of gauge transformations at the vertices of $\Gamma$ and hence induces an isomorphism between the  subalgebras of gauge invariant functions in the two models.
Proposition \ref{lem:facetransfer}
 then establishes that this isomorphism maps curvatures in the Hopf algebra gauge theory to curvatures in the Kitaev model.  
As the curvatures in the Hopf algebra gauge theory define a $C(D(H))^{\oo F}$-module structure on the gauge invariant subalgebra by Lemma \ref{lem:facecentral} and one has  $C(D(H))\cong Z(D(H))$ and $|F|=|V|$, this defines $Z(D(H))^{\oo V}$-module  structures on the gauge invariant subalgebras of the two models.  From Theorem \ref{th:modth} we then obtain  
 an algebra isomorphism between the associated subalgebras of invariants. These are the topological invariants of the models, the quantum moduli algebra $\mathcal M_\Gamma$ of the Hopf algebra gauge theory and the algebra $\mathcal H(H)^{op\oo E}_{flat}$ of operators on the protected space in the Kitaev model. 
 
Denoting by $\mathcal G=D(H)^{\oo V}$ the Hopf algebra of gauge transformations and by  $\mathcal C=Z(D(H))^{\oo V}$ the algebra of curvatures, we can describe the relation between Hopf algebra gauge theory and Kitaev models by the following commuting diagram that summarises the results of this article:

 \begin{align*}
\xymatrix{
\mathcal A^*_{\Gamma}\oo \mathcal G  
\ar[rd]^{\lhd}  \ar[rdd]_{P_{inv}\oo\epsilon^{\oo V}}  \ar[rrr]^{\chi\oo\id} & & & 
\mathcal H(H)^{op\oo E}\oo \mathcal G 
\ar[ldd]^{Q_{inv}\oo\epsilon^{\oo V}} \ar[ld]_{\lhd} & 
{\begin{array}{l}\text{action of}\\ \text{gauge transformations}\end{array}}\quad
 \\
& \mathcal A^*_\Gamma \ar[r]^{\chi} \ar[d]^{P_{inv}} & 
\mathcal H(H)^{op\oo E} \ar[d]_{Q_{inv}}&  & \text{functions on gauge fields}
\\
& \mathcal A^*_{\Gamma\, inv} \ar[r]^{\chi} \ar[d]^{P_{flat}} 
& \mathcal H(H)^{op\oo E}_{inv}  \ar[d]_{Q_{flat}} & &\text{gauge invariant functions}
\\
 & 
 \mathcal M_\Gamma \ar[r]^{\chi}& 
 \mathcal H(H)^{op\oo E}_{flat} &  & {\begin{array}{l}\text{gauge invariant functions}\\ \text{on flat gauge fields}\end{array}}
\\
  \mathcal A^*_{\Gamma\, inv}\oo \mathcal C
  \ar[ruu]^{\lhd} \ar[ru]_{ P_{flat}\oo\epsilon^{\oo V}}  \ar[rrr]_{\chi\oo\id}  & & &  \mathcal H(H)^{op\oo E}_{inv}\oo\mathcal C \ar[luu]_{\lhd} \ar[lu]^{Q_{flat}\oo\epsilon^{\oo V}\quad} & 
{\begin{array}{l}\text{action of}\\ \text{curvatures}\end{array}}\qquad\qquad\qquad\;\;
}
\end{align*}

\vfill
{\large \bf Acknowledgements}.  I thank  Derek Wise and John Baez for discussions. This work was supported by 
 the Action MP1405 QSPACE from the European Cooperation in Science and Technology (COST).


\newpage



\begin{thebibliography}{99}


\bibitem 
{AGSI} A.~Alekseev, H.~Grosse, V.~Schomerus, Combinatorial quantization of the Hamiltonian Chern-Simons theory I, Commun.~Math.~Phys.~172.2 (1995) 317--358. 
\bibitem 
{AGSII}  A.~Alekseev, H.~Grosse, V.~Schomerus, Combinatorial quantization of the Hamiltonian Chern-Simons theory II, Commun.~Math.~Phys.~174.3 (1996) 561--604.

\bibitem 
{AS} A.~Alekseev, V.~Schomerus, Representation theory of Chern-Simons observables, Duke Math.~J.~85.2 (1996) 447--510.

\bibitem 
{AM} A.~Alekseev, A.~Malkin, Symplectic structures associated to Lie-Poisson groups, Commun.~Math.~Phys.~162.1 (1994) 147-173.


\bibitem 
{BK}  B.~Balsam, A.~Kirillov Jr, Kitaev's Lattice Model and Turaev-Viro TQFTs, arXiv preprint 
arXiv:1206.2308.

\bibitem 
{Ba1}B.~Balsam, Turaev-Viro invariants as an extended TQFT II, arXiv preprint arXiv:1010.1222. 

\bibitem 
{Ba2}B.~Balsam, Turaev-Viro invariants as an extended TQFT III, arXiv preprint \newline arXiv:1012.0560.


\bibitem 
{john1} J.~Barrett, Quantum gravity as topological quantum field theory, J.~Math.~Phys.~36.11 (1995) 6161-6179.

\bibitem 
{john2}J.~Barrett, J.~Martins, J.~Garc\'ia-Islas, Observables in the Turaev-Viro and Crane-Yetter models, J.~Math.~Phys.~
48.9 (2007)  093508.

\bibitem 
{BW} J.~Barrett, B.~Westbury, Invariants of piecewise-linear 3-manifolds, T.~Am.~Math.~Soc. 348.10 (1996) 3997--4022.

\bibitem 
{majidbraid1}W.~Baskerville, S.~Majid, The braided Heisenberg group,
J.~Math.~Phys.~34.8 (1993)  3588--3606.

\bibitem 
{BBK}M.~Beverland, O.~Buerschaper, R.~Koenig, F.~Pastawski, J.~Preskill, S.~Sijher,  Protected gates for topological quantum field theories, J.~Math.~Phys., 57(2)  (2016) 022201.

\bibitem 
{BMD} H.~Bombin,  M.~Martin-Delgado, A Family of non-Abelian Kitaev models on a lattice: Topological condensation and confinement, Phys.~Rev.~B 78.11 (2008) 115421.


\bibitem 
{BA}O.~Buerschaper, M.~Aguado, Mapping Kitaev's quantum double lattice models to Levin and Wen's string-net models, 
Phys.~Rev.~B 80.15 (2009) 155136.

\bibitem 
{BMCA} O.~Buerschaper, J.~M.~Mombelli, M.~Christandl, M.~Aguado, A hierarchy of topological tensor network states, J.~Math.~Phys.~54.1 (2013) 012201.



\bibitem  
{BNR} E.~Buffenoir, K.~Noui, Ph.~Roche,
Hamiltonian Quantization of Chern-Simons theory with $SL(2,\mathbb C)$ Group,
Class.~Quant.~Grav.~19 (2002) 4953--5016.

\bibitem 
{BR} E.~Buffenoir, Ph.~Roche, Two dimensional lattice gauge theory based on a quantum group, Commun.~Math.~Phys.~170.3 (1995) 669--698.

\bibitem 
{BR2} E.~Buffenoir, Ph.~Roche, Link invariants and combinatorial quantization of hamiltonian Chern Simons theory, Commun.~Math.~Phys.~181.2 (1996) 331--365.

\bibitem 
{BFK}D.~Bullock,
 C.~Frohman, J.~Kania-Bartosz\'ynska,  Topological Interpretations of Lattice Gauge Field Theory, Commun. Math. Phys. 198,  (1998) 47--81.


\bibitem 
{Dr}  V.~Drinfeld, On Almost Cocommutative Hopf Algebras, Len.~Math.~J.~1
(1990), 321--342.

\bibitem  
{EM} J.~Ellis-Monaghan, I.~Moffat, Graphs on surfaces: dualities, polynomials, and knots, Vol.~84 (2013) Springer, Berlin.

\bibitem 
{sgel} P.~Etinghof, S.~Gelaki, Some properties of finite-dimensional semisimple Hopf algebras,
 Math.~Res.~Lett.~5 (1998)  191--197.

\bibitem 
{FR} V.~Fock, A.~Rosly, Poisson structure on moduli of flat connections and r-matrix, Am.~Math.~Soc.~Transl.~191 (1999) 67--86.



\bibitem 
{KMR} Z.~K\'ad\'ar, A.~Marzuoli, M.~Rasetti, Microscopic description of 2d topological phases, duality, and 3D state sums,  Adv.~Math.~Phys, 2010, Article ID 671039, 18 pages.



\bibitem{KMR2} Z.~K\'ad\'ar, A.~Marzuoli, M.~Rasetti,  Braiding and entanglement in spin networks: a combinatorial approach to topological phases,  Int.~J.~Quantum Inf.~7.supp (2009) 195--203.


\bibitem 
{K} C.~Kassel, Quantum groups,  Vol.~155 (2012) Springer Science \& Business Media.

\bibitem 
{KB} A.~Kirillov Jr, B.~Balsam, Turaev-Viro invariants as an extended TQFT, arXiv preprint arXiv:1004.1533.


\bibitem 
{Ki} A.~Kitaev,  Fault-tolerant quantum computation by anyons, Ann.~Phys.~303.1 (2003) 2--30.


\bibitem 
{KK} A.~Kitaev, L.~Kong, Models for gapped boundaries and domain walls, Commun.~Math.~\newline Phys.~313.2 (2012) 351--373.


\bibitem 
{KKR} R.~Koenig, G.~Kuperberg, B.~Reichardt, Quantum computation with Turaev-Viro codes,
Ann.~Phys.~325.12 (2010) 2707--2749.

\bibitem 
{LZ} S.~Lando, A.~Zvonkin, Graphs on surfaces and their applications, Vol.~141 (2013) Springer Science \& Business Media.

\bibitem 
{LR} G.~Larson,  D.~Radford, Semisimple Cosemisimple
Hopf Algebras, Am.~J.~Math.~109
(1987), 187--195.

\bibitem 
{LW1} M.~Levin,  X.-G.~Wen, String-net condensation: a physical mechanism for
topological phases,
Phys.~Rev.~B 71.4 (2005) 045110.

\bibitem 
{LW2} M.~Levin, X.-G.~Wen, Detecting topological order in a ground state wave
function,
Phys.~Rev.~Lett.~96.11 (2006) 110405.


\bibitem 
{majidbraid} S.~Majid, 
Algebras and Hopf algebras in braided categories, in: 
 Advances in Hopf 
 algebras,    Lecture Notes in Pure and Appl.~Math 158,  Dekker, New York (1994) 55--105.


\bibitem 
{majidbook} S.~Majid,  Foundations of quantum group theory (2000) Cambridge University Press.

\bibitem 
{MN} C.~Meusburger, K.~Noui, The Hilbert space of 3d gravity: quantum group symmetries and observables, Adv.~Theor.~Math.~Phys.~14.6 (2010) 1651-1716.


\bibitem 
{MW} C.~Meusburger, D.~Wise, Hopf algebra gauge theory on a ribbon graph, arXiv preprint arXiv:1512.03966.

\bibitem 
{Mon} S.~Montgomery, Hopf algebras and their actions on rings (1993) Montgomery, American Mathematical Soc.~82.


\bibitem 
{R} D.~Radford, Minimal quasitriangular Hopf algebras, J.~Algebra
157.2  (1993) 281--315.

\bibitem 
{rbook} D.~Radford, Hopf algebras, Series on Knots and everything   49 (2011)  World Scientific.


\bibitem 
{RT} N.~Reshetikhin, V.~Turaev, Invariants of 3-manifolds via link polynomials and quantum groups, Invent.~Math.~103.1 (1991) 547--597.


\bibitem 
{TVr} V.~Turaev,  A.~Virelizier, On two approaches to 3-dimensional TQFTs, arXiv preprint arXiv:1006.3501.



\bibitem 
{TVi} V.~Turaev, O.~Viro, State sum invariants of 3-manifolds and quantum 6j-symbols, 
Topology 31.4 (1992) 865--902.

\bibitem 
{W} E.~Witten, Quantum field theory and the Jones polynomial, Commun.~Math.~Phys.~121.3 (1989)  351--399.

\end{thebibliography}
\end{document}